\newtheorem{theorem}{Theorem}[section]
\newtheorem{lemma}[theorem]{Lemma}
\theoremstyle{definition}
\newtheorem{definition}[theorem]{Definition}
\theoremstyle{remark}
\newtheorem{remark}[theorem]{Remark}
 \newcommand{\mN}{\mathbb N}
\newcommand{\be}{\begin{eqnarray}}
\newcommand{\ee}{\end{eqnarray}}
\newcommand{\bd}{\begin{definition}}
\newcommand{\ed}{\end{definition}}
\newcommand{\br}{\begin{remark}}
\newcommand{\er}{\end{remark}}
\newcommand{\bl}{\begin{lemma}}
\newcommand{\el}{\end{lemma}}
\newcommand{\bp}{\begin{picture}}
\newcommand{\ep}{\end{picture}}
\newcommand{\bi}{\begin{itemize}}
\newcommand{\ei}{\end{itemize}}
\newcommand{\bq}{\begin{quotation}}
\newcommand{\eq}{\end{quotation}}
\begin{document}

\title{On the homotopy transfer of $A_\infty$ structures}
\author{Jakub Kop\v{r}iva}
\date{}
\maketitle

\centerline{\it{Dedicated to the memory of Martin Doubek}}

\abstract 

The present article is devoted to the study of transfers for $A_\infty$ structures, 
their maps and homotopies, as developed in \cite{Markl06}. In particular, we supply
the proofs of claims formulated therein and provide their extension by comparing them 
with the former approach based on the homological perturbation lemma. 
  
{\bf Key words:} $A_\infty$ structures, transfer, homological perturbation lemma.
  
{\bf MSC classification:} 18D10, 55S99 .

\endabstract

\tableofcontents  

\section{Introduction}

The notion of strongly homotopy associative  or $A_{\infty }$ algebras 
is a generalization of the concept of differential graded algebras.
These algebras were introduced by J. Stasheff with the aim of a characterization 
of (de)looping and bar construction in the category of topological spaces. 
Since then they found many applications ranging from algebraic topology and 
operads to quantum theories in theoretical physics.

We consider the following situation: 
let $(V, \partial_V)$ and $(W, \partial_W)$ be two chain complexes of modules, 
and $f: (V, \partial_V) \to (W, \partial_W)$ and $g: (W, \partial_W) \to (V, \partial_V)$ 
two mappings of chain complexes such that $gf$ is homotopic to the identity map on $V$
and $(V, \partial_V)$ is equipped with $A_\infty$ algebra structure.
Then a natural question arises $-$ can $A_\infty$ structure be transferred to 
$(W, \partial_W)$ and secondly, what is its explicit form in terms of $A_\infty$ 
algebra structure on $(V, \partial_V)$ and and in which sense it is unique? 

While the existence of a transfer follows from general model 
structure considerations, an unconditional and elaborate answer 
producing explicit formulas for the transferred objects was formulated 
in \cite{Markl06}. 
The present article contributes to the problem of transfer of $A_\infty$ 
structures. Its modest aim is to supply
detailed proofs of many claims omitted in the original article \cite{Markl06},
thereby facilitating complete subtle proofs to a reader interested 
in this topic. This exposition also extends the results of the aforementioned article
in several ways, and sheds a light on its relationship with the
homological perturbation lemma. 
 
The content of our article goes as follows. In the Section \ref{sec:2} we 
recall a well-known correspondence 
between $A_\infty$ algebras and codifferentials on reduced tensor coalgebras. 
This allows us to simplify the proofs in Section \ref{sec:3} considerably. 
The Section \ref{sec:3} is devoted to the problem of homotopy transfer  
of $A_\infty$ algebras. We first derive the formulas introduced in \cite{Markl06}, and 
then give their self-contained proofs. Here we achieve a substantial simplification of 
all proofs due to the reduction of sign factors. 
We also comment on another remark in \cite{Markl06}, namely, the
relationship between the homological perturbation lemma and homotopy transfer of
$A_\infty$ algebras. We prove that on certain assumptions
the explicit formulas in \cite{Markl06} do coincide with those coming 
from the homological perturbation lemma.

We shall work in the category of $\mathbb{Z}$-graded modules over an arbitrary 
commutative unital ring $R$, and their graded $R$-homomorphisms.

We first briefly recall the concepts of $A_\infty$ algebra,
$A_\infty$ morphism of $A_\infty$ algebras and $A_\infty$
homotopy of $A_\infty$ morphisms, cf. \cite{Markl06}, \cite{Keller01}.
\begin{definition}
Let $(V, \partial_V)$ be a chain complex of modules  
indexed by $\mathbb{Z}$, i.e. $(V, \partial_V)$ is a $\mathbb{Z}$-graded modules 
$V = \bigoplus_{i = -\infty}^{\infty}V_i$ with $\partial_V(V_i) \subset V_{i-1}$ and 
$\partial_V \circ \partial_V = 0$. Let $\mu_n: V^{\otimes n} \to V$ be a collection of
linear mappings of degree $n-2$ ($n \geq 2$), satisfying 
\begin{align}\label{ainftyrelations}
\partial_V & \mu_n - \sum_{i = 1}^{n} (-1)^{n} \mu_n\left(\mathbb{1}_V^{\otimes  i-1} 
\otimes \partial_V \otimes \mathbb{1}_V^{\otimes  n-i}\right) \nonumber \\
& = \sum_{A(n)} (-1)^{i(\ell+1) + n} \mu_k \left(\mathbb{1}_V^{\otimes  i-1} \otimes 
\mu_\ell  \otimes \mathbb{1}_V^{\otimes  k - i}\right)
\end{align}
for all $n \geq 2$ and $A(n) = \{k, \ell\in\mN |\, k+\ell = n+1, k,\ell \geq 2, 1 \leq i \leq k\}$. 
The structure $(V, \partial_V, \mu_2, \mu_3, \dots)$ is called $A_\infty$ algebra.
\end{definition}
Throughout the article, we use the Koszul sign convention. This means that 
for $U$, $V$ a $W$ graded modules and  
$f: U \to V$, $g: U \to V$, $h: V \to W$ and $i: V \to W$ linear maps of degrees 
$|f|$, $|g|$, $|h|$ and $|i|$, respectively, holds
$$(h \otimes i)(f \otimes g) = (-1)^{|f||i|} hf \otimes ig.$$ Similarly
for $u_1, u_2 \in U$ of degree $|u_1|$ and $|u_2|$, respectively, holds
$$(f \otimes g)(u_1 \otimes u_2) = (-1)^{|u_1||g|}f(u_1) \otimes g(u_2).$$

\begin{definition} 
Let $(V, \partial_V, \mu_2, \dots)$ and $(W, \partial_W, \nu_2, \dots)$ be $A_\infty$ algebras. 
Then the set $\{f_n: V^{\otimes n} \to W, |f_n| = n-1\}_{n \geq 1}$ is called $A_\infty$ morphism
if 
\begin{align}
& \partial_Wf_n + \sum_{B(n)}(-1)^{\vartheta(r_1, \dots, r_k)} \nu_k(f_{r_1} \otimes \dots \otimes f_{r_k}) 
\nonumber \\
& = f_1\mu_n - \sum_{i = 1}^{n} (-1)^{n} f_n\left(\mathbb{1}_V^{\otimes  i-1} \otimes \partial_V \otimes \mathbb{1}_V^{\otimes  n-i}\right) 
\nonumber \\
& -\sum_{A(n)} (-1)^{i(\ell+1) + n} f_k \left(\mathbb{1}_V^{\otimes  i-1} \otimes \mu_\ell  \otimes \mathbb{1}_V^{\otimes  k - i}\right)
\end{align}
holds for all $n \geq 1$ with $B(n) = \{k, r_1, \mbox{\dots}, r_k\in\mN |\, k \geq 2,  r_1, \mbox{\dots}, r_k \geq 1,  
r_1 + \mbox{\dots} + r_k = n\}$ and $\vartheta(r_1, \dots, r_k) = \sum_{1 \leq i < j \leq k} r_i(r_j + 1)$.
\end{definition}
Morphisms of $A_\infty$ algebras can be composed: for $(U, \partial_U, \varrho_2, \dots)$, 
$(V, \partial_V, \mu_2, \dots)$ and $(W, \partial_W, \nu_2, \dots)$ $A_\infty$ algebras,
$\{f_n: U^{\otimes n} \to V\}_{n \geq 1}$ and $\{g_n: V^{\otimes n} \to W\}_{n \geq 1}$
$A_\infty$ morphisms, their composition   
$\{(gf)_n: U^{\otimes n} \to W\}_{n \geq 1}$ is defined as
\begin{align}\label{composition}
(gf)_n = g_1f_n + \sum_{B(n)}(-1)^{\vartheta(r_1, \dots, r_k)} g_k(f_{r_1} \otimes \dots \otimes f_{r_k}).
\end{align}

\begin{definition} \label{ainftyhomotopy}
Let $\{f_n: V^{\otimes n} \to W\}_{n \geq 1}$ and $\{g_n: V^{\otimes n} \to W\}_{n \geq 1}$ 
be morphisms between $A_\infty$ algebras $(V, \partial_V, \mu_2, \dots)$ and $(W, \partial_W, \nu_2, \dots)$. The 
set of linear mappings $\{h_n: V^{\otimes n} \to W, |h_n| = n\}_{n \geq 1}$ is an $A_\infty$ homotopy between 
$A_\infty$ morphisms $\{f_n: V^{\otimes n} \to W\}_{n \geq 1}$ and $\{g_n: V^{\otimes n} \to W\}_{n \geq 1}$
provided 
\begin{align}
& f_n - g_n = h_1\mu_n - \sum_{i = 1}^{n} (-1)^{n} h_n\left(\mathbb{1}_V^{\otimes  i-1} \otimes \partial_V \otimes \mathbb{1}_V^{\otimes  n-i}\right)
\nonumber \\
& -\sum_{A(n)} (-1)^{i(\ell+1) + n} h_k \left(\mathbb{1}_V^{\otimes  i-1} \otimes \mu_\ell  \otimes \mathbb{1}_V^{\otimes  k - i}\right) + \delta_W h_n 
\nonumber \\
& + \sum_{B(n)}\sum_{1 \leq i \leq k}(-1)^{\vartheta(r_1, \dots, r_k)} \nu_k(f_{r_1} \otimes \dots \otimes f_{r_{i-1}} \otimes h_{r_i} \otimes g_{r_{i+1}} \otimes \dots \otimes g_{r_k}),
\end{align}
is true for all $n \geq 1$ with 
$B(n) = \{k, r_1, \mbox{\dots}, r_k\in\mN |\, k \geq 2,  r_1, \mbox{\dots}, r_k \geq 1,  r_1 + \mbox{\dots} + r_k = n\}$.
\end{definition}

\section{Reduced tensor coalgebras}
\label{sec:2}

In the present section we introduce a bijective correspondence between 
$A_\infty$ algebras and codifferentials on reduced tensor coalgebras, cf.
\cite{Keller01}. We retain the notation 
$V = \bigoplus_{i = -\infty}^{\infty} V_i$ for $\mathbb{Z}-$ graded modules as well as
\begin{displaymath}
\tag{A}
\label{A}
A(n) = \{k, \ell\in\mN |\, k+\ell = n+1, k,\ell \geq 2, 1 \leq i \leq k\},
\end{displaymath}
\begin{displaymath}
\tag{B}
\label{B}
B(n) = \{k, r_1, \mbox{\dots}, r_k\in\mN |\, k \geq 2,  r_1, \mbox{\dots}, r_k \geq 1,  r_1 + \mbox{\dots} + r_k = n\}
\end{displaymath}
for $n \in \mathbb{N}$, and $A(1) = A(2) = B(1) = \emptyset.$ We use a few natural variations on this notation, 
e.g. $A'(n) = \{k', \ell'\in\mN |\, k'+\ell' = n+1, k',\ell' \geq 2, 1 \leq i' \leq k'\}$. 

\subsection{Codiferentials on tensor coalgebras}

\begin{definition} \label{k1d1}
Let $\overline{T}V = \bigoplus_{n=1}^\infty V^{\otimes n}$, where the elements in $V^{\otimes i}$ 
have degree (or homogeneity) $i$, and let the mapping $C: \overline{T}V \to \overline{T}V \otimes \overline{T}V$ 
be defined in such a way that $C: v \mapsto 0$ for $v \in V^{\otimes 1} = V$ and 
\begin{align}
C: v_1 \otimes \mbox{\dots} \otimes v_n \mapsto \sum_{i=1}^{n-1} (v_1 \otimes \mbox{\dots} \otimes v_i) \otimes (v_{i+1} \otimes \mbox{\dots} \otimes v_n),
\end{align} 
for $n \geq 2$ and $v_1, \mbox{\dots}, v_n \in V$. The pair $(\overline{T}V, C)$ 
is called the reduced tensor coalgebra. 
\end{definition}

\begin{definition}\label{k1d2}
A linear mapping $\delta: \overline{T}V \to \overline{T}V$ of degree $-1$ is called coderivation 
if $C \circ \delta = (\delta \otimes \mathbb{1} + \mathbb{1} \otimes \delta) \circ C$. Moreover,
if $\delta$ satisfies $\delta \circ \delta = 0$, it is called codifferential. 
\end{definition}

\begin{remark}
We notice that $C$ is coassociative, $(\mathbb{1} \otimes C) \circ C = (C \otimes \mathbb{1}) \circ C$.
For all $v \in \overline{T}V$ holds $C(v) = 0$ if and only if $v$ is of homogeneity $1$.
For all maps $\varphi: V^{\otimes n} \to \overline{T}W$, $n \geq 1$, holds $C_{\overline{T}W} \circ \varphi = 0$ 
if and only if $\varphi\left(V^{\otimes n}\right) \subseteq W$.
For all $v = v_1 \otimes \mbox{\dots} \otimes v_n \in \overline{T}V$ and 
$w = w_1 \otimes \mbox{\dots} \otimes w_m \in \overline{T}V$, we have
$$C(v \otimes w) = \sum_{i = 1}^{n-1} \left(v_{1,i}\right) \otimes \left(v_{i+1, n} \otimes w\right) + \left(v\right)\otimes \left(w\right) + \sum_{i = 1}^{m-1} \left(v \otimes w_{1,i}\right) \otimes \left(w_{i+1,m}\right),$$
with $v_{i,j} = v_i \otimes \mbox{\dots} \otimes v_j$, $i \leq j$, $i,j \in \{1, \dots, n\}$, and analogously
for $w_{i,j}$. This little calculation expresses a fact that $\overline{T}V$ is a bialgebra which is, as a conilpotent coalgebra, cogenerated by $V$.
\end{remark}
\begin{lemma}\label{k1l3}
Let $E: \overline{T}V \to \overline{T}W$ be a linear mapping for which there exist 
$\{e_n: V^{\otimes n} \to W\}_{n \geq 1}$ with
$E|_{V^{\otimes n}} = e_n + \sum_{B(n)} e_{r_1} \otimes \mbox{\dots} \otimes e_{r_{k}}$, 
and $B(n)$ given in \eqref{B}. Then 
\begin{align}
C_{\overline{T}W} \circ E|_{V^{\otimes n}} = \sum_{i=1}^{n-1} \left(E|_{V^{\otimes i}}\right) 
\otimes \left(E|_{V^{\otimes n-i}}\right).
\end{align}
\end{lemma}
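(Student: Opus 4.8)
The plan is to compute $C_{\overline{T}W} \circ E$ directly on a homogeneous element $v = v_1 \otimes \dots \otimes v_n \in V^{\otimes n}$, using the explicit description of $E|_{V^{\otimes n}}$ as the sum $e_n + \sum_{B(n)} e_{r_1} \otimes \dots \otimes e_{r_k}$, and then to match the result term by term against $\sum_{i=1}^{n-1} (E|_{V^{\otimes i}}) \otimes (E|_{V^{\otimes n-i}})$ evaluated on the same element via the coproduct $C$ on $\overline{T}V$. The key structural observation is that a summand $e_{r_1} \otimes \dots \otimes e_{r_k}$ (with $k \geq 1$ and $r_1 + \dots + r_k = n$) maps $v$ into $W^{\otimes k} \subseteq \overline{T}W$; when we apply $C_{\overline{T}W}$ we split this word of length $k$ at each internal position $j$ with $1 \leq j \leq k-1$, producing $(e_{r_1} \otimes \dots \otimes e_{r_j})(v_1 \otimes \dots \otimes v_{r_1 + \dots + r_j}) \otimes (e_{r_{j+1}} \otimes \dots \otimes e_{r_k})(\text{rest})$. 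Note that the $k=1$ term $e_n(v)$ lies in $W = W^{\otimes 1}$ and so is killed by $C_{\overline{T}W}$, which is exactly as it should be since the right-hand side only involves proper splittings $1 \leq i \leq n-1$.

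Next I would reorganize this double sum — first over the decomposition $(k; r_1, \dots, r_k)$ in $B(n)$, then over the cut point $j$ — by instead summing first over the cut point $i = r_1 + \dots + r_j \in \{1, \dots, n-1\}$ in the underlying tensor word $v$. For a fixed $i$, a cut of $v$ into $v_{1,i} \otimes v_{i+1,n}$ arises from exactly those $(k; r_1, \dots, r_k, j)$ for which $(r_1, \dots, r_j)$ is a composition of $i$ and $(r_{j+1}, \dots, r_k)$ is a composition of $n-i$; and the corresponding contribution factors as $\big(e_{r_1} \otimes \dots \otimes e_{r_j}\big)(v_{1,i}) \otimes \big(e_{r_{j+1}} \otimes \dots \otimes e_{r_k}\big)(v_{i+1,n})$. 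Summing over all compositions of $i$ (including the trivial one, giving $e_i$) reconstitutes precisely $E|_{V^{\otimes i}}$ applied to $v_{1,i}$, and likewise for the second factor. Hence the total equals $\sum_{i=1}^{n-1} (E|_{V^{\otimes i}} \otimes E|_{V^{\otimes n-i}})(C(v))$, which is the claim.

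A small point requiring care is the Koszul sign: when $C_{\overline{T}W}$ is applied to $(e_{r_1} \otimes \dots \otimes e_{r_k})(v)$ there are no extra signs introduced by the splitting itself, since $C_{\overline{T}W}$ on a tensor word is just de-concatenation with coefficient $+1$; and the sign incurred when the tensor product of maps $e_{r_1} \otimes \dots \otimes e_{r_k}$ acts on $v_1 \otimes \dots \otimes v_n$ depends only on the degrees $|e_{r_a}|$ and $|v_b|$ and the partition into blocks, so it is identical whether one computes $(e_{r_1} \otimes \dots \otimes e_{r_k})(v)$ first and then splits, or splits the word $v$ at position $i$ first and then applies $(e_{r_1} \otimes \dots \otimes e_{r_j})$ and $(e_{r_{j+1}} \otimes \dots \otimes e_{r_k})$ separately — there is no crossing of a map past a vector in regrouping a single tensor word. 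Thus the signs match automatically and the main obstacle is purely bookkeeping: setting up the bijection between $\{(k; r_1, \dots, r_k; j) : 1 \leq j \leq k-1\}$ and $\{(i; \text{composition of } i; \text{composition of } n-i) : 1 \leq i \leq n-1\}$ cleanly enough that the refactoring is transparent.
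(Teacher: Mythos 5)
Your argument is correct, but it takes a different route from the paper's. The paper proves the lemma by induction on $n$: it isolates the first tensor factor, writing $E|_{V^{\otimes n}} = e_n + \sum_{i=1}^{n-1} e_i \otimes E|_{V^{\otimes n-i}}$, applies the formula for $C$ on a concatenation $v \otimes w$ (recorded in the remark preceding the lemma), re-indexes, and invokes the induction hypothesis to reassemble $E|_{V^{\otimes \ell}}$ from $e_\ell + \sum_j e_j \otimes E|_{V^{\otimes \ell - j}}$. You instead expand $E|_{V^{\otimes n}}$ as the full sum over compositions $(r_1,\dots,r_k)$ of $n$ and perform a direct, non-inductive re-indexing: deconcatenating the length-$k$ output word at an internal position $j$ corresponds bijectively to choosing a cut point $i = r_1 + \cdots + r_j$ of the input together with a composition of $i$ and one of $n-i$, and summing over these reconstitutes $E|_{V^{\otimes i}} \otimes E|_{V^{\otimes n-i}}$. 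Your observation that the $k=1$ term is annihilated by $C_{\overline{T}W}$, and your check that the Koszul sign for $(e_{r_1}\otimes\cdots\otimes e_{r_k})$ acting on $v_1\otimes\cdots\otimes v_n$ is unchanged by regrouping the factors into two blocks, are both accurate and are exactly the points where such a direct argument could go wrong. The trade-off is the usual one: your version makes the underlying combinatorial bijection between compositions explicit and avoids recursion, at the cost of heavier bookkeeping in one step, whereas the paper's induction keeps each step short by burying the same combinatorics in the recursive structure of $E$.
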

\begin{proof}
Obviously, we can write 
$E|_{V^{\otimes n}} = e_n + \sum_{i=1}^{n-1} e_{i} \otimes E|_{V^{\otimes n-i}}$. 
The proof is by induction on $n$: the claim holds for $n=1$ and we assume it
is true foll all natural numbers less than $n$. Then
$$C_{\overline{T}W} \circ E|_{V^{\otimes n}} = C_{\overline{T}W} \circ \left( e_n + \sum_{i=1}^{n-1} e_{i} \otimes E|_{V^{\otimes n-i}}\right)$$
$$= C_{\overline{T}W} \circ \left(\sum_{i=1}^{n-1} e_{i} \otimes E|_{V^{\otimes n-i}}\right)$$ 
$$= \sum_{i=1}^{n-1} \left(e_i \right) \otimes \left(E|_{V^{\otimes n-i}}\right) + \sum_{i=1}^{n-1}\sum_{j=1}^{n-1-i} \left(e_i \otimes E|_{V^{\otimes j}}\right) \otimes \left(E|_{V^{\otimes n-i-j}}\right) $$ $$=\sum_{i=1}^{n-1} \left(e_i \right) \otimes \left(E|_{V^{\otimes n-i}}\right) + \sum_{\ell=2}^{n-1}\sum_{j=1}^{\ell-1} \left(e_j \otimes E|_{V^{\otimes \ell-j}}\right) \otimes \left(E|_{V^{\otimes n-\ell}}\right)$$ $$= \left(e_1 \right) \otimes \left(E|_{V^{\otimes n-1}}\right) + \sum_{\ell=2}^{n-1}\left(e_\ell + \sum_{j=1}^{\ell-1} e_j \otimes E|_{V^{\otimes \ell-j}}\right) \otimes \left(E|_{V^{\otimes n-\ell}}\right),$$ and the proof follows by induction hypothesis from 
$E|_{V^{\otimes \ell}} = e_\ell + \sum_{i=1}^{\ell-1} e_{i} \otimes E|_{V^{\otimes \ell-i}}.$
\end{proof}

\begin{theorem}\label{k1v4}
Let $E: \overline{T}V \to \overline{T}W$ a $G: \overline{T}V \to \overline{T}W$ be linear mappings for which
there exist linear mappings $\{e_n: V^{\otimes n} \to W\}_{n \geq 1}$, $\{g_n: V^{\otimes n} \to W\}_{n \geq 1}$ 
such that $E|_{V^{\otimes n}} = e_n + \sum_{B(n)} e_{r_1} \otimes \mbox{\dots} \otimes e_{r_{k}}$ and 
$G|_{V^{\otimes n}} = g_n + \sum_{B(n)} g_{r_1} \otimes \mbox{\dots} \otimes g_{r_{k}}$ with $B(n)$ given in 
\eqref{B}. Given a linear mapping $F: \overline{T}V \to \overline{T}W$, the following conditions are equivalent: 
\begin{enumerate}
\item $C_{\overline{T}W} \circ F = \left(E \otimes F + F \otimes G\right) \circ C_{\overline{T}V}$,
\item there exist linear mappings $\{f_n: V^{\otimes n} \to W\}_{n \geq 1}$ such that 

$$F|_{V^{\otimes n}} = f_n + \sum_{B(n)}\sum_{1 \leq i \leq k} e_{r_1} \otimes \mbox{\dots} \otimes e_{r_{i-1}} \otimes f_{r_i} \otimes g_{r_{i+1}} \otimes \mbox{\dots} \otimes g_{r_k}.$$
\end{enumerate}
\end{theorem}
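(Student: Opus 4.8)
The plan is to prove the two implications separately, mirroring the inductive bookkeeping already developed for Lemma \ref{k1l3}. The key observation is that condition (1) is a recursive constraint: restricted to $V^{\otimes n}$, the coproduct $C_{\overline{T}V}$ splits a word of homogeneity $n$ into all ordered pairs of subwords of homogeneities $i$ and $n-i$ with $1\le i\le n-1$, so condition (1) reads
\begin{align*}
C_{\overline{T}W}\circ F|_{V^{\otimes n}} = \sum_{i=1}^{n-1}\left(E|_{V^{\otimes i}}\right)\otimes\left(F|_{V^{\otimes n-i}}\right) + \sum_{i=1}^{n-1}\left(F|_{V^{\otimes i}}\right)\otimes\left(G|_{V^{\otimes n-i}}\right),
\end{align*}
where I have used the coderivation identity $C\circ(E\otimes F+F\otimes G) = \ldots$ together with Lemma \ref{k1l3} applied to $E$ and to $G$. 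This already exhibits how the components of $E$ and $G$ on the left factor and right factor respectively enter; one then reads off that the ``new'' information $F$ carries at level $n$, beyond what is forced by lower levels, is exactly a single map $f_n:V^{\otimes n}\to W$, namely the composite of $F|_{V^{\otimes n}}$ with the projection $\overline{T}W\to W$ onto homogeneity $1$.

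For the implication (2) $\Rightarrow$ (1): assuming the displayed formula for $F|_{V^{\otimes n}}$ in terms of the $f_n$, $e_n$, $g_n$, I would compute $C_{\overline{T}W}\circ F|_{V^{\otimes n}}$ directly. Splitting each summand $e_{r_1}\otimes\cdots\otimes f_{r_i}\otimes\cdots\otimes g_{r_k}$ by the coproduct (the cut falls either strictly inside one tensor factor or between two factors, and uses the third formula in the Remark for $C$ on a tensor product), the resulting terms reorganize into those where the $f$-block lands in the left half — contributing to $(F|_{V^{\otimes \bullet}})\otimes(G|_{V^{\otimes\bullet}})$ since everything to its right is a $g$ — and those where it lands in the right half — contributing to $(E|_{V^{\otimes\bullet}})\otimes(F|_{V^{\otimes\bullet}})$. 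This is the same reindexing trick ($\ell = i+j$) used in the proof of Lemma \ref{k1l3}, now carried out on a word decorated by one distinguished $f$-factor; combining with Lemma \ref{k1l3} for the undecorated $E$- and $G$-words gives exactly condition (1).

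For the converse (1) $\Rightarrow$ (2): I would argue by induction on $n$. Define $f_n := \pi_W\circ F|_{V^{\otimes n}}$, where $\pi_W:\overline{T}W\to W$ is the projection onto homogeneity $1$. By the Remark, $C_{\overline{T}W}\circ\varphi=0$ iff $\varphi$ lands in $W$, so the homogeneity-$1$ part of $F|_{V^{\otimes n}}$ is not seen by $C_{\overline{T}W}$; all higher-homogeneity parts are determined by $C_{\overline{T}W}\circ F|_{V^{\otimes n}}$, which by the rewriting of condition (1) above is expressed entirely through $E|_{V^{\otimes i}}$, $G|_{V^{\otimes i}}$ and $F|_{V^{\otimes i}}$ for $i<n$ — hence, by the induction hypothesis, through the $e$'s, $g$'s and $f_1,\dots,f_{n-1}$. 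Cogeneration of $\overline{T}W$ by $W$ (equivalently: a map into $\overline{T}W$ is determined by its compositions with $\pi_W$ and with $C_{\overline{T}W}$, and $C_{\overline{T}W}$ is coassociative and conilpotent) then forces $F|_{V^{\otimes n}}$ to equal precisely the claimed sum; the candidate right-hand side indeed has $\pi_W$-component $f_n$ and, by the already-proven direction (2)$\Rightarrow$(1) applied at levels $\le n$, the correct $C_{\overline{T}W}$-image, so the two agree.

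The main obstacle I anticipate is purely combinatorial rather than conceptual: keeping the decorated-word reindexing honest, i.e. checking that when the coproduct cuts a word $e_{r_1}\otimes\cdots\otimes f_{r_i}\otimes\cdots\otimes g_{r_k}$ the two families of pieces genuinely reassemble into $E\otimes F$ and $F\otimes G$ terms with the correct ranges of summation, including the boundary cases where the cut is immediately adjacent to the distinguished $f$-factor (so that one side is a pure $e$-word or a pure $g$-word and the adjacent half starts or ends exactly at $f_{r_i}$). Matching these boundary terms against the two ``$i$'' sums in the rewritten condition (1) is where sign- and index-bookkeeping errors would creep in; fortunately, because $C$ carries no signs and all the maps here are grouped into homogeneity-graded pieces, no Koszul signs intervene, which is precisely the simplification the introduction advertises.
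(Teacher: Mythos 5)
Your proposal is correct and follows essentially the same route as the paper: the same rewriting of condition (1) on $V^{\otimes n}$ as $\sum_{i}\left(E|_{V^{\otimes i}}\right)\otimes\left(F|_{V^{\otimes n-i}}\right)+\left(F|_{V^{\otimes i}}\right)\otimes\left(G|_{V^{\otimes n-i}}\right)$, the same decorated-word coproduct computation and reindexing via Lemma \ref{k1l3} for (2)$\Rightarrow$(1), and the same inductive extraction of $f_n$ from the kernel of $C_{\overline{T}W}$ (conilpotent cogeneration by $W$) for (1)$\Rightarrow$(2).
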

\begin{proof}
$(2) \Rightarrow (1)$: We have 
$F|_{V^{\otimes n}} = f_n + \sum_{i=1}^{n-1} E|_{V^{\otimes i}} \otimes f_{n-i} +  \sum_{i=1}^{n-1} f_i \otimes G|_{V^{\otimes n-i}} + \sum_{i=1}^{n-1}\sum_{j = 1}^{n-i-1} E|_{V^{\otimes j}} \otimes f_{i} \otimes G|_{V^{\otimes n-i-j}}$
for all $n \geq 1$.
We now verify $1.$ by expanding both sides: 
$$\left(E \otimes F + F \otimes G\right) \circ C_{\overline{T}V}|_{V^{\otimes n}} = \left(E \otimes F + F \otimes G\right) \circ \sum_{i=1}^{n-1} \left(\mathbb{1}_V^{\otimes  n-i}\right) \otimes \left(\mathbb{1}_V^{\otimes  i}\right) $$
$$= \sum_{i=1}^{n-1} \left[\left(E|_{V^{\otimes n-i}}\right) \otimes \left(F|_{V^{\otimes i}}\right) + \left(F|_{V^{\otimes n-i}}\right) \otimes \left(G|_{V^{\otimes i}}\right)\right],$$
and by Lemma \ref{k1l3} we get 
$$C_{\overline{T}W} \circ \left(\sum_{i=1}^{n-1} E|_{V^{\otimes i}} \otimes f_{n-i}\right) $$
$$= \sum_{i=1}^{n-1}\left(E|_{V^{\otimes n-i}}\right) \otimes \left(f_{i}\right) + \sum_{i=1}^{n-1} \sum_{j=1}^{n-1-i}\left(E|_{V^{\otimes n-i-j}} \right) \otimes \left(E|_{V^{\otimes j}} \otimes f_{i}\right),$$
$$C_{\overline{T}W} \circ \left(\sum_{i=1}^{n-1} f_i \otimes G|_{V^{\otimes n-i}}\right) $$
$$=\sum_{i=1}^{n-1} \left(f_i\right) \otimes \left(G|_{V^{\otimes n-i}}\right) + \sum_{i=1}^{n-1} \sum_{j=1}^{n-1-i}\left(f_i \otimes G|_{V^{\otimes j}}\right) \otimes \left(G|_{V^{\otimes n-i-j}}\right),$$
$$C_{\overline{T}W} \circ \left(\sum_{i=1}^{n-1}\sum_{j = 1}^{n-i-1} E|_{V^{\otimes j}} \otimes f_{i} \otimes G|_{V^{\otimes n-i-j}}\right) $$
$$=\sum_{i=1}^{n-1}\sum_{j = 1}^{n-i-1} \left(E|_{V^{\otimes n-i-j}}\right) \otimes \left(f_{i} \otimes G|_{V^{\otimes j}}\right) + \sum_{i=1}^{n-1}\sum_{j = 1}^{n-i-1} \left(E|_{V^{\otimes j}} \otimes f_{i}\right) \otimes \left(G|_{V^{\otimes n-i-j}}\right) $$
$$+ \sum_{i=1}^{n-1}\sum_{j = 1}^{n-i-1}\sum_{k=1}^{j-1} \left(E|_{V^{\otimes n-i-j-k}}\right) \otimes \left(E|_{V^{\otimes j}} \otimes f_{i} \otimes G|_{V^{\otimes k}}\right) $$
$$+\sum_{i=1}^{n-1}\sum_{j = 1}^{n-i-1} \sum_{k=1}^{j-1} \left(E|_{V^{\otimes j}} \otimes f_{i} \otimes G|_{V^{\otimes k}}\right) \otimes \left(G|_{V^{\otimes n-i-j-k}}\right).$$ 
The summation in the variables $i+j$ and $i+j+k$, respectively, yields 
$$C_{\overline{T}W} \circ \left(\sum_{i=1}^{n-1} E|_{V^{\otimes i}} \otimes f_{n-i}\right) $$
$$= \sum_{i=1}^{n-1}\left(E|_{V^{\otimes n-i}}\right) \otimes \left(f_{i}\right) + \sum_{\ell=2}^{n-1} \sum_{j=1}^{\ell-1}\left(E|_{V^{\otimes n-\ell}} \right) \otimes \left(E|_{V^{\otimes \ell-j}} \otimes f_{j}\right),$$
$$C_{\overline{T}W} \circ \left(\sum_{i=1}^{n-1} f_i \otimes G|_{V^{\otimes n-i}}\right) $$
$$=\sum_{i=1}^{n-1} \left(f_i\right) \otimes \left(G|_{V^{\otimes n-i}}\right) + \sum_{\ell=2}^{n-1} \sum_{j=1}^{\ell-1}\left(f_j \otimes G|_{V^{\otimes \ell - j}}\right) \otimes \left(G|_{V^{\otimes n-\ell}}\right),$$
$$C_{\overline{T}W} \circ \left(\sum_{i=1}^{n-1}\sum_{j = 1}^{n-i-1} E|_{V^{\otimes j}} \otimes f_{i} \otimes G|_{V^{\otimes n-i-j}}\right) $$
$$=\sum_{\ell=2}^{n-1} \sum_{j=1}^{\ell-1} \left(E|_{V^{\otimes n-\ell}}\right) \otimes \left(f_{j} \otimes G|_{V^{\otimes \ell-j}}\right) + \sum_{\ell=2}^{n-1} \sum_{j=1}^{\ell-1} \left(E|_{V^{\otimes \ell-j}} \otimes f_{j}\right) \otimes \left(G|_{V^{\otimes n-\ell}}\right) $$
$$+ \sum_{\ell=3}^{n-1}\sum_{j = 1}^{m-1} \sum_{i=1}^{j-1}  \left(E|_{V^{\otimes n-\ell}}\right) \otimes \left(E|_{V^{\otimes \ell-j}} \otimes f_{i} \otimes G|_{V^{\otimes j-i}}\right) $$
$$+\sum_{\ell=3}^{n-1}\sum_{j = 1}^{m-1} \sum_{i=1}^{j-1} \left(E|_{V^{\otimes j-i}} \otimes f_{i} \otimes G|_{V^{\otimes \ell-j}}\right) \otimes \left(G|_{V^{\otimes n-\ell}}\right).$$
Taking all terms of the form 
$\left(E|_{V^{\otimes n-i}}\right) \otimes \star\,$ and $\,\star \otimes \left(G|_{V^{\otimes n-i}}\right)$ 
results in 
$$C_{\overline{T}W} \circ F|_{V^{\otimes n}} = \sum_{i=1}^{n-1} \left[\left(E|_{V^{\otimes n-i}}\right) \otimes \left(F|_{V^{\otimes i}}\right) + \left(F|_{V^{\otimes n-i}}\right) \otimes \left(G|_{V^{\otimes i}}\right)\right]$$
and the implication is proved.
Notice that we also proved, on the assumption 
$F|_{V^{\otimes m}} = f_n + \sum_{B(m)}\sum_{1 \leq i \leq k} e_{r_1} \otimes \mbox{\dots} \otimes e_{r_{i-1}} \otimes f_{r_i} \otimes g_{r_{i+1}} \otimes \mbox{\dots} \otimes g_{r_k}$ for $n > m \geq 1$, that 
$$C_{\overline{T}W} \circ \Bigg(\sum_{i=1}^{n-1} E|_{V^{\otimes i}} \otimes f_{n-i} +  \sum_{i=1}^{n-1} f_i \otimes G|_{V^{\otimes n-i}} + \sum_{i=1}^{n-1}\sum_{j = 1}^{n-i-1}\!E|_{V^{\otimes j}} \otimes f_{i} \otimes G|_{V^{\otimes n-i-j}}\Bigg)$$ 
$$= \sum_{i=1}^{n-1} \left[\left(E|_{V^{\otimes n-i}}\right) \otimes \left(F|_{V^{\otimes i}}\right) + \left(F|_{V^{\otimes n-i}}\right) \otimes \left(G|_{V^{\otimes i}}\right)\right].$$

\noindent $(1) \Rightarrow (2)$: The proof is again by induction. For all $v \in V$ holds 
$C_{\overline{T}W} \circ F(v) = 0$, which gives $F(V) \subset W$ and so there exists a 
linear mapping $f_1: V \to W$ such that $F|_V = f_1$.
Assume now the claim of the implication is true for all natural numbers less than
$n$, i.e. 
$F|_{V^{\otimes m}} = f_m + \sum_{B(m)}\sum_{1 \leq i \leq k} e_{r_1} \otimes \mbox{\dots} \otimes e_{r_{i-1}} \otimes f_{r_i} \otimes g_{r_{i+1}} \otimes \mbox{\dots} \otimes g_{r_k},$ for $n > m \geq 1$. The proof 
of the previous implication claims for 
$F|_{V^{\otimes m}} = f_m + \sum_{B(m), r_i > 0} e_{r_1} \otimes \mbox{\dots} \otimes e_{r_{i-1}} \otimes f_{r_i} \otimes g_{r_{i+1}} \otimes \mbox{\dots} \otimes g_{r_k}$ with  $n > m \geq 1$, that 
$$C_{\overline{T}W} \circ F|_{V^{\otimes n}}=\sum_{i=1}^{n-1} \left[\left(E|_{V^{\otimes n-i}}\right) \otimes \left(F|_{V^{\otimes i}}\right) + \left(F|_{V^{\otimes n-i}}\right) \otimes \left(G|_{V^{\otimes i}}\right)\right]$$ 
$$=C_{\overline{T}W} \circ \Bigg(\sum_{i=1}^{n-1} E|_{V^{\otimes i}} \otimes f_{n-i} + \sum_{i=1}^{n-1} f_i \otimes G|_{V^{\otimes n-i}} + \sum_{i=1}^{n-1}\sum_{j = 1}^{n-i-1}\!E|_{V^{\otimes j}} \otimes f_{i} \otimes G|_{V^{\otimes n-i-j}}\Bigg).$$
Because $C_{\overline{T}W}$ is linear, 
$F|_{V^{\otimes n}}$ differs from 
$\sum_{i=1}^{n-1} E|_{V^{\otimes i}} \otimes f_{n-i} +  \sum_{i=1}^{n-1} f_i \otimes G|_{V^{\otimes n-i}} + \sum_{i=1}^{n-1}\sum_{j = 1}^{n-i-1}\!E|_{V^{\otimes j}} \otimes f_{i} \otimes G|_{V^{\otimes n-i-j}}$ by a linear map
$f_n: V^{\otimes n} \to W$. This means $F|_{V^{\otimes n}}$ is of the required form and the proof is complete.
\end{proof}

\begin{theorem}\label{k1v5}
A linear mapping $\delta: \overline{T}V \to \overline{T}V$ of degree $-1$ fulfills 
$C \circ \delta = (\delta \otimes {\mathbb 1}_V + \mathbb{1}_V \otimes \delta) \circ C$ if and only if 
there exist a set of maps $\{\delta_n: V^{\otimes n} \to V\}_{n \geq 1}$ of degree $-1$ such that
$\delta|_V = \delta_1$ and for 
$n\geq 2$ holds $\delta|_{V^{\otimes n}} = \delta_n + \sum_{i = 1}^{n} \mathbb{1}_V^{\otimes  i-1} \otimes \delta_1 \otimes \mathbb{1}_V^{\otimes  n-i} + \sum_{A(n)} \mathbb{1}_V^{\otimes  i-1} \otimes \delta_\ell \otimes \mathbb{1}_V^{\otimes  k - i},$ where $A(n)$ is given by \eqref{A}.
\end{theorem}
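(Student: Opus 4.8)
The plan is to prove both implications, using Theorem~\ref{k1v4} (with $W$ replaced by $V$ and $G = E = \mathbb{1}_{\overline{T}V}$, seen as built from the components $e_n = g_n = \delta_1$ for $n=1$ and $0$ otherwise — wait, this needs care) or, more simply, to imitate the proof of Theorem~\ref{k1v4} directly. Actually the cleanest route is to derive this as a special case of Theorem~\ref{k1v4}: observe that the identity map $\mathbb{1}_{\overline{T}V}$ has the form required of both $E$ and $G$ there, with component maps $\mathbb{1}_V$ in arity $1$ and $0$ in all higher arities. Then the cofree-coderivation condition $C\circ\delta = (\delta\otimes\mathbb{1} + \mathbb{1}\otimes\delta)\circ C$ is exactly condition~(1) of Theorem~\ref{k1v4} with $F = \delta$, $E = G = \mathbb{1}_{\overline{T}V}$; so the theorem gives component maps $\{\delta_n : V^{\otimes n}\to V\}_{n\ge 1}$ with
$$\delta|_{V^{\otimes n}} = \delta_n + \sum_{B(n)}\sum_{1\le i\le k} (\mathbb{1}_V)_{r_1}\otimes\dots\otimes(\mathbb{1}_V)_{r_{i-1}}\otimes\delta_{r_i}\otimes(\mathbb{1}_V)_{r_{i+1}}\otimes\dots\otimes(\mathbb{1}_V)_{r_k},$$
and the only surviving terms are those in which every $r_j$ with $j\neq i$ equals $1$, which is precisely $\delta_n + \sum_{i=1}^{n}\mathbb{1}_V^{\otimes i-1}\otimes\delta_1\otimes\mathbb{1}_V^{\otimes n-i} + \sum_{A(n)}\mathbb{1}_V^{\otimes i-1}\otimes\delta_\ell\otimes\mathbb{1}_V^{\otimes k-i}$, after reindexing the $B(n)$-sum by the position of $\delta$ and the length $\ell$ of the $\delta$-block (here $\ell = r_i$ and $k$ is the number of tensor factors, so $k + \ell = n + 1$ is not right — rather $k-1+\ell = n$, i.e. the new $k$ in $A(n)$ equals the old number of factors). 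I would spell out this reindexing carefully, since it is the one genuinely fiddly bookkeeping point.

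Alternatively, if a self-contained argument is preferred, I would prove it by induction on $n$ in the spirit of Lemma~\ref{k1l3}. For the ``if'' direction, assuming $\delta$ has the stated form, I would compute $C\circ\delta|_{V^{\otimes n}}$ by splitting $\delta|_{V^{\otimes n}}$ as $\delta_n + \sum_{i=1}^{n}\mathbb{1}_V^{\otimes i-1}\otimes(\delta|_{V^{\otimes n-i+1}}\text{ restricted appropriately})$ — more precisely, grouping according to whether the leftmost factor is touched by $\delta$ — and then apply $C$ and the definition of $C$ on a tensor product (the displayed formula in the Remark). Using that $C\circ\delta_n = 0$ because $\delta_n$ lands in $V$, the computation telescopes just as in Lemma~\ref{k1l3}, and one recognizes the right-hand side as $(\delta\otimes\mathbb{1}+\mathbb{1}\otimes\delta)\circ C$ evaluated on $V^{\otimes n}$. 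For the ``only if'' direction, I would argue by induction: $C\circ\delta(v) = 0$ for $v\in V$ forces $\delta(V)\subseteq V$, giving $\delta_1 := \delta|_V$; and assuming the formula holds below degree $n$, the computation just performed shows that $C\circ\delta|_{V^{\otimes n}}$ agrees with $C$ applied to $\sum_{i=1}^{n}\mathbb{1}_V^{\otimes i-1}\otimes\delta_1\otimes\mathbb{1}_V^{\otimes n-i} + \sum_{A(n)}\mathbb{1}_V^{\otimes i-1}\otimes\delta_\ell\otimes\mathbb{1}_V^{\otimes k-i}$, whence the difference is killed by $C$ and therefore, by the Remark, maps $V^{\otimes n}$ into $V$; calling this difference $\delta_n$ completes the induction.

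The main obstacle, such as it is, is purely combinatorial: matching the nested double sum $\sum_{B(n)}\sum_{1\le i\le k}$ coming from Theorem~\ref{k1v4} (or arising in the induction) against the two explicit sums $\sum_{i=1}^{n}$ and $\sum_{A(n)}$ in the statement, and checking the degenerate cases $n=1,2$ where $A(n)=\emptyset$. I expect no sign difficulties here because all the structure maps in sight are built from $\delta_\ell$ and identities with no extra Koszul signs inserted in this particular correspondence — the signs are absorbed into the relation between $\{\delta_n\}$ and the $A_\infty$ relations \eqref{ainftyrelations}, which is a separate statement (the codifferential condition $\delta\circ\delta=0$) not needed here. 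I would therefore present the proof via Theorem~\ref{k1v4}, reducing the whole argument to the observation that $\mathbb{1}_{\overline{T}V}$ has the requisite ``triangular'' form and to the reindexing of the $B(n)$-sum, and remark that the degenerate low-degree cases are immediate.
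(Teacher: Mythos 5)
Your primary route is exactly the paper's proof: Theorem~\ref{k1v4} is applied with $E=G=\mathbb{1}_{\overline{T}V}$, i.e.\ $e_1=g_1=\mathbb{1}_V$ and $e_n=g_n=0$ for $n\geq 2$, and the surviving terms of the $B(n)$-sum reindex to the two sums in the statement (your bookkeeping is right: the number of tensor factors becomes the $k$ of $A(n)$ and $\ell=r_i$, with $k+\ell=n+1$). The proposal is correct and takes essentially the same approach, only spelling out the reindexing that the paper leaves implicit.
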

\begin{proof}
In Theorem ~\ref{k1v4} we take $E=G=\mathbb{1}_V$, where 
$e_1 = g_1 = \mathbb{1}_{V}$ and $e_n = g_n = 0$ for $n \geq 2$.
\end{proof}
\begin{lemma}\label{k1t6}
Let $\delta: \overline{T}V \to \overline{T}V$ be a linear map of degree $-1$ such that 
$\delta|_V = \delta_1$ and for $n\geq 2$ holds 
$\delta|_{V^{\otimes n}} = \delta_n + \sum_{i = 1}^{n} \mathbb{1}_V^{\otimes  i-1} \otimes \delta_1 \otimes \mathbb{1}_V^{\otimes  n-i} + \sum_{A(n)} \mathbb{1}_V^{\otimes  i-1} \otimes \delta_\ell \otimes \mathbb{1}_V^{\otimes  k - i}$.
Then the following conditions are equivalent:
\begin{enumerate} 
\item $\delta \circ \delta = 0$,
\item $\delta_1 \circ \delta_1 = 0$ and for all $n \geq 2$ we have
\begin{equation}
\label{T6}
\begin{split}
\delta_1(\delta_n) + \sum_{i = 1}^{n} \delta_n \left(\mathbb{1}_V^{\otimes  i-1} \otimes \delta_1 \otimes \mathbb{1}_V^{\otimes  n-i}\right) \\
+ \sum_{A(n)} \delta_k\left(\mathbb{1}_V^{\otimes  i-1} \otimes \delta_\ell \otimes \mathbb{1}_V^{\otimes  k - i}\right) = 0,
\end{split}	
\end{equation}
\end{enumerate}
where $A(n)$ is given by \eqref{A}.
\end{lemma}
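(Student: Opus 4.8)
The plan is to expand $\delta \circ \delta$ restricted to $V^{\otimes n}$ and read off the homogeneous-degree-$1$ component, which will yield exactly the relation \eqref{T6}. First I would observe that $\delta \circ \delta$ is itself a coderivation-squared, but rather than invoking abstract nonsense it is cleaner to compute directly: writing $\delta|_{V^{\otimes m}} = \delta_m + \sum_{i} \mathbb{1}^{\otimes i-1} \otimes \delta_1 \otimes \mathbb{1}^{\otimes m-i} + \sum_{A(m)} \mathbb{1}^{\otimes i-1} \otimes \delta_\ell \otimes \mathbb{1}^{\otimes k-i}$, one composes $\delta$ with this expression. Every term in $\delta \circ \delta|_{V^{\otimes n}}$ is of the shape $\delta_a$ applied after a tensor word containing one inner $\delta_b$, i.e. it lands in $V^{\otimes 1} \oplus \bigoplus_{m \geq 2} V^{\otimes m}$; the key point is that $\delta \circ \delta = 0$ on all of $\overline{T}V$ is equivalent, by linearity and the direct-sum decomposition, to the vanishing of each homogeneous component separately, and because $\delta$ lowers homogeneity by exactly $1$, the degree-$1$ component of $(\delta\circ\delta)|_{V^{\otimes n}}$ already captures all the information for that $n$.

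Next I would make precise the bookkeeping. When we apply the outer $\delta$ to $\delta|_{V^{\otimes n}}$, the outer $\delta$ itself splits into its $\delta_1$-part, its single-inner-$\delta_1$ part, and its $\delta_\ell$-insertion part (for various $\ell \geq 2$). Multiplying out the two factors, the terms naturally organize into: (i) $\delta_1 \circ \delta_1$ on $V$; (ii) terms with an outer $\delta_1$ composed with the top-level $\delta_n$ — giving $\delta_1(\delta_n)$; (iii) terms where one $\delta_b$ sits inside a tensor word and the outer map contracts using the top component $\delta_a$ with $a+b = n+1$ — giving $\sum_{A(n)} \delta_k(\mathbb{1}^{\otimes i-1} \otimes \delta_\ell \otimes \mathbb{1}^{\otimes k-i})$; and (iv) all remaining terms, which retain homogeneity $\geq 2$ and hence do not contribute to the degree-$1$ component. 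The claim $(1)\Leftrightarrow(2)$ then reduces to checking that grouping (iv) contributes nothing to the relevant component, and that groupings (i)–(iii) reproduce \eqref{T6} with the correct index set $A(n)$, together with the separate statement $\delta_1 \circ \delta_1 = 0$ coming from homogeneity $1$ alone (i.e. $n=1$).

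The main obstacle will be the careful sign/index accounting that disentangles which pairs $(a,b)$ with inner and outer pieces actually collapse onto the top tensor slot versus which produce genuinely higher-homogeneity output. Concretely, one must verify that when the outer $\delta$ acts via $\delta_k$ on a word $\mathbb{1}^{\otimes i-1} \otimes \delta_\ell \otimes \mathbb{1}^{\otimes k-i}$ coming from the inner $\delta$, the surviving contributions to $V^{\otimes 1}$ are precisely indexed by $A(n) = \{k+\ell = n+1,\ k,\ell \geq 2,\ 1 \leq i \leq k\}$, and that the $\delta_1$-insertions of the outer and inner maps either cancel the $\partial$-type cross terms or get absorbed into cases (ii) and (iii) by the convention $\delta_1$ being treated uniformly. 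For the converse direction $(2) \Rightarrow (1)$ I would argue by induction on homogeneity: assuming $(\delta\circ\delta)|_{V^{\otimes m}} = 0$ for $m < n$ and using that the higher-homogeneity components of $(\delta \circ \delta)|_{V^{\otimes n}}$ are built from $\delta\circ\delta$ applied to proper sub-tensors (hence vanish by the inductive hypothesis together with Lemma~\ref{k1l3}-style coalgebra compatibility), the only remaining component is the degree-$1$ one, which is $0$ by \eqref{T6}. Since the proof of Theorem~\ref{k1v5} already exhibits $\delta\circ\delta$ as a coderivation-like object with components determined by their corestriction to $V$, this inductive step is essentially formal once the expansion above is in hand.
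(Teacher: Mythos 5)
Your overall strategy --- expand $\delta\circ\delta|_{V^{\otimes n}}$, identify its corestriction to $V^{\otimes 1}$ with the left-hand side of \eqref{T6}, and argue that the higher-homogeneity components carry no independent information --- is the same as the paper's, and your observation that $(1)\Rightarrow(2)$ needs only the projection onto $V^{\otimes 1}$ (no induction) is correct and slightly cleaner than the paper's treatment of that direction. But the justification you give for the central claim is false: $\delta$ does not lower homogeneity by exactly $1$. The component $\delta_n$ sends $V^{\otimes n}$ to $V$, the padded $\delta_1$-insertions preserve homogeneity, and the padded $\delta_\ell$-insertions send $V^{\otimes n}$ to $V^{\otimes(n+1-\ell)}$; you are conflating the internal $\mathbb{Z}$-degree $|\delta|=-1$ with tensor length. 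The correct reason the $V^{\otimes 1}$-component determines everything is that $\delta^2$ is again a coderivation, since by the Koszul convention $(\delta\otimes\mathbb{1}+\mathbb{1}\otimes\delta)^2=\delta^2\otimes\mathbb{1}+\mathbb{1}\otimes\delta^2$, and coderivations of $\overline{T}V$ are determined by their corestriction to $V$ (Theorem~\ref{k1v4} with $E=G=\mathbb{1}$). You gesture at this at the end, but having set it aside as ``abstract nonsense'' you lean on the false homogeneity claim instead.

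Relatedly, in the direct computation the one step that genuinely requires an argument for $(2)\Rightarrow(1)$ is the cancellation of the cross terms $\mathbb{1}_V^{\otimes a}\otimes\delta_b\otimes\mathbb{1}_V^{\otimes c}\otimes\delta_d\otimes\mathbb{1}_V^{\otimes e}$ in which the two insertions occupy disjoint slots: each arises once with $\delta_b$ contributed by the inner factor and $\delta_d$ by the outer one, and once the other way around, and the two occurrences cancel precisely because $(-1)^{|\delta_b||\delta_d|}=-1$. Your plan mentions only that ``the $\delta_1$-insertions \dots\ cancel the $\partial$-type cross terms,'' which misattributes the cancellation to $\delta_1$ alone (it is needed for all pairs $b,d\geq 1$) and leaves it as an unverified obstacle rather than a resolved step. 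The remaining higher-homogeneity terms, of the form $\mathbb{1}_V^{\otimes a}\otimes\delta_{b+d+1}\left(\mathbb{1}_V^{\otimes b}\otimes\delta_c\otimes\mathbb{1}_V^{\otimes d}\right)\otimes\mathbb{1}_V^{\otimes e}$, do sum to identity-padded copies of the left-hand side of \eqref{T6} for smaller $n'$ and vanish by hypothesis, as you say; but without the Koszul sign cancellation the implication $(2)\Rightarrow(1)$ does not go through, so this point must be made explicit.
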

\begin{proof}
$(1) \Rightarrow (2)$: The proof goes by induction. By assumption we have for $v \in V$ 
$\delta(\delta_1(v)) = 0,$ so $\delta_1: V \to V$ implies $\delta_1(\delta_1(v)) = 0.$
Now assume \eqref{T6} is true for all natural numbers less than $n$. Then
$$\delta^2|_{V^{\otimes n}} = \delta_1(\delta_n) + \sum_{i = 1}^{n} \delta|_{V^{\otimes n}} \left(\mathbb{1}_V^{\otimes  i-1} \otimes \delta_1 \otimes \mathbb{1}_V^{\otimes  n-i}\right) $$$$+ \sum_{A(n)} \delta|_{V^{\otimes k}}\left(\mathbb{1}_V^{\otimes  i-1} \otimes \delta_\ell \otimes \mathbb{1}_V^{\otimes  k - i}\right).$$
Schematically, this means
$$\delta^2|_{V^{\otimes n}} = \delta_1(\delta_n) + \sum_{i = 1}^{n} \delta_n \left(\mathbb{1}_V^{\otimes  i-1} \otimes \delta_1 \otimes \mathbb{1}_V^{\otimes  n-i}\right) $$$$+ \sum_{A(n)} \delta_k\left(\mathbb{1}_V^{\otimes  i-1} \otimes \delta_\ell \otimes \mathbb{1}_V^{\otimes  k - i}\right) + \sum \mathbb{1}_V^{\otimes  a} \otimes \delta_{b+d+1}\left(\mathbb{1}_V^{\otimes  b} \otimes \delta_c \otimes \mathbb{1}_V^{\otimes  d}\right) \otimes \mathbb{1}_V^{\otimes  e} $$
$$+ \sum \mathbb{1}_V^{\otimes  a} \otimes \delta_b \otimes \mathbb{1}_V^{\otimes  c} \otimes \delta_d \otimes \mathbb{1}_V^{\otimes  e} -  \sum \mathbb{1}_V^{\otimes  a} \otimes \delta_b \otimes \mathbb{1}_V^{\otimes  c} \otimes \delta_d \otimes \mathbb{1}_V^{\otimes  e},$$
where the last row is a consequence of the Koszul sign convention:
$$\left(\mathbb{1}_V^{\otimes  a} \otimes \delta_b \otimes \mathbb{1}_V^{\otimes  c + 1 + e}\right)\left(\mathbb{1}_V^{\otimes  a + b + c} \otimes \delta_{d} \otimes \mathbb{1}_V^{\otimes  e}\right) = \mathbb{1}_V^{\otimes  a} \otimes \delta_b \otimes \mathbb{1}_V^{\otimes  c} \otimes \delta_d \otimes \mathbb{1}_V^{\otimes  e},$$
$$\left(\mathbb{1}_V^{\otimes  a + 1 + c} \otimes \delta_{d} \otimes \mathbb{1}_V^{\otimes  e}\right)\left(\mathbb{1}_V^{\otimes  a} \otimes \delta_{b} \otimes \mathbb{1}_V^{\otimes  c+ d + e}\right) = (-1)^{\left|\delta_b \right|\left|\delta_d\right|}\mathbb{1}_V^{\otimes  a} \otimes \delta_b \otimes \mathbb{1}_V^{\otimes  c} \otimes \delta_d \otimes \mathbb{1}_V^{\otimes  e}$$
with $\left|\delta_n\right| = -1$ for all $n \in \mathbb{N}$.
The term $\sum \mathbb{1}_V^{\otimes  a} \otimes \delta_{b+d+1}\left(\mathbb{1}_V^{\otimes  b} \otimes \delta_c \otimes \mathbb{1}_V^{\otimes  d}\right) \otimes \mathbb{1}_V^{\otimes  e}$ can be written as 
$$\mathbb{1}_V^{\otimes  a} \otimes \delta_{b+d+1}\left(\mathbb{1}_V^{\otimes  b} \otimes \delta_c \otimes \mathbb{1}_V^{\otimes  d}\right) \otimes \mathbb{1}_V^{\otimes  e} = \left(\mathbb{1}_V^{\otimes  a} \otimes \delta_{b+d+1} \otimes \mathbb{1}_V^{\otimes  e} \right)\left(\mathbb{1}_V^{\otimes  a+b} \otimes \delta_c \otimes \mathbb{1}_V^{\otimes  d+e}\right).$$
We have $a+b+c+d+e = n$, choose arbitrary $a,e \geq 0$, $1 \leq a + e < n$ and sum over all $b,c,d$ such that 
$0 \leq b,d \leq n-a-e$ and $1 \leq c \leq n-a-e$ such that $b+c+d = n -e - a$:
$$\sum_{b,c,d} \delta_{b+d+1}\left(\mathbb{1}_V^{\otimes  b} \otimes \delta_c \otimes \mathbb{1}_V^{\otimes  d}\right)
=\delta_1(\delta_{n'}) + \sum_{i = 1}^{n'} \delta_{n'} \left(\mathbb{1}_V^{\otimes  i-1} \otimes \delta_1 \otimes \mathbb{1}_V^{\otimes  n'-i}\right) $$$$+ \sum_{A(n')} \delta_k\left(\mathbb{1}_V^{\otimes  i-1} \otimes \delta_\ell \otimes \mathbb{1}_V^{\otimes  k - i}\right),$$
where $n'= n - a - e$. By induction hypothesis, the last display is equal to $0$, and we have
 $$\sum \mathbb{1}_V^{\otimes  a} \otimes \delta_{b+d+1}\left(\mathbb{1}_V^{\otimes  b} \otimes \delta_c \otimes \mathbb{1}_V^{\otimes  d}\right) \otimes \mathbb{1}_V^{\otimes  e} $$
 $$= \sum_{a,e} \mathbb{1}_V^{\otimes  a} \otimes\left(\sum_{b,c,d} \delta_{b+d+1}\left(\mathbb{1}_V^{\otimes  b} \otimes \delta_c \otimes \mathbb{1}_V^{\otimes  d}\right)\right) \otimes \mathbb{1}_V^{\otimes  e} = \sum_{a,e} \mathbb{1}_V^{\otimes  a} \otimes\, 0\, \otimes \mathbb{1}_V^{\otimes  e} = 0.$$
 Consequently, \eqref{T6} is true for $n$ and
 $$\delta_1(\delta_n) + \sum_{i = 1}^{n} \delta_n \left(\mathbb{1}_V^{\otimes  i-1} \otimes \delta_1 \otimes \mathbb{1}_V^{\otimes  n-i}\right) $$$$+ \sum_{A(n)} \delta_k\left(\mathbb{1}_V^{\otimes  i-1} \otimes \delta_\ell \otimes \mathbb{1}_V^{\otimes  k - i}\right) = \delta^2|_{V^{\otimes n}} = 0.$$
 
\noindent $(2) \Rightarrow (1)$: The second implication can be easily deduced from the first one.
\end{proof}

\subsection{Morphisms and homotopies}

\begin{definition}\label{k1d7}
Let $\delta^V$ be a codifferential on $(\overline{T}V, C)$ and $\delta^W$ be a codifferential on
$(\overline{T}W, C)$. A linear mapping $F: \left(\overline{T}V, C, \delta^V\right) \to \left(\overline{T}W, C, \delta^W\right)$ of degree $0$ is called morphism provided 
$C_{\overline{T}W} \circ F = \left(F \otimes F\right) \circ C_{\overline{T}V}$ and $\delta^W \circ F = F \circ \delta^V.$
\end{definition}

\begin{lemma}\label{k1t8}
Let $F: \left(\overline{T}V, \delta^V\right) \to \left(\overline{T}W, \delta^W\right)$ be a linear 
map of degree $0$. Then the following claims are equivalent:
\begin{enumerate} 
\item $C_{\overline{T}W} \circ F = \left(F \otimes F\right) \circ C_{\overline{T}V}$,
\item there is a set of linear mappings $\{f_n: V^{\otimes n} \to W\}_{n \geq 1}$ 
of degree $0$ such that $F|_{V^{\otimes n}} = f_n + \sum_{B(n)} f_{r_1} \otimes \mbox{\dots} \otimes f_{r_k}$,
with $B(n)$ given in \eqref{B}.
\end{enumerate}
\end{lemma}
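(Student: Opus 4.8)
The plan is to derive this as a special case of Theorem \ref{k1v4}. Indeed, the condition $C_{\overline{T}W} \circ F = (F \otimes F) \circ C_{\overline{T}V}$ is exactly condition (1) of Theorem \ref{k1v4} in the degenerate situation where one takes $E = G = F$. The difficulty is that Theorem \ref{k1v4} presupposes that $E$ and $G$ are already of the ``cofree'' form $E|_{V^{\otimes n}} = e_n + \sum_{B(n)} e_{r_1} \otimes \dots \otimes e_{r_k}$, whereas here the map playing both roles is $F$ itself, whose form is only what we are trying to establish. So a direct appeal is not immediate, and the argument must be run as an induction on $n$ in parallel with the structural description of $F$.

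Concretely, I would argue as follows. First, for $v \in V$ the last remark of Section \ref{sec:2} (or the computation $C(v) = 0$) gives $C_{\overline{T}W} \circ F(v) = (F \otimes F)\circ C_{\overline{T}V}(v) = 0$, hence $F(V) \subseteq W$ and there is $f_1 : V \to W$ with $F|_V = f_1$. Now assume inductively that $F|_{V^{\otimes m}} = f_m + \sum_{B(m)} f_{r_1} \otimes \dots \otimes f_{r_k}$ for all $m < n$, with $f_m : V^{\otimes m} \to W$ of degree $0$. Using $C_{\overline{T}V}|_{V^{\otimes n}} = \sum_{i=1}^{n-1} (\mathbb{1}_V^{\otimes i}) \otimes (\mathbb{1}_V^{\otimes n-i})$, condition (1) gives
\begin{align*}
C_{\overline{T}W} \circ F|_{V^{\otimes n}} = \sum_{i=1}^{n-1} \left(F|_{V^{\otimes i}}\right) \otimes \left(F|_{V^{\otimes n-i}}\right),
\end{align*}
and on the right-hand side every tensor factor $F|_{V^{\otimes i}}$, $F|_{V^{\otimes n-i}}$ has index strictly smaller than $n$, so the induction hypothesis applies to each of them. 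Substituting $F|_{V^{\otimes i}} = f_i + \sum_{B(i)} f_{r_1} \otimes \dots$ and expanding, the right-hand side becomes precisely $C_{\overline{T}W}$ applied to $\sum_{B(n)} f_{r_1} \otimes \dots \otimes f_{r_k}$ (with at least two blocks); this is exactly the content of Lemma \ref{k1l3} applied to the map $E' : \overline{T}V \to \overline{T}W$ defined on $V^{\otimes m}$, $m \le n$, by $E'|_{V^{\otimes m}} = f_m + \sum_{B(m)} f_{r_1} \otimes \dots \otimes f_{r_k}$ for $m < n$ and $E'|_{V^{\otimes n}} = \sum_{B(n)} f_{r_1} \otimes \dots \otimes f_{r_k}$, noting that the formula in Lemma \ref{k1l3} only references restrictions to $V^{\otimes i}$ with $i \le n$, and only the first summand $e_n$ of the top piece drops out under $C$.

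Hence $C_{\overline{T}W}\circ\left(F|_{V^{\otimes n}} - \sum_{B(n)} f_{r_1}\otimes\dots\otimes f_{r_k}\right) = 0$, and by the remark in Section \ref{sec:2} (a map $\varphi : V^{\otimes n} \to \overline{T}W$ with $C_{\overline{T}W}\circ\varphi = 0$ has image in $W$) the difference is a linear map $f_n : V^{\otimes n}\to W$, necessarily of degree $0$ since $F$ and all the $f_{r_j}$ are. This closes the induction and proves $(1)\Rightarrow(2)$. The converse $(2)\Rightarrow(1)$ is immediate from Lemma \ref{k1l3}: if $F$ has the stated form then $C_{\overline{T}W}\circ F|_{V^{\otimes n}} = \sum_{i=1}^{n-1}(F|_{V^{\otimes i}})\otimes(F|_{V^{\otimes n-i}})$, which is exactly $(F\otimes F)\circ C_{\overline{T}V}|_{V^{\otimes n}}$. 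The main obstacle is purely bookkeeping: making the reindexing of the multiple sums in the expansion of $\sum_i (F|_{V^{\otimes i}})\otimes(F|_{V^{\otimes n-i}})$ match the $B(n)$-indexed sum cleanly, which is precisely what Lemma \ref{k1l3} was set up to handle, so no genuinely new estimate or construction is needed.
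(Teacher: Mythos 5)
Your proof is correct and follows essentially the same route as the paper: the converse direction is read off from Lemma \ref{k1l3}, and the forward direction is the same induction on $n$, using $C(v)=0$ for the base case, Lemma \ref{k1l3} to show that the candidate ``decomposable part'' of $F|_{V^{\otimes n}}$ has the same image under $C_{\overline{T}W}$ as $F|_{V^{\otimes n}}$ itself, and the fact that $\ker C_{\overline{T}W} = W$ to extract $f_n$. The only difference is cosmetic — you write the candidate as the $B(n)$-indexed sum with $k\geq 2$ while the paper uses the equivalent recursive form $\sum_{i=1}^{n-1} f_i \otimes F|_{V^{\otimes n-i}}$ — and your explicit auxiliary map $E'$ just makes the (harmless) applicability of Lemma \ref{k1l3} at the top level a little more careful.
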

\begin{proof} $(2) \Rightarrow (1)$: A consequence of Lemma~\ref{k1l3}.\\

\noindent $(1) \Rightarrow (2)$ The proof goes by induction. For $v \in V$ we have $C(v) = 0,$ 
which implies $0 = (F \otimes F) \circ C_{\overline{T}V} = C_{\overline{T}W} \circ F$ and so $F(v) \in W.$\\
Assuming the claim is true for all natural numbers less than $n$, 
$$(F \otimes F) \circ C|_{V^{\otimes n}} = (F \otimes F) \circ \sum_{i = 1}^{n-1} \left(\mathbb{1}^{\otimes i}\right) \otimes \left(\mathbb{1}^{\otimes n-i}\right) $$$$= \sum_{i = 1}^{n-1} \left(F|_{V^{\otimes i}}\right) \otimes \left(F|_{V^{\otimes n-i}}\right)$$
and by induction hypothesis $F|_{V^{\otimes m}} = f_m + \sum_{B(m)} f_{r_1} \otimes \mbox{\dots} \otimes f_{r_k}$  
for all $n > m \geq 1$. Lemma~\ref{k1l3} gives
$$\sum_{i = 1}^{n-1} \left(F|_{V^{\otimes i}}\right) \otimes \left(F|_{V^{\otimes n-i}}\right) = C_{\overline{T}W} \circ \left(\sum_{i = 1}^{n-1} f_i \otimes F|_{V^{\otimes n-i}}\right)$$
and because $C_{\overline{T}W}$ is linear, $F|_{V^{\otimes n}}$ differs from 
$\sum_{i = 1}^{n-1} f_i \otimes F|_{V^{\otimes n-i}}$ by a linear map $f_n: V^{\otimes n} \to W$. 
Then $F|_{V^{\otimes n}}$ is of the required form and the proof is complete.
\end{proof}

\begin{lemma}\label{k1t9}
Let $F: \left(\overline{T}V, \delta^V\right) \to \left(\overline{T}W, \delta^W\right)$ be a linear map 
of degree $0$ such that $F|_{V^{\otimes n}} = f_n + \sum_{B(n)} f_{r_1} \otimes \mbox{\dots} \otimes f_{r_k},$ 
with all $\{f_n: V^{\otimes n} \to W\}_{n \geq 1}$ linear of degree $0$. Then the following are equivalent:
\begin{enumerate} 
\item $\delta^W \circ F = F \circ \delta^V$, 
\item for all $n \geq 1$ holds 
\begin{equation}
\label{T9}
\begin{split}
& \delta^W_1(f_n) + \sum_{B(n)} \delta_{k}^W\left(f_{r_{1}} \otimes \mbox{\dots} \otimes f_{r_{k}} \right)= f_1\left(\delta^V_{n}\right) \\ 
& +\sum_{i = 1}^{n} f_{n}\left(\mathbb{1}_V^{\otimes  i-1} \otimes \delta^V_1 \otimes \mathbb{1}_V^{\otimes  n-i}\right) + \sum_{A(n)} f_k\left(\mathbb{1}_V^{\otimes  i-1} \otimes \delta^V_\ell \otimes \mathbb{1}_V^{\otimes  k - i}\right).	
\end{split}	
\end{equation}
\end{enumerate}
\end{lemma}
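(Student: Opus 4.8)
The plan is to mimic the structure of the proof of Lemma~\ref{k1t6}: translate the single coalgebra identity $\delta^W \circ F = F \circ \delta^V$ into its components on each $V^{\otimes n}$, using the explicit tensor expansions of $F$, $\delta^V$ and $\delta^W$ established in Lemmas~\ref{k1l3}, \ref{k1t8} and Theorem~\ref{k1v5}. Concretely, both $\delta^W \circ F|_{V^{\otimes n}}$ and $F \circ \delta^V|_{V^{\otimes n}}$ are sums of terms of the shape $\mathbb{1}_W^{\otimes a} \otimes (\text{something landing in }W) \otimes \mathbb{1}_W^{\otimes e}$, i.e. each lies in a specific homogeneous component of $\overline{T}W$, and the equality of the two maps is equivalent to the equality of all such components. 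The ``corner'' component — the part that maps into $W \subset \overline{T}W$ itself, obtained by projecting onto $W$ — will give exactly the identity \eqref{T9}; the remaining components will turn out to be automatically satisfied once \eqref{T9} holds for smaller arities, exactly as in Lemma~\ref{k1t6}.

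For the implication $(1) \Rightarrow (2)$ I would argue by induction on $n$. First expand $\delta^W \circ F|_{V^{\otimes n}}$: since $\delta^W|_{V^{\otimes m}} = \delta^W_m + \sum_i \mathbb{1}_W^{\otimes i-1}\otimes \delta^W_1 \otimes \mathbb{1}_W^{\otimes m-i} + \sum_{A(m)} \mathbb{1}_W^{\otimes i-1}\otimes \delta^W_\ell \otimes \mathbb{1}_W^{\otimes k-i}$ and $F|_{V^{\otimes n}} = f_n + \sum_{B(n)} f_{r_1}\otimes\dots\otimes f_{r_k}$, composing produces the term $\delta^W_1(f_n) + \sum_{B(n)} \delta^W_k(f_{r_1}\otimes\dots\otimes f_{r_k})$ landing in $W$, plus a collection of terms of the form $\mathbb{1}_W^{\otimes a}\otimes\bigl(\delta^W_b(f_{s_1}\otimes\dots\otimes f_{s_b})\bigr)\otimes \mathbb{1}_W^{\otimes e}$ (contracting one block of $F$-factors by a single $\delta^W_b$) together with terms $f_{s_1}\otimes\dots\otimes\delta^W_1 f_{s_j}\otimes\dots$ and $f_{s_1}\otimes\dots\otimes\delta^W_\ell(f_{s_j}\otimes\dots)\otimes\dots$ that affect only a proper sub-block of the outputs. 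Symmetrically, expand $F\circ\delta^V|_{V^{\otimes n}}$: the corner piece is $f_1(\delta^V_n) + \sum_{i} f_n(\mathbb{1}_V^{\otimes i-1}\otimes\delta^V_1\otimes\mathbb{1}_V^{\otimes n-i}) + \sum_{A(n)} f_k(\mathbb{1}_V^{\otimes i-1}\otimes \delta^V_\ell \otimes \mathbb{1}_V^{\otimes k-i})$, plus terms supported in higher homogeneity. Matching the corner components of the two expansions gives \eqref{T9} for $n$, provided all the non-corner components agree; and the non-corner components are, block by block, precisely instances of \eqref{T9} at arities $< n$ (an $\mathbb{1}^{\otimes a}\otimes(-)\otimes\mathbb{1}^{\otimes e}$ wrapped around the arity-$(n-a-e)$ relation), which hold by the induction hypothesis. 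The base case $n=1$ is the statement $\delta^W_1 f_1 = f_1 \delta^V_1$, which is just the restriction of $\delta^W\circ F = F\circ\delta^V$ to $V$.

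For $(2)\Rightarrow(1)$ one runs the same computation in reverse: assuming \eqref{T9} for every $n$, the two expansions of $\delta^W\circ F|_{V^{\otimes n}}$ and $F\circ\delta^V|_{V^{\otimes n}}$ agree corner-term by corner-term, and the remaining terms agree because they are wrapped copies of lower-arity instances of \eqref{T9}; summing over all homogeneity components gives $\delta^W\circ F = F\circ\delta^V$ on each $V^{\otimes n}$, hence everywhere by linearity. Here it is cleanest to phrase the bookkeeping once, as in Lemma~\ref{k1t6}, and then observe both implications fall out of it.

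The main obstacle I anticipate is purely organizational rather than conceptual: carefully keeping track of which homogeneity component of $\overline{T}W$ each composite term lands in, and verifying that the ``wrapped'' lower-arity terms on the two sides match up with the correct multiplicities and (trivial, degree $-1$ but composed with degree-$0$ maps) signs. Because $F$ has degree $0$ and the Koszul signs from $|f_{r_j}|=0$ vanish, there are essentially no sign subtleties — this is exactly the ``substantial simplification due to the reduction of sign factors'' alluded to in the introduction — so the only real care needed is in the combinatorial reindexing, handled just as in the proof of Lemma~\ref{k1t6} by fixing the outer $\mathbb{1}^{\otimes a}\otimes-\otimes\mathbb{1}^{\otimes e}$ wrapper and summing the inner indices to recognize a copy of \eqref{T9} at arity $n-a-e$.

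\begin{proof}
We only indicate the argument, since it parallels the proof of Lemma~\ref{k1t6}.

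Using Theorem~\ref{k1v5} for $\delta^V$ and $\delta^W$, and the hypothesis on $F$, both $\delta^W\circ F|_{V^{\otimes n}}$ and $F\circ\delta^V|_{V^{\otimes n}}$ decompose as sums of terms of the form $\mathbb{1}_W^{\otimes a}\otimes(\,\cdot\,)\otimes\mathbb{1}_W^{\otimes e}$ with the middle factor mapping into $\overline{T}W$, so the identity $\delta^W\circ F=F\circ\delta^V$ is equivalent to the equality of all corresponding homogeneous components on every $V^{\otimes n}$.

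$(1)\Rightarrow(2)$: We argue by induction on $n$. For $n=1$ the restriction of $\delta^W\circ F=F\circ\delta^V$ to $V$ reads $\delta^W_1 f_1=f_1\delta^V_1$, which is \eqref{T9} for $n=1$. Assume \eqref{T9} holds for all natural numbers less than $n$. Expanding $\delta^W\circ F|_{V^{\otimes n}}$ via
$\delta^W|_{V^{\otimes m}}=\delta^W_m+\sum_{i=1}^{m}\mathbb{1}_W^{\otimes i-1}\otimes\delta^W_1\otimes\mathbb{1}_W^{\otimes m-i}+\sum_{A(m)}\mathbb{1}_W^{\otimes i-1}\otimes\delta^W_\ell\otimes\mathbb{1}_W^{\otimes k-i}$
and $F|_{V^{\otimes n}}=f_n+\sum_{B(n)}f_{r_1}\otimes\dots\otimes f_{r_k}$, the component mapping into $W$ equals
$\delta^W_1(f_n)+\sum_{B(n)}\delta^W_k(f_{r_1}\otimes\dots\otimes f_{r_k})$,
while the remaining components are of the form $\mathbb{1}_W^{\otimes a}\otimes(\,\cdot\,)\otimes\mathbb{1}_W^{\otimes e}$ with $a+e\geq 1$, the middle factor being the left-hand side of \eqref{T9} at arity $n-a-e$. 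Likewise, expanding $F\circ\delta^V|_{V^{\otimes n}}$, the component mapping into $W$ equals
$f_1(\delta^V_n)+\sum_{i=1}^{n}f_n(\mathbb{1}_V^{\otimes i-1}\otimes\delta^V_1\otimes\mathbb{1}_V^{\otimes n-i})+\sum_{A(n)}f_k(\mathbb{1}_V^{\otimes i-1}\otimes\delta^V_\ell\otimes\mathbb{1}_V^{\otimes k-i})$,
and the remaining components are $\mathbb{1}_W^{\otimes a}\otimes(\,\cdot\,)\otimes\mathbb{1}_W^{\otimes e}$ with middle factor the right-hand side of \eqref{T9} at arity $n-a-e$. (All Koszul signs are trivial because $|f_m|=0$.) By the induction hypothesis the non-$W$ components on the two sides coincide, so the identity $\delta^W\circ F=F\circ\delta^V$ on $V^{\otimes n}$ reduces to the equality of the two $W$-components, which is precisely \eqref{T9} for $n$.

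$(2)\Rightarrow(1)$: Conversely, if \eqref{T9} holds for every $n$, then the $W$-components of $\delta^W\circ F|_{V^{\otimes n}}$ and $F\circ\delta^V|_{V^{\otimes n}}$ coincide by \eqref{T9} at arity $n$, and all other homogeneous components coincide because, by the computation above, they are copies $\mathbb{1}_W^{\otimes a}\otimes(\,\cdot\,)\otimes\mathbb{1}_W^{\otimes e}$ of the two sides of \eqref{T9} at arities $n-a-e<n$. Hence $\delta^W\circ F|_{V^{\otimes n}}=F\circ\delta^V|_{V^{\otimes n}}$ for all $n\geq 1$, and by linearity $\delta^W\circ F=F\circ\delta^V$.
\end{proof}
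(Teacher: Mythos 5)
Your proposal is correct and follows essentially the same route as the paper: expand both $\delta^W\circ F|_{V^{\otimes n}}$ and $F\circ\delta^V|_{V^{\otimes n}}$, match homogeneity components, read off \eqref{T9} from the component landing in $W$, and dispose of the higher components as wrapped lower-arity instances of \eqref{T9} via induction. The only (harmless) imprecision is writing the wrappers as $\mathbb{1}_W^{\otimes a}\otimes(\,\cdot\,)\otimes\mathbb{1}_W^{\otimes e}$ — they are in fact tensor factors $f_{r_1}\otimes\dots\otimes f_{r_m}\otimes(\,\cdot\,)\otimes f_{r_{m+1}}\otimes\dots\otimes f_{r_j}$, so the inner arity is $n-(r_1+\dots+r_j)$ rather than $n-a-e$, exactly as in the paper's bookkeeping.
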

\begin{proof}
$(1) \Rightarrow (2)$: The proof goes by induction. The restriction to $V$,
$\delta^W \circ F|_{V} = F \circ \delta^V|_{V}$, corresponds to $\delta_1^W \circ f_1 = f_1 \circ \delta_1^V.$
We now assume \eqref{T9} applies to all natural numbers less than $n$. We expand both sides of \eqref{T9},
$$\delta^W \circ F|_{V^{\otimes n}} =$$
$$\delta_1^W\left(f_n\right) + \sum_{B(n)} \sum_{a,b} f_{r_1} \otimes \mbox{\dots} \otimes f_{r_a} \otimes \delta_{b}^W\left(f_{r_{a+1}} \otimes \mbox{\dots} \otimes f_{r_{a+b}}\right) \otimes f_{r_{a+b+1}} \otimes \mbox{\dots} \otimes f_{r_k},$$
$$F \circ \delta^V|_{V^{\otimes n}} =$$
$$f_1\left(\delta^V_1\right) + \sum_{B(n)} \sum_{j, \ell} f_{r_1} \otimes \mbox{\dots} \otimes f_{r_{i-1}} \otimes f_{r_i}\left(\mathbb{1}_V^{\otimes  j} \otimes \delta^V_\ell \otimes \mathbb{1}_V^{\otimes r_i - j - 1}\right) \otimes f_{r_{i+1}} \otimes \mbox{\dots} \otimes f_{r_k}$$
and compare the terms of same homogeneities. We fix $j \geq 1$ and $r_1, \mbox{\dots}, r_j \geq 1, r_1+ \mbox{\dots} + r_j < n$ and $0 \leq m \leq j$, and focus on terms of the form $f_{r_1} \star \mbox{\dots} \otimes f_{r_{i-1}} \otimes \star \otimes f_{r_{i}} \otimes \mbox{\dots} \otimes f_{r_j}$, where $\star$ is an expression of the form $\delta_{\star}^W\left(f_{{\star}} \otimes \mbox{\dots} \otimes f_{\star}\right)$ or $f_{\star}\left(\mathbb{1}_V^{\otimes  \star} \otimes \delta^V_\star \otimes \mathbb{1}_V^{\otimes  \star}\right)$.\\\\
Terms on the right hand side of the form $f_{r_1} \otimes \mbox{\dots} \otimes f_{r_{i-1}} \otimes \delta_{\star}^W\left(f_{{\star}} \otimes \mbox{\dots} \otimes f_{\star}\right) \otimes f_{r_{i}} \otimes \mbox{\dots} \otimes f_{r_j}$ 
correspond to
$$f_{r_1} \otimes \mbox{\dots} \otimes f_{r_{m}} \otimes\left(\delta^W_1(f_n') + \sum_{B(n')} \delta_{k}^W\left(f_{r'_{1}} \otimes \mbox{\dots} \otimes f_{r'_{k}}\right) \right)\otimes f_{r_{m+1}} \otimes \mbox{\dots} \otimes f_{r_j},$$
while the terms of the form
 $f_{r_1} \otimes \mbox{\dots} \otimes f_{r_{i-1}} \otimes f_{\star}\left(\mathbb{1}_V^{\otimes  \star} \otimes \delta^V_\star \otimes \mathbb{1}_V^{\otimes  \star}\right) \otimes f_{r_{i}} \otimes \mbox{\dots} \otimes f_{r_j}$ 
correspond to
$$ f_{r_1} \otimes \mbox{\dots} \otimes f_{r_{m}} \otimes $$
$$\otimes\Bigg(f_1\left(\delta^V_{n'}\right) + \sum_{i = 1}^{n'} f_{n'}\left(\mathbb{1}_V^{\otimes  i-1} \otimes \delta^V_1 \otimes \mathbb{1}_V^{\otimes  n'-i}\right) $$$$+ \sum_{A(n')} f_k\left(\mathbb{1}_V^{\otimes  i-1} \otimes \delta^V_\ell \otimes \mathbb{1}_V^{\otimes  k - i}\right)\Bigg) \otimes \otimes f_{r_{m+1}} \otimes \mbox{\dots} \otimes f_{r_j}$$
with $n' = n - r_1+ \mbox{\dots} + r_j.$ Because $n' < n,$ they fulfill the equality \eqref{T9}
and hence are equal. Subtracting from both sides all elements of homogeneity greater than $1$,
we arrive at
$$\delta^W_1(f_n) + \sum_{B(n)} \delta_{k}^W\left(f_{r_{1}} \otimes \mbox{\dots} \otimes f_{r_{k}} \right)$$
$$= f_1\left(\delta^V_{n}\right) + \sum_{i = 1}^{n} f_{n}\left(\mathbb{1}_V^{\otimes  i-1} \otimes \delta^V_1 \otimes \mathbb{1}_V^{\otimes  n-i}\right) $$$$+ \sum_{A(n)} f_k\left(\mathbb{1}_V^{\otimes  i-1} \otimes \delta^V_\ell \otimes \mathbb{1}_V^{\otimes  k - i}\right).$$
However, this equality is true by \eqref{T9} for $n$.\\
 
\noindent $(2) \Rightarrow (1)$: This implication can be again reduced to the previous one.
\end{proof}

\begin{definition}\label{k1d10}
Let $\delta^V$ be a codifferential on $(\overline{T}V, C)$ and $\delta^W$ be a codifferential on 
$(\overline{T}W, C)$. Let $F: \left(\overline{T}V, C, \delta^V\right) \to \left(\overline{T}W, C, \delta^W\right)$ 
and $G: \left(\overline{T}V, C, \delta^V\right) \to \left(\overline{T}W, C, \delta^W\right)$ be 
morphisms. $F$ and $G$ are homotopy equivalent provided there exist linear maps 
$H: \overline{T}V \to \overline{T}W$ of degree $1$ such that 
$C_{\overline{T}W} \circ H = \left(F \otimes H + H \otimes G\right) \circ C_{\overline{T}V}$ and 
$F - G = H\delta^V + \delta^W H.$ The map $H$ is a homotopy between $F$ a $G$.
\end{definition}

\begin{remark}
Theorem ~\ref{k1v4} implies that $H: \overline{T}V \to \overline{T}W$ of degree $1$ fulfills 
$C_{\overline{T}W} \circ H = \left(F \otimes H + H \otimes G\right) \circ C_{\overline{T}V}$ 
if and only if there is a set of maps $\{h_n: V^{\otimes n} \to W\}_{n \geq 1}$ of degree $1$ 
such that $H|_{V^{\otimes n}} = h_n + \sum_{B(n), r_i > 0} f_{r_1} \otimes \mbox{\dots} \otimes f_{r_{i-1}} \otimes h_{r_i} \otimes g_{r_{i+1}} \otimes \mbox{\dots} \otimes g_{r_k}.$
\end{remark}

\begin{theorem}\label{k1v11}
We retain the assumptions of Definition \ref{k1d10}, and in addition assume the existence
of the set of linear maps
 $\{e_n: V^{\otimes n} \to W\}_{n \geq 1}$, $\{g_n: V^{\otimes n} \to W\}_{n \geq 1}$ of even degree 
$d$ such that $E|_{V^{\otimes n}} = e_n + \sum_{B(n)} e_{r_1} \otimes \mbox{\dots} \otimes e_{r_{k}}$ and $G|_{V^{\otimes n}} = g_n + \sum_{B(n)} g_{r_1} \otimes \mbox{\dots} \otimes g_{r_{k}}$. Let $F: \overline{T}V \to \overline{T}W$ be 
a linear mapping for which there exists a set of linear maps  $\{f_n: V^{\otimes n} \to W\}_{n \geq 1}$ of odd
degree $d+1$ fulfilling
$$F|_{V^{\otimes n}} = f_n + \sum_{B(n), r_i > 0} e_{r_1} \otimes \mbox{\dots} \otimes e_{r_{i-1}} \otimes f_{r_i} \otimes g_{r_{i+1}} \otimes \mbox{\dots} \otimes g_{r_k}.$$
Then the following assertions are equivalent:
\begin{enumerate}
\item  $E - G = F\delta^V + \delta^W F$,
\item $e_n - g_n = f_1(\delta^V_n) + \sum_{i=1}^{n}f_n(\mathbb{1}_V^{\otimes  i-1} \otimes \delta^V_1 \otimes \mathbb{1}_V^{\otimes  n-i}) + \sum_{A(n)} f_k(\mathbb{1}_V^{\otimes  i-1} \otimes \delta^V_\ell \otimes \mathbb{1}_V^{\otimes  k-i}) + \delta_1^W(f_n) + \sum_{B(n), r_i > 0} \delta^W_k(e_{r_1} \otimes \mbox{\dots} \otimes e_{r_{i-1}} \otimes f_{r_i} \otimes g_{r_{i+1}} \otimes \mbox{\dots} \otimes g_{r_k})$ for all ${n \geq 1}$.
\end{enumerate}
\end{theorem}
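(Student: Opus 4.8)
The plan is to mimic the structure of the proof of Lemma~\ref{k1t9}, which is the ``one-morphism'' analogue of the present statement, and to exploit Theorem~\ref{k1v4} together with Lemma~\ref{k1l3} to keep track of how the composite maps $F\delta^V$ and $\delta^W F$ decompose on each graded piece $V^{\otimes n}$. The implication $(1)\Rightarrow(2)$ will be proved by induction on $n$: one expands both $\delta^W\circ F|_{V^{\otimes n}}$ and $F\circ\delta^V|_{V^{\otimes n}}$ into a sum of terms supported in various homogeneities of $\overline{T}W$, and then matches the components of homogeneity strictly greater than $1$ using the induction hypothesis, so that the remaining components of homogeneity $1$ yield exactly the equation in $(2)$. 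For $n=1$ the restriction of $E-G=F\delta^V+\delta^W F$ to $V$ reads $e_1-g_1 = f_1\delta^V_1 + \delta^W_1 f_1$, which is the case $n=1$ of $(2)$.

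The key step is the bookkeeping of the expansion. Using $E|_{V^{\otimes m}} = e_m + \sum_{B(m)} e_{r_1}\otimes\cdots\otimes e_{r_k}$, $G|_{V^{\otimes m}} = g_m + \sum_{B(m)} g_{r_1}\otimes\cdots\otimes g_{r_k}$, and the hypothesis on $F$, one writes $F|_{V^{\otimes n}}$, $\delta^V|_{V^{\otimes n}}$ and $\delta^W|_{V^{\otimes n}}$ in the ``one distinguished block'' form provided by Theorems~\ref{k1v4} and~\ref{k1v5}. Then $\delta^W\circ F|_{V^{\otimes n}}$ becomes a sum over ways of inserting one $\delta^W$-block into the tensor word $e_{r_1}\otimes\cdots\otimes f_{r_i}\otimes\cdots\otimes g_{r_k}$, which splits into: (a) the block $\delta^W$ acting on a sub-word lying entirely in the $e$'s (producing $\delta^W_k(e_{\star}\otimes\cdots)$ around untouched $e$'s, $f$, $g$'s), (b) entirely in the $g$'s (symmetric), (c) on a sub-word straddling the $f_{r_i}$ factor. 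Similarly $F\circ\delta^V|_{V^{\otimes n}}$ is a sum over inserting one $\delta^V$-block inside one of the arguments $V^{\otimes r_j}$ and then applying the appropriate $e$, $f$ or $g$. Grouping all terms that are tensor words with a distinguished interior factor of the shape $e_{r_1}\otimes\cdots\otimes(\text{stuff acting on }V^{\otimes n'})\otimes\cdots\otimes g_{r_k}$ with $n'<n$, the induction hypothesis (applied with $n'$ in place of $n$) shows that this distinguished factor equals $e_{n'}-g_{n'}$; but those very terms also appear on the left-hand side $E-G$, once inside $E|_{V^{\otimes n}}$ via the $e_{n'}$ piece and once inside $G|_{V^{\otimes n}}$ via the $g_{n'}$ piece, with the correct signs coming from the Koszul rule (all $e$'s, $g$'s having even degree $d$, so they commute past each other and past the blocks without sign, while $f$ and the $\delta$'s have odd degree). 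Hence all homogeneity${}>1$ contributions cancel between the two sides, leaving precisely
\begin{equation*}
e_n - g_n = f_1(\delta^V_n) + \sum_{i=1}^{n} f_n(\mathbb{1}_V^{\otimes i-1}\otimes\delta^V_1\otimes\mathbb{1}_V^{\otimes n-i}) + \sum_{A(n)} f_k(\mathbb{1}_V^{\otimes i-1}\otimes\delta^V_\ell\otimes\mathbb{1}_V^{\otimes k-i}) + \delta^W_1(f_n) + \sum_{B(n),\,r_i>0}\delta^W_k(e_{r_1}\otimes\cdots\otimes f_{r_i}\otimes\cdots\otimes g_{r_k}),
\end{equation*}
which is $(2)$ for $n$.

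For the converse $(2)\Rightarrow(1)$, I would argue as in the analogous lemmas: assuming $(2)$ holds for all $n$, run the same expansion of $F\delta^V+\delta^W F$ on each $V^{\otimes n}$; the homogeneity-$1$ part is $(2)$ by hypothesis, and the higher-homogeneity parts reassemble, via the computation just described (now used in the reverse direction), into $E|_{V^{\otimes n}}-G|_{V^{\otimes n}}$, so $(1)$ holds on every graded piece and hence globally. The main obstacle I anticipate is purely the combinatorial/sign bookkeeping in step (c) above — correctly identifying the terms in which a $\delta^W$-block straddles the distinguished factor $f_{r_i}$ with those in which a $\delta^V$-block sits inside the argument of $f_{r_i}$, and checking that the induction hypothesis for the ``one-morphism'' case (Lemma~\ref{k1t9}, applied to $f$ with source and target structures $\delta^V$, $\delta^W$) together with the homotopy relation for smaller $n$ makes exactly these straddling terms cancel. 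The evenness of $d$ is what keeps every sign in this matching trivial, so once the index ranges are set up carefully the cancellations are forced; this is the same simplification the author highlights in the introduction, and I would lean on it rather than recomputing Koszul signs term by term.
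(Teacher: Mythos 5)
Your overall strategy is exactly what the paper intends: its entire proof of Theorem~\ref{k1v11} is the single remark that the argument ``can be done along the same lines as the proofs of Lemma~\ref{k1t6} and Lemma~\ref{k1t9}'', and your induction on $n$ --- expanding $F\delta^V+\delta^W F$ on $V^{\otimes n}$, isolating the homogeneity-one component as the identity in $(2)$, and accounting for the higher components via the telescoping $e_{r_1}\otimes\cdots\otimes e_{r_k}-g_{r_1}\otimes\cdots\otimes g_{r_k}=\sum_{i}e_{r_1}\otimes\cdots\otimes e_{r_{i-1}}\otimes(e_{r_i}-g_{r_i})\otimes g_{r_{i+1}}\otimes\cdots\otimes g_{r_k}$ --- is that argument.

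One claim in your sketch is inaccurate, and it sits precisely where the remaining work lies. It is not true that ``every sign in this matching is trivial'': whenever a degree-$(-1)$ block $\mathbb{1}^{\otimes a}\otimes\delta^V_\ell\otimes\mathbb{1}^{\otimes b}$ (or $\mathbb{1}^{\otimes a}\otimes\delta^W_{\ell'}\otimes\mathbb{1}^{\otimes b}$) has to pass the odd factor $f_{r_i}$, the Koszul rule produces $(-1)^{(d+1)\cdot(-1)}=-1$. These signs occur exactly in the terms your grouping does not cover, namely those in which the $\delta$-block misses the $f$-factor entirely: the resulting tensor word then has \emph{two} distinguished positions (an untouched $f_{r_i}$ plus a factor of the form $e_s(\cdots\otimes\delta^V_\ell\otimes\cdots)$ or $\delta^W_{\ell'}(e_{t_1}\otimes\cdots\otimes e_{t_{\ell'}})$, and symmetrically on the $g$-side), so it cannot be absorbed into the telescoping frame and cannot be handled by the induction hypothesis. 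These words must cancel among themselves, and they do so because $E$ and $G$ are morphisms: Lemma~\ref{k1t9} applied to $E$ (not to $f$, as your text suggests) says that $\delta^W_1(e_m)+\sum_{B(m)}\delta^W_{k}(e_{r_1}\otimes\cdots\otimes e_{r_k})$ equals $e_1(\delta^V_m)+\sum_i e_m(\cdots\otimes\delta^V_1\otimes\cdots)+\sum_{A(m)}e_k(\cdots\otimes\delta^V_\ell\otimes\cdots)$, and it is exactly the $-1$ above that turns the sum of the two contributions into the \emph{difference} of the two sides of this identity; likewise for $G$ with the roles of the signs reversed. With this correction the bookkeeping closes the induction, and the converse implication is the same computation read backwards, as you indicate.
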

\begin{proof}
The proof can be done along the same lines as the proofs of Lemma~\ref{k1t6} and Lemma~\ref{k1t9}.
\end{proof}

\subsection{Codifferentials and $A_\infty$ algebras}

\begin{definition}\label{k1d11}
For $V$ graded we define $sV$ in such a way that $\left(sV\right)_i = V_{i-1}$. The graded modules 
$V$ and $sV$ are canonically isomorphic: $s: V \to sV$ is a linear map of degree $1$ called suspension, 
$\omega : sV \to V$ is a linear map of degree $-1$ called desuspension.
\end{definition}

\begin{remark}
We have $s^{\otimes n} \otimes \omega^{\otimes n} = (-1)^{\frac{n(n-1)}{2}}$ by the Koszul sign convention.
\end{remark}

\begin{theorem}\label{k1v13}
The following claims are equivalent:
\begin{enumerate}
\item $\{\mu_n: V^{\otimes n} \to V; |\mu_n| = n-2\}_{n \geq 1}$ is $A_\infty$ structure on $V$,
\item The linear maps $\delta_n = s \circ \mu_n \circ \omega^{\otimes n}$ are of degree $-1$, and
are the components of a codifferential on $\overline{T}sV$ in the sense of Theorem~\ref{k1v5}.
\end{enumerate}
\end{theorem}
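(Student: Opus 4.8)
The plan is to reduce the equivalence to Lemma~\ref{k1t6} via the dictionary $\delta_n = s\circ\mu_n\circ\omega^{\otimes n}$, so that the only real work is a bookkeeping comparison of sign conventions. First I would note that the suspension maps are invertible with $\omega\circ s=\mathbb{1}_V$, $s\circ\omega=\mathbb{1}_{sV}$, so the assignment $\mu_n\mapsto\delta_n=s\circ\mu_n\circ\omega^{\otimes n}$ is a bijection between collections $\{\mu_n:V^{\otimes n}\to V,\ |\mu_n|=n-2\}$ and collections $\{\delta_n:(sV)^{\otimes n}\to sV,\ |\delta_n|=-1\}$; the degree count is immediate since $|\omega^{\otimes n}|=-n$, $|s|=1$, hence $|\delta_n|=1+(n-2)-n=-1$. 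By Theorem~\ref{k1v5} the data of such $\delta_n$ is the same as the data of a coderivation $\delta$ on $\overline{T}sV$, and by Lemma~\ref{k1t6} that coderivation is a codifferential (i.e. $\delta\circ\delta=0$) if and only if, for every $n\ge 2$, the identity \eqref{T6} holds, namely $\delta_1(\delta_n)+\sum_{i=1}^n\delta_n(\mathbb{1}^{\otimes i-1}\otimes\delta_1\otimes\mathbb{1}^{\otimes n-i})+\sum_{A(n)}\delta_k(\mathbb{1}^{\otimes i-1}\otimes\delta_\ell\otimes\mathbb{1}^{\otimes k-i})=0$ (together with $\delta_1^2=0$, which matches $\partial_V^2=0$ since $\delta_1=s\partial_V\omega$).

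Next I would substitute $\delta_m=s\mu_m\omega^{\otimes m}$ into \eqref{T6} and push the suspension operators to the outside. The three terms become, respectively, $s\partial_V\omega\cdot s\mu_n\omega^{\otimes n}=s\partial_V\mu_n\omega^{\otimes n}$ for the first; for the second, $s\mu_n\omega^{\otimes n}(\mathbb{1}^{\otimes i-1}\otimes s\partial_V\omega\otimes\mathbb{1}^{\otimes n-i})$, and here the Koszul rule moves $\omega^{\otimes n}$ past the internal $s$, producing a sign depending on $i$ (roughly $(-1)^{i-1}$ from commuting $\omega^{\otimes i-1}$ with $s$, plus the $\omega$ meeting $\partial_V$) and collapsing to $\pm s\,\mu_n(\mathbb{1}^{\otimes i-1}\otimes\partial_V\otimes\mathbb{1}^{\otimes n-i})\,\omega^{\otimes n}$; for the third, $s\mu_k\omega^{\otimes k}(\mathbb{1}^{\otimes i-1}\otimes s\mu_\ell\omega^{\otimes\ell}\otimes\mathbb{1}^{\otimes k-i})$ similarly collapses to $\pm s\,\mu_k(\mathbb{1}^{\otimes i-1}\otimes\mu_\ell\otimes\mathbb{1}^{\otimes k-i})\,\omega^{\otimes n}$, the sign now depending on $i$, $\ell$ and the degree of $\mu_\ell$. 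Since $s$ is injective and $\omega^{\otimes n}$ is surjective, \eqref{T6} is equivalent to the bracketed expression with $s,\omega$ stripped being zero, and the claim is that after carrying the signs out this is exactly the $A_\infty$ relation \eqref{ainftyrelations} with its factors $(-1)^n$ and $(-1)^{i(\ell+1)+n}$.

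The main obstacle is purely the sign reconciliation: one must verify that the signs generated by repeatedly applying the Koszul rule $(h\otimes i)(f\otimes g)=(-1)^{|f||i|}hf\otimes ig$ when commuting the desuspension tensor factors past the internal suspension maps — taking into account $|\mu_\ell|=\ell-2$, $|\partial_V|=-1$, $|s|=1$, $|\omega|=-1$ — reproduce precisely $(-1)^n$ on the differential terms and $(-1)^{i(\ell+1)+n}$ on the composition terms. I would organize this as a short lemma computing $s\mu_k\omega^{\otimes k}(\mathbb{1}^{\otimes i-1}\otimes s\mu_\ell\omega^{\otimes\ell}\otimes\mathbb{1}^{\otimes k-i})=\varepsilon(i,\ell)\, s\,\mu_k(\mathbb{1}^{\otimes i-1}\otimes\mu_\ell\otimes\mathbb{1}^{\otimes k-i})\,\omega^{\otimes n}$ with an explicit $\varepsilon(i,\ell)\in\{\pm1\}$, and its degenerate case $\ell=1$, $\mu_1=\partial_V$; then $(1)\Leftrightarrow(2)$ follows by matching term by term. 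I expect the degree/sign computation for the mixed term to require care with the factor $(-1)^{n(n-1)/2}$ noted in the remark after Definition~\ref{k1d11}, but no conceptual difficulty beyond that.
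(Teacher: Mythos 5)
Your proposal follows essentially the same route as the paper: reduce to the component identity \eqref{T6} of Lemma~\ref{k1t6} via Theorem~\ref{k1v5}, substitute $\delta_n = s\circ\mu_n\circ\omega^{\otimes n}$, and push the (de)suspensions outward with the Koszul rule, checking that the accumulated signs $(-1)^{n-i}(-1)^{i-1}=(-1)^{n-1}$ and $(-1)^{k-i}(-1)^{\ell(i-1)}=(-1)^{i(\ell+1)+n-1}$ reproduce \eqref{ainftyrelations}. The only remaining work is carrying out that sign computation explicitly (it does not in fact need the global factor $(-1)^{n(n-1)/2}$, only the term-by-term commutations), which is exactly what the paper does.
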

\begin{proof}
$(2) \Rightarrow (1)$: $\delta_n = s \circ \mu_n \circ \omega^{\otimes n}$ are the components
of a codifferential, and so we have for all $n \geq 1$  
$$\delta_1(\delta_n) + \sum_{i = 1}^{n} \delta_n \left(\mathbb{1}_V^{\otimes  i-1} \otimes \delta_1 \otimes \mathbb{1}_V^{\otimes  n-i}\right) $$$$+ \sum_{A(n)} \delta_k\left(\mathbb{1}_V^{\otimes  i-1} \otimes \delta_\ell \otimes \mathbb{1}_V^{\otimes  k - i}\right) = 0.$$
This can be rewritten, by Koszul sign convention, as 
$$\delta_1(\delta_n) = s \circ \mu_1 \circ \omega \circ s \circ \mu_n \circ \omega^{\otimes n} = s \circ \mu_1(\mu_n) \circ \omega^{\otimes n},$$
$$\sum_{i = 1}^{n} \delta_n \left(\mathbb{1}_V^{\otimes  i-1} \otimes \delta_1 \otimes \mathbb{1}_V^{\otimes  n-i}\right) 
= \sum_{i = 1}^{n} s \circ \mu_n \circ \omega^{\otimes n} \left(\mathbb{1}_V^{\otimes  i-1} \otimes s \circ \mu_1 \circ \omega \otimes \mathbb{1}_V^{\otimes  n-i}\right)$$
$$=\sum_{i = 1}^{n} (-1)^{n-i} s \circ \mu_n\left(\omega^{\otimes i-1} \otimes \mu_1 \circ \omega \otimes \omega^{\otimes n-i}\right)$$
$$= \sum_{i = 1}^{n} (-1)^{n-i}(-1)^{i-1} s \circ \mu_n\left(\mathbb{1}_V^{\otimes  i-1} \otimes \mu_1 \otimes \mathbb{1}_V^{\otimes  n-i}\right) \circ \omega^{\otimes n},$$
$$\sum_{A(n)} \delta_k\left(\mathbb{1}_V^{\otimes  i-1} \otimes \delta_\ell \otimes \mathbb{1}_V^{\otimes  k - i}\right) $$$$= \sum_{A(n)} s \circ \mu_k \circ \omega^{\otimes k}\left(\mathbb{1}_V^{\otimes  i-1} \otimes s \circ \mu_\ell \circ \omega^{\otimes \ell} \otimes \mathbb{1}_V^{\otimes  k - i}\right) $$
$$= \sum_{A(n)} (-1)^{k-i}s \circ \mu_k \left(\omega^{\otimes i-1} \otimes \mu_\ell \circ \omega^{\otimes \ell} \otimes \omega^{\otimes k - i}\right) $$
$$= \sum_{A(n)} (-1)^{k-i} (-1)^{\ell(i-1)}s \circ \mu_k \left(\mathbb{1}_V^{\otimes  i-1} \otimes \mu_\ell  \otimes \mathbb{1}_V^{\otimes  k - i}\right)\circ \omega^{\otimes n}.$$
The mappings $s$ and $\omega$ are linear, hence  
$$s \circ \left(\mu_1(\mu_n) + \sum_{i = 1}^{n} (-1)^{n-1} \mu_n\left(\mathbb{1}_V^{\otimes  i-1} \otimes \mu_1 \otimes \mathbb{1}_V^{\otimes  n-i}\right) \right.$$
$$ +\left. \sum_{A(n)} (-1)^{i(\ell+1) + n-1} \mu_k \left(\mathbb{1}_V^{\otimes  i-1} \otimes \mu_\ell  \otimes \mathbb{1}_V^{\otimes  k - i}\right)\right) \circ \omega^{\otimes n} = 0.$$
$(1) \Rightarrow (2)$: This can be easily reduced to the proof of the previous implication.
\end{proof}

\begin{theorem}\label{k1v14}
The following claims are equivalent:
\begin{enumerate}
\item $\{\varphi_n: V^{\otimes n} \to W; |\varphi_n| = n-1\}_{n \geq 1}$ is $A_\infty$ morphism
from $(V, \pmb{\mu})$ to $(W, \pmb{\nu})$,
\item the mappings 
$$f_n = s_W \circ \varphi_n \circ \omega_V^{\otimes n}$$
are of degree $0$, and are the components of $A_\infty$ morphism from $(\overline{T}sV, \delta^V)$ to 
$(\overline{T}sW, \delta^W)$ in the sense of Lemma~\ref{k1t8}. The codifferentials are given
by $A_\infty$ structures on $V$ and $W$, respectively, via Theorem \ref{k1v13}.
\end{enumerate}
The following claims are equivalent:
\begin{enumerate}
\item $\{h_n: V^{\otimes n} \to W; |h_n| = n\}_{n \geq 1}$ is $A_\infty$ homotopy between 
$A_\infty$ morphisms $\pmb{\varphi}$ with components  
$\{\varphi_n: V^{\otimes n} \to W; |\varphi_n| = n-1\}_{n \geq 1}$ and 
$\pmb{\psi}$ with components $\{\psi_n: V^{\otimes n} \to W; |\psi_n| = n-1\}_{n \geq 1}$, respectively, from 
$(V, \pmb{\mu})$ to $(W, \pmb{\nu})$,
\item 
$$h_n = s_W \circ h_n \circ \omega_V^{\otimes n}$$
are of degree $1$, and are the components of $A_\infty$ homotopy between morphisms $\pmb{F}$ and 
$\pmb{G}$ from $(\overline{T}sV, \delta^V)$ to $(\overline{T}sW, \delta^W)$, where $\pmb{F}$ 
corresponds to $\pmb{\varphi}$ and $\pmb{G}$ corresponds to $\pmb{\psi}$ 
in the sense of the first equivalence in the theorem. 
The codifferentials are given by $A_\infty$ structures on $V$ and $W$, respectively, as in Theorem~\ref{k1v13}.
\end{enumerate}
\end{theorem}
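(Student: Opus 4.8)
The plan is to deduce both equivalences from the characterizations already obtained --- Lemma~\ref{k1t8}, Lemma~\ref{k1t9} and Theorem~\ref{k1v11} --- by transporting everything through the (de)suspension isomorphisms and tracking the Koszul signs exactly as in the proof of Theorem~\ref{k1v13}.

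For the first equivalence I would begin with degrees: since $|\varphi_n| = n-1$, $|s_W| = 1$ and $|\omega_V| = -1$, the map $f_n = s_W \circ \varphi_n \circ \omega_V^{\otimes n}$ has degree $0$, while $\varphi_n = \omega_W \circ f_n \circ s_V^{\otimes n}$ recovers a degree-$(n-1)$ map, so $\pmb{\varphi} \leftrightarrow \pmb{f}$ is a bijection between the two families. By Lemma~\ref{k1t8} the $f_n$ are precisely the components of a coalgebra morphism $F\colon (\overline{T}sV,C) \to (\overline{T}sW,C)$, and by Theorem~\ref{k1v13} the maps $\delta^V_n = s_V \circ \mu_n \circ \omega_V^{\otimes n}$ and $\delta^W_n = s_W \circ \nu_n \circ \omega_W^{\otimes n}$ are the components of the codifferentials attached to $\pmb{\mu}$ and $\pmb{\nu}$. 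It then remains to show that $\delta^W \circ F = F \circ \delta^V$, equivalently \eqref{T9} by Lemma~\ref{k1t9}, holds if and only if the defining relations of an $A_\infty$ morphism hold. I would verify this termwise: substitute the formulas for $f_n$, $\delta^V_\ell$, $\delta^W_k$ into \eqref{T9}, cancel the adjacent pairs $\omega_W s_W = \mathbb{1}_{sW}$ and $\omega_V s_V = \mathbb{1}_{sV}$ wherever they occur, push the remaining suspensions to the far left and desuspensions to the far right via the Koszul rule, and strip them off by composing with the isomorphisms $\omega_V^{\otimes n}$ on the right and $s_W$ on the left. As in the proof of Theorem~\ref{k1v13}, transporting $\omega_V^{\otimes k}$ past $\mathbb{1}_{sV}^{\otimes i-1} \otimes s_V\mu_\ell\omega_V^{\otimes\ell} \otimes \mathbb{1}_{sV}^{\otimes k-i}$ yields the sign $(-1)^{k-i}(-1)^{\ell(i-1)}$, which one checks equals $-(-1)^{i(\ell+1)+n}$ using $k+\ell = n+1$; the $\delta^V_1$ term similarly contributes $-(-1)^n$; and rewriting $\delta^W_k(f_{r_1} \otimes \dots \otimes f_{r_k})$ as $s_W \circ \nu_k(\varphi_{r_1} \otimes \dots \otimes \varphi_{r_k}) \circ \omega_V^{\otimes n}$ requires interleaving $s_W$, $\omega_W$ and the $\omega_V^{\otimes r_j}$ through $\varphi_{r_1} \otimes \dots \otimes \varphi_{r_k}$, whose $j$-th factor has degree $r_j-1$; this produces exactly the sign $(-1)^{\vartheta(r_1,\dots,r_k)}$ with $\vartheta(r_1,\dots,r_k) = \sum_{1\le i<j\le k} r_i(r_j+1)$. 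With all signs matched, \eqref{T9} becomes the defining relation of an $A_\infty$ morphism, and the inverse substitution gives the converse.

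For the second equivalence I would run the identical argument with Theorem~\ref{k1v11} in place of Lemma~\ref{k1t9}. By the first equivalence $\pmb{\varphi}$ and $\pmb{\psi}$ correspond to morphisms of codifferential coalgebras, say $E$ and $G$; the homotopy corresponds via $h_n = s_W \circ h_n \circ \omega_V^{\otimes n}$, which has degree $1 + n - n = 1$ as required in Definition~\ref{k1d10}; and one feeds $E|_{V^{\otimes n}}$, $G|_{V^{\otimes n}}$ together with the degree-$1$ map $H$ built from these $h_n$ into condition~(2) of Theorem~\ref{k1v11}. The same (de)suspension sign bookkeeping turns that condition into the identity of Definition~\ref{ainftyhomotopy}; the mixed term $\nu_k(\varphi_{r_1} \otimes \dots \otimes \varphi_{r_{i-1}} \otimes h_{r_i} \otimes \psi_{r_{i+1}} \otimes \dots \otimes \psi_{r_k})$ again picks up the coefficient $(-1)^{\vartheta(r_1,\dots,r_k)}$, since the pattern in which the suspensions and desuspensions interleave is insensitive to which tensor slot carries the homotopy.

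The one genuinely laborious step --- and the main place where an error could hide --- is checking that the Koszul signs incurred when transporting $s_W$, $\omega_W$ and the $\omega_V^{\otimes r_j}$ past the factors $\varphi_{r_j}$ collapse to $(-1)^{\vartheta(r_1,\dots,r_k)}$, and that the internal-insertion signs collapse to $-(-1)^{i(\ell+1)+n}$. I would carry this out in detail once, in the morphism case, and then observe that the homotopy case is formally the same.
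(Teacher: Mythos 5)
Your plan for the first equivalence is sound and is exactly what the paper intends (its own proof consists of the single sentence ``along the same lines as Theorem~\ref{k1v13}''): the degree count, the reduction to Lemma~\ref{k1t8} plus Lemma~\ref{k1t9}, and the sign $(-1)^{\vartheta(r_1,\dots,r_k)}$ for $\delta^W_k(f_{r_1}\otimes\dots\otimes f_{r_k})$ are all correct, since each $f_{r_j}$ has degree $0$ and each $\varphi_{r_j}$ has degree $r_j-1$, so the only Koszul contribution is $\sum_{i<j}(-r_i)(r_j-1)\equiv\vartheta(r_1,\dots,r_k)\pmod 2$.

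The gap is in the homotopy case, precisely at the step you flagged as the one place an error could hide and then waved through. Your assertion that the interleaving of suspensions ``is insensitive to which tensor slot carries the homotopy'' is false. The suspended homotopy component $f_{r_i}=s_W\circ h_{r_i}\circ\omega_V^{\otimes r_i}$ has degree $1$, whereas the suspended morphism components $e_{r_j},g_{r_j}$ have degree $0$; this parity difference is exactly what the Koszul convention detects. Carrying out the two commutations explicitly: first, $\omega_W^{\otimes k}\circ(e_{r_1}\otimes\dots\otimes f_{r_i}\otimes\dots\otimes g_{r_k})$ acquires the sign $(-1)^{\sum_{u<v}|a_u|\,|\omega_W|}=(-1)^{k-i}$ because only the $i$-th factor has odd degree; second, extracting $\omega_V^{\otimes n}$ to the right gives the exponent $\sum_{u<v}r_u|b_v|$ with $|b_v|=r_v-1$ for $v\neq i$ but $|b_i|=r_i$, i.e.\ $\vartheta(r_1,\dots,r_k)+\sum_{j<i}r_j$ rather than $\vartheta(r_1,\dots,r_k)$. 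The total transported coefficient is therefore $(-1)^{\vartheta(r_1,\dots,r_k)+(k-i)+\sum_{j<i}r_j}$, which genuinely depends on the slot $i$ (e.g.\ for $k=2$, $(r_1,r_2)=(2,1)$ the two mixed terms come out with opposite signs). So condition $(2)$ of Theorem~\ref{k1v11} does not transport to the identity of Definition~\ref{ainftyhomotopy} by the mechanism you describe; you must either compute and carry this slot-dependent sign through and reconcile it with the stated sign $(-1)^{\vartheta(r_1,\dots,r_k)}$ in Definition~\ref{ainftyhomotopy}, or conclude that the definition needs the extra factor. As written, the step would fail.
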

\begin{proof}
The proof goes along the same lines as in Theorem~\ref{k1v13}.
\end{proof}

\section{Homotopy transfer of $A_\infty$ algebras}\label{sect:hotr}
\label{sec:3}

The starting point for the present section are 
the chain complexes $(V, \partial_V)$ and $(W, \partial_W)$, 
$f: V \to W$, $g: W \to V$ their morphisms such that $gf$ is 
homotopy equivalent to $\mathbb{1}_V$ by a homotopy $h$. Let $(V, \partial_V)$ 
be equipped with $A_\infty$ algebra structure, which means that there 
is a set of multilinear maps $\pmb{\mu} = (\mu_2, \mu_3, \mbox{\dots})$ 
satisfying the relations \eqref{ainftyrelations}.
We would like to induce $A_\infty$ structure 
$(W, \partial_W, \nu_2, \nu_3, \mbox{\dots})$ on 
$(W, \partial_W)$ by transferring $(V, \partial_V, \mu_2, \mu_3, \mbox{\dots})$,
as well as the morphisms of $A_\infty$ algebras $\pmb{\psi} = (g, \psi_2, \psi_3, \mbox{\dots})$ 
from $(W, \partial_W, \pmb{\nu})$ to $(V, \partial_V, \pmb{\mu})$ and 
$\pmb{\varphi} = (f, \varphi_2, \varphi_3, \mbox{\dots})$ acting in the opposite 
direction such that their composition $\pmb{\psi \varphi}$ is $A_\infty$ homotopy 
equivalent with the identity map via $\pmb{H}= (h, H_2, H_3, \mbox{\dots})$.

The strategy to solve this problem, cf. \cite{Markl06}, suggests to 
construct the set of maps $\{\pmb{p}_n: V^{\otimes n} \to V\}_{n \geq 2}$ of degree $n-2$ 
called $\pmb{p}-$kernels, and the set of maps $\{\pmb{q}_n: V^{\otimes n} \to V\}_{n \geq 1}$ 
of degree $n-1$ called $\pmb{q}-$kernels in such a way that  
$\nu_n, \varphi_n, \psi_n$ and $H_n$ defined by 
\begin{equation}
    \label{Antanz}
	\nu_n := f \circ \pmb{p}_n \circ g^{\otimes n}, \;\;\; \varphi_n := f \circ \pmb{q}_n, \;\;\; \psi_n := h \circ \pmb{p}_n \circ g^{\otimes n}, \;\;\; H_n = h \circ \pmb{q}_n,
\end{equation}
fulfill the transfer problem of $A_\infty$ algebra as discussed in the 
previous paragraph.

We shall first introduce the $\pmb{p}-$kernels and based on them we introduce the $\pmb{q}-$kernels
later on.
Apart from \eqref{A} a \eqref{B}, we shall rely on the notation (cf., \cite{Markl06}) 
\begin{equation}
	\tag{C}
	\label{C}
\begin{split}
	C(n) = \{k,i, r_1, \mbox{\dots}, r_i\in\mN | \, 2 \leq k \leq n, 1 \leq i \leq k,\\ r_1, \mbox{\dots}, r_i \geq 1, r_1 + \mbox{\dots} + r_i + k - i = n\},	
\end{split}
\end{equation}
for $n \in \mathbb{N}$, and
\begin{equation}
\tag{$\vartheta$}
\label{theta}
\vartheta(u_1, \mbox{\dots}, u_k) = \sum_{1 \leq i < j \leq k} u_i(u_j + 1), 	
\end{equation}
for arbitrary $u_1, \mbox{\dots}, u_k, k \in \mathbb{N}$.

\subsection{\texorpdfstring{$\pmb{p}-$kernels}{p-kernels}}

\begin{lemma}\label{k2t15}
The $\pmb{p}-$kernels together with $\partial_W$ constitute an $A_\infty$ structure
on $(W, \partial_W)$ via \eqref{Antanz} if and only if for all $n \geq 2$ holds
$$f \circ \left(\partial_V\pmb{p}_n - \sum_{u=1}^{n}(-1)^{n} \pmb{p}_n(\mathbb{1}_V^{\otimes  u-1} \otimes \partial_V \otimes \mathbb{1}_V^{\otimes  n-u})  \right.$$
$$\left. - \sum_{A(n)}(-1)^{i(\ell+1) + n}\pmb{p}_k(\mathbb{1}_V^{\otimes  i-1} \otimes gf \circ \pmb{p}_\ell \otimes \mathbb{1}_V^{\otimes  k - i})\right) \circ g^{\otimes n} = 0$$
\end{lemma}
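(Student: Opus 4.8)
The plan is to recognise the displayed identity for a given $n$ as nothing but the $n$-th $A_\infty$ relation \eqref{ainftyrelations} for the candidate structure $\pmb{\nu} = (\nu_2, \nu_3, \dots)$ on $(W, \partial_W)$, $\nu_m := f \circ \pmb{p}_m \circ g^{\otimes m}$, after pushing $f$ to the far left and $g^{\otimes n}$ to the far right. First I would note the degree count: since $|f| = |g| = 0$ and $|\pmb{p}_m| = m-2$, each $\nu_m$ has degree $m-2$, so the assertion that $(W, \partial_W, \nu_2, \nu_3, \dots)$ is an $A_\infty$ algebra carries no content beyond the validity, for every $n \geq 2$, of the relations \eqref{ainftyrelations} with $(V, \partial_V, \pmb{\mu})$ replaced by $(W, \partial_W, \pmb{\nu})$.

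Next I would expand that $n$-th relation term by term after substituting $\nu_m = f \pmb{p}_m g^{\otimes m}$. For the outer differential, $\partial_W \nu_n = (\partial_W f) \pmb{p}_n g^{\otimes n} = f (\partial_V \pmb{p}_n) g^{\otimes n}$ since $f$ is a chain map. For the inner-differential terms I would commute $\partial_W$ leftwards: every tensor factor of $g^{\otimes n}$ has degree $0$, so the Koszul rule contributes no sign and $g^{\otimes n}(\mathbb{1}_W^{\otimes u-1} \otimes \partial_W \otimes \mathbb{1}_W^{\otimes n-u}) = g^{\otimes u-1} \otimes (g \partial_W) \otimes g^{\otimes n-u} = (\mathbb{1}_V^{\otimes u-1} \otimes \partial_V \otimes \mathbb{1}_V^{\otimes n-u}) g^{\otimes n}$, using $g \partial_W = \partial_V g$; hence $\nu_n(\mathbb{1}_W^{\otimes u-1} \otimes \partial_W \otimes \mathbb{1}_W^{\otimes n-u}) = f \pmb{p}_n (\mathbb{1}_V^{\otimes u-1} \otimes \partial_V \otimes \mathbb{1}_V^{\otimes n-u}) g^{\otimes n}$. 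For a composite term on the right, with $k + \ell = n+1$ and $1 \leq i \leq k$, I would write $\nu_k(\mathbb{1}_W^{\otimes i-1} \otimes \nu_\ell \otimes \mathbb{1}_W^{\otimes k-i}) = f \pmb{p}_k \bigl(g^{\otimes i-1} \otimes g f \pmb{p}_\ell g^{\otimes \ell} \otimes g^{\otimes k-i}\bigr)$, and then, since $g^{\otimes n} = g^{\otimes i-1} \otimes g^{\otimes \ell} \otimes g^{\otimes k-i}$ (because $(i-1) + \ell + (k-i) = n$) and all Koszul signs against the degree-$0$ maps $g$ and $\mathbb{1}_V$ vanish, rewrite it as $f \pmb{p}_k(\mathbb{1}_V^{\otimes i-1} \otimes g f \pmb{p}_\ell \otimes \mathbb{1}_V^{\otimes k-i}) g^{\otimes n}$.

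Putting the pieces together, the $n$-th $\pmb{\nu}$-relation becomes precisely $f \circ \bigl(\partial_V \pmb{p}_n - \sum_{u=1}^{n}(-1)^n \pmb{p}_n(\mathbb{1}_V^{\otimes u-1} \otimes \partial_V \otimes \mathbb{1}_V^{\otimes n-u}) - \sum_{A(n)}(-1)^{i(\ell+1)+n}\pmb{p}_k(\mathbb{1}_V^{\otimes i-1} \otimes g f \pmb{p}_\ell \otimes \mathbb{1}_V^{\otimes k-i})\bigr) \circ g^{\otimes n} = 0$, i.e.\ the asserted identity; the original sign pattern of \eqref{ainftyrelations} is carried over verbatim. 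Since every step was an equality of maps, the two statements are equivalent for each fixed $n$, and hence the equivalence in the lemma follows. The one point demanding genuine care — and the step I would write out most carefully — is the Koszul sign bookkeeping when commuting $\partial_W$, $\partial_V$ and the intermediate map $g f \pmb{p}_\ell$ past tensor powers of $g$ and past identity maps; the entire argument hinges on observing that all of these signs are trivial because $f$, $g$ and $\mathbb{1}$ have degree $0$, which is exactly why the signs of the $A_\infty$ relations reappear unchanged in the $\pmb{p}$-kernel identity.
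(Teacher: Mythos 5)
Your proposal is correct and follows essentially the same route as the paper: substitute $\nu_m = f\circ\pmb{p}_m\circ g^{\otimes m}$ into the $A_\infty$ relations on $W$, use that $f$ and $g$ are degree-$0$ chain maps to commute $\partial_W$ into $\partial_V$ and to slide $g^{\otimes n}$ to the right without Koszul signs, and observe that the resulting identity is exactly the displayed one. The only cosmetic difference is that the paper also remarks that the $n=1$ relation holds automatically because $(W,\partial_W)$ is a chain complex, which your degree-counting preamble covers implicitly.
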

\begin{proof}
$(W, \partial_W, \nu_2 \mbox{\dots})$ is an $A_\infty$ algebra if we have for all $n \geq 1$
$$\partial_W \nu_n - \sum_{u=1}^{n}(-1)^{n} \nu_n(\mathbb{1}_W^{\otimes u-1} \otimes \partial_W \otimes \mathbb{1}_W^{\otimes n-u})$$ $$- \sum_{A(n)}(-1)^{i(\ell+1) + n}\nu_k(\mathbb{1}_W^{\otimes i-1} \otimes \nu_\ell \otimes \mathbb{1}_W^{\otimes k - i})= 0.$$ 
This is true for $n = 1$, because $(W, \partial_W)$ is the chain complex ($f \circ \partial_V = \partial_W \circ f$ 
and analogously for $g$.) Now expand $\nu_n$ following \eqref{Antanz}:
$$\partial_W \nu_n - \sum_{u=1}^{n}(-1)^{n} \nu_n(\mathbb{1}_W^{\otimes u-1} \otimes \partial_W \otimes \mathbb{1}_W^{\otimes n-u})$$$$- \sum_{A(n)}(-1)^{i(\ell+1) + n}\nu_k(\mathbb{1}_W^{\otimes i-1} \otimes \nu_\ell \otimes \mathbb{1}_W^{\otimes k - i}) $$
$$= \partial_W \left(f \circ \pmb{p}_n \circ g^{\otimes n} \right)  - \sum_{u=1}^{n}(-1)^{n} \left(f \circ \pmb{p}_n \circ g^{\otimes n} \right)(\mathbb{1}_W^{\otimes u-1} \otimes \partial_W \otimes \mathbb{1}_W^{\otimes n-u})$$
$$- \sum_{A(n)}(-1)^{i(\ell+1) + n}\left(f \circ \pmb{p}_k \circ g^{\otimes k} \right)(\mathbb{1}_W^{\otimes i-1} \otimes \left(f \circ \pmb{p}_\ell \circ g^{\otimes \ell} \right) \otimes \mathbb{1}_W^{\otimes k - i}).$$
 Because both $f$ and $g$ are linear maps of degree $0$, this equals to 
 $$f\circ \left(\partial_V \circ \pmb{p}_n  \right) \circ g^{\otimes n}  - f \circ \left(\sum_{u=1}^{n}(-1)^{n} \pmb{p}_n(g^{\otimes u-1} \otimes g \circ \partial_W \otimes g^{\otimes n-u})\right)$$
 $$- f \circ\left(\sum_{A(n)}(-1)^{i(\ell+1) + n}\pmb{p}_k(g^{\otimes i-1} \otimes gf \circ \pmb{p}_\ell \circ g^{\otimes \ell} \otimes g^{\otimes k - i})\right),$$
which is
 $$f\circ \left(\partial_V \circ \pmb{p}_n  \right) \circ g^{\otimes n}  - f \circ \left(\sum_{u=1}^{n}(-1)^{n} \pmb{p}_n(\mathbb{1}_V^{\otimes  u-1} \otimes \partial_V \otimes \mathbb{1}_V^{\otimes  n-u})\right)  \circ g^{\otimes n} $$
 $$- f \circ\left(\sum_{A(n)}(-1)^{i(\ell+1) + n}\pmb{p}_k(\mathbb{1}_V^{\otimes  i-1} \otimes gf \circ \pmb{p}_\ell \otimes \mathbb{1}_V^{\otimes  k - i})\right)\circ g^{\otimes n} = 0.$$
\end{proof}

\begin{lemma}\label{k2t16}
Let us assume that $\pmb{p}-$kernels induce the transfer of $A_\infty$ algebra as formulated 
above, and they fulfill ($n \geq 2$)
\begin{equation}
\label{T16}
	\begin{split}
		\partial_V\pmb{p}_n - \sum_{u=1}^{n}(-1)^{n} \pmb{p}_n(\mathbb{1}_V^{\otimes  u-1} \otimes \partial_V \otimes \mathbb{1}_V^{\otimes  n-u})\\
		-\sum_{A(n)}(-1)^{i(\ell+1) + n}\pmb{p}_k(\mathbb{1}_V^{\otimes  i-1} \otimes gf \circ \pmb{p}_\ell \otimes \mathbb{1}_V^{\otimes  k - i})= 0.
	\end{split}
\end{equation} 
Then
$$\pmb{p}_n \circ g^{\otimes n}= \left(\sum_{B(n)} (-1)^{\vartheta(r_1, \mbox{\dots}, r_k)}\mu_k(h \circ \pmb{p}_{r_1} \otimes \mbox{\dots} \otimes h \circ \pmb{p}_{r_k})\right) \circ g^{\otimes n},$$
where we define $h \circ \pmb{p}_1 = \mathbb{1}_V$.
\end{lemma}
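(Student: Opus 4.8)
The plan is to translate the statement into the language of reduced tensor coalgebras developed in Section~\ref{sec:2} and then recognise equation~\eqref{T16} as precisely the condition that an associated coalgebra morphism is compatible with the codifferentials. First I would desuspend everything: set $\delta^V_n = s\circ\mu_n\circ\omega^{\otimes n}$ (together with $\delta^V_1 = s\circ\partial_V\circ\omega$), so that by Theorem~\ref{k1v13} these are the components of a codifferential $\delta^V$ on $\overline{T}sV$. Likewise I would introduce the desuspended $\pmb{p}$-kernels $P_n = s\circ\pmb{p}_n\circ\omega^{\otimes n}$ and the desuspended homotopy-composite $h\circ\pmb{p}_n$, with the convention $h\circ\pmb{p}_1 = \mathbb{1}_V$ matching $\delta^V_1$'s role. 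The point of this step is that the signs $(-1)^{i(\ell+1)+n}$, $(-1)^n$ and $(-1)^{\vartheta(r_1,\dots,r_k)}$ appearing in \eqref{T16} and in the desired identity are exactly the signs that the Koszul convention produces when one strips off the suspension/desuspension maps, as was already witnessed in the proof of Theorem~\ref{k1v13}. So after desuspension \eqref{T16} becomes the clean, sign-free relation
\[
\delta^V_1(P_n) + \sum_{u=1}^{n} P_n\bigl(\mathbb{1}^{\otimes u-1}\otimes \delta^V_1 \otimes \mathbb{1}^{\otimes n-u}\bigr) + \sum_{A(n)} P_k\bigl(\mathbb{1}^{\otimes i-1}\otimes (gf)\!\cdot\! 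P_\ell \otimes \mathbb{1}^{\otimes k-i}\bigr) = 0,
\]
where I am abusing notation and writing $gf$ also for its desuspension.

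Next I would assemble the $P_n$ into a single linear map on the coalgebra. Because $gf$ is chain-homotopic to $\mathbb{1}_V$ via $h$, i.e. $\mathbb{1}_V - gf = \partial_V h + h\partial_V$, the desuspended statement $h\circ\pmb{p}_1 = \mathbb{1}_V$ lets us write $gf\circ\pmb{p}_\ell = \pmb{p}_\ell - (\partial_V h + h\partial_V)\pmb{p}_\ell$ at the bottom level and, more usefully, lets us recognise the right-hand side of the claim as the value on $sV^{\otimes n}$ of the composite-with-a-morphism construction of Lemma~\ref{k1t8}. Concretely, I would let $\Psi\colon \overline{T}sV\to\overline{T}sV$ be the coalgebra morphism whose components are the desuspensions of $h\circ\pmb{p}_n$ (with $n$-th component $= \mathbb{1}$ for $n=1$), so that by Lemma~\ref{k1t8}
\[
\Psi|_{sV^{\otimes n}} = (h\circ\pmb{p}_n)^{\mathrm{desusp}} + \sum_{B(n)} (h\circ\pmb{p}_{r_1})^{\mathrm{desusp}}\otimes\dots\otimes (h\circ\pmb{p}_{r_k})^{\mathrm{desusp}};
\]
then $\mu_k(h\pmb{p}_{r_1}\otimes\dots\otimes h\pmb{p}_{r_k})$ summed over $B(n)$ is, after desuspension, exactly $\delta^V\circ\Psi$ restricted to $sV^{\otimes n}$ minus the single ``top'' term $\delta^V_n$ composed with the non-decomposable part. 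The claim to be proved then reads $P_n\circ g^{\otimes n} = (\delta^V\circ\Psi)|_{sV^{\otimes n}}$-type expression, precomposed with $g^{\otimes n}$, up to isolating the $\delta^V_1$ and $\mu_n$ pieces.

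I would then run an induction on $n$. For $n=1$ (if one extends the convention) or $n=2$ the identity is immediate from the definitions and from $h\circ\pmb{p}_1 = \mathbb{1}_V$. For the inductive step, I would use the hypothesis that \eqref{T16} holds and substitute into the right-hand side of the claim, expand $\mu_k(h\pmb{p}_{r_1}\otimes\dots)$ using the $A_\infty$ relations \eqref{ainftyrelations} for $\pmb{\mu}$ on $V$, and use the homotopy identity $\mathbb{1}_V - gf = \partial_V h + h\partial_V$ together with the earlier-established \eqref{T16} to telescope the sum. The $\partial_V h$ and $h\partial_V$ terms should pair up: $h\partial_V$ acting on the output of an inner $\pmb{p}_\ell$ combines with the $A_\infty$ relation applied to that $\pmb{p}_\ell$ to reconstruct a $gf\circ\pmb{p}_\ell$ term, which is exactly what appears in \eqref{T16}, while the leftover $\partial_V h\cdot\pmb{p}_{r_i}$ terms at the outside get absorbed into the $\delta^V_1(P_n)$ and $P_n(\mathbb{1}\otimes\delta^V_1\otimes\mathbb{1})$ contributions. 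Carefully bookkeeping the decomposition index $i$ of $B(n)$ against the index $i$ of $A(n)$, and using the reindexing trick from the proof of Theorem~\ref{k1v4} (summing in the variable $r_1+\dots+r_i$), one should see every term on the right cancel against a term produced by applying \eqref{T16} and the induction hypothesis, leaving exactly $\pmb{p}_n$ precomposed with $g^{\otimes n}$.

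\textbf{Main obstacle.} The hard part will be the sign bookkeeping in the inductive step: one must verify that the $\vartheta(r_1,\dots,r_k)$ signs produced by the $A_\infty$ morphism/composition formula \eqref{composition}, the $(-1)^{i(\ell+1)+n}$ signs from \eqref{ainftyrelations}, and the Koszul signs from commuting $\partial_V$, $h$ and the $\pmb{p}_j$ past one another all conspire so that the telescoping is exact rather than off by a sign. This is precisely where the desuspension is worth its weight: after passing to $\overline{T}sV$ all of these signs collapse (the components $\delta^V_n$, $P_n$ all have degree $-1$, so most Koszul signs are $\pm1$ in a controlled way, and the $\vartheta$-sign becomes invisible), so the safest route is to do the entire induction on the level of the $\delta^V$'s and $P_n$'s and only at the very end re-suspend to recover the stated formula with its $(-1)^{\vartheta}$ factors. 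A secondary technical point is justifying that precomposition with $g^{\otimes n}$ is harmless, i.e. that one never needs $\pmb{p}_n$ off the image of $g$ — this follows because every term in \eqref{T16} and in the claim is already precomposed with a suitable power of $g$, exactly as in the proof of Lemma~\ref{k2t15}.
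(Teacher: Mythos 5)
There is a genuine gap: your argument never uses the hypothesis that the $\pmb{p}$-kernels \emph{induce the transfer} ``as formulated above,'' and without it the conclusion cannot be reached. Throughout your inductive step you only invoke \eqref{T16}, the $A_\infty$ relations for $\pmb{\mu}$, and the homotopy identity $gf-\mathbb{1}_V=\partial_Vh+h\partial_V$. But \eqref{T16} only constrains the ``boundary'' of $\pmb{p}_n$: if you replace $\pmb{p}_n$ by $\pmb{p}_n+c_n$ with $c_n$ a chain map (i.e.\ $\partial_Vc_n-\sum_u(-1)^nc_n(\mathbb{1}^{\otimes u-1}\otimes\partial_V\otimes\mathbb{1}^{\otimes n-u})=0$), keeping the lower kernels fixed, then \eqref{T16} at level $n$ still holds (the $A(n)$ sum involves only $\pmb{p}_k$ with $k\leq n-1$) and the right-hand side of the claimed identity is unchanged, yet $\pmb{p}_n\circ g^{\otimes n}$ changes by $c_n\circ g^{\otimes n}$. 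So no amount of telescoping from \eqref{T16} can produce an \emph{explicit} formula for $\pmb{p}_n\circ g^{\otimes n}$; indeed nothing in your listed inputs even contains the combination $\mu_k(h\circ\pmb{p}_{r_1}\otimes\cdots\otimes h\circ\pmb{p}_{r_k})$, so it cannot emerge from the cancellation you describe. Even your base case $n=2$, namely $\pmb{p}_2\circ g^{\otimes 2}=\mu_2\circ g^{\otimes 2}$, is not ``immediate from the definitions'': the $\pmb{p}_n$ here are not yet the recursively defined kernels of Definition~\ref{k2d17}, but arbitrary maps assumed to solve the transfer problem.

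The missing ingredient is the $A_\infty$-morphism identity for $\pmb{\psi}=(g,\psi_2,\dots)$ with $\psi_n=h\circ\pmb{p}_n\circ g^{\otimes n}$, which is part of the standing hypothesis. That identity is the \emph{only} equation in which the target-side term $\sum_{B(n)}(-1)^{\vartheta}\mu_k(\psi_{r_1}\otimes\cdots\otimes\psi_{r_k})$ --- i.e.\ exactly the right-hand side of the claim --- appears, and also the only place where an undifferentiated $\pmb{p}_n\circ g^{\otimes n}$ is produced, via $\partial_V\psi_n=(gf-\mathbb{1}_V-h\partial_V)\circ\pmb{p}_n\circ g^{\otimes n}$ cancelling against $\psi_1\nu_n=gf\circ\pmb{p}_n\circ g^{\otimes n}$. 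Once that identity is written down and \eqref{Antanz} is substituted, all remaining terms are $h$ applied to the left-hand side of \eqref{T16} (precomposed with $g^{\otimes n}$) and vanish by assumption; no induction and no desuspension are needed. Your desuspension framework is fine as a sign-management device, but the proof must be restructured around the morphism condition for $\pmb{\psi}$ rather than around \eqref{T16} alone.
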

\begin{proof}
According to \eqref{T16} these $\pmb{p}-$kernels induce 
$A_\infty$ structure on $(W, \partial_W)$ by Lemma~\ref{k2t15}.
It remains to verify that they give $A_\infty$ morphism from 
$(V, \partial_V, \pmb{\mu})$ to $(W, \partial_W, \pmb{\nu})$, i.e.
$$\partial_V \psi_n + \sum_{B(n)} (-1)^{\vartheta(r_1, \mbox{\dots}, r_k)}\mu_k(\psi_{r_1} \otimes \mbox{\dots} \otimes \psi_{r_k})$$
$$= \psi_1 \nu_n -\sum_{u=1}^{n}(-1)^{n} \psi_n(\mathbb{1}_W^{\otimes u-1} \otimes \partial_W \otimes \mathbb{1}_W^{\otimes n-u}) $$$$- \sum_{A(n)}(-1)^{i(\ell+1) + n}\psi_k(\mathbb{1}_W^{\otimes i-1} \otimes \nu_\ell \otimes \mathbb{1}_W^{\otimes k - i}),$$
 which by \eqref{Antanz} can be formulated as 
$$\partial_V h \circ \pmb{p}_n \circ g^{\otimes n} + \left(\sum_{B(n)} (-1)^{\vartheta(r_1, \mbox{\dots}, r_k)}\mu_k(h \circ \pmb{p}_{r_1} \otimes \mbox{\dots} \otimes h \circ \pmb{p}_{r_k})\right) \circ g^{\otimes n}$$
$$= gf \circ \pmb{p}_n \circ g^{\otimes n} -h \circ \left(\sum_{u=1}^{n}(-1)^{n} \pmb{p}_n(\mathbb{1}_V^{\otimes  u-1} \otimes \partial_V \otimes \mathbb{1}_V^{\otimes  n-u})\right) \circ g^{\otimes n} $$
$$- h \circ \left(\sum_{A(n)}(-1)^{i(\ell+1) + n}\pmb{p}_k(\mathbb{1}_V^{\otimes  i-1} \otimes gf \circ \pmb{p}_\ell \otimes \mathbb{1}_V^{\otimes  k - i})\right)\circ g^{\otimes n}.$$
Due to $gf - \mathbb{1}_V = \partial_V h + h \partial_V,$ we have 
$$\left(\sum_{B(n)} (-1)^{\vartheta(r_1, \mbox{\dots}, r_k)}\mu_k(h \circ \pmb{p}_{r_1} \otimes \mbox{\dots} \otimes h \circ \pmb{p}_{r_k})\right) \circ g^{\otimes n}$$
$$= \pmb{p}_n \circ g^{\otimes n} -h \circ \left(-\partial_V \pmb{p}_n + \sum_{u=1}^{n}(-1)^{n} \pmb{p}_n(\mathbb{1}_V^{\otimes  u-1} \otimes \partial_V \otimes \mathbb{1}_V^{\otimes  n-u})\right) \circ g^{\otimes n} $$
$$- h \circ \left(\sum_{A(n)}(-1)^{i(\ell+1) + n}\pmb{p}_k(\mathbb{1}_V^{\otimes  i-1} \otimes gf \circ \pmb{p}_\ell \otimes \mathbb{1}_V^{\otimes  k - i})\right)\circ g^{\otimes n}.$$
By assumption \eqref{T16}, we obtain
$$ h \circ \left(-\partial_V \pmb{p}_n + \sum_{u=1}^{n}(-1)^{n} \pmb{p}_n(\mathbb{1}_V^{\otimes  u-1} \otimes \partial_V \otimes \mathbb{1}_V^{\otimes  n-u})\right) \circ g^{\otimes n} $$
$$ + h \circ \left(\sum_{A(n)}(-1)^{i(\ell+1) + n}\pmb{p}_k(\mathbb{1}_V^{\otimes  i-1} \otimes gf \circ \pmb{p}_\ell \otimes \mathbb{1}_V^{\otimes  k - i})\right) \circ g^{\otimes n} = 0,$$
which reduces to
$$\left(\pmb{p}_n-\sum_{B(n)} (-1)^{\vartheta(r_1, \mbox{\dots}, r_k)}\mu_k(h \circ \pmb{p}_{r_1} \otimes \mbox{\dots} \otimes h \circ \pmb{p}_{r_k})\right) \circ g^{\otimes n}= 0.$$
\end{proof}

\begin{remark}
The assumption of Lemma~\ref{k2t16} can be weaken to
\begin{equation}
\label{T16g}
	\begin{split}
		\partial_V\pmb{p}_n\circ g^{\otimes n} - \sum_{u=1}^{n}(-1)^{n} \pmb{p}_n(\mathbb{1}_V^{\otimes  u-1} \otimes \partial_V \otimes \mathbb{1}_V^{\otimes  n-u})\circ g^{\otimes n} \\
		 -\sum_{A(n)}(-1)^{i(\ell+1) + n}\pmb{p}_k(\mathbb{1}_V^{\otimes  i-1} \otimes gf \circ \pmb{p}_\ell \otimes \mathbb{1}_V^{\otimes  k - i}) \circ g^{\otimes n} = 0,
	\end{split}
\end{equation} 
where \eqref{T16g} is fulfilled if the $\pmb{p}-$kernels define $A_\infty$ structure on $(W, \partial_W)$, and 
$f$ is a monomorphism. In the situation of interest is $f$, however, assumed to be an epimorphism.\\
\end{remark}

\begin{definition}[$\pmb{p}-$kernels, \cite{Markl06}]\label{k2d17}
We define for each $n \geq 2$: 
\begin{align}\label{pkernels}
\pmb{p}_n = \sum_{B(n)} (-1)^{\vartheta(r_1, \mbox{\dots}, r_k)}\mu_k(h \circ \pmb{p}_{r_1} \otimes \mbox{\dots} \otimes h \circ \pmb{p}_{r_k}),
\end{align}
where $h \circ \pmb{p}_1 = \mathbb{1}_V$, with $B(n)$ given in \eqref{B} and $\vartheta(r_1, \mbox{\dots}, r_k)$ 
given in \eqref{theta}.
\end{definition}

\begin{remark}
For $\pmb{p}-$kernels there exists a non-inductive explicit expression. Each term in the $\pmb{p}-$kernel 
can be represented by a rooted plane tree, and there is a function which associates to a rooted plane tree a
sign corresponding to our inductive definition.
\end{remark}

\begin{theorem}\label{k2v18}
The $\pmb{p}-$kernels introduced in \cite{Markl06} satisfy
\begin{equation}
\tag{\ref{T16}}\label{pkernelform}
	\begin{split}
		\partial_V\pmb{p}_n - \sum_{u=1}^{n}(-1)^{n} \pmb{p}_n(\mathbb{1}_V^{\otimes  u-1} \otimes \partial_V \otimes \mathbb{1}_V^{\otimes  n-u})\\
		-\sum_{A(n)}(-1)^{i(\ell+1) + n}\pmb{p}_k(\mathbb{1}_V^{\otimes  i-1} \otimes gf \circ \pmb{p}_\ell \otimes \mathbb{1}_V^{\otimes  k - i})= 0,
	\end{split}
\end{equation}  for all $n \geq 2$.
\end{theorem}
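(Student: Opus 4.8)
The plan is to prove the identity \eqref{T16} by induction on $n$, substituting the inductive definition \eqref{pkernels} of the $\pmb{p}$-kernels and reorganizing the resulting sum so that the $A_\infty$-relations \eqref{ainftyrelations} for $\pmb\mu$, together with the homotopy identity $gf-\mathbb{1}_V=\partial_V h+h\partial_V$, collapse everything to zero. The base case $n=2$ is a direct check: $\pmb{p}_2=\mu_2$ (the only element of $B(2)$ is $k=2,r_1=r_2=1$ with $\vartheta(1,1)=0$ and $h\circ\pmb{p}_1=\mathbb{1}_V$), so \eqref{T16} for $n=2$ reads $\partial_V\mu_2-\sum_{u=1}^2(-1)^2\mu_2(\mathbb{1}_V^{\otimes u-1}\otimes\partial_V\otimes\mathbb{1}_V^{\otimes 2-u})=0$, which is precisely \eqref{ainftyrelations} for $n=2$ (the right-hand side of \eqref{ainftyrelations} is empty since $A(2)=\emptyset$).

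First I would assume \eqref{T16} holds for all indices smaller than $n$ and expand the leading term $\partial_V\pmb{p}_n$ using \eqref{pkernels}. Applying $\partial_V$ to $\mu_k(h\circ\pmb{p}_{r_1}\otimes\cdots\otimes h\circ\pmb{p}_{r_k})$ via the Leibniz rule (in the Koszul-signed form) produces a term with $\partial_V$ hitting $\mu_k$ from outside plus $k$ terms with $\partial_V$ hitting one of the $h\circ\pmb{p}_{r_j}$ factors. For the term where $\partial_V$ hits $h\circ\pmb{p}_{r_j}$ I would use $\partial_V h=gf-\mathbb{1}_V-h\partial_V$ (so $\partial_V(h\circ\pmb{p}_{r_j})=gf\circ\pmb{p}_{r_j}-\pmb{p}_{r_j}-h\partial_V\pmb{p}_{r_j}$ when $r_j\ge 2$, and $\partial_V\mathbb{1}_V=0$ when $r_j=1$). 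The $\partial_V\mu_k$ term is rewritten through the $A_\infty$-relation \eqref{ainftyrelations} as a sum of $\mu_k(\mathbb{1}^{\otimes i-1}\otimes\partial_V\otimes\mathbb{1}^{\otimes k-i})$ terms plus $\mu_{k'}(\mathbb{1}^{\otimes i-1}\otimes\mu_{\ell'}\otimes\mathbb{1}^{\otimes k-i})$ terms. The $\partial_V$-in-the-middle terms coming from \eqref{ainftyrelations} recombine with the $h\partial_V\pmb{p}_{r_j}$ pieces and, when $r_j\ge 2$, with the expansion of $\partial_V\pmb{p}_{r_j}$ provided by the inductive hypothesis \eqref{T16} for $r_j$; what survives of those are exactly the terms $\mu_k(h\circ\pmb{p}_{r_1}\otimes\cdots\otimes gf\circ\pmb{p}_{s}\otimes\cdots)$ needed to build the $\sum_{A(n)}\pmb{p}_k(\cdots gf\circ\pmb{p}_\ell\cdots)$ term of \eqref{T16}, after re-expanding $\pmb{p}_k$ and $\pmb{p}_\ell$ by \eqref{pkernels} and matching the $\vartheta$-signs.

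The main obstacle, and where the bulk of the work lies, is the bookkeeping of signs: one must check that the Koszul signs generated by commuting $\partial_V$ (degree $-1$) and $h$ (degree $+1$) past the tensor factors $h\circ\pmb{p}_{r_j}$ (degree $r_j-1$), combined with the explicit signs $(-1)^{\vartheta(r_1,\dots,r_k)}$ from \eqref{pkernels}, $(-1)^n$ and $(-1)^{i(\ell+1)+n}$ from \eqref{T16}, and $(-1)^{i(\ell+1)+n}$ from \eqref{ainftyrelations}, all conspire to make the reorganized sum cancel identically. I would handle this by passing to the bar/cobar picture: by Theorem~\ref{k1v13} the $A_\infty$-structure $\pmb\mu$ corresponds to a codifferential $\delta$ on $\overline{T}sV$ with components $\delta_n=s\circ\mu_n\circ\omega^{\otimes n}$, and the suspended $\pmb{p}$-kernels $\tilde{\pmb{p}}_n=s\circ\pmb{p}_n\circ\omega^{\otimes n}$ satisfy the sign-free recursion $\tilde{\pmb{p}}_n=\sum_{B(n)}\delta_k(\tilde h\tilde{\pmb{p}}_{r_1}\otimes\cdots\otimes\tilde h\tilde{\pmb{p}}_{r_k})$ with $\tilde h=s\circ h\circ\omega$; in that language \eqref{T16} becomes the sign-free identity $\delta_1\tilde{\pmb{p}}_n+\sum_i\tilde{\pmb{p}}_n(\mathbb{1}^{\otimes i-1}\otimes\delta_1\otimes\mathbb{1}^{\otimes n-i})+\sum_{A(n)}\tilde{\pmb{p}}_k(\mathbb{1}^{\otimes i-1}\otimes\tilde g\tilde f\tilde{\pmb{p}}_\ell\otimes\mathbb{1}^{\otimes k-i})=0$, which by Lemma~\ref{k1t6}-type reasoning is equivalent to a statement about the square of an induced map being zero and can be verified cleanly using $\tilde g\tilde f-\mathbb{1}=\delta_1\tilde h+\tilde h\delta_1$ and $\delta\circ\delta=0$. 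Once the sign-free version is established at the level of $\overline{T}sV$, transporting back through $s,\omega$ recovers \eqref{T16} in the stated signed form, completing the induction.
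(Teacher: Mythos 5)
Your proposal is correct and takes essentially the same route as the paper: the paper likewise passes to the suspension $\overline{T}sV$, uses the sign-free recursion $\hat{\pmb{p}}_n=\sum_{B(n)}\delta_k(\hat h\circ\hat{\pmb{p}}_{r_1}\otimes\cdots\otimes\hat h\circ\hat{\pmb{p}}_{r_k})$, and proves the sign-free form of \eqref{T16} by induction using $\delta\circ\delta=0$ together with $\hat g\hat f-\mathbb{1}_V=\delta_1\hat h+\hat h\delta_1$ and the induction hypothesis applied to the inner factors. The only difference is presentational: the paper carries out the final regrouping explicitly as the pairings \eqref{P1}+\eqref{P4} and \eqref{P2}+\eqref{P3}, which you summarize as recombination but do not spell out.
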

\begin{proof}
Let us first simplify our situation by passing to the suspension 
$\overline{T}sV$ with the induced codifferential $\delta$.
Because $s$ and $\omega$ are by Definition~\ref{k1d11} izomorphisms, \eqref{pkernelform} is true if and only if 
$$s\circ \left(\partial_V\pmb{p}_n - \sum_{u=1}^{n}(-1)^{n} \pmb{p}_n(\mathbb{1}_V^{\otimes  u-1} \otimes \partial_V \otimes \mathbb{1}_V^{\otimes  n-u})\right)\circ \omega^{\otimes n} $$
$$= s \circ \left(\sum_{A(n)}(-1)^{i(\ell+1) + n}\pmb{p}_k(\mathbb{1}_V^{\otimes  i-1} \otimes gf \circ \pmb{p}_\ell \otimes \mathbb{1}_V^{\otimes  k - i})\right) \circ \omega^{\otimes n}.$$
Introducing $\hat{\pmb{p}}_m = s \circ \pmb{p}_m \circ \omega^{\otimes m},$ $\hat{g} = s \circ g \circ \omega$ and $\hat{f} = s \circ f \circ \omega$ ($|\hat{\pmb{p}}_m| = -1$, $|\hat{g}| = |\hat{f}| = 0$), we have
$$\delta_1\hat{\pmb{p}}_n + \sum_{u=1}^{n}\hat{\pmb{p}}_n(\mathbb{1}_V^{\otimes  u-1} \otimes \delta_1 \otimes \mathbb{1}_V^{\otimes  n-u}) + \sum_{A(n)}\hat{\pmb{p}}_k(\mathbb{1}_V^{\otimes  i-1} \otimes \hat{g}\hat{f} \circ \hat{\pmb{p}}_\ell \otimes \mathbb{1}_V^{\otimes  k - i}) = 0.$$
The proof of the last claim goes by induction. The case $n = 2$ corresponds to
$$\delta_1 \delta_2 + \delta_2(\mathbb{1}_V \otimes \delta_1) + \delta_2(\delta_1 \otimes \mathbb{1}_V) = 0,$$
which is certainly true because $\{\delta_n: V^{\otimes n} \to V\}_{n \geq 1}$ are the components of the
codifferential on $\overline{T}sV$ (cf., \eqref{T6} for $n = 2$ in Lemma~\ref{k1t6}.)\\

\noindent By induction hypothesis, we assume the claim is true for all natural numbers less than $n$. 
The proof is naturally divided into three steps:\\

\noindent \textbf{I.} We shall first expand the term $\delta_1 \hat{\pmb{p}}_n$: we have 
$\hat{\pmb{p}}_n = s \circ \pmb{p}_n \circ \omega^{\otimes n},$ so by Definition~\ref{k2d17}
$$ \hat{\pmb{p}}_n = s \circ \left(\sum_{B(n)} (-1)^{\vartheta(r_1, \mbox{\dots}, r_k)}\mu_k(h \circ \pmb{p}_{r_1} \otimes \mbox{\dots} \otimes h \circ \pmb{p}_{r_k})\right) \circ \omega^{\otimes n} 
$$$$ =\sum_{B(n)} (-1)^{\vartheta(r_1, \mbox{\dots}, r_k)}(-1)^{\sigma}s \circ \mu_k(\omega \circ s \circ h \circ \pmb{p}_{r_1} \circ \omega^{\otimes r_1} \otimes \mbox{\dots}$$$$\mbox{\dots} \otimes \omega \circ s \circ h \circ \pmb{p}_{r_k} \circ \omega^{\otimes r_k})
$$  with $\sigma = \sum_{1 \leq i < j \leq k} r_i(r_j + 1)$. However, 
	$|s \circ h \circ \pmb{p}_{r_i} \circ \omega^{\otimes r_i}| = 1 + 1 + (r_i - 2) - r_i = 0$, so the last 
	display equals to
\begin{align}\nonumber	
 \sum_{B(n)} s \circ \mu_k \circ \omega^{\otimes k}(s \circ h \circ \omega \circ s \circ \pmb{p}_{r_1} \circ \omega^{\otimes r_1} \otimes \mbox{\dots} \otimes s \circ h \circ \omega \circ s \circ \pmb{p}_{r_k} \circ \omega^{\otimes r_k}).
\end{align}
Consequently,
\begin{equation}
\label{p-kernel}
	\hat{\pmb{p}}_n = \sum_{B(n)} \delta_k(\hat{h} \circ \hat{\pmb{p}}_{r_1} \otimes \mbox{\dots} \otimes \hat{h} \circ \hat{\pmb{p}}_{r_k}),\quad \hat{h} = s \circ h \circ \omega\quad (|\hat{h}| =1),
\end{equation}
and so
\begin{align}
\delta_1 \hat{\pmb{p}}_n =& \sum_{B(n)} \delta_1\delta_{k}(\hat{h} \circ \hat{\pmb{p}}_{r_1} \otimes \mbox{\dots} \otimes \hat{h} \circ \hat{\pmb{p}}_{r_{k}})
\nonumber \\
=&  -\sum_{B(n)} \left(\sum_{i = 1}^{k} \delta_k \left(\mathbb{1}_V^{\otimes  i-1} \otimes \delta_1 \otimes \mathbb{1}_V^{\otimes  k-i}\right)\right)(\hat{h} \circ \hat{\pmb{p}}_{r_1} \otimes \mbox{\dots} \otimes \hat{h} \circ \hat{\pmb{p}}_{r_k}) 
\nonumber \\ \nonumber 
&- \sum_{B(n)}\left(\sum_{A(k)} \delta_{k'}\left(\mathbb{1}_V^{\otimes  i-1} \otimes \delta_\ell \otimes \mathbb{1}_V^{\otimes  k' - i}\right) \right)(\hat{h} \circ \hat{\pmb{p}}_{r_1} \otimes \mbox{\dots} \otimes \hat{h} \circ \hat{\pmb{p}}_{r_k}).
\end{align}
The last summation can be rewritten as 
$$\sum_{B(n)}\left(\sum_{A(k)} \delta_{k'}\left(\mathbb{1}_V^{\otimes  i-1} \otimes \delta_\ell \otimes \mathbb{1}_V^{\otimes  k' - i}\right) \right)(\hat{h} \circ \hat{\pmb{p}}_{r_1} \otimes \mbox{\dots} \otimes \hat{h} \circ \hat{\pmb{p}}_{r_k}) $$
$$= \sum_{B(n)}\sum_{A(k)} \delta_{k'}\left(\hat{h} \circ \hat{\pmb{p}}_{r_1} \otimes \mbox{\dots} \otimes \delta_\ell(\hat{h} \circ \hat{\pmb{p}}_{r_i} \otimes \mbox{\dots} \otimes \hat{h} \circ \hat{\pmb{p}}_{r_{i+\ell}}) \otimes \mbox{\dots} \otimes \hat{h} \circ \hat{\pmb{p}}_{r_k}\right)$$ 
$$= \sum_{B(n),\, r_i > 1} \delta_{k}(\hat{h} \circ \hat{\pmb{p}}_{r_1} \otimes \mbox{\dots} \otimes \hat{\pmb{p}}_{r_i} \otimes \mbox{\dots} \otimes \hat{h} \circ \hat{\pmb{p}}_{r_{k}}),$$
where the last equality comes from the summation over all $r_1,\ldots,r_{i+l}$ with 
$r_i + \mbox{\dots} + r_{i+\ell}$ fixed. We conclude
\begin{align}
\delta_1 \hat{\pmb{p}}_n = & -\sum_{B(n), r_i > 1} \delta_{k}(\hat{h} \circ \hat{\pmb{p}}_{r_1} \otimes \mbox{\dots} \otimes (\delta_1\hat{h} + \mathbb{1}_V)\hat{\pmb{p}}_{r_i} \otimes \mbox{\dots} \otimes \hat{h} \circ \hat{\pmb{p}}_{r_{k}})
\nonumber \\ \nonumber
& -\sum_{B(n), r_i = 1} \delta_{k}(\hat{h} \circ \hat{\pmb{p}}_{r_1} \otimes \mbox{\dots} \otimes \delta_1\hat{h}\circ \hat{\pmb{p}}_{r_i} \otimes \mbox{\dots} \otimes \hat{h} \circ \hat{\pmb{p}}_{r_{k}}).
\end{align}

\textbf{II.} We shall apply the induction hypothesis to $\delta_1 \hat{\pmb{p}}_n$.
We remind the formal equality $\hat{h} \circ \hat{\pmb{p}}_{1} = \mathbb{1}_V$ and also
$gf - \mathbb{1}_V = \partial_Vh + h\partial_V$ equivalent to 
$\delta_1 \hat{h} + \mathbb{1}_V = \hat{g}\hat{f} - \hat{h} \delta_1$. Then
\begin{align}
 \label{V24_1}
	\delta_1 \hat{\pmb{p}}_n &= \sum_{B(n), r_i > 1} \delta_{k}(\hat{h} \circ \hat{\pmb{p}}_{r_1} \otimes 
	\mbox{\dots} \otimes (\hat{h}\delta_1 - \hat{g}\hat{f})\hat{\pmb{p}}_{r_i} \otimes \mbox{\dots} \otimes \hat{h} \circ \hat{\pmb{p}}_{r_{k}}) \nonumber \\
&-\sum_{B(n), r_i = 1} \delta_{k}(\hat{h} \circ \hat{\pmb{p}}_{r_1} \otimes \mbox{\dots} \otimes \delta_1\hat{h}\circ \hat{\pmb{p}}_{r_i} \otimes \mbox{\dots} \otimes \hat{h} \circ \hat{\pmb{p}}_{r_{k}}).
\end{align}
  The second part of the first term on the right hand side \eqref{V24_1} equals
$$
 -\sum_{B(n)} \delta_{k}(\hat{h} \circ \hat{\pmb{p}}_{r_1} \otimes \mbox{\dots} \otimes \hat{g}\hat{f} \circ \hat{\pmb{p}}_{r_i} \otimes \mbox{\dots} \otimes \hat{h} \circ \hat{\pmb{p}}_{r_{k}})$$
$$ = \sum_{B(n)} \delta_{k}(\hat{h} \circ \hat{\pmb{p}}_{r_1} \otimes \mbox{\dots} \otimes \hat{h} \circ \hat{\pmb{p}}_{1} \otimes \mbox{\dots} \otimes \hat{h} \circ \hat{\pmb{p}}_{r_{k}})(\mathbb{1}_V^{\otimes  s} \otimes \hat{g}\hat{f} \circ\hat{\pmb{p}}_{r_i} \otimes \mathbb{1}_V^{\otimes  n - s - r_i}),
$$
where $s = \sum_{j < i} r_j.$ The second term in \eqref{V24_1} equals 
$$-\sum_{B(n), r_i = 1} \delta_{k}(\hat{h} \circ \hat{\pmb{p}}_{r_1} \otimes \mbox{\dots} \otimes \delta_1\hat{h}\circ \hat{\pmb{p}}_{r_i} \otimes \mbox{\dots} \otimes \hat{h} \circ \hat{\pmb{p}}_{r_{k}}) $$ 
$$= -\sum_{B(n), r_i = 1} \delta_{k}(\hat{h} \circ \hat{\pmb{p}}_{r_1} \otimes \mbox{\dots} \otimes \hat{h}\circ \hat{\pmb{p}}_{r_i} \otimes \mbox{\dots} \otimes \hat{h} \circ \hat{\pmb{p}}_{r_{k}})(\mathbb{1}_V^{\otimes  s} \otimes \delta_1 \otimes \mathbb{1}_V^{\otimes  n - s - r_i}).$$ 
By induction hypothesis, we have for all $m < n$
$$\delta_1\hat{\pmb{p}}_m = -\sum_{u=1}^{m}\hat{\pmb{p}}_m(\mathbb{1}_V^{\otimes  u-1} \otimes \delta_1 \otimes \mathbb{1}_V^{\otimes  m-u}) - \sum_{A(m)}\hat{\pmb{p}}_k(\mathbb{1}_V^{\otimes  i-1} \otimes \hat{g}\hat{f} \circ \hat{\pmb{p}}_\ell \otimes \mathbb{1}_V^{\otimes  k - i}).$$
Finally, the first part of the first term \eqref{V24_1} equals
$$\sum_{B(n), r_i > 1} \delta_{k}(\hat{h} \circ \hat{\pmb{p}}_{r_1} \otimes \mbox{\dots} \otimes \hat{h} \circ\delta_1 \hat{\pmb{p}}_{r_i} \otimes \mbox{\dots} \otimes \hat{h} \circ \hat{\pmb{p}}_{r_{k}})$$ 
$$=-\sum_{B(n), r_i > 1} \delta_{k}\left(\hat{h} \circ \hat{\pmb{p}}_{r_1} \otimes \mbox{\dots} \otimes \sum_{u=1}^{r_i}\hat{h} \circ \hat{\pmb{p}}_{r_i}(\mathbb{1}_V^{\otimes  u-1} \otimes \delta_1 \otimes \mathbb{1}_V^{\otimes  r_i-u}) \otimes \mbox{\dots} \otimes \hat{h} \circ \hat{\pmb{p}}_{r_{k}}\right) $$
$$- \sum_{B(n), r_i > 1} \delta_{k}\!\!\left(\hat{h} \circ \hat{\pmb{p}}_{r_1} \otimes \!\mbox{\dots}\! \otimes \! \sum_{A(r_i)}\hat{h} \circ \hat{\pmb{p}}_k(\mathbb{1}_V^{\otimes  i-1} \otimes \hat{g}\hat{f} \circ \hat{\pmb{p}}_\ell \otimes \mathbb{1}_V^{\otimes  k - i}) \otimes \!\mbox{\dots}\! \otimes \hat{h} \circ \hat{\pmb{p}}_{r_{k}}\right)\!\!.$$
\textbf{III.} Now we pair up the contributions appearing in the previous step: the right hand side of 
\eqref{V24_1} can be rewritten as
\begin{align} 
\tag{P1}
\label{P1}
&-\sum_{B(n), r_i > 1} \delta_{k}\!\!\left(\hat{h} \circ \hat{\pmb{p}}_{r_1} \otimes \mbox{\dots} \otimes \sum_{u=1}^{r_i}\hat{h} \circ \hat{\pmb{p}}_{r_i}(\mathbb{1}_V^{\otimes  u-1} \otimes \delta_1 \otimes \mathbb{1}_V^{\otimes  r_i-u}) \otimes \mbox{\dots} \otimes \hat{h} \circ \hat{\pmb{p}}_{r_{k}}\right)\\
\tag{P2}
\label{P2}
&-\sum_{B(n), r_i > 1} \delta_{k}\!\!\left(\hat{h} \circ \hat{\pmb{p}}_{r_1}\! \otimes \!\mbox{\dots} \! \otimes \! \sum_{A(r_i)}\hat{h} \circ \hat{\pmb{p}}_k(\mathbb{1}_V^{\otimes  i-1} \otimes \hat{g}\hat{f} \circ \hat{\pmb{p}}_\ell \! \otimes \! \mathbb{1}_V^{\otimes  k - i}) \otimes \! \mbox{\dots} \! \otimes \hat{h} \circ \hat{\pmb{p}}_{r_{k}}\right)\\
\tag{P3}
\label{P3} 
&-\sum_{B(n)} \delta_{k}(\hat{h} \circ \hat{\pmb{p}}_{r_1} \otimes \mbox{\dots} \otimes \hat{h} \circ \hat{\pmb{p}}_{1} \otimes \mbox{\dots} \otimes \hat{h} \circ \hat{\pmb{p}}_{r_{k}})(\mathbb{1}_V^{\otimes  s} \otimes \hat{g}\hat{f} \circ\hat{\pmb{p}}_{r_i} \otimes \mathbb{1}_V^{\otimes  n - s - r_i}) \\
\tag{P4}
\label{P4} 
&-\sum_{B(n), r_i = 1} \delta_{k}(\hat{h} \circ \hat{\pmb{p}}_{r_1} \otimes \mbox{\dots} \otimes \hat{h}\circ \hat{\pmb{p}}_{r_i} \otimes \mbox{\dots} \otimes \hat{h} \circ \hat{\pmb{p}}_{r_{k}})(\mathbb{1}_V^{\otimes s} \otimes \delta_1 \otimes \mathbb{1}_V^{\otimes  n - s - r_i}),
\end{align}
with $s = \sum_{j < i} r_j$, and we get 
\begin{align*} 
\eqref{P1} + \eqref{P4} &= -\sum_{u=1}^{n}\hat{\pmb{p}}_n(\mathbb{1}_V^{\otimes  u-1} \otimes \delta_1 \otimes \mathbb{1}_V^{\otimes  n-u}),\\
\eqref{P2} + \eqref{P3} &=- \sum_{A(n)}\hat{\pmb{p}}_k(\mathbb{1}_V^{\otimes  i-1} \otimes \hat{g}\hat{f} \circ \hat{\pmb{p}}_\ell \otimes \mathbb{1}_V^{\otimes  k - i}).
\end{align*}
\end{proof}

\begin{remark}
Theorem~\ref{k2v18} implies that the $\pmb{p}-$kernels in \cite{Markl06} fulfill \eqref{T16g}.
\end{remark}

\subsection{\texorpdfstring{$\pmb{q}-$kernels}{q-kernels}}

\begin{lemma}\label{k2t19}
The $\pmb{q}-$kernels constitute
$A_\infty$ morphism $\pmb{\varphi} = (f, \varphi_2, \varphi_3, \mbox{\dots})$,
$\varphi_n = f \circ \pmb{q}_n$ and $\nu_n = f \circ \pmb{p}_n \circ g^{\otimes n}$,
from $(V, \partial_V, \mu_2, \mu_3, \mbox{\dots})$ to 
$(W, \partial_W, \nu_2,$\linebreak$\nu_3, \mbox{\dots})$ if and only if for all $n \geq 2$:
$$f \circ \left(\partial_V\pmb{q}_n + \sum_{u=1}^{n}(-1)^{n} \pmb{q}_n(\mathbb{1}_V^{\otimes  u-1} \otimes \partial_V \otimes \mathbb{1}_V^{\otimes  n-u}) \right.$$
$$+ \sum_{B(n)}(-1)^{\vartheta(r_1, \mbox{\dots}, r_k)}\pmb{p}_k(gf \circ \pmb{q}_{r_1}\! \otimes \mbox{\dots} \otimes gf \circ \pmb{q}_{r_k}) $$
$$\left. +\sum_{A(n)}(-1)^{i(\ell + 1) + n}\pmb{q}_{k}(\mathbb{1}_V^{\otimes  i-1} \otimes \mu_{\ell} \otimes \mathbb{1}_V^{\otimes  n-k}) - \pmb{q}_1\mu_n\!\right)\! = 0.$$
\end{lemma}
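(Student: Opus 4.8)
The plan is to prove this \emph{exactly as} Lemma~\ref{k2t15} was proved for the $\pmb{p}$-kernels: write out the defining relations of an $A_\infty$ morphism and substitute the ansatz \eqref{Antanz}, using only that $f$ and $g$ are linear of degree $0$ (so they produce no Koszul signs) and that $f$ is a chain map. Concretely, I would start from the condition for $\pmb{\varphi}=(f,\varphi_2,\varphi_3,\dots)$ to be an $A_\infty$ morphism from $(V,\partial_V,\pmb{\mu})$ to $(W,\partial_W,\pmb{\nu})$, namely
\begin{align*}
\partial_W\varphi_n + \sum_{B(n)}(-1)^{\vartheta(r_1,\dots,r_k)}\nu_k(\varphi_{r_1}\otimes\dots\otimes\varphi_{r_k})
&= \varphi_1\mu_n - \sum_{u=1}^{n}(-1)^{n}\varphi_n\bigl(\mathbb{1}_V^{\otimes u-1}\otimes\partial_V\otimes\mathbb{1}_V^{\otimes n-u}\bigr)\\
&\quad - \sum_{A(n)}(-1)^{i(\ell+1)+n}\varphi_k\bigl(\mathbb{1}_V^{\otimes i-1}\otimes\mu_\ell\otimes\mathbb{1}_V^{\otimes k-i}\bigr),
\end{align*}
for all $n\geq 1$, where $\varphi_1=f$. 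The case $n=1$ reads $\partial_W f = f\partial_V$ and holds because $f$ is a morphism of chain complexes, so only $n\geq 2$ requires work.

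For $n\geq 2$ I would substitute $\varphi_m = f\circ\pmb{q}_m$ and $\nu_m = f\circ\pmb{p}_m\circ g^{\otimes m}$, using the bookkeeping convention $\pmb{q}_1:=\mathbb{1}_V$ (so that $\varphi_1=f=f\circ\pmb{q}_1$). Since $\partial_W f = f\partial_V$, the first term becomes $\partial_W f\pmb{q}_n = f\partial_V\pmb{q}_n$; the terms $\varphi_n(\cdots)$, $\varphi_k(\cdots)$ and $\varphi_1\mu_n$ become $f\pmb{q}_n(\cdots)$, $f\pmb{q}_k(\cdots)$ and $f\mu_n$ with no extra signs. The only term needing a moment's care is the quadratic one: the internal $g^{\otimes k}$ of $\nu_k$ composed with $\varphi_{r_1}\otimes\dots\otimes\varphi_{r_k}=(f\circ\pmb{q}_{r_1})\otimes\dots\otimes(f\circ\pmb{q}_{r_k})$ collapses (again signlessly, as $|g|=0$) to $(gf\circ\pmb{q}_{r_1})\otimes\dots\otimes(gf\circ\pmb{q}_{r_k})$, the $r_j=1$ slots being absorbed by $gf\circ\pmb{q}_1=gf$; hence
$$\nu_k(\varphi_{r_1}\otimes\dots\otimes\varphi_{r_k}) = f\circ\pmb{p}_k\circ\bigl(gf\circ\pmb{q}_{r_1}\otimes\dots\otimes gf\circ\pmb{q}_{r_k}\bigr).$$

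After these substitutions every summand on both sides carries $f$ as its outermost factor, so I would move all terms to one side, factor $f$ out on the left, and use $\mu_n=\pmb{q}_1\mu_n$ to obtain precisely
\begin{align*}
f\circ\Bigl(&\partial_V\pmb{q}_n + \sum_{u=1}^{n}(-1)^{n}\pmb{q}_n\bigl(\mathbb{1}_V^{\otimes u-1}\otimes\partial_V\otimes\mathbb{1}_V^{\otimes n-u}\bigr)\\
&+ \sum_{B(n)}(-1)^{\vartheta(r_1,\dots,r_k)}\pmb{p}_k\bigl(gf\circ\pmb{q}_{r_1}\otimes\dots\otimes gf\circ\pmb{q}_{r_k}\bigr)\\
&+ \sum_{A(n)}(-1)^{i(\ell+1)+n}\pmb{q}_k\bigl(\mathbb{1}_V^{\otimes i-1}\otimes\mu_\ell\otimes\mathbb{1}_V^{\otimes k-i}\bigr) - \pmb{q}_1\mu_n\Bigr) = 0,
\end{align*}
which is the asserted identity; conversely, reversing every step (and adjoining the automatic $n=1$ relation) recovers the morphism equations. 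Note that $\pmb{\nu}$ is already an $A_\infty$ structure on $(W,\partial_W)$ by Lemma~\ref{k2t15} together with Theorem~\ref{k2v18}, so the resulting relations are exactly those defining an $A_\infty$ morphism. The computation is entirely routine — it is the morphism analogue of Lemma~\ref{k2t15} — so the only thing demanding attention is matching the three sign families $(-1)^n$, $(-1)^{i(\ell+1)+n}$ and $(-1)^{\vartheta}$ term by term and keeping track of the conventions $\pmb{q}_1=\mathbb{1}_V$ and $h\circ\pmb{p}_1=\mathbb{1}_V$.
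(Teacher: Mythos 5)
Your proposal is correct and is exactly the argument the paper intends: the paper's proof of this lemma is a one-line remark that the claim "follows from the explicit expansion of the $A_\infty$ morphism" with $\varphi_n = f\circ\pmb{q}_n$ and $\nu_n = f\circ\pmb{p}_n\circ g^{\otimes n}$, and your write-up simply fills in that routine substitution (degree-$0$ maps producing no Koszul signs, $\partial_W f = f\partial_V$, and the collapse of $g^{\otimes k}$ onto the $f\circ\pmb{q}_{r_j}$). No gaps; this is the same approach, carried out in more detail than the paper bothers to record.
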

\begin{proof}
The proof easily follows from the explicit expansion of $A_\infty$ morphism 
$\pmb{\varphi} = (f, \varphi_2, \varphi_3, \mbox{\dots})$, which maps 
$(V, \partial_V, \mu_2, \mu_3, \mbox{\dots})$ 
to $(W, \partial_W,$\linebreak$ \nu_2, \nu_3, \mbox{\dots})$ for
$\varphi_n = f \circ \pmb{q}_n$ and 
$\nu_n = f \circ \pmb{p}_n \circ g^{\otimes n}$ (cf. \eqref{Antanz}).
\end{proof}

\begin{lemma}\label{k2t20}
Let the $\pmb{q}-$kernels fulfill 
\begin{align}
\label{T20}
& \partial_V\pmb{q}_n +\sum_{u=1}^{n}(-1)^{n} \pmb{q}_n(\mathbb{1}_V^{\otimes  u-1} \otimes \partial_V \otimes \mathbb{1}_V^{\otimes  n-u})  \nonumber \\
& + \!\sum_{B(n)}(-1)^{\vartheta(r_1, \mbox{\dots}, r_k)}\pmb{p}_k(gf \circ \pmb{q}_{r_1}\! \otimes \mbox{\dots} \otimes gf \circ \pmb{q}_{r_k}) \\ \nonumber
& + \!\sum_{A(n)}(-1)^{i(\ell + 1) + n}\pmb{q}_{k}(\mathbb{1}_V^{\otimes  i-1} \otimes \mu_{\ell} \otimes \mathbb{1}_V^{\otimes  n-k})  - \pmb{q}_1\mu_n= 0.
\end{align}
for all $n \geq 2$. Then we have
$$\pmb{q}_n = \sum_{C(n)} (-1)^{n+r_i + \vartheta(r_1, \mbox{\dots}, r_i)}\mu_k((\pmb{\psi}\pmb{\varphi})_{r_1} \otimes \mbox{\dots} \otimes (\pmb{\psi}\pmb{\varphi})_{r_{i-1}} \otimes h \circ \pmb{q}_{r_i} \otimes \mathbb{1}_V^{\otimes k-i}),$$
for all $n \geq 2$, where $A_\infty$ morphisms 
$\pmb{\varphi}$ and $\pmb{\psi}$ are given by $\pmb{p}-$kernels and 
$\pmb{q}-$kernels, \eqref{Antanz}. We also used the notation $C(n)$ as in \eqref{C} and 
$\vartheta(r_1, \mbox{\dots}, r_k)$ as in \eqref{theta}.
\end{lemma}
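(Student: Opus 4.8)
The plan is to imitate the proof of Lemma~\ref{k2t16}, with the $A_\infty$ homotopy relation of Definition~\ref{ainftyhomotopy} playing the part that the $A_\infty$ morphism relation played there. First I would record that, since \eqref{T20} is nothing but the condition of Lemma~\ref{k2t19} with the outer $f$ deleted, the $\pmb{q}$-kernels do define an $A_\infty$ morphism $\pmb{\varphi} = (f, f\circ\pmb{q}_2, \dots)$ from $(V, \partial_V, \pmb{\mu})$ to $(W, \partial_W, \pmb{\nu})$; moreover, by Theorem~\ref{k2v18} the $\pmb{p}$-kernels satisfy \eqref{T16}, so by Lemma~\ref{k2t15} $\pmb{\nu}$ is an $A_\infty$ structure on $(W, \partial_W)$ and, by the computation of Lemma~\ref{k2t16}, the maps $\psi_n = h\circ\pmb{p}_n\circ g^{\otimes n}$ assemble into an $A_\infty$ morphism $\pmb{\psi}$ from $(W, \partial_W, \pmb{\nu})$ to $(V, \partial_V, \pmb{\mu})$. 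Hence the composite $\pmb{\psi}\pmb{\varphi}$ is an $A_\infty$ endomorphism of $(V, \partial_V, \pmb{\mu})$, and expanding it via \eqref{composition} and \eqref{Antanz} (and using $|g| = 0$) gives
$$(\pmb{\psi}\pmb{\varphi})_n = gf\circ\pmb{q}_n + \sum_{B(n)}(-1)^{\vartheta(r_1, \dots, r_k)}h\circ\pmb{p}_k\bigl(gf\circ\pmb{q}_{r_1}\otimes\dots\otimes gf\circ\pmb{q}_{r_k}\bigr), \qquad (\pmb{\psi}\pmb{\varphi})_1 = gf.$$

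Next I would observe that the asserted identity is exactly what the $A_\infty$ homotopy relation of Definition~\ref{ainftyhomotopy} becomes after one reduction, taken for $\pmb{H} = (h, h\circ\pmb{q}_2, h\circ\pmb{q}_3, \dots)$ between the morphisms $\pmb{\psi}\pmb{\varphi}$ and the identity morphism $\mathbb{1}_V$ of $(V, \partial_V, \pmb{\mu})$, the target structure being $\pmb{\mu}$. One uses that the identity morphism has components $\mathbb{1}_V$ in arity $1$ and $0$ in all arities $\geq 2$: substituting these into the last double sum of Definition~\ref{ainftyhomotopy} kills every summand except those with $r_{i+1} = \dots = r_k = 1$, so the double sum over $B(n)$ collapses to precisely the single sum over $C(n)$ in the statement, the trailing factors becoming $\mathbb{1}_V^{\otimes k-i}$ and, where some of $r_1, \dots, r_{i-1}$ equal $1$, the corresponding factors becoming $(\pmb{\psi}\pmb{\varphi})_1 = gf$. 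Thus the claim is equivalent to checking that $\pmb{H}$ satisfies this homotopy relation, and the occurrence of a bare $\pmb{q}_n$ on the left of the stated identity is forced by the chain homotopy below.

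To verify the relation I would substitute $\varphi_n = f\circ\pmb{q}_n$, $\psi_n = h\circ\pmb{p}_n\circ g^{\otimes n}$, $H_n = h\circ\pmb{q}_n$ and the displayed expansion of $(\pmb{\psi}\pmb{\varphi})_n$, then apply the two reductions familiar from Lemma~\ref{k2t16}: the chain homotopy $gf - \mathbb{1}_V = \partial_V h + h\partial_V$, which replaces $gf\circ\pmb{q}_n$ by $\pmb{q}_n + \partial_V h\circ\pmb{q}_n + h\partial_V\circ\pmb{q}_n$, and the hypothesis \eqref{T20} pre-composed with $h$, which rewrites $\sum_{B(n)}(-1)^{\vartheta}h\circ\pmb{p}_k(gf\circ\pmb{q}_{r_1}\otimes\dots\otimes gf\circ\pmb{q}_{r_k})$ as $-h\partial_V\circ\pmb{q}_n - \sum_{u}(-1)^{n}h\circ\pmb{q}_n(\mathbb{1}_V^{\otimes u-1}\otimes\partial_V\otimes\mathbb{1}_V^{\otimes n-u}) - \sum_{A(n)}(-1)^{i(\ell+1)+n}h\circ\pmb{q}_k(\mathbb{1}_V^{\otimes i-1}\otimes\mu_\ell\otimes\mathbb{1}_V^{\otimes k-i}) + h\mu_n$. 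The two copies of $h\partial_V\circ\pmb{q}_n$ cancel, the term $\partial_V H_n = \partial_V h\circ\pmb{q}_n$ of Definition~\ref{ainftyhomotopy} is absorbed into $gf\circ\pmb{q}_n$, and what survives is precisely $\pmb{q}_n$ expressed as the stated sum over $C(n)$. The converse reading (recovering \eqref{T20} from the formula) is routine, as in Lemmas~\ref{k2t16} and~\ref{k1t6}.

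The one step that is not routine is the sign bookkeeping. One has to reconcile the exponent $\vartheta(r_1, \dots, r_k)$ of Definition~\ref{ainftyhomotopy}, evaluated on the collapsed tuple $(r_1, \dots, r_i, 1, \dots, 1)$, together with all Koszul signs produced when $g$, $h$, $\partial_V$ and the $\mu_\ell$ are moved across tensor factors, against the exponent $n + r_i + \vartheta(r_1, \dots, r_i)$ recorded in the lemma. Exactly as in the proof of Theorem~\ref{k2v18}, I would carry out the whole computation after passing to the suspensions $\overline{T}sV$, $\overline{T}sW$ with their induced codifferentials, working with $\hat{\pmb{q}}_n = s\circ\pmb{q}_n\circ\omega^{\otimes n}$, $\hat{\pmb{p}}_n = s\circ\pmb{p}_n\circ\omega^{\otimes n}$, $\hat{h} = s\circ h\circ\omega$ and $\hat{f}, \hat{g}$ of homogeneous degree; there the $\vartheta$-factors disappear, the cancellations above become transparent, and desuspending reintroduces precisely the sign $(-1)^{n + r_i + \vartheta(r_1, \dots, r_i)}$. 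This is where I expect essentially all of the work to lie.
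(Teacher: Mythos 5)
Your proposal follows essentially the same route as the paper: invoke Lemma~\ref{k2t19} to get the morphism $\pmb{\varphi}$, write out the $A_\infty$ homotopy condition of Definition~\ref{ainftyhomotopy} for $\pmb{H}=(h, h\circ\pmb{q}_2,\dots)$ between $\pmb{\psi\varphi}$ and $\mathbb{1}$ (with the $B(n)$ double sum collapsing to the $C(n)$ sum because the identity morphism has no higher components), expand $(\pmb{\psi\varphi})_m$ via \eqref{composition} and \eqref{Antanz}, and then cancel using $\partial_Vh = gf-\mathbb{1}_V-h\partial_V$ together with \eqref{T20} precomposed with $h$, leaving the bare $\pmb{q}_n$ equal to the $C(n)$ sum. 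Your suggestion to verify the residual sign $(-1)^{n+r_i+\vartheta(r_1,\dots,r_i)}$ by passing to the suspension is a minor organizational variation (the paper does this in Theorems~\ref{k2v18} and~\ref{k2v24} but writes Lemma~\ref{k2t20} unsuspended), not a different argument.
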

\begin{proof} Assuming \eqref{T20}, the set of $\pmb{q}-$kernels constitutes by Lemma~\ref{k2t19}
$A_\infty$ morphism $\pmb{\varphi} = (f, \varphi_2, \varphi_3, \mbox{\dots})$ from 
$(V, \partial_V, \mu_2, \mu_3, \mbox{\dots})$ to $(W, \partial_W,$\linebreak$ \nu_2, \nu_3, \mbox{\dots})$.
We also demand the set of maps $H_n = h \circ \pmb{q}_n$ gives $A_\infty$ homotopy 
$\pmb{H}= (h, H_2, H_3, \mbox{\dots})$ between $\pmb{\psi \varphi}$ and $\mathbb{1}$.
This is equivalent by Definition~\ref{ainftyhomotopy} to  
 $$\partial_VH_n - \sum_{u=1}^{n}(-1)^{n} H_n(\mathbb{1}_V^{\otimes  u-1} \otimes \partial_V \otimes \mathbb{1}_V^{\otimes  n-u}) $$
 $$+\sum_{C(n)} (-1)^{n+r_i + \vartheta(r_1, \mbox{\dots}, r_i)}\mu_k((\pmb{\psi}\pmb{\varphi})_{r_1} \otimes \mbox{\dots} \otimes (\pmb{\psi}\pmb{\varphi})_{r_{i-1}} \otimes H_{r_i} \otimes \mathbb{1}_V^{\otimes k-i}) + H_1\mu_n $$
 $$= \sum_{A(n)}(-1)^{i(\ell + 1) + n} H_k(\mathbb{1}_V^{\otimes i-1} \otimes \mu_\ell \otimes \mathbb{1}_V^{\otimes n-k}) + (\pmb{\psi}\pmb{\varphi})_{n} - (\mathbb{1})_{n}$$
for all $n \geq 2$.
 According to \eqref{composition}, we have
 $$(\pmb{\psi}\pmb{\varphi})_m = \psi_1\varphi_m + \sum_{B(m)} (-1)^{\vartheta(r_1, \mbox{\dots}, r_k)}\psi_k(\varphi_{r_1} \otimes \mbox{\dots} \otimes \varphi_{r_k}),$$
 and so we can write the composition of $A_\infty$ morphisms in terms of $\pmb{p}-$kernels and $\pmb{q}-$kernels:
\begin{equation}
\label{p,q-slozeni}
	(\pmb{\psi}\pmb{\varphi})_m = gf\circ \pmb{q}_m + \sum_{B(m)} (-1)^{\vartheta(r_1, \mbox{\dots}, r_k)}h \circ \pmb{p}_k(gf \circ \pmb{q}_{r_1} \otimes \mbox{\dots} \otimes gf \circ \pmb{q}_{r_k}).
\end{equation}
By Definition~\ref{ainftyhomotopy}, the $A_\infty$ homotopy $\pmb{H}= (h, H_2, H_3, \mbox{\dots})$
can be rewritten in terms of $\pmb{p}-$kernels and $\pmb{q}-$kernels  
(we use again $\partial_Vh = gf - \mathbb{1}_V - h\partial_V$ and $(\mathbb{1})_n=0$):
 $$ gf\circ\pmb{q}_n - \pmb{q}_n - h\partial_V\pmb{q}_n - \sum_{u=1}^{n}(-1)^{n} h \circ \pmb{q}_n(\mathbb{1}_V^{\otimes  u-1} \otimes \partial_V \otimes \mathbb{1}_V^{\otimes  n-u}) $$
 $$+\sum_{C(n)} (-1)^{n+r_i + \vartheta(r_1, \mbox{\dots}, r_i)}\mu_k((\pmb{\psi}\pmb{\varphi})_{r_1} \otimes \mbox{\dots} \otimes (\pmb{\psi}\pmb{\varphi})_{r_{i-1}} \otimes h \circ \pmb{q}_{r_i} \otimes \mathbb{1}_V^{\otimes k-i}) + h \circ \pmb{q}_1 \mu_n $$
 $$= \sum_{A(n)}(-1)^{i(\ell + 1) + n} h\circ\pmb{q}_k(\mathbb{1}_V^{\otimes i-1} \otimes \mu_\ell \otimes \mathbb{1}_V^{\otimes n-k}) + gf\circ \pmb{q}_n $$
 $$+ \sum_{B(n)} (-1)^{\vartheta(r_1, \mbox{\dots}, r_k)}h \circ \pmb{p}_k(gf \circ \pmb{q}_{r_1} \otimes \mbox{\dots} \otimes gf \circ \pmb{q}_{r_k}).$$
 We subtract from both sides of the last display 
$gf\circ \pmb{q}_n$, and by \eqref{T20} conclude
 $$- h\partial_V\pmb{q}_n - \sum_{u=1}^{n}(-1)^{n} h \circ \pmb{q}_n(\mathbb{1}_V^{\otimes  u-1} \otimes \partial_V \otimes \mathbb{1}_V^{\otimes  n-u}) + h \circ \pmb{q}_1 \mu_n=$$
 $$= \sum_{A(n)}(-1)^{i(\ell + 1) + n} h\circ\pmb{q}_k(\mathbb{1}_V^{\otimes i-1} \otimes \mu_\ell \otimes \mathbb{1}_V^{\otimes n-k}) + $$
 $$+ \sum_{B(n)} (-1)^{\vartheta(r_1, \mbox{\dots}, r_k)}h \circ \pmb{p}_k(gf \circ \pmb{q}_{r_1} \otimes \mbox{\dots} \otimes gf \circ \pmb{q}_{r_k}),$$
 which finally results in
 $$\pmb{q}_n = \sum_{C(n)} (-1)^{n+r_i + \vartheta(r_1, \mbox{\dots}, r_i)}\mu_k((\pmb{\psi}\pmb{\varphi})_{r_1} \otimes \mbox{\dots} \otimes (\pmb{\psi}\pmb{\varphi})_{r_{i-1}} \otimes h \circ \pmb{q}_{r_i} \otimes \mathbb{1}_V^{\otimes k-i}).$$
\end{proof}

\begin{remark}
The assumption \eqref{T20} is fulfilled as soon as the $\pmb{q}-$kernels give a $A_\infty$ morphism 
$\pmb{\varphi} = (f, \varphi_2, \varphi_3, \mbox{\dots})$ from $(V, \partial_V, \mu_2, \mu_3, \mbox{\dots})$ 
to $(W, \partial_W, \nu_2, \nu_3, \mbox{\dots})$ and $f$ is a monomorphism.
\end{remark}

\begin{definition}[$\pmb{q}-$kernels, \cite{Markl06}]\label{k2d21} Let $n \geq 2$ and define $\pmb{q}_1 := \mathbb{1}_V$. 
We define $\pmb{q}-$kernels inductively by
$$\pmb{q}_n = \sum_{C(n)} (-1)^{n+r_i + \vartheta(r_1, \mbox{\dots}, r_i)}\mu_k((\pmb{\psi}\pmb{\varphi})_{r_1} \otimes \mbox{\dots} \otimes (\pmb{\psi}\pmb{\varphi})_{r_{i-1}} \otimes h \circ \pmb{q}_{r_i} \otimes \mathbb{1}_V^{\otimes k-i}),$$ where $(\pmb{\psi}\pmb{\varphi})_m = gf\circ \pmb{q}_m + \sum_{B(m)} (-1)^{\vartheta(r_1, \mbox{\dots}, r_k)}h \circ \pmb{p}_k(gf \circ \pmb{q}_{r_1} \otimes \mbox{\dots} \otimes gf \circ \pmb{q}_{r_k})$ (cf., \eqref{p,q-slozeni}), 
$\pmb{p}-$kernels were introduced in \ref{k2d17}, with $C(n)$ given in \eqref{C} and $\vartheta(u_1, \mbox{\dots}, u_k)$ in \eqref{theta}.
\end{definition}

\begin{remark}
There is an explicit description of the $\pmb{q}-$kernels in terms of rooted plane trees,
but it is much more complicated when compared to the analogous description for the $\pmb{p}-$kernels.
\end{remark}

We shall now prove that the $\pmb{q}-$kernels introduced in Definition \ref{k2d21} satisfy \eqref{T20}. 
Let us consider again the suspension $\overline{T}sV$ with the induced codifferential $\delta$ such that
$\delta_1 = s \circ \partial_V \circ \omega$ and 
$\delta_n = s \circ \mu_n \circ \omega^{\otimes n}$, $n \geq 2$. 
Then $\hat{\pmb{q}}_m = s \circ \pmb{q}_m \circ \omega^{\otimes m},$ $\hat{\pmb{\psi}}_m = s \circ \psi_m \circ \omega^{\otimes m}$ and $\hat{\pmb{\varphi}}_m = s \circ \varphi_m \circ \omega^{\otimes m}$ for $m \geq 2$
($|\hat{\pmb{q}}_m| = |\hat{\pmb{\varphi}}_m| = |\hat{\pmb{\psi}}_m| = 0$), and \eqref{T20} is equivalent to
\begin{equation}
\begin{split}
\notag
\delta_1\hat{\pmb{q}}_n + \sum_{B(n)}\hat{\pmb{p}}_k(\hat{g}\hat{f} \circ \hat{\pmb{q}}_{r_1} \otimes \mbox{\dots} \otimes \hat{g}\hat{f} \circ \hat{\pmb{q}}_{r_k}) & \\ =\sum_{u=1}^{n}\hat{\pmb{q}}_n(\mathbb{1}_V^{\otimes  u-1} \otimes \delta_1 \otimes \mathbb{1}_V^{\otimes  n-u}) + \sum_{A(n)}\hat{\pmb{q}}_{k}(\mathbb{1}_V^{\otimes  i-1} \otimes &\delta_{\ell} \otimes \mathbb{1}_V^{\otimes  n-k}) + \hat{\pmb{q}}_1\delta_n.
\end{split}
\end{equation}
In the following two lemmas we prove that $\hat{\pmb{\psi}}\hat{\pmb{\varphi}}$ is an $A_\infty$ morphism.

\begin{lemma}\label{k2l22}
Let us assume \eqref{T20} is true for all $n \leq m$.
Then the $\pmb{p}-$kernels in Definition \ref{k2d17} 
and the $\pmb{q}-$kernels in Definition \ref{k2d21}  fulfill
$$\delta_1(\hat{\pmb{\psi}}\hat{\pmb{\varphi}})_m = \sum_{u=1}^{m}(\hat{\pmb{\psi}}\hat{\pmb{\varphi}})_m(\mathbb{1}_V^{\otimes  u-1} \otimes \delta_1 \otimes \mathbb{1}_V^{\otimes  m-u})$$$$ + \sum_{A(m)}(\hat{\pmb{\psi}}\hat{\pmb{\varphi}})_k(\mathbb{1}_V^{\otimes  i-1} \otimes \delta_{\ell} \otimes \mathbb{1}_V^{\otimes  n-k}) + (\hat{\pmb{\psi}}\hat{\pmb{\varphi}})_1\delta_m$$$$ -\sum_{B(m)}\hat{\pmb{p}}_k(\hat{g}\hat{f} \circ \hat{\pmb{q}}_{r_1} \otimes \mbox{\dots} \otimes \hat{g}\hat{f} \circ \hat{\pmb{q}}_{r_k})$$
for all $m \geq 2$.
\end{lemma}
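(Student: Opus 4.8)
The plan is to carry out the whole computation on the suspension $\overline{T}sV$, where all sign factors disappear, feeding in the structural identities already at our disposal. We shall use the suspended form of \eqref{p,q-slozeni}, namely
$(\hat{\pmb{\psi}}\hat{\pmb{\varphi}})_m = \hat{g}\hat{f}\circ\hat{\pmb{q}}_m + \sum_{B(m)}\hat{h}\circ\hat{\pmb{p}}_k(\hat{g}\hat{f}\circ\hat{\pmb{q}}_{r_1}\otimes\mbox{\dots}\otimes\hat{g}\hat{f}\circ\hat{\pmb{q}}_{r_k})$
(with the conventions $\hat{h}\circ\hat{\pmb{p}}_1=\mathbb{1}_V$ and $(\hat{\pmb{\psi}}\hat{\pmb{\varphi}})_1=\hat{g}\hat{f}$); the suspended form of \eqref{T16} established in Theorem~\ref{k2v18}, i.e.
$\delta_1\hat{\pmb{p}}_k = -\sum_{u=1}^{k}\hat{\pmb{p}}_k(\mathbb{1}_V^{\otimes u-1}\otimes\delta_1\otimes\mathbb{1}_V^{\otimes k-u}) - \sum_{A(k)}\hat{\pmb{p}}_{k'}(\mathbb{1}_V^{\otimes i-1}\otimes\hat{g}\hat{f}\circ\hat{\pmb{p}}_\ell\otimes\mathbb{1}_V^{\otimes k'-i})$
for $k\geq 2$; the suspended form of \eqref{T20}, which by hypothesis holds for every $\hat{\pmb{q}}_n$ with $2\leq n\leq m$; and the elementary relations $\delta_1\hat{g}\hat{f}=\hat{g}\hat{f}\delta_1$ (because $f$ and $g$ are chain maps) and $\delta_1\hat{h}+\hat{h}\delta_1=\hat{g}\hat{f}-\mathbb{1}_V$ (the suspended homotopy $gf-\mathbb{1}_V=\partial_Vh+h\partial_V$).

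First I would apply $\delta_1$ to the expression for $(\hat{\pmb{\psi}}\hat{\pmb{\varphi}})_m$ above and treat its two summands separately. On $\hat{g}\hat{f}\circ\hat{\pmb{q}}_m$ one pulls $\delta_1$ through $\hat{g}\hat{f}$ and expands $\delta_1\hat{\pmb{q}}_m$ by \eqref{T20} with $n=m$; this yields $-\hat{g}\hat{f}\circ\sum_{B(m)}\hat{\pmb{p}}_k(\hat{g}\hat{f}\circ\hat{\pmb{q}}_{r_1}\otimes\mbox{\dots})$ together with $\hat{g}\hat{f}\circ\sum_{u}\hat{\pmb{q}}_m(\mathbb{1}_V^{\otimes u-1}\otimes\delta_1\otimes\mathbb{1}_V^{\otimes m-u})$, $\hat{g}\hat{f}\circ\sum_{A(m)}\hat{\pmb{q}}_k(\mathbb{1}_V^{\otimes i-1}\otimes\delta_\ell\otimes\mathbb{1}_V^{\otimes k-i})$ and $\hat{g}\hat{f}\circ\delta_m$. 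On $\sum_{B(m)}\hat{h}\circ\hat{\pmb{p}}_k(\hat{g}\hat{f}\circ\hat{\pmb{q}}_{r_1}\otimes\mbox{\dots})$ one substitutes $\delta_1\hat{h}=\hat{g}\hat{f}-\mathbb{1}_V-\hat{h}\delta_1$, obtaining $+\sum_{B(m)}\hat{g}\hat{f}\circ\hat{\pmb{p}}_k(\hat{g}\hat{f}\circ\hat{\pmb{q}}_{r_1}\otimes\mbox{\dots})$, the bare $-\sum_{B(m)}\hat{\pmb{p}}_k(\hat{g}\hat{f}\circ\hat{\pmb{q}}_{r_1}\otimes\mbox{\dots})$, and $-\sum_{B(m)}\hat{h}\circ(\delta_1\hat{\pmb{p}}_k)\circ(\hat{g}\hat{f}\circ\hat{\pmb{q}}_{r_1}\otimes\mbox{\dots})$; the last of these is expanded by the suspended \eqref{T16}, and in the resulting $\hat{\pmb{p}}_k(\mathbb{1}_V^{\otimes u-1}\otimes\delta_1\otimes\mathbb{1}_V^{\otimes k-u})$-terms the inner $\delta_1$ meets a factor $\hat{g}\hat{f}\circ\hat{\pmb{q}}_{r_u}$, which is rewritten by a second application of \eqref{T20} (legitimate since $r_u\leq m$; for $r_u=1$ trivially, because $\hat{\pmb{q}}_1=\mathbb{1}_V$).

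Now one collects the terms, as in Step~III of the proof of Theorem~\ref{k2v18}. The term $+\sum_{B(m)}\hat{g}\hat{f}\circ\hat{\pmb{p}}_k(\hat{g}\hat{f}\circ\hat{\pmb{q}}_{r_1}\otimes\mbox{\dots})$ cancels $-\hat{g}\hat{f}\circ\sum_{B(m)}\hat{\pmb{p}}_k(\hat{g}\hat{f}\circ\hat{\pmb{q}}_{r_1}\otimes\mbox{\dots})$, and the bare $-\sum_{B(m)}\hat{\pmb{p}}_k(\hat{g}\hat{f}\circ\hat{\pmb{q}}_{r_1}\otimes\mbox{\dots})$ is exactly the last term of the claimed right-hand side. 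All terms of the shape $\hat{h}\circ\hat{\pmb{p}}_N(\mbox{\dots}\otimes\hat{g}\hat{f}\circ\hat{\pmb{p}}_M(\mbox{\dots})\otimes\mbox{\dots})$ ($N,M\geq2$) cancel in pairs: those coming from the $\sum_{A(k)}$-part of \eqref{T16} are in bijection — matching the weight absorbed by the inner kernel $\hat{\pmb{p}}_M$ — with those coming from the $\hat{\pmb{p}}$-part of the second application of \eqref{T20}, and they carry opposite signs. Finally the remainder reassembles, by the same ``summation over a composite index'' manipulation, into $\sum_{u=1}^{m}(\hat{\pmb{\psi}}\hat{\pmb{\varphi}})_m(\mathbb{1}_V^{\otimes u-1}\otimes\delta_1\otimes\mathbb{1}_V^{\otimes m-u}) + \sum_{A(m)}(\hat{\pmb{\psi}}\hat{\pmb{\varphi}})_k(\mathbb{1}_V^{\otimes i-1}\otimes\delta_\ell\otimes\mathbb{1}_V^{\otimes k-i}) + (\hat{\pmb{\psi}}\hat{\pmb{\varphi}})_1\delta_m$: here $\hat{g}\hat{f}\circ\delta_m=(\hat{\pmb{\psi}}\hat{\pmb{\varphi}})_1\delta_m$; the terms carrying an inner $\delta_\ell$ with $\ell\geq2$ recombine — through both the $\hat{g}\hat{f}\circ\hat{\pmb{q}}$-part and the $\hat{h}\circ\hat{\pmb{p}}$-part of the components $(\hat{\pmb{\psi}}\hat{\pmb{\varphi}})_k$ — out of $\hat{g}\hat{f}\circ\sum_{A(m)}\hat{\pmb{q}}_k(\mbox{\dots})$ together with the $\hat{\pmb{q}}(\mbox{\dots}\otimes\delta_\lambda\otimes\mbox{\dots})$ and $\delta_{r_u}$ pieces produced by the second application of \eqref{T20}; and the terms carrying a single $\delta_1$ on one tensor slot recombine likewise out of $\hat{g}\hat{f}\circ\sum_{u}\hat{\pmb{q}}_m(\mbox{\dots})$, the $\hat{\pmb{p}}_k(\mbox{\dots}\otimes\delta_1\otimes\mbox{\dots})$-part of \eqref{T16}, and the $\hat{\pmb{q}}_{r_u}(\mbox{\dots}\otimes\delta_1\otimes\mbox{\dots})$ pieces.

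The main obstacle is precisely this last bookkeeping: one must check that, after the second application of \eqref{T20} to each inner $\hat{\pmb{q}}_{r_u}$, every surviving term either annihilates a twin produced elsewhere (this is the case for the $\hat{h}\circ\hat{\pmb{p}}_N(\mbox{\dots}\otimes\hat{g}\hat{f}\circ\hat{\pmb{p}}_M(\mbox{\dots})\otimes\mbox{\dots})$-terms) or matches exactly one summand of one of the three groups above, including the degenerate subcases where an inner $\hat{\pmb{q}}_1=\mathbb{1}_V$ turns $\hat{\pmb{q}}_{r_u}(\mbox{\dots}\otimes\delta\otimes\mbox{\dots})$ into a bare $\hat{g}\hat{f}\circ\delta$, and that the index sets $A(\cdot)$ and $B(\cdot)$ involved line up under the grouping. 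Since on $\overline{T}sV$ there are no signs to track, this is a purely combinatorial verification, entirely analogous to Step~III in the proof of Theorem~\ref{k2v18}, and requires no new idea beyond careful accounting.
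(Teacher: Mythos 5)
Your proposal is correct and follows essentially the same route as the paper's proof: expand $(\hat{\pmb{\psi}}\hat{\pmb{\varphi}})_m$ via \eqref{p,q-slozeni}, replace $\delta_1\hat h$ by $\hat g\hat f-\mathbb{1}_V-\hat h\delta_1$, expand $\delta_1\hat{\pmb{p}}_k$ by Theorem~\ref{k2v18} and the inner $\delta_1\hat{\pmb{q}}_{r_u}$ by a second application of \eqref{T20}, and then observe the same cancellations (the $\hat g\hat f\circ\hat{\pmb{p}}_k$ terms against each other, and the $A(k)$-part of \eqref{T16} against the $B'(r_u)$-part of \eqref{T20}). The level of detail in your final regrouping matches what the paper itself records, so nothing essential is missing.
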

\begin{proof} We shall first expand the composition of morphisms in the suspended form 
as in \eqref{p,q-slozeni}, and also use the homotopy $\hat{h}$ between $\hat{g}\hat{f}$ and 
$\mathbb{1}_V$:
$$\delta_1(\hat{\pmb{\psi}}\hat{\pmb{\varphi}})_m = \hat{g}\hat{f}\circ \delta_1 \pmb{q}_m + \sum_{B(m)}\delta_1 \hat{h} \circ \hat{\pmb{p}}_k(\hat{g}\hat{f} \circ \hat{\pmb{q}}_{r_1} \otimes \mbox{\dots} \otimes \hat{g}\hat{f} \circ \hat{\pmb{q}}_{r_k}) $$
\begin{equation}
\label{L22_1}
= \hat{g}\hat{f}\circ \delta_1 \pmb{q}_m + \sum_{B(m)}(\hat{g}\hat{f} - \mathbb{1}_V - \hat{h}\delta_1) \circ \hat{\pmb{p}}_k(\hat{g}\hat{f} \circ \hat{\pmb{q}}_{r_1} \otimes \mbox{\dots} \otimes \hat{g}\hat{f} \circ \hat{\pmb{q}}_{r_k}).	
\end{equation}
By Theorem~\ref{k2v18}
$$\sum_{B(m)}\hat{h} \circ\delta_1 \hat{\pmb{p}}_k(\hat{g}\hat{f} \circ \hat{\pmb{q}}_{r_1} \otimes \mbox{\dots} \otimes \hat{g}\hat{f} \circ \hat{\pmb{q}}_{r_k}) $$
\begin{equation}
\label{L22_2}
= -\sum_{B(m)}\sum_{u=1}^{k}\hat{h} \circ\hat{\pmb{p}}_k(\mathbb{1}_V^{\otimes  u-1} \otimes \delta_1 \otimes \mathbb{1}_V^{\otimes  k-u})(\hat{g}\hat{f} \circ \hat{\pmb{q}}_{r_1} \otimes \mbox{\dots} \otimes \hat{g}\hat{f} \circ \hat{\pmb{q}}_{r_k}) 
\end{equation}
\begin{equation}
\label{L22_3}
-\sum_{B(m)}\sum_{A'(k)}\hat{h} \circ\hat{\pmb{p}}_{k'}(\mathbb{1}_V^{\otimes  i'-1} \otimes \hat{g}\hat{f} \circ \hat{\pmb{p}}_{\ell'} \otimes \mathbb{1}_V^{\otimes  k-k'})(\hat{g}\hat{f} \circ \hat{\pmb{q}}_{r_1} \otimes \mbox{\dots} \otimes \hat{g}\hat{f} \circ \hat{\pmb{q}}_{r_k})	
\end{equation}
and as $\hat{g}, \hat{f}$ and $\hat{\pmb{q}}_m$ are of degree $0$, we have
$$\eqref{L22_2} = -\sum_{B(m)}\sum_{u=1}^{k}\hat{h} \circ\hat{\pmb{p}}_k(\hat{g}\hat{f} \circ \hat{\pmb{q}}_{r_1} \otimes \mbox{\dots} \otimes \hat{g}\hat{f} \circ \delta_1\hat{\pmb{q}}_{r_u} \otimes \mbox{\dots} \otimes \hat{g}\hat{f} \circ \hat{\pmb{q}}_{r_k}).$$
By \eqref{T20} for $n \leq m$, we expand the terms of the form $\delta_1 \hat{\pmb{q}}_\bullet$ as
$$ -\sum_{B(m)}\sum_{u=1}^{k}\hat{\pmb{p}}_k(\hat{g}\hat{f} \circ \hat{\pmb{q}}_{r_1} \otimes \mbox{\dots} \otimes \sum_{v=1}^{r_u} \hat{g}\hat{f} \circ\hat{\pmb{q}}_{r_u}(\mathbb{1}_V^{\otimes  v-1} \otimes \delta_1 \otimes \mathbb{1}_V^{\otimes  r_u -v}) \otimes \mbox{\dots} \otimes \hat{g}\hat{f} \circ \hat{\pmb{q}}_{r_k}) $$
$$-\sum_{B(m)}\sum_{u=1}^{k}\hat{\pmb{p}}_k(\hat{g}\hat{f} \circ \hat{\pmb{q}}_{r_1} \otimes \mbox{\dots} \otimes \sum_{A'(r_u)} \hat{g}\hat{f} \circ\hat{\pmb{q}}_{k'}(\mathbb{1}_V^{\otimes  i'-1} \otimes \delta_{\ell'} \otimes \mathbb{1}_V^{\otimes  r_u-k'}) \otimes \mbox{\dots} \otimes \hat{g}\hat{f} \circ \hat{\pmb{q}}_{r_k}) $$
$$-\sum_{B(m)}\sum_{u=1}^{k}\hat{\pmb{p}}_k(\hat{g}\hat{f} \circ \hat{\pmb{q}}_{r_1} \otimes \mbox{\dots} \otimes \hat{g}\hat{f} \circ\hat{\pmb{q}}_{1}\delta_{r_u} \otimes \mbox{\dots} \otimes \hat{g}\hat{f} \circ \hat{\pmb{q}}_{r_k}) $$
$$+\sum_{B(m)}\sum_{u=1}^{k}\hat{\pmb{p}}_k(\hat{g}\hat{f} \circ \hat{\pmb{q}}_{r_1} \otimes \mbox{\dots} \otimes \sum_{B'(r_u)} \hat{g}\hat{f} \circ\hat{\pmb{p}}_{k'}(\hat{g}\hat{f} \circ \hat{\pmb{q}}_{r'_1} \otimes \mbox{\dots} \otimes \hat{g}\hat{f} \circ \hat{\pmb{q}}_{r'_{k'}}) \otimes \mbox{\dots} \otimes \hat{g}\hat{f} \circ \hat{\pmb{q}}_{r_k}).$$
In the second contribution \eqref{L22_3}, which equals to
$$-\sum_{B(m)}\sum_{A'(k)}\hat{\pmb{p}}_{k'}(\hat{g}\hat{f} \circ \hat{\pmb{q}}_{r_1} \otimes \mbox{\dots} \otimes$$
$$\otimes \hat{g}\hat{f} \circ \hat{\pmb{p}}_{\ell'}(\hat{g}\hat{f} \circ \hat{\pmb{q}}_{r_{i}} \otimes \mbox{\dots} \otimes \hat{g}\hat{f} \circ \hat{\pmb{q}}_{r_{i + \ell'-1}}) \otimes  \mbox{\dots} \otimes \hat{g}\hat{f} \circ \hat{\pmb{q}}_{r_{k'}}),$$
we sum over all inner positions of $\hat{\pmb{p}}_{k'}(\hat{g}\hat{f} \circ \hat{\pmb{q}}_{r_1} \otimes \mbox{\dots} \otimes \bullet \otimes  \mbox{\dots} \otimes \hat{g}\hat{f} \circ \hat{\pmb{q}}_{r_{k'}})$ and get 
$$\eqref{L22_3} = -\sum_{B(m)}\sum_{u=1}^{k}\hat{\pmb{p}}_k(\hat{g}\hat{f} \circ \hat{\pmb{q}}_{r_1} \otimes \mbox{\dots} \otimes$$
$$\otimes \sum_{B'(r_u)} \hat{g}\hat{f} \circ\hat{\pmb{p}}_{k'}(\hat{g}\hat{f} \circ \hat{\pmb{q}}_{r'_1} \otimes \mbox{\dots} \otimes \hat{g}\hat{f} \circ \hat{\pmb{q}}_{r'_{k'}}) \otimes \mbox{\dots} \otimes \hat{g}\hat{f} \circ \hat{\pmb{q}}_{r_k}).$$
Up to a sign, this is the same expression as the expression on the fourth line of the expansion \eqref{L22_2}.
We substitute into \eqref{L22_1} for $\sum_{B(m)}\hat{h} \circ\delta_1 \hat{\pmb{p}}_k(\hat{g}\hat{f} \circ \hat{\pmb{q}}_{r_1} \otimes \mbox{\dots} \otimes \hat{g}\hat{f} \circ \hat{\pmb{q}}_{r_k})$ 
the combination $\eqref{L22_2} + \eqref{L22_3}$ and also substitute for $\delta_1\hat{\pmb{q}}_m$ according to \eqref{T20}: 
$$\delta_1(\hat{\pmb{\psi}}\hat{\pmb{\varphi}})_m = -\sum_{B(m)}\hat{g}\hat{f} \circ \hat{\pmb{p}}_k(\hat{g}\hat{f} \circ \hat{\pmb{q}}_{r_1} \otimes \mbox{\dots} \otimes \hat{g}\hat{f} \circ \hat{\pmb{q}}_{r_k}) $$$$+ \sum_{u=1}^{m}\hat{g}\hat{f} \circ\hat{\pmb{q}}_m(\mathbb{1}_V^{\otimes  u-1} \otimes \delta_1 \otimes \mathbb{1}_V^{\otimes  m-u}) $$
$$+ \sum_{A(m)}\hat{g}\hat{f} \circ\hat{\pmb{q}}_{k}(\mathbb{1}_V^{\otimes  i-1} \otimes \delta_{\ell} \otimes \mathbb{1}_V^{\otimes  n-k}) + \hat{g}\hat{f} \circ \hat{\pmb{q}}_1\delta_m $$
$$+  \sum_{B(m)}(\hat{g}\hat{f} - \mathbb{1}_V)\circ \hat{\pmb{p}}_k(\hat{g}\hat{f} \circ \hat{\pmb{q}}_{r_1} \otimes \mbox{\dots} \otimes \hat{g}\hat{f} \circ \hat{\pmb{q}}_{r_k}) $$
$$+ \sum_{B(m)}\sum_{u=1}^{k}\hat{\pmb{p}}_k(\hat{g}\hat{f} \circ \hat{\pmb{q}}_{r_1} \otimes \mbox{\dots} \otimes \sum_{v=1}^{r_u} \hat{g}\hat{f} \circ\hat{\pmb{q}}_{r_u}(\mathbb{1}_V^{\otimes  v-1} \otimes \delta_1 \otimes \mathbb{1}_V^{\otimes  r_u -v}) \otimes \mbox{\dots} \otimes \hat{g}\hat{f} \circ \hat{\pmb{q}}_{r_k}) $$
$$+ \sum_{B(m)}\sum_{u=1}^{k}\hat{\pmb{p}}_k(\hat{g}\hat{f} \circ \hat{\pmb{q}}_{r_1} \otimes \mbox{\dots} \otimes \sum_{A'(r_u)} \hat{g}\hat{f} \circ\hat{\pmb{q}}_{k'}(\mathbb{1}_V^{\otimes  i'-1} \otimes \delta_{\ell'} \otimes \mathbb{1}_V^{\otimes  r_u-k'}) \otimes \mbox{\dots} \otimes \hat{g}\hat{f} \circ \hat{\pmb{q}}_{r_k}) $$
$$ + \sum_{B(m)}\sum_{u=1}^{k}\hat{\pmb{p}}_k(\hat{g}\hat{f} \circ \hat{\pmb{q}}_{r_1} \otimes \mbox{\dots} \otimes \hat{g}\hat{f} \circ\hat{\pmb{q}}_{1}\delta_{r_u} \otimes \mbox{\dots} \otimes \hat{g}\hat{f} \circ \hat{\pmb{q}}_{r_k}).$$
This completes the proof.
\end{proof}

\begin{lemma}\label{k2l23}
The $\pmb{p}-$kernels in Definition \ref{k2d17} 
and the $\pmb{q}-$kernels in Definition \ref{k2d21} fulfill
$$\sum_{B(m)}\delta_k((\hat{\pmb{\psi}}\hat{\pmb{\varphi}})_{r_1} \otimes \mbox{\dots} \otimes (\hat{\pmb{\psi}}\hat{\pmb{\varphi}})_{r_k}) = \sum_{B(m)}\hat{\pmb{p}}_k(\hat{g}\hat{f} \circ \hat{\pmb{q}}_{r_1} \otimes \mbox{\dots} \otimes \hat{g}\hat{f} \circ \hat{\pmb{q}}_{r_k})$$
for all $m \geq 2$.
\end{lemma}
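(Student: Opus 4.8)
The plan is to prove this as a purely formal rearrangement of sums — note that, unlike Lemma~\ref{k2l22}, it needs no inductive hypothesis, only the defining expansion \eqref{p,q-slozeni} of the composite together with the expansion \eqref{p-kernel} of the $\hat{\pmb{p}}$-kernels. As throughout the subsection I would compute in the suspended picture, where all Koszul signs vanish: the maps $\hat h\circ\hat{\pmb{p}}_r$, $\hat g\hat f\circ\hat{\pmb{q}}_r$ and $(\hat{\pmb{\psi}}\hat{\pmb{\varphi}})_r$ are of degree $0$, so interleaving them costs no sign, and the $\delta_k$ enter only by outer composition.

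First I would put both expansions into one uniform shape. Writing $\pmb{a}_r:=\hat g\hat f\circ\hat{\pmb{q}}_r$ and $\pmb{b}_r:=\hat h\circ\hat{\pmb{p}}_r$, with the conventions $\hat{\pmb{q}}_1=\mathbb{1}_V$ (so $\pmb{a}_1=\hat g\hat f$) and $\hat h\circ\hat{\pmb{p}}_1=\mathbb{1}_V$ (so $\pmb{b}_1=\mathbb{1}_V$), formula \eqref{p,q-slozeni} reads, for every $r\geq 1$,
$$(\hat{\pmb{\psi}}\hat{\pmb{\varphi}})_r=\sum_{k\geq 1}\ \sum_{\substack{s_1,\dots,s_k\geq 1\\ s_1+\dots+s_k=r}}\pmb{b}_k\big(\pmb{a}_{s_1}\otimes\dots\otimes\pmb{a}_{s_k}\big),$$
the $k=1$ summand being $\pmb{b}_1(\pmb{a}_r)=\pmb{a}_r$; and \eqref{p-kernel} reads $\hat{\pmb{p}}_k=\sum_{B(k)}\delta_j\big(\pmb{b}_{s_1}\otimes\dots\otimes\pmb{b}_{s_j}\big)$ for every $k\geq 2$.

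Then I would substitute this expression for $\hat{\pmb{p}}_k$ into the right-hand side $\sum_{B(m)}\hat{\pmb{p}}_k(\pmb{a}_{r_1}\otimes\dots\otimes\pmb{a}_{r_k})$ and redistribute the arguments $\pmb{a}_{r_1},\dots,\pmb{a}_{r_k}$ among the factors $\pmb{b}_{s_1},\dots,\pmb{b}_{s_j}$ (with $s_1+\dots+s_j=k$) — sign-free since $|\pmb{b}_\bullet|=0$. \emph{The crucial bookkeeping observation} is that a partition $(r_1,\dots,r_k)\in B(m)$ together with a grouping of its parts into $j\geq 2$ consecutive super-blocks of lengths $s_1,\dots,s_j$ is the same datum as a composition $(t_1,\dots,t_j)$ of $m$ with $j\geq 2$ (where $t_l$ is the sum of the $r$'s in super-block $l$, whence $t_l\geq s_l\geq 1$) together with a partition of each $t_l$ into $s_l\geq 1$ parts; the surviving constraint $k=s_1+\dots+s_j\geq 2$ is then automatic. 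Summing over the partitions of each $t_l$ — the $s_l=1$ term contributing $\pmb{b}_1(\pmb{a}_{t_l})=\pmb{a}_{t_l}$ — collapses the $l$-th tensor slot to $(\hat{\pmb{\psi}}\hat{\pmb{\varphi}})_{t_l}$ by the display above, so the right-hand side becomes $\sum_{j\geq 2,\ t_1+\dots+t_j=m}\delta_j\big((\hat{\pmb{\psi}}\hat{\pmb{\varphi}})_{t_1}\otimes\dots\otimes(\hat{\pmb{\psi}}\hat{\pmb{\varphi}})_{t_j}\big)=\sum_{B(m)}\delta_k\big((\hat{\pmb{\psi}}\hat{\pmb{\varphi}})_{r_1}\otimes\dots\otimes(\hat{\pmb{\psi}}\hat{\pmb{\varphi}})_{r_k}\big)$, which is exactly the left-hand side.

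I do not expect a genuine obstacle; the argument is this single re-indexing. The only points demanding care are matching the nested sums $\sum_{B(m)}\sum_{B(k)}$ against the sum over compositions of $m$ weighted by products of sums over partitions of the individual parts, handling the degenerate cases controlled by the conventions $\hat h\circ\hat{\pmb{p}}_1=\mathbb{1}_V$ and $\hat{\pmb{q}}_1=\mathbb{1}_V$, and re-checking that the suspended computation is genuinely sign-free so that the unsuspended statement of the lemma follows verbatim.
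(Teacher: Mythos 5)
Your proposal is correct and follows essentially the same route as the paper: expand $\hat{\pmb{p}}_k$ via \eqref{p-kernel}, distribute the degree-$0$ arguments $\hat{g}\hat{f}\circ\hat{\pmb{q}}_{r_j}$ into the slots $\hat{h}\circ\hat{\pmb{p}}_{r'_i}$ without signs, and resum over refinements of each super-block to reassemble $(\hat{\pmb{\psi}}\hat{\pmb{\varphi}})_{t_l}$ from \eqref{p,q-slozeni}, with the conventions $\hat{h}\circ\hat{\pmb{p}}_1=\mathbb{1}_V$ and $\hat{\pmb{q}}_1=\mathbb{1}_V$ handling the degenerate slots exactly as in the paper. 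The only cosmetic remark is that the lemma is already stated in the suspended picture, so no "unsuspension" step is needed at the end.
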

\begin{proof} By \eqref{p-kernel}, we have
$$\sum_{B(m)}\hat{\pmb{p}}_k(\hat{g}\hat{f} \circ \hat{\pmb{q}}_{r_1} \otimes \mbox{\dots} \otimes \hat{g}\hat{f} \circ \hat{\pmb{q}}_{r_k})$$
$$= \sum_{B(m)} \sum_{B'(k)} \delta_{k'}(h \circ \hat{\pmb{p}}_{r'_1} \otimes \mbox{\dots} \otimes h \circ \hat{\pmb{p}}_{r'_{k'}})(\hat{g}\hat{f} \circ \hat{\pmb{q}}_{r_1} \otimes \mbox{\dots} \otimes \hat{g}\hat{f} \circ \hat{\pmb{q}}_{r_k}).$$ 
Taking into account that $\hat{g}, \hat{f}$ and $\hat{\pmb{q}}_m$ are of degree $0$,
the last display equals to
$$\sum_{B(m)} \sum_{B'(k)} \delta_{k'}(h \circ \hat{\pmb{p}}_{r'_1}(\hat{g}\hat{f} \circ \hat{\pmb{q}}_{r_1} \otimes \mbox{\dots} \otimes \hat{g}\hat{f} \circ \hat{\pmb{q}}_{r_{r'_1}}) \otimes \mbox{\dots}$$
$$\mbox{\dots} \otimes h \circ \hat{\pmb{p}}_{r'_{k'}}(\hat{g}\hat{f} \circ \hat{\pmb{q}}_{r_{{r_{k'-1}} + 1}} \otimes \mbox{\dots} \otimes \hat{g}\hat{f} \circ \hat{\pmb{q}}_{r_{k'}}))$$
and the summation over the terms $\delta_k(\star_{r_1} \otimes \mbox{\dots} \otimes \star_{r_k})$  in all possible indices ($\star_j$ denoting a map $V^{\otimes j} \to V$) gives 
$$\sum_{B(m)}\delta_k\left[\left(\hat{g}\hat{f} \circ \hat{\pmb{q}}_{r_1} + \sum_{B'(r_1)}\hat{\pmb{p}}_{k'}(\hat{g}\hat{f} \circ \hat{\pmb{q}}_{r'_1} \otimes \mbox{\dots} \otimes \hat{g}\hat{f} \circ \hat{\pmb{q}}_{r_{k'}})\right) \otimes \mbox{\dots}\right.$$
$$ \left. \mbox{\dots} \otimes \left(\hat{g}\hat{f} \circ \hat{\pmb{q}}_{r_{k}} +\sum_{B'(r_k)}\hat{\pmb{p}}_{k'}(\hat{g}\hat{f} \circ \hat{\pmb{q}}_{r'_1} \otimes \mbox{\dots}
 \otimes \hat{g}\hat{f} \circ \hat{\pmb{q}}_{r_{k'}})\right)\right].$$
However this is already \eqref{p,q-slozeni} composed with the suspension, and the proof is 
complete. 
\end{proof}

Because the formula for the $\pmb{q}-$kernels in Lemma~\ref{k2t20} was based on 
the assumption \eqref{T20}, we have to prove that it is fulfilled by the 
$\pmb{q}-$kernels in Definition \ref{k2d21}. 

\begin{theorem}\label{k2v24}
The $\pmb{p}-$kernels in Definition \ref{k2d17} 
and the $\pmb{q}-$kernels in Definition \ref{k2d21} 
fulfill \eqref{T20}, i.e.
\begin{equation}
\notag
\begin{split}
\partial_V\pmb{q}_n + \sum_{u=1}^{n}(-1)^{n} \pmb{q}_n(\mathbb{1}_V^{\otimes  u-1} \otimes \partial_V \otimes \mathbb{1}_V^{\otimes  n-u}) &\\ +\!\sum_{B(n)}(-1)^{\vartheta(r_1, \mbox{\dots}, r_k)}\pmb{p}_k(gf \circ \pmb{q}_{r_1}\! \otimes \mbox{\dots} \otimes gf \circ \pmb{q}_{r_k}) &\\ +\!\sum_{A(n)}(-1)^{i(\ell + 1) + n}\pmb{q}_{k}(\mathbb{1}_V^{\otimes  i-1} \otimes \mu_{\ell} \otimes \mathbb{1}_V^{\otimes  n-k}) - \pmb{q}_1\mu_n&= 0.
\end{split}
\end{equation}
 This means that the objects introduced in \eqref{Antanz} solve the 
problem of the transfer of $A_\infty$ structure.
\end{theorem}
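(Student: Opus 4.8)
The plan is to follow the strategy used for the $\pmb{p}-$kernels in Theorem~\ref{k2v18}. One passes to the suspension $\overline{T}sV$ with the induced codifferential $\delta$ (so $\delta_1 = s\circ\partial_V\circ\omega$ and $\delta_n = s\circ\mu_n\circ\omega^{\otimes n}$), where, as noted before Lemma~\ref{k2l22}, \eqref{T20} is equivalent to the identity
\begin{equation}
\notag
\delta_1\hat{\pmb{q}}_n + \sum_{B(n)}\hat{\pmb{p}}_k(\hat{g}\hat{f}\circ\hat{\pmb{q}}_{r_1}\otimes\mbox{\dots}\otimes\hat{g}\hat{f}\circ\hat{\pmb{q}}_{r_k}) = \sum_{u=1}^{n}\hat{\pmb{q}}_n(\mathbb{1}_V^{\otimes u-1}\otimes\delta_1\otimes\mathbb{1}_V^{\otimes n-u}) + \sum_{A(n)}\hat{\pmb{q}}_k(\mathbb{1}_V^{\otimes i-1}\otimes\delta_\ell\otimes\mathbb{1}_V^{\otimes n-k}) + \hat{\pmb{q}}_1\delta_n,
\end{equation}
and proceeds by induction on $n$. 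For $n=2$ the suspended form of Definition~\ref{k2d21} gives $\hat{\pmb{q}}_2 = \delta_2(\hat{h}\otimes\mathbb{1}_V) + \delta_2(\hat{g}\hat{f}\otimes\hat{h})$, and the required identity follows by a direct computation from the codifferential relation $\delta_1\delta_2 = -\delta_2(\delta_1\otimes\mathbb{1}_V) - \delta_2(\mathbb{1}_V\otimes\delta_1)$ together with $\delta_1\hat{h} = \hat{g}\hat{f} - \mathbb{1}_V - \hat{h}\delta_1$.

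For the inductive step one assumes \eqref{T20} for all $2\le m<n$. The preliminary observation is that substituting the conclusion of Lemma~\ref{k2l23} into that of Lemma~\ref{k2l22} (the latter being applicable precisely because the hypothesis holds up to degree $n-1$) shows that $\hat{\pmb{\psi}}\hat{\pmb{\varphi}}$ satisfies, in every degree $m\le n-1$, the $A_\infty$-morphism relation
\begin{align*}
\delta_1(\hat{\pmb{\psi}}\hat{\pmb{\varphi}})_m + \sum_{B(m)}\delta_k\big((\hat{\pmb{\psi}}\hat{\pmb{\varphi}})_{r_1}\otimes\mbox{\dots}\otimes(\hat{\pmb{\psi}}\hat{\pmb{\varphi}})_{r_k}\big) &= \sum_{u=1}^{m}(\hat{\pmb{\psi}}\hat{\pmb{\varphi}})_m(\mathbb{1}_V^{\otimes u-1}\otimes\delta_1\otimes\mathbb{1}_V^{\otimes m-u}) \\
&\quad + \sum_{A(m)}(\hat{\pmb{\psi}}\hat{\pmb{\varphi}})_k(\mathbb{1}_V^{\otimes i-1}\otimes\delta_\ell\otimes\mathbb{1}_V^{\otimes m-k}) + (\hat{\pmb{\psi}}\hat{\pmb{\varphi}})_1\delta_m.
\end{align*}
One then expands $\delta_1\hat{\pmb{q}}_n$ by inserting the suspended form of Definition~\ref{k2d21}, namely $\hat{\pmb{q}}_n = \sum_{C(n)}\delta_k\big((\hat{\pmb{\psi}}\hat{\pmb{\varphi}})_{r_1}\otimes\mbox{\dots}\otimes(\hat{\pmb{\psi}}\hat{\pmb{\varphi}})_{r_{i-1}}\otimes\hat{h}\circ\hat{\pmb{q}}_{r_i}\otimes\mathbb{1}_V^{\otimes k-i}\big)$, and applying the codifferential identity $\delta_1\delta_k = -\sum_{u=1}^{k}\delta_k(\mathbb{1}_V^{\otimes u-1}\otimes\delta_1\otimes\mathbb{1}_V^{\otimes k-u}) - \sum_{A(k)}\delta_{k'}(\mathbb{1}_V^{\otimes i-1}\otimes\delta_\ell\otimes\mathbb{1}_V^{\otimes k'-i})$ inside each term.

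As in Step~I of the proof of Theorem~\ref{k2v18}, this produces terms in which the inner differential lands (a) on some factor $(\hat{\pmb{\psi}}\hat{\pmb{\varphi}})_{r_j}$ with $j<i$, (b) on the distinguished factor $\hat{h}\circ\hat{\pmb{q}}_{r_i}$, or (c) as a nested $\delta_\ell$ fusing several consecutive arguments. Case (a) is rewritten using the $A_\infty$-morphism relation for $\hat{\pmb{\psi}}\hat{\pmb{\varphi}}$ at degree $r_j<n$ displayed above; case (b) is treated by writing $\delta_1(\hat{h}\circ\hat{\pmb{q}}_{r_i}) = \hat{g}\hat{f}\circ\hat{\pmb{q}}_{r_i} - \hat{\pmb{q}}_{r_i} - \hat{h}\circ\delta_1\hat{\pmb{q}}_{r_i}$ and then expanding $\delta_1\hat{\pmb{q}}_{r_i}$ by the induction hypothesis \eqref{T20} at degree $r_i<n$; in case (c) a $\delta_\ell$ fusing only $(\hat{\pmb{\psi}}\hat{\pmb{\varphi}})$-factors is absorbed into the same morphism relation, while the boundary sub-cases where $\delta_\ell$ also swallows part of $\hat{h}\circ\hat{\pmb{q}}_{r_i}$ or of the trailing $\mathbb{1}_V^{\otimes k-i}$ block recombine with (a) and (b).

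Finally one carries out the pairing as in Step~III of Theorem~\ref{k2v18} and in Lemma~\ref{k2t20}, but read backwards: re-summing over the $C$-type indices, the contributions that entered through $\hat{g}\hat{f}$ (coming from $\delta_1\hat{h}$) reassemble, via the suspended Definition~\ref{k2d21} and Lemma~\ref{k2l23}, into $-\sum_{B(n)}\hat{\pmb{p}}_k(\hat{g}\hat{f}\circ\hat{\pmb{q}}_{r_1}\otimes\mbox{\dots}\otimes\hat{g}\hat{f}\circ\hat{\pmb{q}}_{r_k})$; the contributions with $\delta_1$ in an internal identity slot reassemble into $\sum_{u=1}^{n}\hat{\pmb{q}}_n(\mathbb{1}_V^{\otimes u-1}\otimes\delta_1\otimes\mathbb{1}_V^{\otimes n-u})$; the remaining $\delta_\ell$-contributions into $\sum_{A(n)}\hat{\pmb{q}}_k(\mathbb{1}_V^{\otimes i-1}\otimes\delta_\ell\otimes\mathbb{1}_V^{\otimes n-k})$; the single term arising from $(\hat{\pmb{\psi}}\hat{\pmb{\varphi}})_1\delta_{r_i}$ with $r_i=n$ gives $\hat{\pmb{q}}_1\delta_n$; and everything else cancels in pairs, yielding the suspended form of \eqref{T20} and hence \eqref{T20} itself. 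I expect this last combinatorial matching to be the main obstacle: one has to verify that after the inductive substitutions of $\delta_1\hat{\pmb{q}}_{r_i}$ and of the morphism relations for $\hat{\pmb{\psi}}\hat{\pmb{\varphi}}$ the $C(n)$-indexed sum redistributes cleanly over the $B(n)$- and $A(n)$-indexed families, leaving exactly the four required terms; the sign economy afforded by the suspension (all $\vartheta$-signs having been absorbed into $\delta$) is what makes the matching tractable. Once \eqref{T20} is established, combining it with Theorem~\ref{k2v18} and with Lemmas~\ref{k2t15}, \ref{k2t16}, \ref{k2t19} and~\ref{k2t20} shows that $\pmb{\nu}$ is an $A_\infty$ structure on $(W,\partial_W)$, that $\pmb{\varphi}$ and $\pmb{\psi}$ are $A_\infty$ morphisms, and that $\pmb{H}$ is an $A_\infty$ homotopy between $\pmb{\psi}\pmb{\varphi}$ and the identity, which is exactly the assertion that the objects of \eqref{Antanz} solve the transfer problem.
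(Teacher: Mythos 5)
Your proposal follows essentially the same route as the paper's proof: pass to the suspension to kill the signs, verify $n=2$ directly from the codifferential and homotopy relations, and then induct by expanding $\delta_1\hat{\pmb{q}}_n$ through the codifferential identity, substituting the induction hypothesis and the $A_\infty$-morphism relation for $\hat{\pmb{\psi}}\hat{\pmb{\varphi}}$ obtained from Lemmas~\ref{k2l22} and~\ref{k2l23}, and recombining the resulting families of terms into the three required sums. The only difference is one of packaging (you fold Lemma~\ref{k2l23} into Lemma~\ref{k2l22} up front, whereas the paper invokes it separately in Step~II), and the final combinatorial matching you flag as the main obstacle is exactly the bookkeeping the paper carries out with its labelled terms \eqref{Q1.1}--\eqref{Q2.3} and their pairings.
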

\begin{proof} We shall prove an equivalent assertion: 
\begin{equation}
\begin{split}
\notag
\delta_1\hat{\pmb{q}}_n + \sum_{B(n)}\hat{\pmb{p}}_k(\hat{g}\hat{f} \circ \hat{\pmb{q}}_{r_1} \otimes \mbox{\dots} \otimes \hat{g}\hat{f} \circ \hat{\pmb{q}}_{r_k}) &=\\
=\sum_{u=1}^{n}\hat{\pmb{q}}_n(\mathbb{1}_V^{\otimes  u-1} \otimes \delta_1 \otimes \mathbb{1}_V^{\otimes  n-u}) + \sum_{A(n)}\hat{\pmb{q}}_{k}(\mathbb{1}_V^{\otimes  i-1} \otimes &\delta_{\ell} \otimes \mathbb{1}_V^{\otimes  n-k}) + \hat{\pmb{q}}_1\delta_n,
\end{split}
\end{equation}
with suspended $\pmb{q}-$kernels given by Definition~\ref{k2d21}:
\begin{equation}
	\label{q-kernel}
	\hat{\pmb{q}}_n = \sum_{C(n)}\delta_k((\hat{\pmb{\psi}}\hat{\pmb{\varphi}})_{r_1} \otimes \mbox{\dots} \otimes (\hat{\pmb{\psi}}\hat{\pmb{\varphi}})_{r_{i-1}} \otimes \hat{h} \circ \hat{\pmb{q}}_{r_i} \otimes \mathbb{1}_V^{\otimes k-i}).
\end{equation}
The proof goes by induction on $n$: for $n = 2$, we have by  
\eqref{T6} (for $n=2$) and \eqref{q-kernel}:
$$\delta_1\hat{\pmb{q}}_2 = \delta_1(\delta_2(\hat{g}\hat{f} \otimes \hat{h}) + \delta_2(\hat{h} \otimes \mathbb{1}_V)) $$
$$= -\delta_2(\delta_1\otimes \mathbb{1}_V)(\hat{g}\hat{f} \otimes \hat{h}) -\delta_2(\mathbb{1}_V \otimes \delta_1)(\hat{g}\hat{f} \otimes \hat{h}) $$
$$-\delta_2(\delta_1\otimes \mathbb{1}_V)(\hat{h} \otimes \mathbb{1}_V) -\delta_2(\mathbb{1}_V \otimes \delta_1)(\hat{h} \otimes \mathbb{1}_V).~$$
By the Koszul sign convention
\begin{align*}
	\delta_2(\delta_1\otimes \mathbb{1}_V)(\hat{g}\hat{f} \otimes \hat{h}) &= (-1)^{|\delta_1||\hat{h}|}\delta_2(\hat{g}\hat{f} \otimes \hat{h})(\delta_1\otimes \mathbb{1}_V),\\
	\delta_2(\mathbb{1}_V \otimes \delta_1)(\hat{g}\hat{f} \otimes \hat{h}) &= \delta_2(\hat{g}\hat{f} \otimes \hat{g}\hat{f} - \mathbb{1}_V -\hat{h}\delta_1) =\\
	 &=  \delta_2(\hat{g}\hat{f} \otimes \hat{g}\hat{f}) -  \delta_2(\hat{g}\hat{f} \otimes \mathbb{1}_V) - \delta_2(\hat{g}\hat{f} \otimes \hat{h})(\mathbb{1}_V \otimes \delta_1),\\
	\delta_2(\delta_1\otimes \mathbb{1}_V)(\hat{h} \otimes \mathbb{1}_V) &= \delta_2(\hat{g}\hat{f} - \mathbb{1}_V -\hat{h}\delta_1 \otimes \mathbb{1}_V) =\\
	&= \delta_2(\hat{g}\hat{f} \otimes \mathbb{1}_V) - \delta_2(\mathbb{1}_V \otimes \mathbb{1}_V) - \delta_2(\hat{h} \otimes \mathbb{1}_V)(\delta_1 \otimes \mathbb{1}_V),\\
	\delta_2(\mathbb{1}_V \otimes \delta_1)(\hat{h} \otimes \mathbb{1}_V) &= (-1)^{|\delta_1||\hat{h}|}\delta_2(\hat{h} \otimes \mathbb{1}_V)(\mathbb{1}_V \otimes \delta_1),
\end{align*}
where $(-1)^{|\delta_1||\hat{h}|} = -1$ is a consequence of $|\hat{h}| = |\delta_1| = 1$,
and so
$$ \delta_1\hat{\pmb{q}}_2 = \delta_2(\hat{g}\hat{f} \otimes \hat{h})(\delta_1\otimes \mathbb{1}_V) + \delta_2(\hat{h} \otimes \mathbb{1}_V)(\delta_1 \otimes \mathbb{1}_V) + \delta_2(\hat{g}\hat{f} \otimes \hat{h})(\mathbb{1}_V \otimes \delta_1) $$
$$+ \delta_2(\hat{h} \otimes \mathbb{1}_V)(\mathbb{1}_V \otimes \delta_1) + \delta_2(\mathbb{1}_V \otimes \mathbb{1}_V) - \delta_2(\hat{g}\hat{f} \otimes \hat{g}\hat{f}) $$
$$=\hat{\pmb{q}}_2(\delta_1 \otimes \mathbb{1}_V) + \hat{\pmb{q}}_2(\mathbb{1}_V \otimes \delta_1) + \hat{\pmb{q}}_1\delta_2 - \hat{\pmb{p}}_2(\hat{g}\hat{f} \otimes \hat{g}\hat{f}).$$
The induction step is divided into three steps:\\
\textbf{I.} We first expand the term $\delta_1 \hat{\pmb{q}}_n$: by \eqref{q-kernel} 
$$\delta_1\hat{\pmb{q}}_n = \sum_{C(n)}\delta_1\delta_k((\hat{\pmb{\psi}}\hat{\pmb{\varphi}})_{r_1} \otimes \mbox{\dots} \otimes (\hat{\pmb{\psi}}\hat{\pmb{\varphi}})_{r_{i-1}} \otimes \hat{h} \circ \hat{\pmb{q}}_{r_i} \otimes \mathbb{1}_V^{\otimes k-i}) $$
$$= -\sum_{C(n)}\left(\sum_{u = 1}^{k} \delta_k \left(\mathbb{1}_V^{\otimes  u-1} \otimes \delta_1 \otimes \mathbb{1}_V^{\otimes  k-u}\right)\right)((\hat{\pmb{\psi}}\hat{\pmb{\varphi}})_{r_1} \otimes \mbox{\dots} \otimes (\hat{\pmb{\psi}}\hat{\pmb{\varphi}})_{r_{i-1}} \otimes \hat{h} \circ \hat{\pmb{q}}_{r_i} \otimes \mathbb{1}_V^{\otimes k-i}) $$
$$- \sum_{C(n)}\left(\sum_{A'(k)} \delta_{k'}\left(\mathbb{1}_V^{\otimes  i'-1} \otimes \delta_{\ell'} \otimes \mathbb{1}_V^{\otimes  k' - i'}\right) \right)((\hat{\pmb{\psi}}\hat{\pmb{\varphi}})_{r_1} \otimes \mbox{\dots} \otimes (\hat{\pmb{\psi}}\hat{\pmb{\varphi}})_{r_{i-1}} \otimes \hat{h} \circ \hat{\pmb{q}}_{r_i} \otimes \mathbb{1}_V^{\otimes k-i}).$$
The first summation can be rewritten as
$$
-\sum_{C(n)}\left(\sum_{u = 1}^{k} \delta_k \left(\mathbb{1}_V^{\otimes  u-1} \otimes \delta_1 \otimes \mathbb{1}_V^{\otimes  k-u}\right)\right)((\hat{\pmb{\psi}}\hat{\pmb{\varphi}})_{r_1} \otimes \mbox{\dots} \otimes (\hat{\pmb{\psi}}\hat{\pmb{\varphi}})_{r_{i-1}} \otimes \hat{h} \circ \hat{\pmb{q}}_{r_i} \otimes \mathbb{1}_V^{\otimes k-i}) $$ 
\begin{align}
\tag{Q1.1}
\label{Q1.1}
&=-\sum_{C(n)}\sum_{u = 1}^{i-1} \delta_k((\hat{\pmb{\psi}}\hat{\pmb{\varphi}})_{r_1} \otimes \mbox{\dots} \otimes \delta_1(\hat{\pmb{\psi}}\hat{\pmb{\varphi}})_{r_{u}} \otimes \mbox{\dots} \otimes (\hat{\pmb{\psi}}\hat{\pmb{\varphi}})_{r_{i-1}} \otimes \hat{h} \circ \hat{\pmb{q}}_{r_i} \otimes \mathbb{1}_V^{\otimes k-i}) \\
\tag{Q1.2}
\label{Q1.2} 
&-\sum_{C(n)}\delta_k((\hat{\pmb{\psi}}\hat{\pmb{\varphi}})_{r_1} \otimes \mbox{\dots} \otimes (\hat{\pmb{\psi}}\hat{\pmb{\varphi}})_{r_{i-1}} \otimes \delta_1\hat{h} \circ \hat{\pmb{q}}_{r_i} \otimes \mathbb{1}_V^{\otimes k-i}) \\
\tag{Q1.3}
\label{Q1.3}
&-(-1)^{|\delta_1||\hat{h}|}\! \sum_{C(n)}\sum_{u=i+1}^{k}\!\!\delta_k((\hat{\pmb{\psi}}\hat{\pmb{\varphi}})_{r_1}\!\! \otimes \! \mbox{\dots} \! \otimes \! (\hat{\pmb{\psi}}\hat{\pmb{\varphi}})_{r_{i-1}}\!\! \otimes \hat{h} \circ \hat{\pmb{q}}_{r_i}\! \otimes \mathbb{1}_V^{\otimes  u-1}\! \otimes \delta_1 \! \otimes \mathbb{1}_V^{\otimes  k-u-i}),
\end{align} \\
while the second as
$$ -\sum_{C(n)}\left(\sum_{A'(k)} \delta_{k'}\left(\mathbb{1}_V^{\otimes  i'-1} \otimes \delta_{\ell'} \otimes \mathbb{1}_V^{\otimes  k' - i'}\right) \right) \circ$$
$$\circ ((\hat{\pmb{\psi}}\hat{\pmb{\varphi}})_{r_1} \otimes \mbox{\dots} \otimes (\hat{\pmb{\psi}}\hat{\pmb{\varphi}})_{r_{i-1}} \otimes \hat{h} \circ \hat{\pmb{q}}_{r_i} \otimes \mathbb{1}_V^{\otimes k-i})$$ 
\begin{align}
\tag{Q2.1}
\label{Q2.1}
&=- \sum_{C(n)}\delta_k((\hat{\pmb{\psi}}\hat{\pmb{\varphi}})_{r_1} \otimes \mbox{\dots} \otimes \delta_{\star}((\hat{\pmb{\psi}}\hat{\pmb{\varphi}})_\star \otimes \mbox{\dots} \otimes (\hat{\pmb{\psi}}\hat{\pmb{\varphi}})_\star ) \otimes \mbox{\dots}\\
 \notag
 &~~~~~~~~~~~~~~~~~~~~~~~~~~~~~~~~~\mbox{\dots} \otimes (\hat{\pmb{\psi}}\hat{\pmb{\varphi}})_{r_{i-1}} \otimes \hat{h} \circ \hat{\pmb{q}}_{r_i} \otimes \mathbb{1}_V^{\otimes k-i})\\
\tag{Q2.2}
\label{Q2.2}
&- \sum_{C(n)}\delta_k((\hat{\pmb{\psi}}\hat{\pmb{\varphi}})_{r_1} \otimes \mbox{\dots} \otimes \delta_{\star}((\hat{\pmb{\psi}}\hat{\pmb{\varphi}})_\star \otimes \mbox{\dots} \otimes  (\hat{\pmb{\psi}}\hat{\pmb{\varphi}})_{r_{i-1}} \otimes \hat{h} \circ \hat{\pmb{q}}_{r_i} \otimes \mathbb{1}_V^{\otimes \star}) \otimes \mathbb{1}_V^{\otimes k-\star}\\
\tag{Q2.3}
\label{Q2.3}
&-(-1)^{|\delta_{\ell'}||\hat{h}|}\!
\sum_{C(n)}\sum_{A'(k-i)}\!\!\delta_{k'+ i}((\hat{\pmb{\psi}}\hat{\pmb{\varphi}})_{r_1}\!\! \otimes\! \mbox{\dots}\! \otimes\! (\hat{\pmb{\psi}}\hat{\pmb{\varphi}})_{r_{i-1}}\!\! \otimes \! \hat{h} \circ \hat{\pmb{q}}_{r_i} \!\! \otimes \mathbb{1}_V^{\otimes  i'-1} \! \otimes \delta_{\ell'} \! \otimes \mathbb{1}_V^{\otimes  k' - i'}).
\end{align}
The summation over all indices in \eqref{Q2.1} terms of the form 
$\delta_k((\hat{\pmb{\psi}}\hat{\pmb{\varphi}})_{r_1} \otimes \mbox{\dots} \otimes \star \otimes \mbox{\dots} \otimes (\hat{\pmb{\psi}}\hat{\pmb{\varphi}})_{r_{i-1}} \otimes \hat{h} \circ \hat{\pmb{q}}_{r_i} \otimes \mathbb{1}_V^{\otimes k-i})$ leads to
$$\eqref{Q2.1} = -\sum_{C(n)} \sum_{u = 1}^{i-1} \delta_k((\hat{\pmb{\psi}}\hat{\pmb{\varphi}})_{r_1} \otimes \mbox{\dots} \otimes \sum_{B'(r_u)}\delta_{k'}((\hat{\pmb{\psi}}\hat{\pmb{\varphi}})_{r'_1} \otimes \mbox{\dots} \otimes (\hat{\pmb{\psi}}\hat{\pmb{\varphi}})_{r_{k'}}) \otimes \mbox{\dots}$$
$$ \mbox{\dots} \otimes (\hat{\pmb{\psi}}\hat{\pmb{\varphi}})_{r_{i-1}} \otimes \hat{h} \circ \hat{\pmb{q}}_{r_i} \otimes \mathbb{1}_V^{\otimes k-i}).$$
Analogously, the summation over all indices in \eqref{Q2.2} terms of the form 
$\delta_k((\hat{\pmb{\psi}}\hat{\pmb{\varphi}})_{r_1} \otimes \mbox{\dots} \otimes \star \otimes \mathbb{1}_V^{\otimes k-j})$ gives
$$\eqref{Q2.2} = -\sum_{C(n)}\sum_{r_i>1} \delta_k((\hat{\pmb{\psi}}\hat{\pmb{\varphi}})_{r_1} \otimes \mbox{\dots} \otimes (\hat{\pmb{\psi}}\hat{\pmb{\varphi}})_{r_{i-1}} \otimes \hat{\pmb{q}}_{r_i} \otimes \mathbb{1}_V^{\otimes k-i}).$$

\noindent \textbf{II.}
By Lemma~\ref{k2l23}:
$$\eqref{Q2.1} = -\sum_{C(n)} \sum_{u = 1}^{i-1} \delta_k((\hat{\pmb{\psi}}\hat{\pmb{\varphi}})_{r_1} \otimes \mbox{\dots} \otimes \sum_{B'(r_u)}\hat{\pmb{p}}_{k'}(\hat{g}\hat{f} \circ \hat{\pmb{q}}_{r'_1} \otimes \mbox{\dots} \otimes \hat{g}\hat{f} \circ \hat{\pmb{q}}_{r_{k'}}) \otimes \mbox{\dots}$$
$$ \mbox{\dots} \otimes (\hat{\pmb{\psi}}\hat{\pmb{\varphi}})_{r_{i-1}} \otimes \hat{h} \circ \hat{\pmb{q}}_{r_i} \otimes \mathbb{1}_V^{\otimes k-i}),$$
 and Lemma~\ref{k2l22} for $(\hat{\pmb{\psi}}\hat{\pmb{\varphi}})_m$
(Definition~\ref{k2d21} and definition of $C(n)$ in \eqref{C} imply 
that $m$ is strictly less than $n$, so that assumptions of 
Lemma~\ref{k2l22} are fulfilled by our induction hypothesis) gives
$$\eqref{Q1.1} + \eqref{Q2.1} =$$
\begin{equation}
\tag{Q1.1 + 2.1a}
\label{Q1.1 + 2.1a}
\sum_{C(n)}\sum_{u = 1}^{i-1}\sum_{r_u = 1} \delta_k((\hat{\pmb{\psi}}\hat{\pmb{\varphi}})_{r_1} \otimes \mbox{\dots} \otimes (-1)^{|\hat{h}||\delta_1|}(\hat{\pmb{\psi}}\hat{\pmb{\varphi}})_{r_{u}}\delta_1 \otimes \mbox{\dots} \otimes (\hat{\pmb{\psi}}\hat{\pmb{\varphi}})_{r_{i-1}} \otimes \hat{h} \circ \hat{\pmb{q}}_{r_i} \otimes \mathbb{1}_V^{\otimes k-i})
\end{equation}
\begin{equation}
\notag 
+ \sum_{C(n)}\sum_{u = 1}^{i-1}\sum_{r_u > 1} \delta_k((\hat{\pmb{\psi}}\hat{\pmb{\varphi}})_{r_1} \otimes \mbox{\dots} \otimes \sum_{u=1}^{r_i}(-1)^{|\hat{h}||\delta_1|}(\hat{\pmb{\psi}}\hat{\pmb{\varphi}})_{r_i}(\mathbb{1}_V^{\otimes  u-1} \otimes \delta_1 \otimes \mathbb{1}_V^{\otimes  r_i-u}) \otimes \mbox{\dots}
\end{equation}
\begin{equation}\tag{Q1.1 + 2.1b}\label{Q1.1 + 2.1b}\mbox{\dots} \otimes (\hat{\pmb{\psi}}\hat{\pmb{\varphi}})_{r_{i-1}} \otimes \hat{h} \circ \hat{\pmb{q}}_{r_i} \otimes \mathbb{1}_V^{\otimes k-i})
\end{equation} 
\begin{equation} \notag+ \sum_{C(n)}\sum_{u = 1}^{i-1}\sum_{r_u > 1} \delta_k((\hat{\pmb{\psi}}\hat{\pmb{\varphi}})_{r_1} \otimes \mbox{\dots} \otimes \sum_{A'(r_u)}(-1)^{|\hat{h}||\delta_{\ell'}|}(\hat{\pmb{\psi}}\hat{\pmb{\varphi}})_{k'}(\mathbb{1}_V^{\otimes  i'-1} \otimes \delta_{\ell'} \otimes \mathbb{1}_V^{\otimes  r_u-k'}) \otimes \mbox{\dots}
\end{equation}
\begin{equation}\tag{Q1.1 + 2.1c}\label{Q1.1 + 2.1c}\mbox{\dots} \otimes (\hat{\pmb{\psi}}\hat{\pmb{\varphi}})_{r_{i-1}} \otimes \hat{h} \circ \hat{\pmb{q}}_{r_i} \otimes \mathbb{1}_V^{\otimes k-i})
\end{equation}
\begin{equation}\tag{Q1.1 + 2.1d}\label{Q1.1 + 2.1d}+ \sum_{C(n)}\sum_{u = 1}^{i-1}\sum_{r_u > 1} \delta_k((\hat{\pmb{\psi}}\hat{\pmb{\varphi}})_{r_1} \otimes \mbox{\dots} \otimes (-1)^{|\hat{h}||\delta_{r_u}|}(\hat{\pmb{\psi}}\hat{\pmb{\varphi}})_{1}\delta_{r_u} \otimes \mbox{\dots} \otimes (\hat{\pmb{\psi}}\hat{\pmb{\varphi}})_{r_{i-1}} \otimes \hat{h} \circ \hat{\pmb{q}}_{r_i} \otimes \mathbb{1}_V^{\otimes k-i})
\end{equation}
\begin{equation}\notag+\sum_{C(n)}\sum_{u = 1}^{i-1}\sum_{r_u > 1} \delta_k((\hat{\pmb{\psi}}\hat{\pmb{\varphi}})_{r_1} \otimes \mbox{\dots} \otimes (-1)^{|\hat{h}||\delta_{r_u}|}(\hat{\pmb{\psi}}\hat{\pmb{\varphi}})_{1}\delta_{r_u} \otimes \mbox{\dots} \otimes (\hat{\pmb{\psi}}\hat{\pmb{\varphi}})_{r_{i-1}} \otimes \hat{h} \circ \hat{\pmb{q}}_{r_i} \otimes \mathbb{1}_V^{\otimes k-i})
\end{equation}
\begin{equation}\tag{\ref{Q2.1}}-\sum_{C(n)}\sum_{u = 1}^{i-1}\sum_{r_u > 1} \delta_k((\hat{\pmb{\psi}}\hat{\pmb{\varphi}})_{r_1} \otimes \mbox{\dots} \otimes (-1)^{|\hat{h}||\delta_{r_u}|}(\hat{\pmb{\psi}}\hat{\pmb{\varphi}})_{1}\delta_{r_u} \otimes \mbox{\dots} \otimes (\hat{\pmb{\psi}}\hat{\pmb{\varphi}})_{r_{i-1}} \otimes \hat{h} \circ \hat{\pmb{q}}_{r_i} \otimes \mathbb{1}_V^{\otimes k-i}),
\end{equation}
where the first five terms come from \eqref{Q1.2} by application of Lemma~\ref{k2l22}, and the fifth one cancels out when combined with \eqref{Q2.1}. Recall that we have $|\delta_\ell| = -1$ for all 
$\ell$, and so $(-1)^{|\hat{h}||\delta_\ell|} = -1$ as well as
$$\eqref{Q1.2} + \eqref{Q2.2} = \sum_{C(n), r_i > 1} \delta_k((\hat{\pmb{\psi}}\hat{\pmb{\varphi}})_{r_1} \otimes \mbox{\dots} \otimes (\hat{\pmb{\psi}}\hat{\pmb{\varphi}})_{r_{i-1}} \otimes (-\delta_1\hat{h} - \mathbb{1}_V)\hat{\pmb{q}}_{r_i} \otimes \mathbb{1}_V^{\otimes k-i})$$
$$+\sum_{C(n), r_i = 1} \delta_k((\hat{\pmb{\psi}}\hat{\pmb{\varphi}})_{r_1} \otimes \mbox{\dots} \otimes (\hat{\pmb{\psi}}\hat{\pmb{\varphi}})_{r_{i-1}} \otimes -\delta_1\hat{h}\circ \hat{\pmb{q}}_{r_i} \otimes \mathbb{1}_V^{\otimes k-i})$$
$$ = \sum_{C(n), r_i > 1} \delta_k((\hat{\pmb{\psi}}\hat{\pmb{\varphi}})_{r_1} \otimes \mbox{\dots} \otimes (\hat{\pmb{\psi}}\hat{\pmb{\varphi}})_{r_{i-1}} \otimes (\hat{h}\delta_1 - \hat{g}\hat{f})\hat{\pmb{q}}_{r_i} \otimes \mathbb{1}_V^{\otimes k-i}) $$
$$+ \sum_{C(n), r_i = 1} \delta_k((\hat{\pmb{\psi}}\hat{\pmb{\varphi}})_{r_1} \otimes \mbox{\dots} \otimes (\hat{\pmb{\psi}}\hat{\pmb{\varphi}})_{r_{i-1}} \otimes (\hat{h}\delta_1 - \hat{g}\hat{f} + \mathbb{1}_V)\hat{\pmb{q}}_{r_i} \otimes \mathbb{1}_V^{\otimes k-i}).$$
Thanks to the induction hypothesis we substitute for $\delta_1\hat{\pmb{q}}_\star$ and the last display turns into
$$ -\sum_{C(n), r_i > 1} \delta_k((\hat{\pmb{\psi}}\hat{\pmb{\varphi}})_{r_1} \otimes \mbox{\dots} \otimes (\hat{\pmb{\psi}}\hat{\pmb{\varphi}})_{r_{i-1}} \otimes$$
$$ \otimes \sum_{B'(r_i)}\hat{h} \circ \hat{\pmb{p}}_{k'}(\hat{g}\hat{f} \circ \hat{\pmb{q}}_{r'_1} \otimes \mbox{\dots} \otimes \hat{g}\hat{f} \circ \hat{\pmb{q}}_{r'_{k'}})\otimes \mathbb{1}_V^{\otimes k-i}) $$
$$-\sum_{C(n), r_i > 1} \delta_k((\hat{\pmb{\psi}}\hat{\pmb{\varphi}})_{r_1} \otimes \mbox{\dots} \otimes (\hat{\pmb{\psi}}\hat{\pmb{\varphi}})_{r_{i-1}} \otimes \hat{g}\hat{f}\circ \hat{\pmb{q}}_{r_i}\otimes \mathbb{1}_V^{\otimes k-i}) $$
\begin{equation}
\tag{Q1.2 + 2.2a}
\label{Q1.2 + 2.2a}
+ \sum_{C(n), r_i > 1} \delta_k((\hat{\pmb{\psi}}\hat{\pmb{\varphi}})_{r_1} \otimes \mbox{\dots} \otimes (\hat{\pmb{\psi}}\hat{\pmb{\varphi}})_{r_{i-1}} \otimes \sum_{u=1}^{r_i}\hat{\pmb{q}}_{r_i}(\mathbb{1}_V^{\otimes  u-1} \otimes \delta_1 \otimes \mathbb{1}_V^{\otimes  r_i-u})\otimes \mathbb{1}_V^{\otimes k-i})
\end{equation}
\begin{equation}
\tag{Q1.2 + 2.2b}
\label{Q1.2 + 2.2b}
+\sum_{C(n), r_i > 1} \delta_k((\hat{\pmb{\psi}}\hat{\pmb{\varphi}})_{r_1} \otimes \mbox{\dots} \otimes (\hat{\pmb{\psi}}\hat{\pmb{\varphi}})_{r_{i-1}} \otimes \sum_{A'(r_i)}\hat{\pmb{q}}_{k'}(\mathbb{1}_V^{\otimes  i'-1} \otimes \delta_{\ell'} \otimes \mathbb{1}_V^{\otimes  r_i-k'}) + \hat{\pmb{q}}_1\delta_{r_i}\otimes \mathbb{1}_V^{\otimes k-i}) 
\end{equation}
\begin{equation}
\tag{Q1.2 + 2.2c}
\label{Q1.2 + 2.2c}
+\sum_{C(n), r_i = 1} \delta_k((\hat{\pmb{\psi}}\hat{\pmb{\varphi}})_{r_1} \otimes \mbox{\dots} \otimes (\hat{\pmb{\psi}}\hat{\pmb{\varphi}})_{r_{i-1}} \otimes \hat{h}\circ\hat{\pmb{q}}_{r_i}\delta_1 \otimes \mathbb{1}_V^{\otimes k-i}) \end{equation}
$$+ \sum_{C(n), r_i = 1} \delta_k((\hat{\pmb{\psi}}\hat{\pmb{\varphi}})_{r_1} \otimes \mbox{\dots} \otimes (\hat{\pmb{\psi}}\hat{\pmb{\varphi}})_{r_{i-1}} \otimes (-\hat{g}\hat{f} + \mathbb{1}_V)\hat{\pmb{q}}_{r_i} \otimes \mathbb{1}_V^{\otimes k-i}).$$
The non-numbered terms (first, second and sixth) can be further simplified. We notice 
$$-\sum_{C(n), r_i > 1} \delta_k((\hat{\pmb{\psi}}\hat{\pmb{\varphi}})_{r_1} \otimes \mbox{\dots} \otimes (\hat{\pmb{\psi}}\hat{\pmb{\varphi}})_{r_{i-1}} \otimes \sum_{B'(r_i)}\hat{h} \circ \hat{\pmb{p}}_{k'}(\hat{g}\hat{f} \circ \hat{\pmb{q}}_{r'_1} \otimes \mbox{\dots} \otimes \hat{g}\hat{f} \circ \hat{\pmb{q}}_{r'_{k'}})\otimes \mathbb{1}_V^{\otimes k-i}) $$
$$-\sum_{C(n), r_i > 1} \delta_k((\hat{\pmb{\psi}}\hat{\pmb{\varphi}})_{r_1} \otimes \mbox{\dots} \otimes (\hat{\pmb{\psi}}\hat{\pmb{\varphi}})_{r_{i-1}} \otimes \hat{g}\hat{f}\circ \hat{\pmb{q}}_{r_i}\otimes \mathbb{1}_V^{\otimes k-i}) $$
$$+ \sum_{C(n), r_i = 1} \delta_k((\hat{\pmb{\psi}}\hat{\pmb{\varphi}})_{r_1} \otimes \mbox{\dots} \otimes (\hat{\pmb{\psi}}\hat{\pmb{\varphi}})_{r_{i-1}} \otimes (-\hat{g}\hat{f} + \mathbb{1}_V)\hat{\pmb{q}}_{r_i} \otimes \mathbb{1}_V^{\otimes k-i}) $$
$$= -\sum_{C(n)} \delta_k((\hat{\pmb{\psi}}\hat{\pmb{\varphi}})_{r_1} \otimes \mbox{\dots} \otimes (\hat{\pmb{\psi}}\hat{\pmb{\varphi}})_{r_{i-1}} \otimes (\hat{\pmb{\psi}}\hat{\pmb{\varphi}})_{r_i} \otimes \mathbb{1}_V^{\otimes k-i}) $$
$$+ \sum_{C(n)} \delta_k((\hat{\pmb{\psi}}\hat{\pmb{\varphi}})_{r_1} \otimes \mbox{\dots} \otimes (\hat{\pmb{\psi}}\hat{\pmb{\varphi}})_{r_{i-1}} \otimes \mathbb{1}_V^{\otimes k-i+1}) $$
$$= -\sum_{B(n)} \delta_k((\hat{\pmb{\psi}}\hat{\pmb{\varphi}})_{r_1} \otimes \mbox{\dots} \otimes (\hat{\pmb{\psi}}\hat{\pmb{\varphi}})_{r_{k}}) + \delta_n(\mathbb{1}_V^{\otimes  n}).$$
By Lemma~\ref{k2l23}, this expression equals to
\begin{equation}
\tag{Q1.2 + 2.2d}
\label{Q1.2 + 2.2d}
-\sum_{B(n)}\hat{\pmb{p}}_k(\hat{g}\hat{f} \circ \hat{\pmb{q}}_{r_1} \otimes \mbox{\dots} \otimes \hat{g}\hat{f} \circ \hat{\pmb{q}}_{r_k}) + \hat{\pmb{q}}_1\delta_n.
\end{equation}

\noindent \textbf{III.} In the last step we pair various contributions together: 
the first step can be written as
$$\delta_1\hat{q}_n = \eqref{Q1.1} + \eqref{Q1.2} + \eqref{Q1.3} + \eqref{Q2.1} + \eqref{Q2.2} + \eqref{Q2.3},$$
while the second step as
$$\eqref{Q1.1} + \eqref{Q2.1} = \eqref{Q1.1 + 2.1a} + \eqref{Q1.1 + 2.1b} + \eqref{Q1.1 + 2.1c} + \eqref{Q1.1 + 2.1d}$$
and $$\eqref{Q1.2} + \eqref{Q2.2} = \eqref{Q1.2 + 2.2a} + \eqref{Q1.2 + 2.2b} + \eqref{Q1.2 + 2.2c} +  \eqref{Q1.2 + 2.2d}.$$
Taken altogether,
\begin{align*}
\eqref{Q1.3} + \eqref{Q1.1 + 2.1a} + &\eqref{Q1.1 + 2.1b} + \eqref{Q1.2 + 2.2a} +  \eqref{Q1.2 + 2.2c} =\\
&=\sum_{u=1}^{n}\hat{\pmb{q}}_n(\mathbb{1}_V^{\otimes  u-1} \otimes \delta_1 \otimes \mathbb{1}_V^{\otimes  n-u}),\\
\eqref{Q2.3} + \eqref{Q1.1 + 2.1c} + &\eqref{Q1.1 + 2.1d} + \eqref{Q1.2 + 2.2b} =\\ 
&=\sum_{A(n)}\hat{\pmb{q}}_{k}(\mathbb{1}_V^{\otimes  i-1} \otimes \delta_{\ell} \otimes \mathbb{1}_V^{\otimes  n-k}),\\
\eqref{Q1.2 + 2.2d} &= - \sum_{B(n)}\hat{\pmb{p}}_k(\hat{g}\hat{f} \circ \hat{\pmb{q}}_{r_1} \otimes \mbox{\dots} \otimes \hat{g}\hat{f} \circ \hat{\pmb{q}}_{r_k}) + \hat{\pmb{q}}_1\delta_n.
\end{align*}
The proof is complete.
\end{proof}

\section{Homotopy transfer and the homological perturbation lemma}
\label{sec:4}

In the present section we discuss a motivation to find explicit formulas
for the transfer of $A_\infty$ algebra structure presented in an apparently 
arbitrary form in \eqref{Antanz}. In the following, we recall the homological 
perturbation lemma and show that it gives a recipe to search for
the transfer problem exactly in the form \eqref{Antanz}. This is the approach with which
we develop and formalize \cite[Remark $4$]{Markl06}. 

\begin{lemma}[Homological perturbation lemma, \cite{Crainic04}]\label{k2l25}
Let $(V, \partial_V)$ and $(W, \partial_W)$ be chain complexes together with 
quasi-isomorphisms $f: V \to W$ and $g: W \to V$ such that 
$gf - \mathbb{1}_V = \partial_Vh + h\partial_V$ for a linear map 
$h: V \to V$. Let $\pmb{\mu}: V \to V$ 
be a linear map of the same degree as $\partial_V$ such that 
$(\partial_V + \pmb{\mu})^2 = 0$ and the linear map $\mathbb{1}_V - \pmb{\mu} h$ 
is invertible ($\pmb{\mu}$ is called in this context perturbation.) We define
\begin{align}
\pmb{\nu} = \partial_W + fAg, \;\;\;\; \pmb{\psi} = g + hAg, \;\;\;\; \pmb{\varphi} = f + fAh, 
\;\;\;\; \pmb{H} = h + hAh,
\end{align}
where $A = (\mathbb{1}_V - \pmb{\mu}h)^{-1}\pmb{\mu}$.
Then $(V, \partial_V + \pmb{\mu})$ and $(W, \pmb{\nu})$
are chain complexes and $\pmb{\varphi}: V \to V$, 
$\pmb{\psi}: W \to W$ their quasi-isomorphisms with 
$\pmb{\psi\varphi} - \mathbb{1}_V = (\partial_V + \pmb{\mu})\pmb{H} + \pmb{H}(\partial_V + \pmb{\mu}).$	
\end{lemma}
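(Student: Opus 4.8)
The plan is to base everything on a single \emph{master identity} for the operator $A=(\mathbb{1}_V-\pmb{\mu}h)^{-1}\pmb{\mu}$, namely
$$\partial_V A+A\partial_V+AgfA=0,$$
which I will call $(\mathrm M)$. That $(V,\partial_V+\pmb{\mu})$ is a complex is just the hypothesis $(\partial_V+\pmb{\mu})^2=0$, which expands (using $\partial_V^2=0$) to the Maurer--Cartan relation $\partial_V\pmb{\mu}+\pmb{\mu}\partial_V+\pmb{\mu}^2=0$. I would first note the two resolvent identities $(\mathbb{1}_V-\pmb{\mu}h)A=\pmb{\mu}$ and $A(\mathbb{1}_V-h\pmb{\mu})=\pmb{\mu}$, the second one because $\pmb{\mu}(\mathbb{1}_V-h\pmb{\mu})=(\mathbb{1}_V-\pmb{\mu}h)\pmb{\mu}$ forces $\mathbb{1}_V-h\pmb{\mu}$ to be invertible as well; equivalently $A=\pmb{\mu}+\pmb{\mu}hA=\pmb{\mu}+Ah\pmb{\mu}$. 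To prove $(\mathrm M)$ I substitute $A=\pmb{\mu}+\pmb{\mu}hA$ into $\partial_V A+A\partial_V$, eliminate $\partial_V\pmb{\mu}$ by Maurer--Cartan and the term $\pmb{\mu}^2hA$ by $\pmb{\mu}hA=A-\pmb{\mu}$, and finally replace $\partial_V h$ by $gf-\mathbb{1}_V-h\partial_V$; the expression collapses to $-\pmb{\mu}gfA+\pmb{\mu}h(\partial_V A+A\partial_V)$, and moving the last summand to the left and applying $(\mathbb{1}_V-\pmb{\mu}h)^{-1}$ gives $(\mathrm M)$.

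With $(\mathrm M)$ available, the three chain-level identities and the homotopy identity become routine expansions. For $\pmb{\nu}^2=0$: expand $(\partial_W+fAg)^2$, commute $\partial_W$ past $f$ and $g$, and the result is exactly $f(\partial_V A+A\partial_V+AgfA)g=0$. For $\pmb{\nu}\pmb{\varphi}=\pmb{\varphi}(\partial_V+\pmb{\mu})$ and $(\partial_V+\pmb{\mu})\pmb{\psi}=\pmb{\psi}\pmb{\nu}$, I expand both sides, commute $\partial_W$ past $f,g$, replace one occurrence of $h\partial_V$ (resp.\ $\partial_V h$) using $gf-\mathbb{1}_V=\partial_V h+h\partial_V$, apply $(\mathrm M)$ to cancel the $\partial_V A+A\partial_V$ part, and observe that the surviving terms vanish because $A-\pmb{\mu}-\pmb{\mu}hA=(\mathbb{1}_V-\pmb{\mu}h)A-\pmb{\mu}=0$ and $A-\pmb{\mu}-Ah\pmb{\mu}=A(\mathbb{1}_V-h\pmb{\mu})-\pmb{\mu}=0$. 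For the homotopy relation, I expand $(\partial_V+\pmb{\mu})\pmb{H}+\pmb{H}(\partial_V+\pmb{\mu})$ with $\pmb{H}=h+hAh$, split off $\partial_V h+h\partial_V=gf-\mathbb{1}_V$, rewrite $\partial_V hAh$ and $hAh\partial_V$ via the homotopy relation and rewrite $h(\partial_V A+A\partial_V)h=-hAgfAh$ via $(\mathrm M)$; after cancellation the residual terms regroup as $(\pmb{\mu}-A+\pmb{\mu}hA)h+h(\pmb{\mu}-A+Ah\pmb{\mu})=0$, and what is left equals $gf+gfAh+hAgf+hAgfAh-\mathbb{1}_V=\pmb{\psi\varphi}-\mathbb{1}_V$, as required.

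It remains to see that $\pmb{\varphi}$ and $\pmb{\psi}$ are quasi-isomorphisms between the perturbed complexes. The homotopy identity just established gives $\pmb{\psi}_*\pmb{\varphi}_*=\mathbb{1}$ on $H(V,\partial_V+\pmb{\mu})$, hence $\pmb{\varphi}_*$ is a split monomorphism and $\pmb{\psi}_*$ a split epimorphism; the reverse inclusions I would obtain by the standard comparison of the perturbed and unperturbed complexes underlying the proof in \cite{Crainic04}, using that $f$ and $g$ are already quasi-isomorphisms before the perturbation is switched on. This final step — promoting the one-sided homotopy equivalence to a genuine quasi-isomorphism without assuming the side conditions $fh=0$, $hg=0$, $h^2=0$ — is the only place where more than formal algebra is needed, and is where I expect the main difficulty; all of the preceding is bookkeeping organized around $(\mathrm M)$.
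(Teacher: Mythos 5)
The paper itself gives no proof of this lemma --- it is imported verbatim from \cite{Crainic04} --- so there is nothing internal to compare your argument against; what you have written is, in effect, the missing proof. Your algebraic core is correct. I checked the master identity $(\mathrm M)$: substituting $A=\pmb{\mu}+\pmb{\mu}hA$ into both $\partial_VA$ and $A\partial_V$, using $\partial_V\pmb{\mu}+\pmb{\mu}\partial_V=-\pmb{\mu}^2$ and $-\pmb{\mu}^2-\pmb{\mu}^2hA=-\pmb{\mu}A$, and then $\partial_Vh=gf-\mathbb{1}_V-h\partial_V$, one lands exactly on $\partial_VA+A\partial_V=-\pmb{\mu}gfA+\pmb{\mu}h(\partial_VA+A\partial_V)$, whence $(\mathbb{1}_V-\pmb{\mu}h)(\partial_VA+A\partial_V)=-\pmb{\mu}gfA$ and $(\mathrm M)$ follows. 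The deductions of $\pmb{\nu}^2=0$, of the chain-map identities, and of $(\partial_V+\pmb{\mu})\pmb{H}+\pmb{H}(\partial_V+\pmb{\mu})=\pmb{\psi}\pmb{\varphi}-\mathbb{1}_V$ all check out; the residual cancellations really do reduce to $(\pmb{\mu}-A+\pmb{\mu}hA)h=0$ and $h(\pmb{\mu}-A+Ah\pmb{\mu})=0$. Note also that since no Koszul signs arise (these are plain compositions), the computation is sign-safe. Two small remarks: the relation $\pmb{\mu}(\mathbb{1}_V-h\pmb{\mu})=(\mathbb{1}_V-\pmb{\mu}h)\pmb{\mu}$ does not by itself force $\mathbb{1}_V-h\pmb{\mu}$ to be invertible (for that you should exhibit the inverse $\mathbb{1}_V+h(\mathbb{1}_V-\pmb{\mu}h)^{-1}\pmb{\mu}$), but the identity $A=\pmb{\mu}+Ah\pmb{\mu}$ that you actually use follows directly from $Ah\pmb{\mu}=(\mathbb{1}_V-\pmb{\mu}h)^{-1}\bigl[(\pmb{\mu}h-\mathbb{1}_V)+\mathbb{1}_V\bigr]\pmb{\mu}=A-\pmb{\mu}$, so no invertibility of $\mathbb{1}_V-h\pmb{\mu}$ is needed there.

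The one genuine gap is the quasi-isomorphism assertion, and you have correctly located it. The homotopy identity gives $\pmb{\psi}_*\pmb{\varphi}_*=\mathbb{1}$ on $H(V,\partial_V+\pmb{\mu})$, hence only that $\pmb{\varphi}_*$ is a split monomorphism and $\pmb{\psi}_*$ a split epimorphism; since the data provide no homotopy on the $W$ side, nothing forces the other composite to be the identity up to homotopy, and ``$f,g$ are quasi-isomorphisms before perturbation'' cannot be transported to the perturbed complexes by purely formal manipulation --- the maps $\pmb{\varphi}=f\circ(\mathbb{1}_V-\pmb{\mu}h)^{-1}$ and $\pmb{\psi}=(\mathbb{1}_V-h\pmb{\mu})^{-1}\circ g$ differ from $f$ and $g$ by invertible operators that are \emph{not} chain maps for the unperturbed differentials. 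Closing this requires an extra input: either the side conditions (which this paper only imposes later, in \eqref{Okraj}), or a filtration/local-nilpotence hypothesis permitting a spectral-sequence comparison of $\pmb{\varphi}$ with $f$. The latter is exactly what is available in the application in Section \ref{sec:4}, where $(\delta_{\pmb{\mu}}\hat{H})^{n}$ vanishes on $(sV)^{\otimes n}$, so for the purposes of this paper the gap is harmless; but as a proof of the lemma in the stated generality it remains a deferral to \cite{Crainic04} rather than an argument.
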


In our case, on $(V, \partial_V)$ we have an additional $A_\infty$ structure given by
a collection of multilinear maps $\pmb{\mu} = (\mu_2, \mu_3, \mbox{\dots})$ fulfilling certain
axioms. In order to regard $\pmb{\mu}$ as a perturbation, we have to pass to the (suspended) 
tensor algebra generated by $V$.  
Let us consider $\overline{T}sV$ with a coderivation $\delta_V$ and $\overline{T}sW$ with a coderivation 
$\delta_W$, $\hat{F}$ and $\hat{G}$ morphisms and $\hat{H}$ a homotopy between $\hat{G}\hat{F}$ 
and the identity on $\overline{T}sV$. 
Here $\delta_V$ is given by components $\{s \circ \partial_V \circ \omega: sV \to sV\} \cup 
\{0: sV^{\otimes n} \to sV\}_{n \geq 2}$ in the sense of Theorem~\ref{k1v5}, and it is 
codifferential by Lemma~\ref{k1t6} because $\partial_V$ is a differential on $V$. Analogous 
conclusions do apply to $\delta_W$.
The map $\hat{F}: (\overline{T}sV, \delta_V) \to (\overline{T}sW, \delta_W)$ is given by 
components $\{s \circ f \circ \omega: sV \to sW\} \cup \{0: (sV)^{\otimes n} \to sW\}_{n \geq 2}$ 
(Lemma~\ref{k1t8}). By Lemma~\ref{k1t9}, $\hat{F}$ is a morphism ($f$ is a map of chain complexes), i.e. 
$\hat{F}|_{(sV)^{\otimes n}} = \hat{f}^{\otimes n}$ for $\hat{f} = s \circ f \circ \omega$.
Analogous conclusions apply to $\hat{G}$ as well.
Homotopy $\hat{H}: \overline{T}sV \to \overline{T}sV$ is a map given by 
$\{\hat{g}\hat{f}: sV \to sV\} \cup \{0: (sV)^{\otimes n} \to sV\}_{n \geq 2}$ on the left, 
$\{s \circ h \circ \omega: sV \to sV\} \cup \{0: (sV)^{\otimes n} \to sV\}_{n \geq 2}$ in 
the middle and $\{\mathbb{1}_V: sV \to sV\} \cup \{0: (sV)^{\otimes n} \to sV\}_{n \geq 2}$ 
on the right in the sense of Theorem~\ref{k1v4}. 
Because $h$ is a homotopy between $gf$ and $\mathbb{1}_V$, $\hat{H}$ is a homotopy between 
$\hat{G}\hat{F}$ and the identity on $\overline{T}sV$ according to  Theorem~\ref{k1v11}; 
the notation is 
$\hat{h} = s \circ h \circ \omega.$

Let $\delta_{\pmb{\mu}}$ be a coderivation on $\overline{T}sV$ corresponding to 
$\pmb{\mu}$, whose components are given by 
$\{0: sV \to sV\} \cup \{s \circ \mu_n \circ \omega^{\otimes n}: (sV)^{\otimes n} \to sV\}_{n \geq 2}$ 
in the sense of Theorem \ref{k1v13}. Because $\partial_V$ and $\pmb{\mu}$ form an 
$A_\infty$ structure on $V$, $(\delta_V + \delta_{\pmb{\mu}})^2 = 0$ by Theorem~\ref{k1v13}; 
we use the notation $\delta_n = s \circ \mu_n \circ \omega^{\otimes n}.$

The remaining assumption in Lemma~\ref{k2l25} is the invertibility of the map 
$\mathbb{1}-\delta_{\pmb{\mu}}\hat{H}$. We know
$$\hat{H}|_{(sV)^{\otimes n}} = \sum_{\substack{i+j = n-1,\\ i,j \geq 0}} (\hat{g}\hat{f})^{\otimes i} \otimes \hat{h} \otimes \mathbb{1}_V^{\otimes j}$$
so that $\hat{H}((sV)^{\otimes n}) \subseteq (sV)^{\otimes n}$ for all $n \geq 1$, and also  
$\delta_{\pmb{\mu}}|_{sV} = 0$ implies
$$\delta_{\pmb{\mu}}|_{(sV)^{\otimes n}} = \delta_n + \sum_{A(n)} \mathbb{1}_V^{\otimes i-1} \otimes \delta_\ell \otimes \mathbb{1}_V^{\otimes n-k}$$
for all $n \geq 2$ with $A(n)$ as in \eqref{A}. Consequently, for all 
$n \geq 2$ {holds} $\delta_{\pmb{\mu}}((sV)^{\otimes n}) \subseteq sV \oplus \mbox{\dots} \oplus (sV)^{\otimes n-1},$ and its iteration results in
$(\delta_{\pmb{\mu}} \hat{H})^{n-1}((sV)^{\otimes n}) \subseteq sV$, 
$(\delta_{\pmb{\mu}} \hat{H})^{n}((sV)^{\otimes n}) = 0$.

By previous discussion and in accordance with Remark $2.3$, \cite{Crainic04},
$$(\mathbb{1}-\delta_{\pmb{\mu}}\hat{H})^{-1}|_{sV \oplus \mbox{\dots} \oplus (sV)^{\otimes n}} = \mathbb{1} + \sum_{i = 1}^{n-1}(\delta_{\pmb{\mu}} \hat{H})^{n},$$ 
which means that $\mathbb{1}-\delta_{\pmb{\mu}}\hat{H}$ is invertible. Now all assumptions of Lemma~\ref{k2l25}
are fulfilled and we can write
\begin{align}
\notag
	& \delta_W + \delta_{\pmb{\nu}} = \delta_W + \hat{F}\left(\delta_{\pmb{\mu}}\sum_{n \geq 0}(\hat{H}\delta_{\pmb{\mu}})^{n}\right)\hat{G},\quad \hat{\pmb{\psi}} = \hat{G} + \hat{H}\left(\delta_{\pmb{\mu}}\sum_{n \geq 0}(\hat{H}\delta_{\pmb{\mu}})^{n}\right)\hat{G},\\
\notag
&	\hat{\pmb{\varphi}} = \hat{F} + \hat{F}\left(\sum_{n \geq 1}(\delta_{\pmb{\mu}}\hat{H})^{n}\right),\quad
\hat{\pmb{H}} = \hat{H} + \hat{H}\left(\sum_{n \geq 1}(\delta_{\pmb{\mu}}\hat{H})^{n}\right).
\end{align}
Here we see immediately the motivation for \eqref{Antanz}: 
 $\delta_{\pmb{\mu}}\sum_{n \geq 0}(\hat{H}\delta_{\pmb{\mu}})^{n}$ corresponds to 
the $\hat{\pmb{p}}-$kernels and $\sum_{n \geq 1}(\delta_{\pmb{\mu}}\hat{H})^{n}$ corresponds 
to the $\hat{\pmb{q}}-$kernels. For our purposes it is more convenient to write
\begin{equation}
\label{PLrozpis}
	\begin{split}
		&\delta_W + \delta_{\pmb{\nu}} = \delta_W +  \hat{F}\delta_{\pmb{\mu}}\hat{G} +\hat{F}\left(\sum_{n \geq 1}(\delta_{\pmb{\mu}}\hat{H})^{n}\right)\delta_{\pmb{\mu}}\hat{G},\\
		&\hat{\pmb{\psi}} = \hat{G} + \hat{H}\delta_{\pmb{\mu}}\hat{G} +\hat{H}\left(\sum_{n \geq 1}(\delta_{\pmb{\mu}}\hat{H})^{n}\right)\delta_{\pmb{\mu}}\hat{G},\\
	&\hat{\pmb{\varphi}} = \hat{F} + \hat{F}\delta_{\pmb{\mu}}\hat{H} + \hat{F}\left(\sum_{n \geq 1}(\delta_{\pmb{\mu}}\hat{H})^{n}\right)\delta_{\pmb{\mu}}\hat{H},\\
	&\hat{\pmb{H}} = \hat{H} + \hat{H}\delta_{\pmb{\mu}}\hat{H} + \hat{H}\left(\sum_{n \geq 1}(\delta_{\pmb{\mu}}\hat{H})^{n}\right)\delta_{\pmb{\mu}}\hat{H}.	
	\end{split}
\end{equation}
There is a drawback related to these formulas, however: by a direct inspection we see that
$\delta_W + \delta_{\pmb{\nu}}$ is not a coderivation in the sense of Theorem~\ref{k1v5}, 
$\hat{\pmb{\varphi}}$ and $\hat{\pmb{\psi}}$ do not define a morphism in the sense of
Lemma~\ref{k1t8}, and $\hat{\pmb{H}}$ does not fulfill the first part of morphism definition
in the sense of Theorem~\ref{k1v4}. 

In what follows we prove that on the additional assumptions (see \cite[Remark $4$]{Markl06}):
\begin{equation}
	\label{Okraj}
	\hat{f}\hat{g} = \mathbb{1}, \;\;\;\; \hat{f}\hat{h} = 0, \;\;\;\; \hat{h}\hat{g} = 0, \;\;\;\; \hat{h}\hat{h} = 0,
\end{equation}
the homological perturbation lemma gives the results compatible with Section \ref{sect:hotr}. 

\begin{lemma}\label{k2l26}
Let us assume the formulas in \eqref{Okraj} are satisfied. Then  
	\begin{enumerate}
		\item $\hat{\pmb{q}}_n \circ \hat{g}^{\otimes n} = 0$ for $n \geq 2$,
		\item 
		$\hat{\pmb{q}}_{i+1+j} \circ ((\hat{g}\hat{f})^{\otimes i} \otimes \hat{h} \otimes \mathbb{1}_V^{\otimes j}) = 0$
		for all $i,j \geq 0$, $i+j \geq 1$.
	\end{enumerate}
\end{lemma}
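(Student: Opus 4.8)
The plan is to prove both parts simultaneously by a single induction on $n$ (equivalently on $i+1+j$), using the inductive definition \eqref{q-kernel} of the suspended $\pmb{q}$-kernels together with the orthogonality relations \eqref{Okraj}. The key observation is that in
$$\hat{\pmb{q}}_n = \sum_{C(n)}\delta_k\bigl((\hat{\pmb{\psi}}\hat{\pmb{\varphi}})_{r_1} \otimes \mbox{\dots} \otimes (\hat{\pmb{\psi}}\hat{\pmb{\varphi}})_{r_{i-1}} \otimes \hat{h} \circ \hat{\pmb{q}}_{r_i} \otimes \mathbb{1}_V^{\otimes k-i}\bigr),$$
every summand carries an $\hat{h}$ in the $i$-th tensor slot (to the left of $\hat{\pmb{q}}_{r_i}$), and the composition of morphisms expands, by \eqref{p,q-slozeni} in its suspended form, as
$$(\hat{\pmb{\psi}}\hat{\pmb{\varphi}})_m = \hat{g}\hat{f}\circ \hat{\pmb{q}}_m + \sum_{B(m)} \hat{h} \circ \hat{\pmb{p}}_k(\hat{g}\hat{f} \circ \hat{\pmb{q}}_{r_1} \otimes \mbox{\dots} \otimes \hat{g}\hat{f} \circ \hat{\pmb{q}}_{r_k}),$$
so each $(\hat{\pmb{\psi}}\hat{\pmb{\varphi}})_m$ begins either with $\hat{g}$ or with $\hat{h}$. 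Feeding $\hat{g}^{\otimes n}$ into $\hat{\pmb{q}}_n$ therefore produces, slot by slot, factors of the form $\hat{f}\hat{g} = \mathbb{1}$, $\hat{h}\hat{g} = 0$, or (in the $i$-th slot) $\hat{h}\circ\hat{\pmb{q}}_{r_i}\circ \hat{g}^{\otimes r_i}$; the vanishing factors $\hat{h}\hat{g}=0$ kill every term in which some $(\hat{\pmb{\psi}}\hat{\pmb{\varphi}})_{r_j}$, $j<i$, starts with $\hat{h}$, while the remaining terms reduce (after cancelling the $\hat{f}\hat{g}=\mathbb1$ pairs) to expressions containing $\hat{h}\circ\hat{\pmb{q}}_{r_i}\circ\hat{g}^{\otimes r_i}$ with $r_i<n$, which vanish by the induction hypothesis for part (1) (or are $\hat h\hat g=0$ when $r_i=1$). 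The base case $n=2$ is checked directly from $\hat{\pmb{q}}_2 = \delta_2(\hat{g}\hat{f}\otimes\hat{h}) + \delta_2(\hat{h}\otimes\mathbb{1}_V)$, where precomposition with $\hat{g}\otimes\hat{g}$ gives $\delta_2(\hat g\otimes \hat h\hat g)+\delta_2(\hat h\hat g\otimes \hat g)=0$ using $\hat f\hat g=\mathbb1$ and $\hat h\hat g=0$.

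For part (2), I would run the analogous analysis but feed in the tensor $(\hat{g}\hat{f})^{\otimes i}\otimes\hat{h}\otimes\mathbb{1}_V^{\otimes j}$. Again using \eqref{q-kernel} and \eqref{p,q-slozeni}, one tracks which slot of $\hat{\pmb{q}}_{i+1+j}$ the distinguished $\hat{h}$ (position $i+1$ of the input) and the $i$-th tensor slot of the outer $\delta_k$ land in. If the $\hat h$ of the input is absorbed strictly to the left of the $i$-th outer slot, one produces a factor $\hat h\hat g=0$ or $\hat f\hat h=0$; if it lands in or to the right of the $i$-th slot, one produces a subexpression of the shape $\hat{\pmb{q}}_{r}\circ((\hat g\hat f)^{\otimes a}\otimes\hat h\otimes\mathbb 1_V^{\otimes b})$ with $r<i+1+j$, killed by the induction hypothesis, or $\hat h\circ\hat{\pmb{q}}_{r_i}\circ\hat g^{\otimes r_i}$, killed by part (1); the subtlety $\hat h\hat h=0$ is precisely what handles the case where two $\hat h$'s (one from the input, one from \eqref{q-kernel}) would otherwise collide in the same slot. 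The base case $i+j=1$ (so $\hat{\pmb{q}}_2$) is immediate: precomposing $\delta_2(\hat g\hat f\otimes\hat h)+\delta_2(\hat h\otimes\mathbb1_V)$ with $\hat g\hat f\otimes\hat h$ gives $\delta_2(\hat g\hat f\otimes\hat h\hat h)+\delta_2(\hat h\hat g\hat f\otimes\hat h)=0$ by $\hat h\hat h=0$ and $\hat h\hat g=0$, and with $\hat h\otimes\mathbb1_V$ gives $\delta_2(\hat g\hat f\hat h\otimes\hat h)+\delta_2(\hat h\hat h\otimes\mathbb1_V)=0$ by $\hat f\hat h=0$ and $\hat h\hat h=0$.

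The main obstacle I anticipate is purely bookkeeping: organizing the case distinction so that every summand of the doubly- (in fact triply-) nested sum in \eqref{q-kernel}, after substituting \eqref{p,q-slozeni} for the composition factors and distributing the input tensor through the Koszul sign rule, is seen to contain at least one of the vanishing factors $\hat f\hat h$, $\hat h\hat g$, $\hat h\hat h$, or a strictly smaller instance of (1) or (2). Signs play no real role here since every relevant term is being shown to vanish, which is the simplification alluded to elsewhere in the paper; the care needed is entirely in the combinatorics of which tensor slot receives which piece of the input. I would present this as: fix a summand, identify the tensor position of the input's $\hat h$ relative to the outer index $i$, and in each of the (few) cases exhibit the annihilating factor, then conclude by induction.
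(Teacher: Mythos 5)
Your proposal is correct and follows essentially the same route as the paper's proof: induction on the homogeneity, with part (1) settled because every summand of $\hat{\pmb{q}}_n\circ\hat{g}^{\otimes n}$ carries the factor $\hat{h}\circ\hat{\pmb{q}}_{r_i}\circ\hat{g}^{\otimes r_i}$ (zero by induction, or $\hat{h}\hat{g}=0$ when $r_i=1$), and part (2) by the same case split on which tensor slot of \eqref{q-kernel} absorbs the distinguished $\hat{h}$ of the input, each case being killed by one of $\hat{f}\hat{h}=\hat{h}\hat{g}=\hat{h}\hat{h}=0$, by part (1), or by a smaller instance of (2). The only point to tighten is your ``left of the $i$-th slot'' case: the annihilation there is not a bare factor $\hat{f}\hat{h}$ or $\hat{h}\hat{g}$ but the sub-claim $[\![\hat{\pmb{\psi}}\hat{\pmb{\varphi}}]\!]_{m}\circ((\hat{g}\hat{f})^{\otimes\star}\otimes\hat{h}\otimes\mathbb{1}_V^{\otimes\star})=0$, which itself requires the induction hypothesis of (2) for the $\hat{g}\hat{f}\circ\hat{\pmb{q}}_m$ term of \eqref{p,q-slozeni} together with $\hat{f}\hat{h}=0$ --- exactly the auxiliary step the paper isolates and proves first, and which your closing list of annihilators (``a strictly smaller instance of (1) or (2)'') already implicitly covers.
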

\begin{proof}
	$(1)$: The proof goes by induction. By definition 
	$\hat{\pmb{q}}_2 = \delta_2(\hat{g}\hat{f} \otimes \hat{h}) + \delta_{2}(\hat{h} \otimes \mathbb{1}_V)$
	for $n = 2$, so that 
	$\hat{\pmb{q}}_2 \otimes \hat{g}^{\otimes 2} = \delta_2(\hat{g}\hat{f}\hat{g} \otimes \hat{h}\hat{g}) 
	+ \delta_{2}(\hat{h}\hat{g} \otimes \hat{g})$ and the claim follows from \eqref{Okraj} ($\hat{h}\hat{g} = 0.$)
	
 We assume the assertion is true for all natural numbers less than $n\in\mN$ ($n\geq 2$.) 
By definition
$$\hat{\pmb{q}}_n\circ \hat{g}^{\otimes n} = \sum_{C(n)}\delta_k([\![\hat{\pmb{\psi}}\hat{\pmb{\varphi}}]\!]_{r_1} \circ \hat{g}^{\otimes r_1} \otimes \mbox{\dots} \otimes [\![\hat{\pmb{\psi}}\hat{\pmb{\varphi}}]\!]_{r_{i-1}} \circ \hat{g}^{\otimes r_{i-1}}\otimes \hat{h} \circ \hat{\pmb{q}}_{r_i} \circ \hat{g}^{\otimes r_i}  \otimes \hat{g}^{\otimes k-i}),$$ 
where 
\begin{equation}
\label{p,q-symbol}
[\![\hat{\pmb{\psi}}\hat{\pmb{\varphi}}]\!]_{m} = \hat{g}\hat{f}\circ \hat{\pmb{q}}_m + \sum_{B(m)}\hat{h}\circ\hat{\pmb{p}}_k(\hat{g}\hat{f} \circ \hat{\pmb{q}}_{r_1} \otimes \mbox{\dots} \otimes \hat{g}\hat{f} \circ \hat{\pmb{q}}_{r_k})	
\end{equation}  
with $[\![\hat{\pmb{\psi}}\hat{\pmb{\varphi}}]\!]_1 = \hat{g}\hat{f}.$ 
In the case $r_i > 1$, the composition $\hat{h} \circ \hat{\pmb{q}}_{r_i} \circ \hat{g}^{\otimes r_i}$ is 
trivial by the induction hypothesis. If $r_i = 1$, $\hat{h} \circ \hat{\pmb{q}}_1 \circ \hat{g} = \hat{h}\hat{g}$ 
is trivial by \eqref{Okraj}.

\noindent $(2)$: The proof is by induction on $n = i+1+j$. For $n = 2$ we prove 
$$\hat{\pmb{q}}_2(\hat{h} \otimes \mathbb{1}_V) = 0, \;\;\;\;\hat{\pmb{q}}_2(\hat{g}\hat{f} \otimes \hat{h}) = 0.$$ 
As we know $\hat{\pmb{q}}_2(\hat{h} \otimes \mathbb{1}_V) = (-1)^{|\hat{h}||\hat{h}|}\delta_2(\hat{g}\hat{f}\hat{h} \otimes \hat{h}) + \delta_2(\hat{h}\hat{h} \otimes \mathbb{1}_V)$ and $\hat{\pmb{q}}_2(\hat{g}\hat{f} \otimes \hat{h}) = \delta_2(\hat{g}\hat{f}\hat{g}\hat{f} \otimes \hat{h}\hat{h}) + \delta_2(\hat{h}\hat{g}\hat{f} \otimes \hat{h})$, the claim 
follows thanks to \eqref{Okraj}.

Let the claim hold for $m \geq 2$ and all natural numbers less than $n$, we prove it is true for $n$. 
First of all, for $n > i'+j'+1 \geq 2$ we have
$$[\![\hat{\pmb{\psi}}\hat{\pmb{\varphi}}]\!]_{i'+1+j'} \circ ((\hat{g}\hat{f})^{\otimes i'} \otimes \hat{h} \otimes \mathbb{1}_V^{\otimes j'}) = 0$$
and also $\hat{g}\hat{f} \circ \hat{\pmb{q}}_1 \circ \hat{h} = \hat{g}\hat{f} \circ \hat{h} = 0.$ 
By definition
$$[\![\hat{\pmb{\psi}}\hat{\pmb{\varphi}}]\!]_{i'+1+j'} \circ ((\hat{g}\hat{f})^{\otimes i'} \otimes \hat{h} \otimes \mathbb{1}_V^{\otimes j'}) = \hat{g}\hat{f} \circ \hat{\pmb{q}}_{i' + 1 + j'}\circ ((\hat{g}\hat{f})^{\otimes i'} \otimes \hat{h} \otimes \mathbb{1}_V^{\otimes j'})$$
$$+ \sum_{B(i' + 1 + j')}\hat{h} \circ \hat{\pmb{p}}_k(\hat{g}\hat{f} \circ \hat{\pmb{q}}_{r_1} \otimes \mbox{\dots} \otimes \hat{g}\hat{f} \circ \hat{\pmb{q}}_{r_k})\circ ((\hat{g}\hat{f})^{\otimes i'} \otimes \hat{h} \otimes \mathbb{1}_V^{\otimes j'}),$$
and by induction hypothesis $\hat{\pmb{q}}_{i' + 1 + j'}\circ ((\hat{g}\hat{f})^{\otimes i'} \otimes \hat{h} \otimes \mathbb{1}_V^{\otimes j'}) = 0.$ The last summation can be conveniently rewritten as 
$$\sum_{B(i' + 1 + j')}\hat{h} \circ\hat{\pmb{p}}_k(\hat{g}\hat{f} \circ \hat{\pmb{q}}_{r_1} \otimes \mbox{\dots} \otimes \hat{g}\hat{f} \circ \hat{\pmb{q}}_{r_k})\circ ((\hat{g}\hat{f})^{\otimes i'} \otimes \hat{h} \otimes \mathbb{1}_V^{\otimes j'}) $$
$$= \sum_{B(i' + 1 + j')}\hat{h} \circ \hat{\pmb{p}}_k(\hat{g}\hat{f} \circ \hat{\pmb{q}}_{r_1} \circ (\hat{g}\hat{f})^{\otimes r_1} \otimes \mbox{\dots}$$
$$ \mbox{\dots} \otimes \hat{g}\hat{f} \circ \hat{\pmb{q}}_{r_u} \circ ((\hat{g}\hat{f})^{\otimes \star} \otimes \hat{h} \otimes \mathbb{1}_V^{\otimes \star})\otimes \mbox{\dots} \otimes \hat{g}\hat{f} \circ \hat{\pmb{q}}_{r_k}),$$
and the induction implies $\hat{\pmb{q}}_{r_u} \circ ((\hat{g}\hat{f})^{\otimes \star} \otimes \hat{h} \otimes \mathbb{1}_V^{\otimes \star}) = 0$ for $r_u > 1$. We already showed 
$\hat{g}\hat{f} \circ \hat{\pmb{q}}_{r_u} \circ ((\hat{g}\hat{f})^{\otimes \star} \otimes \hat{h} \otimes \mathbb{1}_V^{\otimes \star}) = \hat{g}\hat{f} \circ \hat{\pmb{q}}_1 \circ \hat{h} = 0$ for $r_u = 1$.

We now return back to the main thread of the proof and show 
$\hat{\pmb{q}}_{n} \circ ((\hat{g}\hat{f})^{\otimes i} \otimes \hat{h} \otimes \mathbb{1}_V^{\otimes j}) = 0$.
We consider $k,i',r_1, \mbox{\dots}, r_{i'-1}, r_{i'}$ in $C(n)$ given by \eqref{C}, and compute
$$\delta_k([\![\hat{\pmb{\psi}}\hat{\pmb{\varphi}}]\!]_{r_1} \otimes \mbox{\dots} \otimes [\![\hat{\pmb{\psi}}\hat{\pmb{\varphi}}]\!]_{r_{i'-1}} \otimes \hat{h} \circ \hat{\pmb{q}}_{r_{i'}} \otimes \mathbb{1}_V^{\otimes k-i'})\circ ((\hat{g}\hat{f})^{\otimes i} \otimes \hat{h} \otimes \mathbb{1}_V^{\otimes j}).$$
After substitution for $[\![\hat{\pmb{\psi}}\hat{\pmb{\varphi}}]\!]$, there are the following three possibilities 
for indices $i$ a $i'$:  
\begin{description}
	\item[\underline{$i < r_1 + \mbox{\dots} + r_{i'-1}$:}] Then there exist $1 \leq u < i$ such that 
	$[\![\hat{\pmb{\psi}}\hat{\pmb{\varphi}}]\!]_{r_u}\circ ((\hat{g}\hat{f})^{\otimes \star} \otimes \hat{h} \otimes \mathbb{1}_V^{\otimes \star})$. For $r_u \geq 2$ we already proved $[\![\hat{\pmb{\psi}}\hat{\pmb{\varphi}}]\!]_{r_u}\circ ((\hat{g}\hat{f})^{\otimes \star} \otimes \hat{h} \otimes \mathbb{1}_V^{\otimes \star}) = 0$, for $r_u = 1$ 
	we have $[\![\hat{\pmb{\psi}}\hat{\pmb{\varphi}}]\!]_{r_u}\circ ((\hat{g}\hat{f})^{\otimes \star} \otimes \hat{h} \otimes \mathbb{1}_V^{\otimes \star}) = \hat{g}\hat{f} \circ \hat{h} = 0$.
	\item[\underline{$r_1 + \mbox{\dots} + r_{i'-1} \leq i < r_1 + \mbox{\dots} + r_{i'}$:}] In the tensor product there
	is a term of the form
	$\hat{h} \circ \hat{\pmb{q}}_{r_{i'}} \circ ((\hat{g}\hat{f})^{\otimes \star} \otimes \hat{h} \otimes \mathbb{1}_V^{\otimes \star}),$ which is by the induction hypothesis $0$ for $r_{i'} > 1$. If $r_{i'} = 1$, then 
	$\hat{h} \circ \hat{\pmb{q}}_{r_{i'}} \circ ((\hat{g}\hat{f})^{\otimes \star} \otimes \hat{h} \otimes \mathbb{1}_V^{\otimes \star}) = \hat{h}\hat{h}$ equals to $0$ by \eqref{Okraj}.
	\item[\underline{$r_1 + \mbox{\dots} + r_{i'} \leq i$:}] In this case we get in the tensor product the term of the form
	$\hat{h} \circ \hat{\pmb{q}}_{r_{i'}} \circ (\hat{g}\hat{f})^{\otimes r_{i'}} = \hat{h} \circ \hat{\pmb{q}}_{r_{i'}} \circ \hat{g}^{\otimes r_{i'}} \circ \hat{f}^{\otimes r_{i'}}$, which is trivial for $r_{i'} \geq 2$ by $(1)$ of the lemma. 
	If $r_{i'} = 1$, then $\hat{h} \circ \hat{\pmb{q}}_{r_{i'}} \circ (\hat{g}\hat{f})^{\otimes r_{i'}} = \hat{h} \circ \hat{g}\hat{f}$ equals to zero again by \eqref{Okraj}.
\end{description}
Because $k,i',r_1, \mbox{\dots}, r_{i'-1}, r_{i'}$ in $C(n)$ was chosen arbitrarily, we get
$$\sum_{C(n)} \delta_k([\![\hat{\pmb{\psi}}\hat{\pmb{\varphi}}]\!]_{r_1} \otimes \mbox{\dots} \otimes [\![\hat{\pmb{\psi}}\hat{\pmb{\varphi}}]\!]_{r_{i'-1}} \otimes \hat{h} \circ \hat{\pmb{q}}_{r_{i'}} \otimes \mathbb{1}_V^{\otimes k-i'})\circ ((\hat{g}\hat{f})^{\otimes i} \otimes \hat{h} \otimes \mathbb{1}_V^{\otimes j}) = 0,$$
and so finally $\hat{\pmb{q}}_{n} \circ ((\hat{g}\hat{f})^{\otimes i} \otimes \hat{h} \otimes \mathbb{1}_V^{\otimes j}) = 0.$
\end{proof}

\begin{remark}\label{remarkpsifi}
We easily observe:
\begin{enumerate}
	\item For all $n \geq 2$ and for linear mappings $\{\pmb{a}_n: (sV)^{\otimes n} \to sV\}_{n \geq 1}$, 
	$$\sum_{B(n)} \pmb{a}_{r_1} \otimes \mbox{\dots} \otimes \pmb{a}_{r_k} = \sum_{\substack{B(n),\\ r_k > 1}} \pmb{a}_{r_1} \otimes \mbox{\dots} \otimes \pmb{a}_{r_k} $$$$+ \sum_{u = 1}^{n-3}\sum_{\substack{B(n-u),\\ r_k > 1}}\pmb{a}_{r_1} \otimes \mbox{\dots} \otimes \pmb{a}_{r_k} \otimes \pmb{a}_1^{\otimes u}  + \sum_{u = 2}^{n-1}\pmb{a}_u \otimes \pmb{a}_1^{\otimes n-u} + \pmb{a}_1^{\otimes n},$$
	where $B(n)$ given as in \eqref{B},
	\item For all $n \geq 2$, we have  $$[\![\hat{\pmb{\psi}}\hat{\pmb{\varphi}}]\!]_{n} \circ \hat{g}^{\otimes n}= \hat{h} \circ \hat{\pmb{p}}_n \circ \hat{g}^{\otimes n},$$
	and if $\hat{h} \circ \hat{\pmb{p}}_1 = \mathbb{1}_V$ (Definition~\ref{k2d17}) the formula is true for 
	$n = 1$ as well.
	\item For all $n \geq 1$ and $0 \leq u \leq n-1$:
	$$[\![\hat{\pmb{\psi}}\hat{\pmb{\varphi}}]\!]_{n} \circ ((\hat{f}\hat{g})^{\otimes u} \otimes \hat{h} \otimes \mathbb{1}_V^{\otimes n-1-u}) = 0.$$
\end{enumerate}	
\end{remark}

\begin{lemma}\label{k2t27}
	Let us assume \eqref{Okraj} is true for $n \geq 2$. Then 
	\begin{equation}
	\label{T27}
	\hat{\pmb{p}}_n \circ \hat{g}^{\otimes n} = \delta_n \circ \hat{g}^{\otimes n} + \sum_{i = 2}^{n-1} \hat{\pmb{q}}_{n-i+1}\circ\left(\sum_{u = 0}^{n-i}\mathbb{1}_V^{\otimes u} \otimes \delta_{i} \otimes \mathbb{1}_V^{\otimes n-i-u}\right)\circ \hat{g}^{\otimes n}.
	\end{equation}
\end{lemma}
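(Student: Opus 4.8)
The plan is to reduce the identity \eqref{T27} to the suspended form of the defining relation \eqref{T20} for the $\hat{\pmb{q}}$-kernels — which holds by Theorem~\ref{k2v24} — together with the two vanishing statements of Lemma~\ref{k2l26} and the normalization $\hat{\pmb{q}}_1=\mathbb{1}_V$. No induction on $n$ is needed, since \eqref{T20} is already available in full generality.

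First I would rewrite the right-hand side of \eqref{T27}. Reindexing the double sum there by $\ell=i$, $k=n-i+1$ and letting the insertion position $i'=u+1$ run over $1\le i'\le k$, one checks that $\sum_{i=2}^{n-1}\hat{\pmb{q}}_{n-i+1}\circ\bigl(\sum_{u=0}^{n-i}\mathbb{1}_V^{\otimes u}\otimes\delta_i\otimes\mathbb{1}_V^{\otimes n-i-u}\bigr)$ coincides with $\sum_{A(n)}\hat{\pmb{q}}_k\circ(\mathbb{1}_V^{\otimes i-1}\otimes\delta_\ell\otimes\mathbb{1}_V^{\otimes k-i})$, with $A(n)$ as in \eqref{A}. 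Hence it suffices to prove
\begin{equation}
\notag
\hat{\pmb{p}}_n\circ\hat{g}^{\otimes n}=\delta_n\circ\hat{g}^{\otimes n}+\sum_{A(n)}\hat{\pmb{q}}_k\circ\bigl(\mathbb{1}_V^{\otimes i-1}\otimes\delta_\ell\otimes\mathbb{1}_V^{\otimes k-i}\bigr)\circ\hat{g}^{\otimes n}.
\end{equation}

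Next I would solve the suspended \eqref{T20} for the sum $\sum_{A(n)}\hat{\pmb{q}}_k(\mathbb{1}_V^{\otimes i-1}\otimes\delta_\ell\otimes\mathbb{1}_V^{\otimes k-i})$ and substitute it into the displayed equation. Because $\hat{\pmb{q}}_1=\mathbb{1}_V$, the term $\delta_n\circ\hat{g}^{\otimes n}$ cancels the resulting contribution $-\hat{\pmb{q}}_1\delta_n\circ\hat{g}^{\otimes n}$, so that the right-hand side collapses to the sum of $\delta_1\hat{\pmb{q}}_n\circ\hat{g}^{\otimes n}$, the term $\sum_{B(n)}\hat{\pmb{p}}_k(\hat{g}\hat{f}\circ\hat{\pmb{q}}_{r_1}\otimes\dots\otimes\hat{g}\hat{f}\circ\hat{\pmb{q}}_{r_k})\circ\hat{g}^{\otimes n}$, and $-\sum_{u=1}^{n}\hat{\pmb{q}}_n(\mathbb{1}_V^{\otimes u-1}\otimes\delta_1\otimes\mathbb{1}_V^{\otimes n-u})\circ\hat{g}^{\otimes n}$. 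The first of these is $0$ because $\hat{\pmb{q}}_n\circ\hat{g}^{\otimes n}=0$ for $n\ge2$ by the first assertion of Lemma~\ref{k2l26}. The third is $0$ as well: since $f$ and $g$ are maps of chain complexes one has $\delta_1\circ\hat{g}=\hat{g}\circ\delta_1^{W}$ with $\delta_1^{W}=s\circ\partial_W\circ\omega$, and therefore $\hat{\pmb{q}}_n(\mathbb{1}_V^{\otimes u-1}\otimes\delta_1\otimes\mathbb{1}_V^{\otimes n-u})\circ\hat{g}^{\otimes n}=\hat{\pmb{q}}_n\circ\hat{g}^{\otimes n}\circ(\mathbb{1}_V^{\otimes u-1}\otimes\delta_1^{W}\otimes\mathbb{1}_V^{\otimes n-u})=0$.

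Finally I would identify what is left, namely $\sum_{B(n)}\hat{\pmb{p}}_k(\hat{g}\hat{f}\circ\hat{\pmb{q}}_{r_1}\otimes\dots\otimes\hat{g}\hat{f}\circ\hat{\pmb{q}}_{r_k})\circ\hat{g}^{\otimes n}$, with $\hat{\pmb{p}}_n\circ\hat{g}^{\otimes n}$. Distributing $\hat{g}^{\otimes n}$ over the tensor factors — all maps involved are of degree $0$, so no Koszul signs appear — turns the $j$-th factor into $\hat{g}\hat{f}\circ\hat{\pmb{q}}_{r_j}\circ\hat{g}^{\otimes r_j}$, which by \eqref{Okraj} equals $\hat{g}\hat{f}\hat{g}=\hat{g}$ when $r_j=1$ and, by the first assertion of Lemma~\ref{k2l26}, vanishes when $r_j\ge2$. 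Thus the only surviving summand is the one with $k=n$ and $r_1=\dots=r_n=1$, which is $\hat{\pmb{p}}_n(\hat{g}\otimes\dots\otimes\hat{g})=\hat{\pmb{p}}_n\circ\hat{g}^{\otimes n}$, completing the proof. I expect the argument to be essentially routine; the only mildly subtle points are the bookkeeping of the reindexing in the first step and spotting the cancellation of the two copies of $\delta_n\circ\hat{g}^{\otimes n}$, which is precisely where $\hat{\pmb{q}}_1=\mathbb{1}_V$ is used.
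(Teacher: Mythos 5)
Your proof is correct, but it takes a genuinely different route from the paper's. The paper proves Lemma~\ref{k2t27} by induction on $n$: it expands each $\hat{\pmb{q}}_{n-i+1}$ via \eqref{q-kernel}, uses Lemma~\ref{k2l26} to kill all summands in which a $\hat{h}\circ\hat{\pmb{q}}_{r}$ or an $[\![\hat{\pmb{\psi}}\hat{\pmb{\varphi}}]\!]_{r}$ meets $\hat{g}^{\otimes r}$ in the wrong position, invokes the induction hypothesis together with the second point of Remark~\ref{remarkpsifi} to convert the surviving summands into $\delta_k(\hat{h}\circ\hat{\pmb{p}}_{r_1}\circ\hat{g}^{\otimes r_1}\otimes\cdots)$, and finally recognizes the tree expansion \eqref{p-kernel} of $\hat{\pmb{p}}_n$. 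You instead observe that, after the (correct) reindexing of the double sum as a sum over $A(n)$, the claimed identity is an algebraic consequence of the suspended relation \eqref{T20} — i.e.\ of Theorem~\ref{k2v24} — combined with the vanishing $\hat{\pmb{q}}_m\circ\hat{g}^{\otimes m}=0$ from Lemma~\ref{k2l26}, the chain-map property $\delta_1\hat{g}=\hat{g}\delta_1^W$, and $\hat{f}\hat{g}=\mathbb{1}$ from \eqref{Okraj}; the only surviving term of $\sum_{B(n)}\hat{\pmb{p}}_k(\hat{g}\hat{f}\circ\hat{\pmb{q}}_{r_1}\otimes\cdots)\circ\hat{g}^{\otimes n}$ is indeed the all-$r_j=1$ one, and the Koszul signs are trivial because everything in sight has degree $0$. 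Your argument is shorter and avoids both the induction and the combinatorial term-matching, at the price of importing the full strength of Theorem~\ref{k2v24}, which is the most laborious result of Section~\ref{sec:3}; the paper's proof, by contrast, stays within the self-contained machinery of Section~\ref{sec:4} (the definitions \eqref{p-kernel}, \eqref{q-kernel}, Lemma~\ref{k2l26} and Remark~\ref{remarkpsifi}) and does not logically depend on Theorem~\ref{k2v24}. Since Theorem~\ref{k2v24} is proved earlier and independently of \eqref{Okraj} and of anything in Section~\ref{sec:4}, there is no circularity in your dependence on it, so both proofs are valid.
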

\begin{proof}
The proof goes by induction on $n$. As for $n = 2$ we have $\hat{\pmb{p}}_2 = \delta_2$, hence the claim follows.

We now assume the assertion holds for all natural numbers greater than $1$ and less than $n$. 
Let us consider $2 \leq m < n$ and $k,i,r_1, \mbox{\dots}, r_{i-1}, r_i$ as given in $C(m)$, so that
$$\delta_k([\![\hat{\pmb{\psi}}\hat{\pmb{\varphi}}]\!]_{r_1} \otimes \mbox{\dots} \otimes [\![\hat{\pmb{\psi}}\hat{\pmb{\varphi}}]\!]_{r_{i-1}} \otimes \hat{h} \circ \hat{\pmb{q}}_{r_{i}} \otimes \mathbb{1}_V^{\otimes k-i})\circ (\hat{g}^{\otimes u} \otimes \delta_\ell \otimes \hat{g}^{\otimes m-1-u})= 0$$
whenever $u < r_1 + \mbox{\dots} + r_{i - 1}$ or $r_1 + \mbox{\dots} + r_{i} \leq u$ because  
$\hat{h} \circ \hat{\pmb{q}}_{r_i} \circ \hat{g}^{\otimes r_i} = 0$ for all $r_i \geq 1$ 
by Lemma~\ref{k2l26}.

We fix $n-1 \geq k \geq 2, k \geq i \geq 1$ and $r_1, \mbox{\dots}, r_{i-1} \geq 1$ as in \eqref{C}. 
As follows from the previous observation, all terms in 
$$\sum_{i = 2}^{n-1} \hat{\pmb{q}}_{n-i+1}\circ\left(\sum_{u = 0}^{n-i}\mathbb{1}_V^{\otimes u} \otimes \delta_{i} \otimes \mathbb{1}_V^{\otimes n-i-u}\right)\circ \hat{g}^{\otimes n}$$
are of the form
$\delta_k([\![\hat{\pmb{\psi}}\hat{\pmb{\varphi}}]\!]_{r_1}\circ \hat{g}^{\otimes r_1} \otimes \mbox{\dots} \otimes [\![\hat{\pmb{\psi}}\hat{\pmb{\varphi}}]\!]_{r_{i-1}}\circ \hat{g}^{\otimes r_{i-1}} \otimes \star \otimes \hat{g}^{\otimes k-i})$ with $\star$ representing a mapping $(sV)^{\otimes \star} \to sV$
(the $\hat{\pmb{q}}-$kernels are given by \eqref{q-kernel}. We can rewrite them in the form
$$\delta_k([\![\hat{\pmb{\psi}}\hat{\pmb{\varphi}}]\!]_{r_1}\circ \hat{g}^{\otimes r_1} \otimes \mbox{\dots} \otimes [\![\hat{\pmb{\psi}}\hat{\pmb{\varphi}}]\!]_{r_{i-1}} \circ \hat{g}^{\otimes r_{i-1}} \otimes$$
$$ \otimes \left[\delta_{n'} \circ \hat{g}^{\otimes r_1} + \sum_{i = 2}^{n'-1} \hat{\pmb{q}}_{n'-i+1}\circ\left(\sum_{u = 0}^{n'-i}\mathbb{1}_V^{\otimes u} \otimes \delta_{i} \otimes \mathbb{1}_V^{\otimes n'-i-u}\right)\circ \hat{g}^{\otimes n'}\right] \otimes \hat{g}^{\otimes k-i}),$$
where $n' = n + i - k - (r_1 + \mbox{\dots} + r_{i-1})$, $n' > 1$.
Applying the second point of Remark~\ref{remarkpsifi} to 
$[\![\hat{\pmb{\psi}}\hat{\pmb{\varphi}}]\!]_{\star}\circ \hat{g}^{\otimes \star}$,
the inducing hypothesis reduces the last display to
\begin{equation}
\label{T27_1}
\delta_k(\hat{h} \circ \hat{\pmb{p}}_{r_1} \circ \hat{g}^{\otimes r_1} \otimes \mbox{\dots} \otimes \hat{h} \circ \hat{\pmb{p}}_{r_{i-1}} \circ \hat{g}^{\otimes r_{i-1}} \otimes \hat{h} \circ \hat{\pmb{p}}_{n'} \circ \hat{g}^{\otimes n'} \otimes \hat{g}^{\otimes k-i})
\end{equation}
(we write $\hat{g} = \hat{h}\circ\hat{\pmb{p}}_1 \circ \hat{g}.$) 
By the first point of Remark~\ref{remarkpsifi},
$$\sum_{i = 2}^{n-1} \hat{\pmb{q}}_{n-i+1}\circ\left(\sum_{u = 0}^{n-i}\mathbb{1}_V^{\otimes u} \otimes \delta_{i} \otimes \mathbb{1}_V^{\otimes n-i-u}\right)\circ \hat{g}^{\otimes n}$$
$$= \sum_{C(n), r_i > 1}\delta_k(\hat{h} \circ \hat{\pmb{p}}_{r_1} \circ \hat{g}^{\otimes r_1} \otimes \mbox{\dots} \otimes \hat{h} \circ \hat{\pmb{p}}_{r_{i-1}} \circ \hat{g}^{\otimes r_{i-1}} \otimes \hat{h} \circ \hat{\pmb{p}}_{r_i} \circ \hat{g}^{\otimes r_i} \otimes \hat{g}^{\otimes k-i})$$
$$= \sum_{B(n), k \neq n}\delta_k(\hat{h} \circ \hat{\pmb{p}}_{r_1} \circ \hat{g}^{\otimes r_1} \otimes \mbox{\dots} \otimes \hat{h} \circ \hat{\pmb{p}}_{r_k} \circ \hat{g}^{\otimes r_k}),$$
so that for each term in the sum there exists $u$, $r_u > 1$ (they are of the form of terms in \eqref{T27_1} with $n' > 1$.)
Adding the remaining term $\delta_n \circ \hat{g}^{\otimes n}$ and using the formula \eqref{p-kernel}
for the $\hat{\pmb{p}}-$kernels, the proof concludes.
\end{proof}

\begin{lemma}\label{k2l28}
	Let us assume \eqref{Okraj}, and also 
\begin{equation}
\label{L28}
\hat{\pmb{q}}_n = \delta_n \circ \hat{H}|_{(sV)^{\otimes n}} + \sum_{i = 2}^{n-1} \hat{\pmb{q}}_{n-i+1}\circ\left(\sum_{u = 0}^{n-i}\mathbb{1}_V^{\otimes u} \otimes \delta_{i} \otimes \mathbb{1}_V^{\otimes n-i-u}\right)\circ \hat{H}|_{(sV)^{\otimes n}}	
\end{equation}
to be true for all $2 \leq m \leq n$. Then 
	$$[\![\hat{\pmb{\psi}}\hat{\pmb{\varphi}}]\!]_{n} - \hat{h} \circ \hat{\pmb{p}}_n \circ (\hat{g}\hat{f})^{\otimes n} =$$
	$$= [\![\hat{\pmb{\psi}}\hat{\pmb{\varphi}}]\!]_{1} \circ \delta_n \circ \hat{H}|_{(sV)^{\otimes n}} $$$$+ \sum_{i = 2}^{n-1} [\![\hat{\pmb{\psi}}\hat{\pmb{\varphi}}]\!]_{n-i+1}\circ\left(\sum_{u = 0}^{n-i}\mathbb{1}_V^{\otimes u} \otimes \delta_{i} \otimes \mathbb{1}_V^{\otimes n-i-u}\right)\circ \hat{H}|_{(sV)^{\otimes n}}.$$
\end{lemma}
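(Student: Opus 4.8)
The plan is to compute $[\![\hat{\pmb{\psi}}\hat{\pmb{\varphi}}]\!]_{n}$ directly from its defining expression \eqref{p,q-symbol}, to feed the hypothesis \eqref{L28} into it, and to separate the asserted right-hand side from a residual term which will then be identified with $\hat{h}\circ\hat{\pmb{p}}_n\circ(\hat{g}\hat{f})^{\otimes n}$. It is convenient to abbreviate $D_i := \sum_{u=0}^{n-i}\mathbb{1}_V^{\otimes u}\otimes\delta_i\otimes\mathbb{1}_V^{\otimes n-i-u}$, so that \eqref{L28} at level $n$ reads $\hat{\pmb{q}}_n = \delta_n\circ\hat{H}|_{(sV)^{\otimes n}} + \sum_{i=2}^{n-1}\hat{\pmb{q}}_{n-i+1}\circ D_i\circ\hat{H}|_{(sV)^{\otimes n}}$.

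First I would expand $[\![\hat{\pmb{\psi}}\hat{\pmb{\varphi}}]\!]_{n} = \hat{g}\hat{f}\circ\hat{\pmb{q}}_n + \sum_{B(n)}\hat{h}\circ\hat{\pmb{p}}_k(\hat{g}\hat{f}\circ\hat{\pmb{q}}_{r_1}\otimes\mbox{\dots}\otimes\hat{g}\hat{f}\circ\hat{\pmb{q}}_{r_k})$, substitute \eqref{L28} for $\hat{\pmb{q}}_n$ in the first summand, and re-express each factor $\hat{g}\hat{f}\circ\hat{\pmb{q}}_{n-i+1}$ appearing there through \eqref{p,q-symbol} once more as $[\![\hat{\pmb{\psi}}\hat{\pmb{\varphi}}]\!]_{n-i+1} - \sum_{B(n-i+1)}\hat{h}\circ\hat{\pmb{p}}_{k'}(\hat{g}\hat{f}\circ\hat{\pmb{q}}_{r'_1}\otimes\mbox{\dots})$ (legitimate since $2\leq n-i+1<n$), while $\hat{g}\hat{f}\circ\delta_n = [\![\hat{\pmb{\psi}}\hat{\pmb{\varphi}}]\!]_1\circ\delta_n$ because $[\![\hat{\pmb{\psi}}\hat{\pmb{\varphi}}]\!]_1 = \hat{g}\hat{f}$. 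Collecting terms one gets that $[\![\hat{\pmb{\psi}}\hat{\pmb{\varphi}}]\!]_{n}$ equals the claimed right-hand side plus the residual
$$X := \sum_{B(n)}\hat{h}\circ\hat{\pmb{p}}_k(\hat{g}\hat{f}\circ\hat{\pmb{q}}_{r_1}\otimes\mbox{\dots}) - \sum_{i=2}^{n-1}\sum_{B(n-i+1)}\hat{h}\circ\hat{\pmb{p}}_{k'}(\hat{g}\hat{f}\circ\hat{\pmb{q}}_{r'_1}\otimes\mbox{\dots})\circ D_i\circ\hat{H}|_{(sV)^{\otimes n}},$$
so the lemma reduces to the identity $X = \hat{h}\circ\hat{\pmb{p}}_n\circ(\hat{g}\hat{f})^{\otimes n}$.

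To prove this I would first observe, using Remark~\ref{remarkpsifi}(2) composed on the right with $\hat{f}^{\otimes n}$ and Lemma~\ref{k2l26}(1) (which annihilates $\hat{\pmb{q}}_n\circ\hat{g}^{\otimes n}$ for $n\geq 2$), that $\hat{h}\circ\hat{\pmb{p}}_n\circ(\hat{g}\hat{f})^{\otimes n} = \sum_{B(n)}\hat{h}\circ\hat{\pmb{p}}_k(\hat{g}\hat{f}\circ\hat{\pmb{q}}_{r_1}\otimes\mbox{\dots})\circ(\hat{g}\hat{f})^{\otimes n}$. Subtracting this from the first sum in $X$, the required identity becomes
$$\sum_{B(n)}\hat{h}\circ\hat{\pmb{p}}_k(\hat{g}\hat{f}\circ\hat{\pmb{q}}_{r_1}\otimes\mbox{\dots})\circ\bigl(\mathbb{1}_V^{\otimes n} - (\hat{g}\hat{f})^{\otimes n}\bigr) = \sum_{i=2}^{n-1}\sum_{B(n-i+1)}\hat{h}\circ\hat{\pmb{p}}_{k'}(\hat{g}\hat{f}\circ\hat{\pmb{q}}_{r'_1}\otimes\mbox{\dots})\circ D_i\circ\hat{H}|_{(sV)^{\otimes n}}.$$
Here I would use Lemma~\ref{k2l23} to turn each $\sum_{B(m)}\hat{\pmb{p}}_k(\hat{g}\hat{f}\circ\hat{\pmb{q}}_{r_1}\otimes\mbox{\dots})$ into $\sum_{B(m)}\delta_k((\hat{\pmb{\psi}}\hat{\pmb{\varphi}})_{r_1}\otimes\mbox{\dots})$, expand $\hat{H}|_{(sV)^{\otimes n}} = \sum_{a+b=n-1}(\hat{g}\hat{f})^{\otimes a}\otimes\hat{h}\otimes\mathbb{1}_V^{\otimes b}$ on the right and $\mathbb{1}_V^{\otimes n} - (\hat{g}\hat{f})^{\otimes n} = \sum_{u=0}^{n-1}(\hat{g}\hat{f})^{\otimes u}\otimes(\mathbb{1}_V-\hat{g}\hat{f})\otimes\mathbb{1}_V^{\otimes n-1-u}$ on the left (with $\mathbb{1}_V-\hat{g}\hat{f} = -\delta_1\hat{h}-\hat{h}\delta_1$), and feed \eqref{L28} once more into the inner kernels of index $\geq 2$. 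Keeping track, on each side, of the tensor slot into which a perturbation is inserted relative to the single homotopy slot, the two sides match term by term once every contribution in which some $\hat{\pmb{q}}$-kernel is precomposed with $\hat{g}^{\otimes\bullet}$, or with a block $(\hat{g}\hat{f})^{\otimes\bullet}\otimes\hat{h}\otimes\mathbb{1}_V^{\otimes\bullet}$, is discarded thanks to Lemma~\ref{k2l26}(1)--(2); it is natural to run this step as an induction on $n$ with base case $n=2$ (where $\hat{\pmb{p}}_2=\delta_2$ and everything is explicit), exactly in the spirit of the proof of Lemma~\ref{k2t27}.

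The principal difficulty is this last matching: the right-hand side carries one perturbation inserted after the homotopy $\hat{H}$, whereas on the left the perturbations are produced by iterating \eqref{L28} inside nested $\hat{\pmb{p}}$-kernels, so one has to track both the order in which perturbations act and the way they interleave with the $(\hat{g}\hat{f})$- and $\hat{h}$-blocks coming from $\hat{H}$ and from $\mathbb{1}_V-\hat{g}\hat{f}$; as in Lemma~\ref{k2t27}, it is the vanishing statements of Lemma~\ref{k2l26} that make this bookkeeping terminate.
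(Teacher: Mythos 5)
Your reduction is essentially the paper's own first step, read from the other side of the equation: expanding $[\![\hat{\pmb{\psi}}\hat{\pmb{\varphi}}]\!]_{n}$ via \eqref{p,q-symbol}, inserting \eqref{L28} into $\hat{g}\hat{f}\circ\hat{\pmb{q}}_n$, and rewriting each $\hat{g}\hat{f}\circ\hat{\pmb{q}}_{n-i+1}$ back through \eqref{p,q-symbol} is exactly the paper's splitting of the right-hand side into the components \eqref{L28_1} and \eqref{L28_2}, and your residual identity $X=\hat{h}\circ\hat{\pmb{p}}_n\circ(\hat{g}\hat{f})^{\otimes n}$ is equivalent to the paper's claim that \eqref{L28_2} equals $\sum_{B(n),\,k\neq n}\hat{h}\circ\hat{\pmb{p}}_k(\hat{g}\hat{f}\circ\hat{\pmb{q}}_{r_1}\otimes\mbox{\dots}\otimes\hat{g}\hat{f}\circ\hat{\pmb{q}}_{r_k})$, the excluded $k=n$ term being precisely $\hat{h}\circ\hat{\pmb{p}}_n\circ(\hat{g}\hat{f})^{\otimes n}$ because $\hat{\pmb{q}}_1=\mathbb{1}_V$. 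Up to this point everything you write is correct, including the observation that $\sum_{B(n)}\hat{h}\circ\hat{\pmb{p}}_k(\hat{g}\hat{f}\circ\hat{\pmb{q}}_{r_1}\otimes\mbox{\dots})\circ(\hat{g}\hat{f})^{\otimes n}$ reduces to the $k=n$ term by Lemma~\ref{k2l26}(1).

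The gap is in how you propose to close the remaining identity, which you yourself flag as the principal difficulty but then only assert ("the two sides match term by term"). Your plan of telescoping $\mathbb{1}_V^{\otimes n}-(\hat{g}\hat{f})^{\otimes n}$ and substituting $\mathbb{1}_V-\hat{g}\hat{f}=-\delta_1\hat{h}-\hat{h}\delta_1$ is not the right tool: it injects $\delta_1$-terms (and, via $\delta_1\hat{h}$, a differential applied \emph{after} the homotopy slot) which have no counterpart among the perturbations $\delta_i$, $i\geq 2$, appearing on the other side; eliminating them would force you to commute $\delta_1$ past the $\hat{\pmb{q}}$-kernels using their full compatibility relation \eqref{T20}, i.e.\ to smuggle in a large part of Theorem~\ref{k2v24}, none of which you invoke. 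The paper avoids this entirely: it evaluates $\sum_{i=2}^{n-1}\sum_{B(n-i+1)}\hat{h}\circ\hat{\pmb{p}}_k(\hat{g}\hat{f}\circ\hat{\pmb{q}}_{r_1}\otimes\mbox{\dots})\circ\bigl(\sum_{u}\mathbb{1}_V^{\otimes u}\otimes\delta_i\otimes\mathbb{1}_V^{\otimes n-i-u}\bigr)\circ\hat{H}|_{(sV)^{\otimes n}}$ directly, distributing the block $\delta_i\circ\hat{H}$ over the tensor slots and observing that a summand survives only when the $\delta_i$ and the single $\hat{h}$ coming from $\hat{H}$ land inside the \emph{same} factor $\hat{\pmb{q}}_{r_u}$, all factors to its left being $\hat{\pmb{q}}_1=\mathbb{1}_V$ (everything else dies by Lemma~\ref{k2l26}(1)--(2) together with $\hat{f}\hat{h}=0$ from \eqref{Okraj}); summing the surviving inner expressions over $i$ and over the insertion position and applying \eqref{L28} once more reassembles each such factor into $\hat{g}\hat{f}\circ\hat{\pmb{q}}_{r'}$ with $r'>1$, which yields exactly $\sum_{B(n),\,k\neq n}$. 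That distribute--kill--reassemble computation is the actual content of the lemma, and your proposal stops short of it.
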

\begin{proof} By \eqref{p,q-symbol}, we have for all $m \geq 2$
$$[\![\hat{\pmb{\psi}}\hat{\pmb{\varphi}}]\!]_{m} = \hat{g}\hat{f}\circ \hat{\pmb{q}}_m + \sum_{B(m)}\hat{h}\circ\hat{\pmb{p}}_k(\hat{g}\hat{f} \circ \hat{\pmb{q}}_{r_1} \otimes \mbox{\dots} \otimes \hat{g}\hat{f} \circ \hat{\pmb{q}}_{r_k}),$$ 
(and $[\![\hat{\pmb{\psi}}\hat{\pmb{\varphi}}]\!]_{1} = \hat{g}\hat{f}.$) We can split 
$$[\![\hat{\pmb{\psi}}\hat{\pmb{\varphi}}]\!]_{1} \circ \delta_n \circ \hat{H}|_{(sV)^{\otimes n}} $$$$+ \sum_{i = 2}^{n-1} [\![\hat{\pmb{\psi}}\hat{\pmb{\varphi}}]\!]_{n-i+1}\circ\left(\sum_{u = 0}^{n-i}\mathbb{1}_V^{\otimes u} \otimes \delta_{i} \otimes \mathbb{1}_V^{\otimes n-i-u}\right)\circ \hat{H}|_{(sV)^{\otimes n}}$$
in two components and write
\begin{align}
		\label{L28_1}
\hat{g}\hat{f} &\circ \hat{\pmb{q}}_1 \circ \delta_n \circ \hat{H}|_{(sV)^{\otimes n}}\\ \notag+ \sum_{i = 2}^{n-1} \hat{g}\hat{f} \circ \hat{\pmb{q}}_{n-i+1}\circ&\left(\sum_{u = 0}^{n-i}\mathbb{1}_V^{\otimes u} \otimes \delta_{i} \otimes \mathbb{1}_V^{\otimes n-i-u}\right)\circ \hat{H}|_{(sV)^{\otimes n}}
\end{align}
\begin{equation}
	\label{L28_2}
+ \sum_{i = 2}^{n-1}\sum_{B(n-i+1)}\hat{h}\circ\hat{\pmb{p}}_k(\hat{g}\hat{f} \circ \hat{\pmb{q}}_{r_1} \otimes \mbox{\dots} \otimes \hat{g}\hat{f} \circ \hat{\pmb{q}}_{r_k}) \circ \left(\sum_{u = 0}^{n-i}\mathbb{1}_V^{\otimes u} \otimes \delta_{i} \otimes \mathbb{1}_V^{\otimes n-i-u}\right)\circ \hat{H}|_{(sV)^{\otimes n}}.
\end{equation}
Because $\hat{\pmb{q}}_n = \mathbb{1}_{V}$, we have $\eqref{L28_1} = \hat{g}\hat{f} \circ \hat{\pmb{q}}_n$ 
thanks to \eqref{L28}.
As for the second component \eqref{L28_2}, consider $k, r_1, \mbox{\dots}, r_k \in B(n-i+1)$ for 
some $i \geq 2$ with $B(n-i+1)$ as in \eqref{B}. Then
$$\hat{h}\circ\hat{\pmb{p}}_k(\hat{g}\hat{f} \circ \hat{\pmb{q}}_{r_1} \otimes \mbox{\dots} \otimes \hat{g}\hat{f} \circ \hat{\pmb{q}}_{r_k}) \circ \left(\sum_{u = 0}^{n-i}\mathbb{1}_V^{\otimes u} \otimes \delta_{i} \otimes \mathbb{1}_V^{\otimes n-i-u}\right)\circ \hat{H}|_{(sV)^{\otimes n}} $$
$$= \sum_{u=1}^{k} \hat{h}\circ\hat{\pmb{p}}_k(\hat{g}\hat{f} \circ \hat{\pmb{q}}_{r_1} \circ (\hat{g}\hat{f})^{\otimes r_1} \otimes \mbox{\dots} \otimes \hat{g}\hat{f} \circ \hat{\pmb{q}}_{r_{u-1}} \circ (\hat{g}\hat{f})^{\otimes r_{u-1}} \otimes$$
$$ \otimes \left[\hat{g}\hat{f} \circ \hat{\pmb{q}}_{r_u} \circ \left(\sum_{v = 0}^{r_u-1}\mathbb{1}_V^{\otimes v} \otimes \delta_{i} \otimes \mathbb{1}_V^{\otimes r_u-1-v}\right)\circ \hat{H}|_{(sV)^{\otimes r_u-1+i}}\right] \otimes \mbox{\dots} \otimes \hat{g}\hat{f} \circ \hat{\pmb{q}}_{r_k}).$$
The reason for the appearance of such terms is that when $r_\star \geq 2$ and $\hat{h}$ were 
in any other $\hat{\pmb{q}}-$kernel than $\delta_i$, we would get $\hat{\pmb{q}}_{r_\star}\circ ((gf)^{\otimes \star} \otimes h \otimes \mathbb{1}_V^{\otimes \star})$ which is trivial by Lemma~\ref{k2l26}. If $r_\star = 1$, 
we get $\hat{g}\hat{f} \circ \hat{\pmb{q}}_1 \circ \hat{h} = 0$ because 
$\hat{\pmb{q}}_1 = \mathbb{1}_V$ and $\hat{f}\hat{h} = 0$ by \eqref{Okraj}.
Thus we have for $i \geq 2$:
$$\sum_{B(n-i+1)}\hat{h}\circ\hat{\pmb{p}}_k(\hat{g}\hat{f} \circ \hat{\pmb{q}}_{r_1} \otimes \mbox{\dots} \otimes \hat{g}\hat{f} \circ \hat{\pmb{q}}_{r_k})\circ\left(\sum_{u = 0}^{n-i}\mathbb{1}_V^{\otimes u} \otimes \delta_{i} \otimes \mathbb{1}_V^{\otimes n-i-u}\right)\circ \hat{H}|_{(sV)^{\otimes n}} $$
$$= \sum_{B(n-i+1)}\hat{h}\circ\hat{\pmb{p}}_k(\underline{\hat{g}\hat{f} \circ \hat{\pmb{q}}_{r_1}} \otimes \mbox{\dots} \otimes \hat{g}\hat{f} \circ \hat{\pmb{q}}_{r_k}) $$
$$+\sum_{u = 1}^{n-i-1}\sum_{B(n-i+1-u)}\hat{h}\circ\hat{\pmb{p}}_{u + k}((\hat{g}\hat{f} \circ \hat{\pmb{q}}_1)^{\otimes u} \otimes \underline{\hat{g}\hat{f} \circ \hat{\pmb{q}}_{r_1}} \otimes \mbox{\dots} \otimes \hat{g}\hat{f} \circ \hat{\pmb{q}}_{r_k}) $$
$$+ \sum_{r_1 = 1}^{n-1} \hat{h} \circ \hat{\pmb{p}}_{n-r_1 + 1}((\hat{g}\hat{f} \circ \hat{\pmb{q}}_1)^{\otimes n- r_1} \otimes \underline{\hat{g}\hat{f} \circ \hat{\pmb{q}}_{r_1}}),$$
where $\underline{\hat{g}\hat{f} \circ \hat{\pmb{q}}_{r_1}} = \hat{g}\hat{f} \circ \hat{\pmb{q}}_{r_1} \circ \left(\sum_{v = 0}^{r_1-1}\mathbb{1}_V^{\otimes v} \otimes \delta_{i} \otimes \mathbb{1}_V^{\otimes r_1-1-v}\right)\circ \hat{H}|_{(sV)^{\otimes r_1 - 1 + i}}$. We notice $\hat{\pmb{q}}_m \circ (\hat{g}\hat{f})^{\otimes m} \neq 0$
if and only if $m = 1$ (cf., Lemma~\ref{k2l26}.)
Therefore, we expand the second contribution into
$$\eqref{L28_2} = \sum_{i = 2}^{n-1}\sum_{B(n-i+1)}\hat{h}\circ\hat{\pmb{p}}_k(\underline{\hat{g}\hat{f} \circ \hat{\pmb{q}}_{r_1}} \otimes \mbox{\dots} \otimes \hat{g}\hat{f} \circ \hat{\pmb{q}}_{r_k}) $$
$$+\sum_{i = 2}^{n-1}\sum_{u = 1}^{n-i-1}\sum_{B(n-i+1-u)}\hat{h}\circ\hat{\pmb{p}}_{u + k}((\hat{g}\hat{f} \circ \hat{\pmb{q}}_1)^{\otimes u} \otimes \underline{\hat{g}\hat{f} \circ \hat{\pmb{q}}_{r_1}} \otimes \mbox{\dots} \otimes \hat{g}\hat{f} \circ \hat{\pmb{q}}_{r_k}) $$
$$+ \sum_{i = 2}^{n-1}\sum_{r_1 = 1}^{n-1} \hat{h} \circ \hat{\pmb{p}}_{n-r_1 + 1}((\hat{g}\hat{f} \circ \hat{\pmb{q}}_1)^{\otimes n- r_1} \otimes \underline{\hat{g}\hat{f} \circ \hat{\pmb{q}}_{r_1}})$$
and for fixed $k, i, r_2, \mbox{\dots}, r_i$ sum up all terms of the form 
$\hat{h} \circ \hat{\pmb{p}}_k((\hat{g}\hat{f} \circ \hat{\pmb{q}}_1)^{\otimes k-i} \otimes \underline{\hat{g}\hat{f} \circ \hat{\pmb{q}}_{\star}} \otimes \hat{g}\hat{f} \circ \hat{\pmb{q}}_{r_2} \otimes \mbox{\dots} \otimes \hat{g}\hat{f} \circ \hat{\pmb{q}}_{r_i})$ in \eqref{L28_2}:
$$\hat{h} \circ \hat{\pmb{p}}_k((\hat{g}\hat{f} \circ \hat{\pmb{q}}_1)^{\otimes k-i} \otimes \sum_{r_1 = 1}^{r'}\underline{\hat{g}\hat{f} \circ \hat{\pmb{q}}_{r_1}} \otimes \hat{g}\hat{f} \circ \hat{\pmb{q}}_{r_2} \otimes \mbox{\dots} \otimes \hat{g}\hat{f} \circ \hat{\pmb{q}}_{r_i}),$$
where $r' = n - k + i - (r_2 + \mbox{\dots} + r_i).$ We recall
$\underline{\hat{g}\hat{f} \circ \hat{\pmb{q}}_{r_1}} = \hat{g}\hat{f} \circ \hat{\pmb{q}}_{r_1}\circ\left(\sum_{u = 0}^{r_1 - 1}\mathbb{1}_V^{\otimes u} \otimes \delta_{r' - r_1 + 1} \otimes \mathbb{1}_V^{\otimes r_1-1-u}\right)\circ \hat{H}|_{(sV)^{\otimes r_1 - 1 + i}}$ and use \eqref{L28} to get
\begin{equation}
\label{L28_3}
	\hat{h} \circ \hat{\pmb{p}}_k((\hat{g}\hat{f} \circ \hat{\pmb{q}}_1)^{\otimes k-i} \otimes \hat{g}\hat{f} \circ \hat{\pmb{q}}_{r'} \otimes \hat{g}\hat{f} \circ \hat{\pmb{q}}_{r_2} \otimes \mbox{\dots} \otimes \hat{g}\hat{f} \circ \hat{\pmb{q}}_{r_i}).
\end{equation}
Clearly $r' > 1$, and the summation over all terms in \eqref{L28_2} leads to
$$\eqref{L28_2} = \sum_{B(n), r_1 > 1}\hat{h} \circ \hat{\pmb{p}}_k(\hat{g}\hat{f} \circ \hat{\pmb{q}}_{r_1} \otimes \mbox{\dots} \otimes \hat{g}\hat{f} \circ \hat{\pmb{q}}_{r_k})$$
$$+ \sum_{u = 1}^{n-3} \sum_{B(n-u), r_1 > 1} \hat{h} \circ \hat{\pmb{p}}_k((\hat{g}\hat{f} \circ \hat{\pmb{q}}_1)^{\otimes u} \otimes \hat{g}\hat{f} \circ \hat{\pmb{q}}_{r_2} \otimes \mbox{\dots} \otimes \hat{g}\hat{f} \circ \hat{\pmb{q}}_{r_k}) 
$$
$$+ \sum_{r = 2}^{n-1}\hat{h} \circ \hat{\pmb{p}}_{n-r+1}((\hat{g}\hat{f} \circ \hat{\pmb{q}}_1)^{\otimes n-r} \otimes \hat{g}\hat{f} \circ \hat{\pmb{q}}_{r}).$$
We conclude
$$\eqref{L28_2} = \sum_{B(n), k \neq n}\hat{h} \circ \hat{\pmb{p}}_k(\hat{g}\hat{f} \circ \hat{\pmb{q}}_{r_1} \otimes \mbox{\dots} \otimes \hat{g}\hat{f} \circ \hat{\pmb{q}}_{r_k}),$$
because all terms are as those in \eqref{L28_3} and there is always at least one $u$ such that 
$r_u > 1$ (this is equivalent to $r' > 1$ in \eqref{L28_3}.) Recall we started with
$$[\![\hat{\pmb{\psi}}\hat{\pmb{\varphi}}]\!]_{1} \circ \delta_n \circ \hat{H}|_{(sV)^{\otimes n}} + \sum_{i = 2}^{n-1} [\![\hat{\pmb{\psi}}\hat{\pmb{\varphi}}]\!]_{n-i+1}\circ\left(\sum_{u = 0}^{n-i}\mathbb{1}_V^{\otimes u} \otimes \delta_{i} \otimes \mathbb{1}_V^{\otimes n-i-u}\right)\circ \hat{H}|_{(sV)^{\otimes n}}$$
$$= \eqref{L28_1} + \eqref{L28_2}$$
and showed
$$\eqref{L28_1} + \eqref{L28_2} = \hat{g}\hat{f} \circ \hat{\pmb{q}}_n + \sum_{B(n), k \neq n}\hat{h} \circ \hat{\pmb{p}}_k(\hat{g}\hat{f} \circ \hat{\pmb{q}}_{r_1} \otimes \mbox{\dots} \otimes \hat{g}\hat{f} \circ \hat{\pmb{q}}_{r_k}).$$ 
Taking into account the definition of $[\![\hat{\pmb{\psi}}\hat{\pmb{\varphi}}]\!]_{n}$ in \eqref{p,q-symbol}, the 
desired conclusion follows immediately.
\end{proof}

\begin{lemma}\label{k2t29}
	Let us assume \eqref{Okraj} to be true. Then for all $n \geq 2$
	\begin{equation}
	\label{T29}
		\hat{\pmb{q}}_n = \delta_n \circ \hat{H}|_{(sV)^{\otimes n}} + \sum_{i = 2}^{n-1} \hat{\pmb{q}}_{n-i+1}\circ\left(\sum_{u = 0}^{n-i}\mathbb{1}_V^{\otimes u} \otimes \delta_{i} \otimes \mathbb{1}_V^{\otimes n-i-u}\right)\circ \hat{H}|_{(sV)^{\otimes n}}.
	\end{equation}
\end{lemma}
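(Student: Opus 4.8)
The plan is to prove the suspended reformulation of \eqref{T29} by induction on $n$, along the same lines as the proof of Lemma~\ref{k2t27}. The right-hand side of \eqref{T29} is obtained from that of \eqref{T27} by replacing $\hat{g}^{\otimes n}$ with $\hat{H}|_{(sV)^{\otimes n}}=\sum_{a+b=n-1}(\hat{g}\hat{f})^{\otimes a}\otimes\hat{h}\otimes\mathbb{1}_V^{\otimes b}$, while its left-hand side is the bare kernel $\hat{\pmb{q}}_n$, so the argument will run parallel on the right but must recover $\hat{\pmb{q}}_n$ (via \eqref{q-kernel}) rather than $\hat{\pmb{p}}_n\circ\hat{g}^{\otimes n}$ (via \eqref{p-kernel}) on the left. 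For $n=2$ the sum on the right of \eqref{T29} is empty, and since $\hat{H}|_{(sV)^{\otimes 2}}=\hat{g}\hat{f}\otimes\hat{h}+\hat{h}\otimes\mathbb{1}_V$ the right-hand side is $\delta_2(\hat{g}\hat{f}\otimes\hat{h})+\delta_2(\hat{h}\otimes\mathbb{1}_V)$, which equals $\hat{\pmb{q}}_2$ by Definition~\ref{k2d21} (this is exactly the $n=2$ identity already recorded in the proof of Theorem~\ref{k2v24}).

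For the inductive step, I would assume \eqref{T29}, equivalently \eqref{L28}, for all $m$ with $2\le m<n$; then Lemma~\ref{k2l28} is available at every index below $n$, and so is the inductive hypothesis of Lemma~\ref{k2t27}. Next I would expand the sum $\sum_{i=2}^{n-1}\hat{\pmb{q}}_{n-i+1}\circ\bigl(\sum_{u=0}^{n-i}\mathbb{1}_V^{\otimes u}\otimes\delta_i\otimes\mathbb{1}_V^{\otimes n-i-u}\bigr)\circ\hat{H}|_{(sV)^{\otimes n}}$ by substituting \eqref{q-kernel} for each $\hat{\pmb{q}}_{n-i+1}$ and by splitting $\hat{H}|_{(sV)^{\otimes n}}$ according to the position of its distinguished factor $\hat{h}$. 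The resulting summands have the form $\delta_k\bigl([\![\hat{\pmb{\psi}}\hat{\pmb{\varphi}}]\!]_{r_1}\otimes\dots\otimes\hat{h}\circ\hat{\pmb{q}}_{r_l}\otimes\dots\otimes\mathbb{1}_V^{\otimes\star}\bigr)$ precomposed with a tensor whose factors are $(\hat{g}\hat{f})^{\otimes\star}$, $\mathbb{1}_V^{\otimes\star}$, $\hat{h}$, and a single $\delta_\bullet$; by Lemma~\ref{k2l26}, the relations \eqref{Okraj}, and the third point of Remark~\ref{remarkpsifi}, all of them vanish except those in which the $\hat{h}$ of $\hat{H}|_{(sV)^{\otimes n}}$ is absorbed, together with a $\delta_\bullet$, into the $\hat{h}\circ\hat{\pmb{q}}_{r_l}$-slot (or into the slot carrying $\delta_n\circ\hat{H}|_{(sV)^{\otimes n}}$). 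In the survivors I would then apply the second point of Remark~\ref{remarkpsifi} and Lemma~\ref{k2t27} at the relevant indices below $n$ to rewrite the $[\![\hat{\pmb{\psi}}\hat{\pmb{\varphi}}]\!]_\star\circ(\hat{g}\hat{f})^{\otimes\star}$ factors and the $\delta$-decorated factor as terms $\hat{h}\circ\hat{\pmb{p}}_\bullet\circ(\hat{g}\hat{f})^{\otimes\bullet}$, and finally the first point of Remark~\ref{remarkpsifi}, \eqref{p-kernel} and \eqref{q-kernel} to re-sum, after adding back $\delta_n\circ\hat{H}|_{(sV)^{\otimes n}}$, into $\hat{\pmb{q}}_n$.

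The hardest part will be twofold. First, the combinatorial bookkeeping: tracking, over the index sets $C(n)$, $B(n)$ and $A(n)$, precisely which summands survive once \eqref{Okraj} and Lemma~\ref{k2l26} are applied, and then matching the surviving partitions on the two sides --- this is the bulk of the work and is routine only in the sense that it needs no new idea. Second, the apparent circularity with Lemma~\ref{k2l28}: its own proof already invokes \eqref{L28} at the top index, so in the present induction Lemma~\ref{k2l28} may be cited only at indices strictly below $n$. Consequently the contribution that would otherwise be handed to us by the index-$n$ case of Lemma~\ref{k2l28} must be produced directly --- it is exactly the $\delta_n\circ\hat{H}|_{(sV)^{\otimes n}}$ term together with the $k=n$ part of the $B(n)$-sum --- and then recognised as a term of \eqref{q-kernel}; getting this boundary contribution to close the induction is the delicate point.
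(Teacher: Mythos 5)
Your plan is correct and follows essentially the same route as the paper's proof: induction on $n$ with the same base case, expansion of each summand via \eqref{q-kernel} and the position of the distinguished $\hat{h}$ of $\hat{H}|_{(sV)^{\otimes n}}$ and of $\delta_j$, elimination of terms via Lemma~\ref{k2l26} and \eqref{Okraj}, and reassembly using Lemma~\ref{k2t27}, Lemma~\ref{k2l28} restricted to indices strictly below $n$, Remark~\ref{remarkpsifi} and \eqref{q-kernel}. You also correctly isolate the two genuine difficulties the paper has to work through (the bookkeeping over $C(n)$ in steps \textbf{I} and \textbf{II} of its proof, and the fact that Lemma~\ref{k2l28} may only be invoked below the top index, with the $k=n$ contribution supplied separately by $\delta_n\circ\hat{H}|_{(sV)^{\otimes n}}$).
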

\begin{proof}
The proof is by the induction hypothesis on $n$. For $n = 2$, by \eqref{T29} 
we have $\delta_2(\hat{g}\hat{f} \otimes \hat{h}) + \delta_2(\hat{h} \otimes \mathbb{1}_V) 
= \delta_2 \circ (\hat{g}\hat{f} \otimes \hat{h} + \hat{h} \otimes \mathbb{1}_V)$ which
is certainly true. 

We assume the claim is true for all natural numbers greater than 
$1$ and strictly less than $n$. Let us consider $2 \leq j < n$ and 
$k,i,r_1, \mbox{\dots}, r_{i-1}, r_i$ as given in $C(n-j+1)$. 
The same reasoning as in Lemma~\ref{k2l28} leads to
$$\delta_k([\![\hat{\pmb{\psi}}\hat{\pmb{\varphi}}]\!]_{r_1} \otimes \mbox{\dots} \otimes [\![\hat{\pmb{\psi}}\hat{\pmb{\varphi}}]\!]_{r_{i-1}} \otimes \hat{h} \circ \hat{\pmb{q}}_{r_{i}} \otimes \mathbb{1}_V^{\otimes k-i})\circ$$
$$\circ \left(\sum_{u = 0}^{n-j}\mathbb{1}_V^{\otimes u} \otimes \delta_{j} \otimes \mathbb{1}_V^{\otimes n-j-u}\right)\circ \hat{H}|_{(sV)^{\otimes n}} 
$$
\begin{equation}
\label{T29_1}
	\begin{split}
&= \delta_k(\underline{[\![\hat{\pmb{\psi}}\hat{\pmb{\varphi}}]\!]_{r_1}} \otimes \mbox{\dots} \otimes [\![\hat{\pmb{\psi}}\hat{\pmb{\varphi}}]\!]_{r_{i-1}} \otimes \hat{h} \circ \hat{\pmb{q}}_{r_{i}} \otimes \mathbb{1}_V^{\otimes k-i}) \\
&+ \delta_k(\hat{h} \circ \hat{\pmb{p}}_{r_1} \circ (\hat{g}\hat{f})^{\otimes r_1}  \otimes \underline{[\![\hat{\pmb{\psi}}\hat{\pmb{\varphi}}]\!]_{r_2}} \otimes \mbox{\dots} \otimes [\![\hat{\pmb{\psi}}\hat{\pmb{\varphi}}]\!]_{r_{i-1}} \otimes \hat{h} \circ \hat{\pmb{q}}_{r_{i}} \otimes \mathbb{1}_V^{\otimes k-i}) \\
& + \delta_k(\hat{h} \circ \hat{\pmb{p}}_{r_1} \circ (\hat{g}\hat{f})^{\otimes r_1}  \otimes \mbox{\dots} \otimes \hat{h} \circ \hat{\pmb{p}}_{r_{i-2}} \circ (\hat{g}\hat{f})^{\otimes r_{i-2}}  \otimes \underline{[\![\hat{\pmb{\psi}}\hat{\pmb{\varphi}}]\!]_{r_{i-1}}} \otimes \hat{h} \circ \hat{\pmb{q}}_{r_{i}} \otimes \mathbb{1}_V^{\otimes k-i}) \\
&+ \delta_k(\hat{h} \circ \hat{\pmb{p}}_{r_1} \circ (\hat{g}\hat{f})^{\otimes r_1}  \otimes \mbox{\dots} \otimes \hat{h} \circ \hat{\pmb{p}}_{r_{i-1}} \circ (\hat{g}\hat{f})^{\otimes r_{i-1}}  \otimes \underline{\hat{h} \circ \hat{\pmb{q}}_{r_{i}}} \otimes \mathbb{1}_V^{\otimes k-i}) \\
&+ \delta_k(\hat{h} \circ \hat{\pmb{p}}_{r_1} \circ (\hat{g}\hat{f})^{\otimes r_1}  \otimes \mbox{\dots} \otimes \hat{h} \circ \hat{\pmb{p}}_{r_{i-1}} \circ (\hat{g}\hat{f})^{\otimes r_{i-1}}  \otimes \overline{\hat{h} \circ \hat{\pmb{q}}_{r_{i}}} \otimes \hat{H}|_{(sV)^{\otimes k -i}}).
	\end{split}
\end{equation}
Hereby we expanded a general summand in the definition of $\hat{\pmb{q}}_{n-j+1}$ as in
\eqref{q-kernel}, where 
$$ \underline{[\![\hat{\pmb{\psi}}\hat{\pmb{\varphi}}]\!]_{r_\ell}} = [\![\hat{\pmb{\psi}}\hat{\pmb{\varphi}}]\!]_{r_1}\left(\sum_{u = 0}^{r_\ell - 1}\mathbb{1}_V^{\otimes u} \otimes \delta_{j} \otimes \mathbb{1}_V^{\otimes r_\ell - 1 -u}\right)\circ \hat{H}|_{(sV)^{\otimes r_i - 1 + j}},$$
$$ \underline{\hat{h} \circ \hat{\pmb{q}}_{r_{i}}} = \hat{h} \circ \hat{\pmb{q}}_{r_{i}}\left(\sum_{u = 0}^{r_i - 1}\mathbb{1}_V^{\otimes u} \otimes \delta_{j} \otimes \mathbb{1}_V^{\otimes r_i - 1 - u}\right)\circ \hat{H}|_{(sV)^{\otimes r_i - 1 + j}},$$
$$ \overline{\hat{h} \circ \hat{\pmb{q}}_{r_{i}}} = \hat{h} \circ \hat{\pmb{q}}_{r_{i}}\left(\sum_{u = 0}^{r_i - 1}\mathbb{1}_V^{\otimes u} \otimes \delta_{j} \otimes \mathbb{1}_V^{\otimes r_i - 1 - u}\right)\circ (\hat{g}\hat{f})^{\otimes r_i - 1 + j}.$$
In the previous formulas there are no signs whatsoever, because 
$\hat{h} \circ \hat{\pmb{q}}_{r_i}$ pass through the terms of degree $0$, and 
$\hat{h}$ in $\hat{H}$ and $\delta_j$ are of degree $1$ and $-1$, respectively,
so that their sign contributions cancel out. 

In the next few steps we show how the terms are organized:

\noindent \textbf{I.} Let us choose $k, i, r_1, \mbox{\dots}, r_i$ given in $C(n)$ such
that $r_i > 1$, and sum up all terms of the form 
$\delta_k(\hat{h} \circ \hat{\pmb{p}}_{r_1} \circ (\hat{g}\hat{f})^{\otimes r_1}  \otimes \mbox{\dots} \otimes \hat{h} \circ \hat{\pmb{p}}_{r_{i-1}} \circ (\hat{g}\hat{f})^{\otimes r_{i-1}}  \otimes \underline{\hat{h} \circ \hat{\pmb{q}}_r} \otimes \mathbb{1}_V^{\otimes k-i})$ out of the summation 
$$ \delta_n \circ \hat{H}|_{(sV)^{\otimes n}} + \sum_{i = 2}^{n-1} \hat{\pmb{q}}_{n-i+1}\circ\left(\sum_{u = 0}^{n-i}\mathbb{1}_V^{\otimes u} \otimes \delta_{i} \otimes \mathbb{1}_V^{\otimes n-i-u}\right)\circ \hat{H}|_{(sV)^{\otimes n}}$$
for all allowable $r$. We get
$$\delta_k(\hat{h} \circ \hat{\pmb{p}}_{r_1} \circ (\hat{g}\hat{f})^{\otimes r_1}  \otimes \mbox{\dots} \otimes \hat{h} \circ \hat{\pmb{p}}_{r_{i-1}} \circ (\hat{g}\hat{f})^{\otimes r_{i-1}}  \otimes \sum_{r = 1}^{r_i}\underline{\hat{h} \circ \hat{\pmb{q}}_{r}} \otimes \mathbb{1}_V^{\otimes k-i}),$$
where
$$\underline{\hat{h}\circ \hat{\pmb{q}}_{r}} = \hat{h}\circ \hat{\pmb{q}}_{r} \circ\left(\sum_{u = 0}^{r - 1}\mathbb{1}_V^{\otimes u} \otimes \delta_{r_i - r + 1} \otimes \mathbb{1}_V^{\otimes r-1-u}\right)\circ \hat{H}|_{(sV)^{\otimes r_i}}.$$
Because $r_i < n$, we get by the induction hypothesis
\begin{equation}\label{terma}
\delta_k(\hat{h} \circ \hat{\pmb{p}}_{r_1} \circ (\hat{g}\hat{f})^{\otimes r_1}  \otimes \mbox{\dots} \otimes \hat{h} \circ \hat{\pmb{p}}_{r_{i-1}} \circ (\hat{g}\hat{f})^{\otimes r_{i-1}}  \otimes \hat{h} \circ \hat{\pmb{q}}_{r_i} \otimes \mathbb{1}_V^{\otimes k-i}).
\end{equation}
If $r_{i - 1} > 1$, we sum up all terms of the form 
$\delta_k(\hat{h} \circ \hat{\pmb{p}}_{r_1} \circ (\hat{g}\hat{f})^{\otimes r_1}  \otimes \mbox{\dots} \otimes\underline{[\![\hat{\pmb{\psi}}\hat{\pmb{\varphi}}]\!]_{\star}} \otimes \hat{h} \circ \hat{\pmb{q}}_{r_i} \otimes \mathbb{1}_V^{\otimes k-i})$:
$$ \delta_k(\hat{h} \circ \hat{\pmb{p}}_{r_1} \circ (\hat{g}\hat{f})^{\otimes r_1}  \otimes \mbox{\dots} \otimes \sum_{r = 1}^{r_{i-1}}\underline{[\![\hat{\pmb{\psi}}\hat{\pmb{\varphi}}]\!]_{r}} \otimes \hat{h} \circ \hat{\pmb{q}}_{r_i} \otimes \mathbb{1}_V^{\otimes k-i}),
$$
with
$$\underline{[\![\hat{\pmb{\psi}}\hat{\pmb{\varphi}}]\!]_{r}} = [\![\hat{\pmb{\psi}}\hat{\pmb{\varphi}}]\!]_{r} \circ\left(\sum_{u = 0}^{r - 1}\mathbb{1}_V^{\otimes u} \otimes \delta_{r_{i-1} - r + 1} \otimes \mathbb{1}_V^{\otimes r-1-u}\right)\circ \hat{H}|_{(sV)^{\otimes r_{i-1}}}.$$
Because $r_i < n$, by the induction hypothesis is Lemma~\ref{k2l28} 
fulfilled and the last display reduces to
$$ \delta_k(\hat{h} \circ \hat{\pmb{p}}_{r_1} \circ (\hat{g}\hat{f})^{\otimes r_1}  \otimes \mbox{\dots} \otimes \hat{h} \circ \hat{\pmb{p}}_{r_{i-2}} \circ (\hat{g}\hat{f})^{\otimes r_{i-2}} \otimes$$
$$ \otimes \left[ [\![\hat{\pmb{\psi}}\hat{\pmb{\varphi}}]\!]_{r_{i-1}} - \hat{h} \circ \hat{\pmb{p}}_{r_{i-1}} \circ (\hat{g}\hat{f})^{r_{i-1}} \right]\otimes \hat{h} \circ \hat{\pmb{q}}_{r_i} \otimes \mathbb{1}_V^{\otimes k-i}).$$
The sum of the last display and \eqref{terma} results in 
$$\delta_k(\hat{h} \circ \hat{\pmb{p}}_{r_1} \circ (\hat{g}\hat{f})^{\otimes r_1}  \otimes \mbox{\dots} \otimes \hat{h} \circ \hat{\pmb{p}}_{r_{i-2}} \circ (\hat{g}\hat{f})^{\otimes r_{i-2}} \otimes [\![\hat{\pmb{\psi}}\hat{\pmb{\varphi}}]\!]_{r_{i-1}}\otimes \hat{h} \circ \hat{\pmb{q}}_{r_i} \otimes \mathbb{1}_V^{\otimes k-i}),$$
which is the same expression as for $r_{i-1} = 1$ because $[\![\hat{\pmb{\psi}}\hat{\pmb{\varphi}}]\!]_{r_{i-1}} = \hat{h} \circ \hat{\pmb{p}}_{r_{i-1}} \circ (\hat{g}\hat{f})^{r_{i-1}}$
in this case.
Repeating this procedure, we arrive at
$\delta_k([\![\hat{\pmb{\psi}}\hat{\pmb{\varphi}}]\!]_{r_{1}} \otimes \mbox{\dots} \otimes [\![\hat{\pmb{\psi}}\hat{\pmb{\varphi}}]\!]_{r_{i-1}}\otimes \hat{h} \circ \hat{\pmb{q}}_{r_i} \otimes \mathbb{1}_V^{\otimes k-i}).$

We summarize the previous considerations: for $k, i, r_1, \mbox{\dots}, r_i$ as in 
$C(n)$ such that $r_i > 1$, we have
\begin{equation}
\label{T29_2}
	\begin{split}
&\delta_k([\![\hat{\pmb{\psi}}\hat{\pmb{\varphi}}]\!]_{r_1} \otimes \mbox{\dots} \otimes [\![\hat{\pmb{\psi}}\hat{\pmb{\varphi}}]\!]_{r_{i-1}} \otimes \hat{h} \circ \hat{\pmb{q}}_{r_{i}} \otimes \mathbb{1}_V^{\otimes k-i})\\
&= \delta_k(\hat{h} \circ \hat{\pmb{p}}_{r_1} \circ (\hat{g}\hat{f})^{\otimes r_1}  \otimes \mbox{\dots} \otimes \hat{h} \circ \hat{\pmb{p}}_{r_{i-1}} \circ (\hat{g}\hat{f})^{\otimes r_{i-1}}  \otimes \sum_{r = 1}^{r_i}\underline{\hat{h} \circ \hat{\pmb{q}}_{r}} \otimes \mathbb{1}_V^{\otimes k-i}) \\
&+ \delta_k(\hat{h} \circ \hat{\pmb{p}}_{r_1} \circ (\hat{g}\hat{f})^{\otimes r_1}  \otimes \mbox{\dots} \otimes \sum_{r = 1}^{r_{i-1}}\underline{[\![\hat{\pmb{\psi}}\hat{\pmb{\varphi}}]\!]_{r_{i-i}}} \otimes \hat{h} \circ \hat{\pmb{q}}_{r_i} \otimes \mathbb{1}_V^{\otimes k-i}) + \mbox{\dots}\\
& + \delta_k(\hat{h} \circ \hat{\pmb{p}}_{r_1} \circ (\hat{g}\hat{f})^{\otimes r_1} \otimes \sum_{r = 1}^{r_2}\underline{[\![\hat{\pmb{\psi}}\hat{\pmb{\varphi}}]\!]_{r}}
 \otimes \mbox{\dots} \otimes [\![\hat{\pmb{\psi}}\hat{\pmb{\varphi}}]\!]_{r_{i-i}} \otimes \hat{h} \circ \hat{\pmb{q}}_{r_i} \otimes \mathbb{1}_V^{\otimes k-i}) \\
&+ \delta_k(\sum_{r = 1}^{r_1}\underline{[\![\hat{\pmb{\psi}}\hat{\pmb{\varphi}}]\!]_{r}} \otimes [\![\hat{\pmb{\psi}}\hat{\pmb{\varphi}}]\!]_{r_2}
 \otimes \mbox{\dots} \otimes [\![\hat{\pmb{\psi}}\hat{\pmb{\varphi}}]\!]_{r_{i-i}} \otimes \hat{h} \circ \hat{\pmb{q}}_{r_i} \otimes \mathbb{1}_V^{\otimes k-i}).
 \end{split}
\end{equation}
\textbf{II.} Let us choose $k, i, r_1, \mbox{\dots}, r_i$ as in $C(n)$ such that $i > 1$, $r_i = 1$, 
and there exists $1 \leq u \leq i - 1$ such that $r_u > 1$ and $r_{u + 1} = \mbox{\dots} = r_{i-1} = 1$. Then
\begin{equation}
\label{T29_3}
	\begin{split}
& \delta_k([\![\hat{\pmb{\psi}}\hat{\pmb{\varphi}}]\!]_{r_1}
 \otimes \mbox{\dots} \otimes [\![\hat{\pmb{\psi}}\hat{\pmb{\varphi}}]\!]_{r_{u}} \otimes(\hat{g}\hat{f})^{\otimes i - u + 1}\otimes \hat{h} \otimes \mathbb{1}_V^{\otimes k-i}) \\
& = \delta_k(\sum_{r = 1}^{r_1}\underline{[\![\hat{\pmb{\psi}}\hat{\pmb{\varphi}}]\!]_{r}} \otimes [\![\hat{\pmb{\psi}}\hat{\pmb{\varphi}}]\!]_{r_2}
 \otimes \mbox{\dots} \otimes [\![\hat{\pmb{\psi}}\hat{\pmb{\varphi}}]\!]_{r_{u}} \otimes(\hat{g}\hat{f})^{\otimes i - u + 1}\otimes \hat{h} \otimes \mathbb{1}_V^{\otimes k-i}) \\
& + \delta_k(\hat{h} \circ \hat{\pmb{p}}_{r_1} \circ (\hat{g}\hat{f})^{\otimes r_1} \otimes \sum_{r = 1}^{r_2}\underline{[\![\hat{\pmb{\psi}}\hat{\pmb{\varphi}}]\!]_{r}}
 \otimes \mbox{\dots} \otimes [\![\hat{\pmb{\psi}}\hat{\pmb{\varphi}}]\!]_{r_{u}} \otimes\\
&\quad \otimes(\hat{g}\hat{f})^{\otimes i - u + 1}\otimes \hat{h} \otimes \mathbb{1}_V^{\otimes k-i}) + \mbox{\dots}\\
& + \delta_k(\hat{h} \circ \hat{\pmb{p}}_{r_1} \circ (\hat{g}\hat{f})^{\otimes r_1}  \otimes \mbox{\dots} \otimes \hat{h} \circ \hat{\pmb{p}}_{r_{u-1}} \circ (\hat{g}\hat{f})^{\otimes r_{u-1}}  
\otimes \sum_{r = 1}^{r_u}\underline{[\![\hat{\pmb{\psi}}\hat{\pmb{\varphi}}]\!]_{r}} \otimes\\
& \quad\otimes (\hat{g}\hat{f})^{\otimes i - u + 1}\otimes \hat{h} \otimes \mathbb{1}_V^{\otimes k-i})\\  
& + \delta_k(\hat{h} \circ \hat{\pmb{p}}_{r_1} \circ (\hat{g}\hat{f})^{\otimes r_1}  \otimes \mbox{\dots} \otimes \hat{h} \circ \hat{\pmb{p}}_{r_{u-1}} \circ (\hat{g}\hat{f})^{\otimes r_{u-1}} \otimes \sum_{r = 1}^{r_u}\overline{\hat{h} \circ \hat{\pmb{q}}_{r}} \otimes\\
& \quad\otimes (\hat{g}\hat{f})^{\otimes i - u + 1}\otimes \hat{h} \otimes \mathbb{1}_V^{\otimes k-i})
	\end{split}
\end{equation}
with
$$\overline{\hat{h} \circ \hat{\pmb{q}}_{r}} = \hat{h} \circ \hat{\pmb{q}}_{r} \circ \left(\sum_{v = 0}^{r - 1}\mathbb{1}_V^{\otimes v} \otimes \delta_{r_{u} - r + 1} \otimes \mathbb{1}_V^{\otimes r-1-v}\right)\circ (\hat{g}\hat{f})^{\otimes r_u}.$$
By Lemma~\ref{k2t27}
$$\sum_{r = 1}^{r_u}\overline{\hat{h} \circ \hat{\pmb{q}}_{r}} = \hat{h} \circ \hat{\pmb{p}}_{r_u} \circ (\hat{g}\hat{f})^{\otimes r_u},$$
which can be justified in the same way as in the first step \textbf{I.}

We expand all terms in the summation (denoted \eqref{T29_1})
$$\sum_{i = 2}^{n-1} \hat{\pmb{q}}_{n-i+1}\circ\left(\sum_{u = 0}^{n-i}\mathbb{1}_V^{\otimes u} \otimes \delta_{i} \otimes \mathbb{1}_V^{\otimes n-i-u}\right)\circ \hat{H}|_{(sV)^{\otimes n}}$$
and use \eqref{T29_2} a \eqref{T29_3} to rewrite terms in the definition of $\hat{\pmb{q}}_n$:
$$\sum_{i = 2}^{n-1} \hat{\pmb{q}}_{n-i+1}\circ\left(\sum_{u = 0}^{n-i}\mathbb{1}_V^{\otimes u} \otimes \delta_{i} \otimes \mathbb{1}_V^{\otimes n-i-u}\right)\circ \hat{H}_{(sV)^{\otimes n}}$$
$$ =\sum_{C(n), k \neq n}\delta_k([\![\hat{\pmb{\psi}}\hat{\pmb{\varphi}}]\!]_{r_1} \otimes \mbox{\dots} \otimes [\![\hat{\pmb{\psi}}\hat{\pmb{\varphi}}]\!]_{r_{i-1}} \otimes \hat{h} \circ \hat{\pmb{q}}_{r_i} \otimes \mathbb{1}_V^{\otimes k-i}).$$
Certainly,
$$ \delta_n \circ \hat{H}_{(sV)^{\otimes n}} = \sum_{C(n), k = n}\delta_k([\![\hat{\pmb{\psi}}\hat{\pmb{\varphi}}]\!]_{r_1} \otimes \mbox{\dots} \otimes [\![\hat{\pmb{\psi}}\hat{\pmb{\varphi}}]\!]_{r_{i-1}} \otimes \hat{h} \circ \hat{\pmb{q}}_{r_i} \otimes \mathbb{1}_V^{\otimes k-i}),$$
which together with \eqref{q-kernel} completes the proof.
\end{proof}

\begin{remark}
Adopting slight changes in the proofs, our claims can be reformulated as follows:
\begin{description}
\item[{Lemma \ref{k2t27}}:] {\em On the assumption \eqref{Okraj} holds for 
all $n \geq 2$}
	\begin{equation}
		\label{T27*}
		\begin{split}
	\left(\sum_{B(n)} \hat{h} \circ \hat{\pmb{p}}_{r_1} \circ \hat{g}^{\otimes r_1} \otimes \mbox{\dots} \otimes \hat{h} \circ \hat{\pmb{p}}_{r_k}\right) \circ \hat{g}^{\otimes r_k} = \hat{g}^{\otimes n} \\
	+ \sum_{i = 2}^{n-1} \sum_{C(n-i+1)}[\![\hat{\pmb{\psi}}\hat{\pmb{\varphi}}]\!]_{r_1} \otimes \mbox{\dots} \otimes [\![\hat{\pmb{\psi}}\hat{\pmb{\varphi}}]\!]_{r_{i-1}} \otimes \hat{h} \circ \hat{\pmb{q}}_{r_i} \otimes \mathbb{1}_V^{\otimes k-i}\circ\\
	\circ\left(\sum_{u = 0}^{n-i}\mathbb{1}_V^{\otimes u} \otimes \delta_{i} \otimes \mathbb{1}_V^{\otimes n-i-u}\right)\circ \hat{g}^{\otimes n},
		\end{split}
	\end{equation}
	where we write $\hat{h} \circ \hat{\pmb{p}}_{r_1} \circ \hat{g}^{\otimes r_1} \otimes \mbox{\dots} \otimes \hat{h} \circ \hat{\pmb{p}}_{r_k}$
	instead of $\delta_k(\hat{h} \circ \hat{\pmb{p}}_{r_1} \circ \hat{g}^{\otimes r_1} \otimes \mbox{\dots} \otimes \hat{h} \circ \hat{\pmb{p}}_{r_k})$ 
and $[\![\hat{\pmb{\psi}}\hat{\pmb{\varphi}}]\!]_{r_1} \otimes \mbox{\dots} \otimes [\![\hat{\pmb{\psi}}\hat{\pmb{\varphi}}]\!]_{r_{i-1}} \otimes \hat{h} \circ \hat{\pmb{q}}_{r_i} \otimes \mathbb{1}_V^{\otimes k-i}$ instead of	
	$\delta_k([\![\hat{\pmb{\psi}}\hat{\pmb{\varphi}}]\!]_{r_1} \otimes \mbox{\dots} \otimes [\![\hat{\pmb{\psi}}\hat{\pmb{\varphi}}]\!]_{r_{i-1}} \otimes \hat{h} \circ \hat{\pmb{q}}_{r_i} \otimes \mathbb{1}_V^{\otimes k-i})$;
\item[{Lemma \ref{k2l28}}:] {\em On the assumption \eqref{Okraj} holds for all $n \geq 2$}
	\begin{equation}
		\label{L28*}
		\begin{split}
\hat{f} \circ \hat{\pmb{q}}_n + \sum_{B(n)}\hat{f} \circ \hat{\pmb{q}}_{r_1} \otimes \mbox{\dots} \otimes\hat{f} \circ \hat{\pmb{q}}_{r_k} - \hat{f}^{\otimes n} = \hat{f} \circ \delta_n \circ \hat{H}|_{(sV)^{\otimes n}} \\
	+ \sum_{i = 2}^{n-1} \left(\hat{f} \circ \hat{\pmb{q}}_{n-i+1} + \sum_{B(n-i+1)}\hat{f} \circ \hat{\pmb{q}}_{r_1} \otimes \mbox{\dots} \otimes\hat{f} \circ \hat{\pmb{q}}_{r_k}\right)\circ\\
	\circ \left(\sum_{u = 0}^{n-i}\mathbb{1}_V^{\otimes u} \otimes \delta_{i} \otimes \mathbb{1}_V^{\otimes n-i-u}\right)\circ \hat{H}|_{(sV)^{\otimes n}},
		\end{split}
	\end{equation}
	where we write 
	$\hat{f} \circ \hat{\pmb{q}}_{r_1} \otimes \mbox{\dots} \otimes\hat{f} \circ \hat{\pmb{q}}_{r_k}$	
	instead of	
		$\hat{h} \circ \hat{\pmb{p}}_k(\hat{g}\hat{f} \circ \hat{\pmb{q}}_{r_1} \otimes \mbox{\dots} \otimes \hat{g}\hat{f} \circ \hat{\pmb{q}}_{r_k})$;
\item[{Lemma \ref{k2t29}}:] {\em On the assumption \eqref{Okraj}, we have for all $n \geq 2$}
		\begin{equation}
		\label{T29*}
		\begin{split}
			\sum_{C(n)}[\![\hat{\pmb{\psi}}\hat{\pmb{\varphi}}]\!]_{r_1} \otimes \mbox{\dots} \otimes [\![\hat{\pmb{\psi}}\hat{\pmb{\varphi}}]\!]_{r_{i-1}} \otimes \hat{h} \circ \hat{\pmb{q}}_{r_i} \otimes \mathbb{1}_V^{\otimes k-i} = \hat{H}|_{(sV)^{\otimes n}}\\
		+ \sum_{i = 2}^{n-1} \sum_{C(n-i+1)}[\![\hat{\pmb{\psi}}\hat{\pmb{\varphi}}]\!]_{r_1} \otimes \mbox{\dots} \otimes [\![\hat{\pmb{\psi}}\hat{\pmb{\varphi}}]\!]_{r_{i-1}} \otimes \hat{h} \circ \hat{\pmb{q}}_{r_i} \otimes \mathbb{1}_V^{\otimes k-i}\circ
			\\
			\circ\left(\sum_{u = 0}^{n-i}\mathbb{1}_V^{\otimes u} \otimes \delta_{i} \otimes \mathbb{1}_V^{\otimes n-i-u}\right)\circ \hat{H}|_{(sV)^{\otimes n}},
		\end{split}
	\end{equation}
	where we write $[\![\hat{\pmb{\psi}}\hat{\pmb{\varphi}}]\!]_{r_1} \otimes \mbox{\dots} \otimes [\![\hat{\pmb{\psi}}\hat{\pmb{\varphi}}]\!]_{r_{i-1}} \otimes \hat{h} \circ \hat{\pmb{q}}_{r_i} \otimes \mathbb{1}_V^{\otimes k-i}$
instead of
 $\delta_k([\![\hat{\pmb{\psi}}\hat{\pmb{\varphi}}]\!]_{r_1} \otimes \mbox{\dots} \otimes [\![\hat{\pmb{\psi}}\hat{\pmb{\varphi}}]\!]_{r_{i-1}} \otimes \hat{h} \circ \hat{\pmb{q}}_{r_i} \otimes \mathbb{1}_V^{\otimes k-i})$.
\end{description}	
\end{remark}

\begin{theorem}
	On the assumption \eqref{Okraj}, the formulas produced by the homological 
	perturbation lemma \eqref{PLrozpis} fulfill
	\begin{equation}
		\tag{\textit{{1}}}
		\label{V30a}
	\delta_{\pmb{\nu}}|_{(sV)^{\otimes n}} = \hat{f} \circ \hat{\pmb{p}}_n \circ \hat{g}^{\otimes n} + \sum_{A(n)} \mathbb{1}_W^{\otimes i - 1} \otimes \hat{f} \circ \hat{\pmb{p}}_\ell \circ \hat{g}^{\otimes \ell} \otimes \mathbb{1}_W^{\otimes n-k},
	\end{equation}
	\begin{equation}
		\tag{\textit{{2}}}
		\label{V30b}
			\hat{\pmb{\psi}}|_{(sV)^{\otimes n}} = \hat{h} \circ \hat{\pmb{p}}_n \circ \hat{g}^{\otimes n} + \sum_{B(n)} \hat{h} \circ \hat{\pmb{p}}_{r_1} \circ \hat{g}^{\otimes r_1} \otimes \mbox{\dots} \otimes \hat{h} \circ \hat{\pmb{p}}_{r_k} \circ \hat{g}^{\otimes r_k},
	\end{equation}
	\begin{equation}
		\tag{\textit{{3}}}
		\label{V30c}
		\hat{\pmb{\varphi}}|_{(sV)^{\otimes n}} = \hat{f} \circ \hat{\pmb{q}}_n + \sum_{B(n)} \hat{f} \circ \hat{\pmb{q}}_{r_1} \otimes \mbox{\dots} \otimes \hat{f} \circ \hat{\pmb{q}}_{r_k},
	\end{equation}
	\begin{equation}
		\tag{\textit{{4}}}
		\label{V30d}
		\hat{\pmb{H}}|_{(sV)^{\otimes n}} = \hat{h} \circ \hat{\pmb{q}}_n + \sum_{C(n)}[\![\hat{\pmb{\psi}}\hat{\pmb{\varphi}}]\!]_{r_1} \otimes \mbox{\dots} \otimes [\![\hat{\pmb{\psi}}\hat{\pmb{\varphi}}]\!]_{r_{i-1}} \otimes \hat{h} \circ \hat{\pmb{q}}_{r_i} \otimes \mathbb{1}_V^{\otimes k-i}
	\end{equation}
	for all $n \geq 2$. In particular, $\delta_W + \delta_{\pmb{\nu}}$ is a codifferential, $\hat{\pmb{\psi}}$, $\hat{\pmb{\varphi}}$ are morphisms and $\hat{\pmb{H}}$ is a homotopy between 
	$\hat{\pmb{\psi}}\hat{\pmb{\varphi}}$ and $\mathbb{1}$. When expressed in terms of $A_\infty$ algebras,
	the relevant objects fulfill \eqref{Antanz}.
\end{theorem}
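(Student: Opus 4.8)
The plan is to read off from the perturbation–lemma formulas \eqref{PLrozpis} the very recursions that govern the $\pmb{p}$- and $\pmb{q}$-kernels, and then run a single induction on the homogeneity $n$ that establishes \eqref{V30a}--\eqref{V30d} simultaneously. Put $T:=\mathbb{1}+\sum_{m\geq1}(\delta_{\pmb{\mu}}\hat{H})^{m}$; since $\delta_{\pmb{\mu}}$ strictly lowers homogeneity, $T|_{(sV)^{\otimes n}}$ is a finite sum, so $T$ is well defined and $T=\mathbb{1}+\delta_{\pmb{\mu}}\hat{H}T=\mathbb{1}+T\delta_{\pmb{\mu}}\hat{H}$. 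Then \eqref{PLrozpis} reads $\hat{\pmb{\varphi}}=\hat{F}T$, $\hat{\pmb{H}}=\hat{H}T$, $\hat{\pmb{\psi}}=\hat{G}+\hat{H}T\delta_{\pmb{\mu}}\hat{G}$, $\delta_{\pmb{\nu}}=\hat{F}T\delta_{\pmb{\mu}}\hat{G}$, whence
$$\hat{\pmb{\varphi}}=\hat{F}+\hat{\pmb{\varphi}}\,\delta_{\pmb{\mu}}\hat{H},\qquad \hat{\pmb{H}}=\hat{H}+\hat{\pmb{H}}\,\delta_{\pmb{\mu}}\hat{H},\qquad \hat{\pmb{\psi}}=\hat{G}+\hat{\pmb{H}}\,\delta_{\pmb{\mu}}\hat{G},\qquad \delta_{\pmb{\nu}}=\hat{\pmb{\varphi}}\,\delta_{\pmb{\mu}}\hat{G}.$$
Because $\hat{H}$ preserves homogeneity, $\hat{G}|_{(sW)^{\otimes n}}=\hat{g}^{\otimes n}$, and $\delta_{\pmb{\mu}}|_{(sV)^{\otimes n}}=\delta_{n}+\sum_{A(n)}\mathbb{1}_V^{\otimes i-1}\otimes\delta_{\ell}\otimes\mathbb{1}_V^{\otimes n-k}$ (Theorem~\ref{k1v13}), one has
$$\delta_{\pmb{\mu}}\hat{H}|_{(sV)^{\otimes n}}=\delta_{n}\circ\hat{H}|_{(sV)^{\otimes n}}+\sum_{i=2}^{n-1}\Bigl(\sum_{u=0}^{n-i}\mathbb{1}_V^{\otimes u}\otimes\delta_{i}\otimes\mathbb{1}_V^{\otimes n-i-u}\Bigr)\circ\hat{H}|_{(sV)^{\otimes n}}$$
and likewise $\delta_{\pmb{\mu}}\hat{G}|_{(sW)^{\otimes n}}=\delta_{n}\circ\hat{g}^{\otimes n}+\sum_{A(n)}\hat{g}^{\otimes i-1}\otimes(\delta_{\ell}\circ\hat{g}^{\otimes\ell})\otimes\hat{g}^{\otimes n-k}$; both land in $\bigoplus_{j<n}(sV)^{\otimes j}$, so at arity $n$ the four recursions only involve $\hat{\pmb{\varphi}},\hat{\pmb{H}}$ at homogeneities $<n$. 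The base case $n=2$ is immediate from $\hat{\pmb{p}}_{2}=\delta_{2}$, $\hat{\pmb{q}}_{2}=\delta_{2}(\hat{g}\hat{f}\otimes\hat{h})+\delta_{2}(\hat{h}\otimes\mathbb{1}_V)$, $\hat{\pmb{\varphi}}|_{sV}=\hat{f}$, $\hat{\pmb{H}}|_{sV}=\hat{h}$, and at $n=1$ the anchor values agree with $\varphi_{1}=f$, $\psi_{1}=g$, $H_{1}=h$.

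First I would dispose of \eqref{V30c} and \eqref{V30d}, which are essentially formal. Substituting the description of $\delta_{\pmb{\mu}}\hat{H}|_{(sV)^{\otimes n}}$, the inductive hypothesis for $\hat{\pmb{\varphi}}|_{(sV)^{\otimes m}}$ $(m<n)$, and $\hat{\pmb{\varphi}}|_{sV}=\hat{f}$ into $\hat{\pmb{\varphi}}=\hat{F}+\hat{\pmb{\varphi}}\,\delta_{\pmb{\mu}}\hat{H}$ turns the claim \eqref{V30c} at arity $n$ into exactly the recursion that the right-hand side of \eqref{V30c} satisfies by the reformulation \eqref{L28*} of Lemma~\ref{k2l28}; similarly $\hat{\pmb{H}}=\hat{H}+\hat{\pmb{H}}\,\delta_{\pmb{\mu}}\hat{H}$ together with Lemma~\ref{k2t29} in the forms \eqref{T29} and \eqref{T29*} gives \eqref{V30d}. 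No use of \eqref{Okraj} beyond what is already built into those lemmas is needed here.

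The real work is \eqref{V30a} and \eqref{V30b}, whose recursions are not self-referential. Here I would substitute the description of $\delta_{\pmb{\mu}}\hat{G}|_{(sW)^{\otimes n}}$ and the now-established forms \eqref{V30c}, \eqref{V30d} of $\hat{\pmb{\varphi}},\hat{\pmb{H}}$ into $\hat{\pmb{\psi}}=\hat{G}+\hat{\pmb{H}}\,\delta_{\pmb{\mu}}\hat{G}$ and $\delta_{\pmb{\nu}}=\hat{\pmb{\varphi}}\,\delta_{\pmb{\mu}}\hat{G}$, and then bring in \eqref{Okraj}. In each resulting tensor term every $\hat{h}\circ\hat{\pmb{q}}_{r}$-slot (resp.\ $\hat{f}\circ\hat{\pmb{q}}_{r}$-slot) fed only $\hat{g}$'s vanishes — by Lemma~\ref{k2l26}(1) for $r\geq2$ and by $\hat{h}\hat{g}=0$ (resp.\ $\hat{f}\hat{h}=0$, using $\hat{f}\hat{g}=\mathbb{1}$) for $r=1$ — so the surviving contributions are precisely those in which the unique factor $\delta_{\ell}\circ\hat{g}^{\otimes\ell}$ occupies that distinguished slot; and every $[\![\hat{\pmb{\psi}}\hat{\pmb{\varphi}}]\!]_{r}$-slot fed only $\hat{g}$'s collapses to $\hat{h}\circ\hat{\pmb{p}}_{r}\circ\hat{g}^{\otimes r}$ by Remark~\ref{remarkpsifi}(2). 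Summing the survivors over $A(n)$ and re-grouping via Lemma~\ref{k2t27} in the forms \eqref{T27}/\eqref{T27*} and the combinatorial identity of Remark~\ref{remarkpsifi}(1) reassembles them into $\hat{f}\circ\hat{\pmb{p}}_{n}\circ\hat{g}^{\otimes n}+\sum_{A(n)}\mathbb{1}_W^{\otimes i-1}\otimes\hat{f}\circ\hat{\pmb{p}}_{\ell}\circ\hat{g}^{\otimes\ell}\otimes\mathbb{1}_W^{\otimes n-k}$, resp.\ $\hat{h}\circ\hat{\pmb{p}}_{n}\circ\hat{g}^{\otimes n}+\sum_{B(n)}\hat{h}\circ\hat{\pmb{p}}_{r_{1}}\circ\hat{g}^{\otimes r_{1}}\otimes\cdots\otimes\hat{h}\circ\hat{\pmb{p}}_{r_{k}}\circ\hat{g}^{\otimes r_{k}}$. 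I expect this matching of the index families $A(n)$, $B(n)$, $C(n)$, and the check that no residual term remains, to be the main obstacle; the rest is bookkeeping of the sort already performed in Section~\ref{sect:hotr}.

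Finally, I would deduce the ``in particular'' assertions formally. By \eqref{V30a} and Theorem~\ref{k1v5}, $\delta_{W}+\delta_{\pmb{\nu}}$ is a coderivation; since Lemma~\ref{k2l25} already guarantees $(\delta_{W}+\delta_{\pmb{\nu}})^{2}=0$, it is a codifferential, hence (Theorem~\ref{k1v13}) an $A_\infty$ structure whose $n$-th structure map, after desuspension, is $f\circ\pmb{p}_{n}\circ g^{\otimes n}$. By \eqref{V30c} and Lemma~\ref{k1t8}, together with the intertwining $\delta_{\pmb{\nu}}\hat{\pmb{\varphi}}=\hat{\pmb{\varphi}}(\delta_{V}+\delta_{\pmb{\mu}})$ supplied by Lemma~\ref{k2l25}, $\hat{\pmb{\varphi}}$ is an $A_\infty$ morphism with components $f\circ\pmb{q}_{n}$, and likewise \eqref{V30b} makes $\hat{\pmb{\psi}}$ a morphism with components $h\circ\pmb{p}_{n}\circ g^{\otimes n}$; by \eqref{V30d}, Theorem~\ref{k1v4} and the relation $\hat{\pmb{\psi}}\hat{\pmb{\varphi}}-\mathbb{1}=\hat{\pmb{H}}(\delta_{V}+\delta_{\pmb{\mu}})+(\delta_{W}+\delta_{\pmb{\nu}})\hat{\pmb{H}}$ from Lemma~\ref{k2l25}, $\hat{\pmb{H}}$ is a homotopy between $\hat{\pmb{\psi}}\hat{\pmb{\varphi}}$ and $\mathbb{1}$ with components $h\circ\pmb{q}_{n}$. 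Desuspending via Definition~\ref{k1d11} and Theorem~\ref{k1v14}, these are precisely the objects \eqref{Antanz}.
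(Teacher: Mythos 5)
Your proposal is correct and follows essentially the same route as the paper's proof: induction on homogeneity using the recursive form of \eqref{PLrozpis}, with Lemmas~\ref{k2l26}, \ref{k2t27}, \ref{k2l28} and \ref{k2t29} (in the forms \eqref{L28*}, \eqref{T29*}) doing the real work, and \eqref{Okraj} entering only through the $\delta_{\pmb{\nu}}$ and $\hat{\pmb{\psi}}$ computations. The repackaging via the operator $T=\mathbb{1}+\sum_{m\geq1}(\delta_{\pmb{\mu}}\hat{H})^m$ and the reordering (establishing \eqref{V30c} and \eqref{V30d} before \eqref{V30a} and \eqref{V30b}) are cosmetic; the substance matches the paper.
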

\begin{proof}
We already noticed 
$$ (\delta_{\pmb{\mu}}\hat{H})((sV)^{\otimes n}) \subseteq sV \oplus \mbox{\dots} \oplus (sV)^{\otimes n-1},$$
$$ (\delta_{\pmb{\mu}} \hat{H})^{n-1}((sV)^{\otimes n}) \subseteq sV, \;\;\; (\delta_{\pmb{\mu}} \hat{H})^{n}((sV)^{\otimes n}) = 0.$$
\eqref{V30c} \& \eqref{V30a}: We prove by the induction hypothesis \eqref{V30c}. For $n = 2$, 
we get by \eqref{PLrozpis} 
$$\hat{\pmb{\varphi}}|_{(sV)^{\otimes 2}} = \hat{F}|_{(sV)^{\otimes 2}} + \hat{F}\delta_{\pmb{\mu}} \hat{H}|_{(sV)^{\otimes 2}} = \hat{f} \otimes \hat{f} + \hat{f}\circ (\delta_2(\hat{g}\hat{f} \otimes \hat{h}) + \delta_2(\hat{h} \otimes \mathbb{1}_V))$$
$$= \hat{f} \circ \hat{\pmb{q}}_1 \otimes \hat{f} \circ \hat{\pmb{q}}_1 + \hat{f} \circ \hat{\pmb{q}}_2.$$
Let us assume \eqref{V30c} holds for all natural number greater than $1$ and less than $n$. 
Because $\delta_{\pmb{\mu}} \hat{H}$ decreases the homogeneity,
$$\hat{\pmb{\varphi}}|_{(sV)^{\otimes n}} = \hat{F}|_{(sV)^{\otimes n}} + \hat{F}\delta_{\pmb{\mu}} \hat{H}|_{(sV)^{\otimes n}} + \hat{F}\left(\sum_{m = 1}^{n-2}(\delta_{\pmb{\mu}} \hat{H})^{m}\right)\delta_{\pmb{\mu}} \hat{H}|_{(sV)^{\otimes n}}$$
$$= \hat{F}|_{(sV)^{\otimes n}} + \hat{F} \circ\left(\sum_{u = 0}^{n-i}\mathbb{1}_V^{\otimes u} \otimes \delta_{i} \otimes \mathbb{1}_V^{\otimes n-i-u}\right)\circ \hat{H}|_{(sV)^{\otimes n}}$$
$$+ \hat{F}\left(\sum_{m = 1}^{n-2}(\delta_{\pmb{\mu}} \hat{H})^{m}\right)\circ\left(\sum_{u = 0}^{n-i}\mathbb{1}_V^{\otimes u} \otimes \delta_{i} \otimes \mathbb{1}_V^{\otimes n-i-u}\right)\circ \hat{H}|_{(sV)^{\otimes n}}.$$
The mapping $\left(\sum_{u = 0}^{n-i}\mathbb{1}_V^{\otimes u} \otimes \delta_{i} \otimes \mathbb{1}_V^{\otimes n-i-u}\right)\circ \hat{H}|_{(sV)^{\otimes n}}$ is of homogeneity $n-i+1$, so 
\eqref{PLrozpis} allows us to rewrite the last result as
$$\hat{F}|_{(sV)^{\otimes n}} + \hat{f} \circ \delta_n \circ \hat{H}|_{(sV)^{\otimes n}}$$
$$+  \sum_{i = 2}^{n-1} \hat{\pmb{\varphi}}|_{(sV)^{\otimes n-i+1}}\circ\left(\sum_{u = 0}^{n-i}\mathbb{1}_V^{\otimes u} \otimes \delta_{i} \otimes \mathbb{1}_V^{\otimes n-i-u}\right)\circ \hat{H}|_{(sV)^{\otimes n}}$$
and the combination of induction hypothesis $\hat{\pmb{\varphi}}|_{(sV)^{\otimes n-i+1}}$ 
and Lemma~\ref{k2t29}, \eqref{L28*}, gives the required form
$$\hat{F}|_{(sV)^{\otimes n}} + \hat{f} \circ \hat{\pmb{q}}_n + \sum_{B(n)}\hat{f} \circ \hat{\pmb{q}}_{r_1} \otimes \mbox{\dots} \otimes\hat{f} \circ \hat{\pmb{q}}_{r_k} - \hat{f}^{\otimes n}.$$
Let us remark that \eqref{PLrozpis} gives for all $n \geq 2$ 
$$\delta_{\pmb{\nu}}|_{(sV)^{\otimes n}} = \hat{f} \circ \delta_n \circ \hat{g}^{\otimes n} $$$$+ \sum_{i = 2}^{n-1} \hat{\pmb{\varphi}}|_{(sV)^{\otimes n-i+1}}\circ\left(\sum_{u = 0}^{n-i}\mathbb{1}_V^{\otimes u} \otimes \delta_{i} \otimes \mathbb{1}_V^{\otimes n-i-u}\right)\circ \hat{g}^{\otimes n}.$$
Choosing $2 \leq j \leq n-1$, Lemma~\ref{k2l26} implies
$$\hat{\pmb{\varphi}}|_{(sV)^{\otimes n-i+1}}\circ\left(\sum_{u = 0}^{n-i}\mathbb{1}_V^{\otimes u} \otimes \delta_{i} \otimes \mathbb{1}_V^{\otimes n-i-u}\right)\circ \hat{g}^{\otimes n}$$
$$= \hat{f} \circ \hat{\pmb{q}}_{n-i+1} \circ\left(\sum_{u = 0}^{n-i}\mathbb{1}_V^{\otimes u} \otimes \delta_{i} \otimes \mathbb{1}_V^{\otimes n-i-u}\right)\circ \hat{g}^{\otimes n}$$
$$+\sum_{u = 1}^{n-i+1} (\hat{f}\hat{g})^{\otimes u - 1} \otimes \hat{f} \circ \delta_i \circ \hat{g}^{\otimes i} \otimes (\hat{f}\hat{g})^{\otimes n-i+1-u}$$
$$ + \sum_{A(n-i+1)} (\hat{f}\hat{g})^{\otimes i - 1} \otimes \hat{f} \circ \hat{\pmb{q}}_{\ell} \circ\left(\sum_{u = 0}^{\ell-1}\mathbb{1}_V^{\otimes u} \otimes \delta_{i} \otimes \mathbb{1}_V^{\otimes \ell-1 - u}\right) \circ \hat{g}^{\otimes \ell - 1 + i} \otimes (\hat{f}\hat{g})^{\otimes n-i+1-k}.$$
We take into account \eqref{Okraj},  $\hat{f}\hat{g} = \mathbb{1}_W$, 
and sum up over all terms of the form 
$\mathbb{1}_W^{\otimes \star} \otimes \star \otimes \mathbb{1}_W^{\otimes \star}$:
$$\delta_{\pmb{\nu}}|_{(sV)^{\otimes n}} = \hat{f} \circ \delta_n \circ \hat{g}^{\otimes n} + \sum_{i = 2}^{n-1} \hat{f} \circ \hat{\pmb{q}}_{n-i+1} \circ\left(\sum_{u = 0}^{n-i}\mathbb{1}_V^{\otimes u} \otimes \delta_{j} \otimes \mathbb{1}_V^{\otimes n-i-u}\right)\circ \hat{g}^{\otimes n} $$
$$ + \sum_{A(n)} (\hat{f}\hat{g})^{\otimes i - 1} \otimes \Bigg[ \hat{f} \circ \delta_{\ell} \circ \hat{g}^{\otimes \ell}$$
$$  + \sum_{i = 2}^{\ell - 1} \hat{f} \circ \hat{\pmb{q}}_{\ell - i + 1} \circ\left(\sum_{u = 0}^{\ell-i}\mathbb{1}_V^{\otimes u} \otimes \delta_{i} \otimes \mathbb{1}_V^{\otimes \ell-i-u}\right) \circ \hat{g}^{\otimes \ell}\Bigg] \otimes (\hat{f}\hat{g})^{\otimes n-k}.$$
The application of Lemma~\ref{k2t27} concludes the proof.

\noindent \eqref{V30d} \& \eqref{V30b}: Similarly to the previous part of the proof, we first 
concentrate on \eqref{V30d} and then derive \eqref{V30b}. For $n = 2$,  
it follows from \eqref{PLrozpis}
$$\hat{\pmb{H}}|_{(sV)^{\otimes 2}} = \hat{H}|_{(sV)^{\otimes 2}} + \hat{H}\delta_{\pmb{\mu}} \hat{H}|_{(sV)^{\otimes 2}} = \hat{g}\hat{f} \otimes \hat{h} + \hat{h} \otimes \mathbb{1}_V + \hat{h}\circ (\delta_2(\hat{g}\hat{f} \otimes \hat{h}) + \delta_2(\hat{h} \otimes \mathbb{1}_V))$$
$$= [\![\hat{\pmb{\psi}}\hat{\pmb{\varphi}}]\!] \otimes \hat{h} \circ \hat{\pmb{q}}_1 + \hat{h} \circ \hat{\pmb{q}}_1 \otimes \mathbb{1}_V + \hat{h} \circ \hat{\pmb{q}}_2.$$
By the induction hypothesis, we assume \eqref{V30d} holds for all natural numbers greater than 
$1$ and less than $n$. We can write
$$\hat{\pmb{H}}|_{(sV)^{\otimes n}} = \hat{H}|_{(sV)^{\otimes n}} + \hat{h} \circ \delta_n \circ \hat{H}|_{(sV)^{\otimes n}}$$
$$+ \sum_{i = 2}^{n-1} \hat{\pmb{H}}|_{(sV)^{\otimes n-i+1}}\circ\left(\sum_{u = 0}^{n-i}\mathbb{1}_V^{\otimes u} \otimes \delta_{i} \otimes \mathbb{1}_V^{\otimes n-i-u}\right)\circ \hat{H}|_{(sV)^{\otimes n}}$$
Thanks to the induction hypothesis we can expand $\hat{\pmb{H}}|_{(sV)^{\otimes n-i+1}}$, and 
apply Lemma~\ref{k2t29} together with \eqref{T29*}:
$$\hat{h} \circ \hat{\pmb{q}}_n + \sum_{C(n)}[\![\hat{\pmb{\psi}}\hat{\pmb{\varphi}}]\!]_{r_1} \otimes \mbox{\dots} \otimes [\![\hat{\pmb{\psi}}\hat{\pmb{\varphi}}]\!]_{r_{i-1}} \otimes \hat{h} \circ \hat{\pmb{q}}_{r_i} \otimes \mathbb{1}_V^{\otimes k-i},$$
which completes the proof of the first assertion.
Now we use again \eqref{PLrozpis} for $n \geq 2$:
$$\hat{\pmb{\psi}}|_{(sV)^{\otimes n}} = \hat{G}|_{(sV)^{\otimes n}} + \hat{h} \circ \delta_n \circ \hat{G}_{(sV)^{\otimes n}}$$
$$+ \sum_{i = 2}^{n-1} \hat{\pmb{H}}|_{(sV)^{\otimes n-i+1}}\circ\left(\sum_{u = 0}^{n-i}\mathbb{1}_V^{\otimes u} \otimes \delta_{i} \otimes \mathbb{1}_V^{\otimes n-i-u}\right)\circ \hat{G}|_{(sV)^{\otimes n}}.$$
A consequence of Lemma~\ref{k2l26}, $2 \leq j \leq n-1$ arbitrary, is
$$\hat{\pmb{H}}|_{(sV)^{\otimes n-j+1}}\circ\left(\sum_{u = 0}^{n-j}\mathbb{1}_V^{\otimes u} \otimes \delta_{j} \otimes \mathbb{1}_V^{\otimes n-j-u}\right)\circ \hat{G}|_{(sV)^{\otimes n}}$$
$$= \hat{h} \circ \hat{\pmb{q}}_{n-j+1} \circ\left(\sum_{u = 0}^{n-j}\mathbb{1}_V^{\otimes u} \otimes \delta_{j} \otimes \mathbb{1}_V^{\otimes n-j-u}\right)\circ \hat{G}|_{(sV)^{\otimes n}}$$
$$+ \sum_{C(n-j+1)}\hat{h} \circ \hat{\pmb{p}}_{r_1} \circ \hat{g}^{\otimes r_1} \otimes \mbox{\dots} \otimes \hat{h} \circ \hat{\pmb{p}}_{r_{i-1}} \circ \hat{g}^{\otimes r_{i-1}} \otimes$$
$$\otimes \hat{h} \circ \hat{\pmb{q}}_{r_i} \circ\left(\sum_{u = 0}^{r_i - 1}\mathbb{1}_V^{\otimes u} \otimes \delta_{j} \otimes \mathbb{1}_V^{\otimes r_i - 1-u}\right)\circ \hat{G} \otimes (\hat{h} \circ \hat{\pmb{p}}_1 \circ \hat{g})^{\otimes k-i},$$
because $\hat{h} \circ \hat{\pmb{q}}_m \circ g^{\otimes m} = 0$ for all $m \geq 1$. 
In other words, if $\delta_\star$ in the last summation would not fit into 
$\hat{h} \circ \hat{\pmb{q}}_\star$ the corresponding term will be trivial. 
The summation then leads to
$$\hat{\pmb{\psi}}|_{(sV)^{\otimes n}} = \hat{G}|_{(sV)^{\otimes n}} + \hat{h} \circ \delta_n \circ \hat{G}_{(sV)^{\otimes n}} $$
$$+ \sum_{i = 2}^{n-1} \hat{h} \circ \hat{\pmb{q}}|_{(sV)^{\otimes n-i+1}}\circ\left(\sum_{u = 0}^{n-i}\mathbb{1}_V^{\otimes u} \otimes \delta_{i} \otimes \mathbb{1}_V^{\otimes n-i-u}\right)\circ \hat{G}|_{(sV)^{\otimes n}} $$
$$+ \sum_{C(n)}\hat{h} \circ \hat{\pmb{p}}_{r_1} \circ \hat{g}^{\otimes r_1} \otimes \mbox{\dots} \otimes \hat{h} \circ \hat{\pmb{p}}_{r_{i-1}} \circ \hat{g}^{\otimes r_{i-1}} \otimes$$
$$\otimes \left[ \hat{h} \circ \delta_{\ell} \circ \hat{G} + \sum_{i = 2}^{\ell - 1} \hat{h} \circ \hat{\pmb{q}}_{\ell - i + 1} \circ\left(\sum_{u = 0}^{i-1}\mathbb{1}_V^{\otimes u} \otimes \delta_{i} \otimes \mathbb{1}_V^{\otimes i-u}\right) \circ \hat{G}\right] \otimes (\hat{h} \circ \hat{\pmb{p}}_1 \circ \hat{g})^{\otimes k-i}.$$
In order to finish the proof, we remind the equality 
$\hat{h} \circ \hat{\pmb{p}}_1 = \mathbb{1}_V$ and use
Lemma~\ref{k2t27}.
\end{proof}


\subsection*{Acknowledgments} 
The present article is based on the thesis by the author. The author wants to thank Petr Somberg very much for his great help in preparing this text for publication $-$ if it hadn't been for it, this article would have never come to being.

\addvspace{\medskipamount}
 J.\, Kop\v{r}iva, Charles University, Faculty of Mathematics and Physics,
	Sokolovsk\'{a} 83, 180\,00 Praha, Czech Republic (\texttt{jakub.kopriva@outlook.com}). 
  \end{document}